\newcommand{\Q}{\mathbb{Q}}
\newcommand{\Z}{\mathbb{Z}}
\newcommand{\F}{\mathbb{F}}
\newcommand{\p}{\mathfrak p}
\newcommand{\q}{\mathfrak q}
\newcommand{\OO}{\mathcal O}
\newcommand{\fp}{\mathfrak{p}}
\newtheorem{tm}{Theorem}[section]
\newtheorem{lemma}[tm]{Lemma}
\newtheorem{defin}[tm]{Definition}
\newtheorem{pp}[tm]{Proposition}
\newtheorem{rem}[tm]{Remark}
\newtheorem{conj}[tm]{Conjecture}
\newtheorem{cor}[tm]{Corollary}
\DeclareMathOperator{\Norm}{Norm}
\title{Recipes to Fermat-type equations of the form $x^r + y^r =Cz^p$}
\author{Nuno Freitas}
\thanks{This work was supported by the scholarship with reference $SFRH/BD/44283/2008$ from \textit{Funda\c{c}ao para a Ci\^{e}ncia e a Tecnologia, Portugal} and also by a grant from \textit{Fundaci\'o Ferran Sunyer i Balaguer}}
\date{}
\begin{document}

\begin{abstract}
We describe a strategy to attack infinitely many Fermat-type equations of signature $(r,r,p)$, where $r \geq 7$ is a fixed prime and $p$ is a prime allowed to vary. We use a variant of the modular method over some totally real subfields of $\Q(\zeta_r)$. In particular, to a solution $(a,b,c)$ of $x^r + y^r =Cz^p$ we will attach several Frey curves $E=E_{(a,b)}$. We prove modularity of all the Frey curves and the exsitence of a constant constant $M_r$, depending only on $r$, such that for all $p>M_r$ the representations $\bar{\rho}_{E,p}$ are absolutely irreducible. Along the way, we also prove modularity of certain elliptic curves that are semistable at all $v \mid 3$.\par
Finally, we illustrate our methods by proving arithmetic statements about equations of signature $(7,7,p)$. Among which we emphasize that, using a multi-Frey technique, we show there is some constant $M$ such that if $p > M$ then the equation $x^7 + y^7 = 3z^p$ has no non-trivial primitive solutions.
\end{abstract}

\maketitle

\section{Introduction}

Wiles' groundbreaking proof \cite{wiles} of the modularity of semistable elliptic curves over $\Q$ was also the final piece in the proof of Fermat's Last Theorem, which states that for all $n \geq 3$ there are no integers $a,b,c$ such that $a^n +b^n = c^n$ and $abc \neq 0$. Since then, the strategy that led to its proof was strengthened and several mathematicians achieved great success in solving other equations that previously seemed intractable. This generalized strategy is now known by \textit{the modular method} or \textit{the modular approach} to Diophantine equations. Broadly speaking, it can be divided into three main steps:
\begin{itemize}
\item[(I)][\textsc{Construction of a Frey curve}] Attach an appropriate elliptic curve $E$ (often called a Frey or Hellegouarch-Frey curve) defined over a totally real field $K$ to a putative solution (of a certain type) of a Diophantine equation;
\item[(II)] [\textsc{Modularity/Level Lowering}] Prove modularity of $E/K$ and irreducibility of some residual Galois representations $\bar{\rho}_E$ attached to $E$, to conclude (via level lowering results), that $\bar{\rho}_E$ corresponds to a (Hilbert) newform almost independent of the choice of the solution;
\item[(III)] [\textsc{Contradiction}] Contradict step (II) by showing that among the finitely many (Hilbert) newforms predicted in Step (II), none of them corresponds to $\bar{\rho}_E$.
\end{itemize}
Most of the efforts leading to the generalized modular method were made with the objective of studying \textit{the generalized Fermat equation} 
\begin{equation}
Ax^p + By^q = Cz^r, \quad \mbox{ where } \quad 1/p + 1/q + 1/r < 1,
\label{genfermat} 
\end{equation}
with $p,q,r \in \Z$ and $A,B,C$ pairwise coprime integers. To the triple of exponents $(p,q,r)$ as in (\ref{genfermat}) we call the \textit{signature} of the equation. In general, for fixed pairwise coprime integers $A,B,C$, equation (\ref{genfermat}) may have infinitely many solutions for a fixed signature. For example, if $z=a^3 + b^3$, $x=az$, $y=bz$ then $(x,y,z)$ satisfies $x^3 + y^3 = z^4$. However, if we assume the $abc$-conjecture it follows that there are only a finite number of solutions $(a,b,c)$ to equation (\ref{genfermat}) satisfying $\gcd(a,b,c)=1$ (see sections 1.1 and 5.2 in \cite{DG}). More precisely, equation \eqref{genfermat} is the subject of the following conjecture 
\begin{conj} Fix $A,B,C \in \mathbb{Z}$ pairwise coprime. There exist only finitely many triples $(a^p,b^q,c^r)$ with $(a,b,c)\in(\mathbb{Z}\setminus\{0\})^3$ and $p,q,r$ primes such that:
\begin{enumerate}
 \item $1/p+1/q+1/r<1$,
 \item $\gcd(a,b,c)=1$,
 \item $Aa^p+Bb^q=Cc^r$.
\end{enumerate}
\label{conjecture} 
\end{conj}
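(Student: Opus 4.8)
Conjecture \ref{conjecture} is open in general, so the only plan I can offer is the standard conditional one: deducing it from the $abc$-conjecture (the route hinted at in the text and carried out in \cite{DG}); the unconditional tools for attacking individual equations \eqref{genfermat} are the subject of the rest of this paper. Throughout set $H=\max(|Aa^p|,|Bb^q|,|Cc^r|)$, and note $H\ge 1$ since the three terms are nonzero integers.

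First I would convert the equation into a genuine $abc$-triple. From $Aa^p+Bb^q-Cc^r=0$ one checks that the two terms of smaller absolute value must have the same sign (if they had opposite signs the third term could not be the largest), so after relabelling we obtain $|T_1|+|T_2|=|T_3|$ with $\{T_1,T_2,T_3\}=\{Aa^p,Bb^q,-Cc^r\}$ and $|T_3|=H$. Let $g=\gcd(|T_1|,|T_2|,|T_3|)$. A short argument using that $A,B,C$ are pairwise coprime and that $\gcd(a,b,c)=1$ shows that any prime $\ell\mid g$ divides $ABC$ and satisfies $\ord_\ell g\le\ord_\ell(ABC)$; hence $g\mid ABC$, so $g\le|ABC|$. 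Dividing through by $g$ produces coprime positive integers $x+y=z$ with $z=H/g$.

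Now I would apply the $abc$-conjecture: for each $\e>0$ there is $K_\e$ with $z\le K_\e\operatorname{rad}(xyz)^{1+\e}$. Since $\operatorname{rad}(xyz)$ divides $\operatorname{rad}(T_1T_2T_3)$, which divides $\operatorname{rad}(ABC\cdot abc)$, and a radical never exceeds the integer itself, this gives
\[
H\ \le\ g\,K_\e\bigl(|ABC|\cdot|abc|\bigr)^{1+\e}\ \le\ K_\e\,|ABC|^{2+\e}\,|abc|^{1+\e}.
\]
From $|a|^p\le|Aa^p|\le H$, and likewise for $b,q$ and $c,r$, we get $|abc|\le H^{1/p+1/q+1/r}$. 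At this point I would use the elementary but crucial fact that for primes with $1/p+1/q+1/r<1$ one in fact has $1/p+1/q+1/r\le\tfrac12+\tfrac13+\tfrac17=\tfrac{41}{42}$; so, fixing once and for all an $\e$ small enough that $\theta:=\tfrac{41}{42}(1+\e)<1$ (any $\e<1/41$ works), and using $H\ge1$, the previous inequality yields $H^{1-\theta}\le K_\e|ABC|^{2+\e}$, i.e.
\[
H\ \le\ \bigl(K_\e\,|ABC|^{2+\e}\bigr)^{1/(1-\theta)}\ =:\ M(A,B,C),
\]
a bound depending only on $A,B,C$ and not on the exponents $p,q,r$. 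Finally, since $|a^p|=|T_i|/|A|\le H\le M(A,B,C)$ and similarly $|b^q|,|c^r|\le M(A,B,C)$, every triple $(a^p,b^q,c^r)$ coming from a solution lies in a fixed bounded box of $\Z^3$; hence there are only finitely many such triples, which is the assertion of the conjecture.

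The step I expect to need the most care is the uniformity in $p,q,r$: the whole argument collapses without the gap between $\sup\{1/p+1/q+1/r:1/p+1/q+1/r<1\}=41/42$ and $1$, which is exactly what lets a single $\e$ serve for all signatures at once, and is the only place the hypothesis $1/p+1/q+1/r<1$ is genuinely exploited. The sign-and-relabelling discussion and the bound $g\mid ABC$ are routine, and everything else is bookkeeping. The true obstacle, of course, is that $abc$ is itself unproven.
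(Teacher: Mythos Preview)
The paper does not prove this statement: it is stated as an open conjecture, with the remark (just before it) that it follows from the $abc$-conjecture, citing \cite{DG}. Your proposal is exactly this conditional deduction, and it is correct; in particular the crucial uniformity step---that for prime triples with $1/p+1/q+1/r<1$ one always has $1/p+1/q+1/r\le 41/42$, so a single $\e<1/41$ works for all signatures simultaneously---is valid and is precisely why the conclusion is finiteness of $(a^p,b^q,c^r)$ over \emph{all} admissible exponents, not merely for a fixed signature. The bookkeeping with $g\mid ABC$ and the sign relabelling is fine.

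So there is nothing to compare: the paper offers no proof of its own, and your sketch is the standard argument it points to.
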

An important result due to Darmon-Granville \cite{DG} states that for $A,B,C$ fixed as above and a fixed signature $(p,q,r)$ such that $1/p+1/q+1/r<1$ there exists only a finite number of solutions satisfying $\gcd(a,b,c)=1$. Furthermore, the previous conjecture has been established in particular cases, including infinite subfamilies, for example: $x^ p + y^p = z^2$ for $p \geq 4$ and $x^ p + y^p = z^3$ for $p \geq 3$ were settled by Darmon-Merel \cite{DM} and Poonen \cite{Poonen}; $x^4 + y^2 = z^p$ for $p \geq 4$ by Ellenberg \cite{ell2} and Bennett-Ellenberg-Ng \cite{BENg}; $x^2 + y^6 = z^p$ for $p \geq 3$ by Bennett-Chen \cite{chenBen}. For an overview and up to date summary of known results see \cite{BCDY}.\\

We would like to note that all the generalizations of the modular approach used to attack different equations are highly dependent on the specific equation under analysis. There is no general algorithm that performs step (I) for a random Diophantine equation, even of Fermat type \eqref{genfermat}. However, a remarkable method to attack equation (\ref{genfermat}) in full generality is explained by Darmon in \cite{Darmon}. Darmon's method makes use of Frey abelian varieties of higher dimension, instead of the usual Frey curves. Unfortunately, it seems that currently little is known about these varieties and in \cite{Darmon} only a few particular cases of the equation $x^p + y^p = z^r$ are solved. 

\subsection{Summary of results}
In this work we focus on equations with signature $(r,r,p)$ and the particular form 
\begin{equation}
x^r + y^r = Cz^p,
\label{treze}
\end{equation}
where $C \in \Z$, $r \geq 7$ is a fixed prime and $p$ is a prime allowed to vary. Our main objective is to provide a strategy to attack infinitely many equations of this type for each fixed prime $r \geq 7$. More precisely, we will complete steps (I) and (II) of the modular approach for equations of the form \eqref{treze}, where $r \geq 7$, $C$ is an integer divisible only by primes $q \not\equiv 1,0$ (mod $r$). Thus, fixed $r$ and $C$, this reduces the problem of showing there are no solutions to \eqref{treze} to the computational part of the method, i.e. step (III). \\

Concerning the non-existence of solutions to equations with the shape of \eqref{treze} there are works for signature $(3,3,p)$ by Kraus \cite{kraus0}, Bruin \cite{bru}, Chen-Siksek \cite{CS} and Dahmen \cite{Dahm}; for $(5,5,p)$ by Billerey \cite{bil1}, Billerey-Dieulefait \cite{BD} and from the author jointly with Dieulefait \cite{DF}; for $(13,13,p)$ from the author jointly with Dieulefait \cite{DF2}. Furthermore, in \cite{kraus3} Kraus  proves that for each fixed pair $(r,p)$ of exponents, satisfying $r \geq 5$ and $p \geq 3$, there are only a finite number of values of $C$ (satisfying some natural conditions) for which \eqref{treze} admits solutions.\\

For the variant of the modular method used here we first introduce a preliminary step before steps (I), (II) and (III) above. Our method can be summarized as follows:
\begin{itemize}
\item[(0)] Relate a primitive solution $(a,b,c)$ of $x^r + y^r =Cz^p$ to a primitive solution of other Diophantine equations defined over $K^+$. The solutions of the new equations are of the form $(a,b,c_1)$ and will differ only on the value of $c_1 \in K^{+}$. 
\item[(I)] To solutions $(a,b,c_1)$ of each new equations we will attach Frey curves $E_{(a,b)}$ defined over $K^+$, obtaining this way multiple Frey curves attached to the initial solution $(a,b,c)$. 
\item[(II)] For each $r$ we prove modularity of all the $E=E_{(a,b)}$ and determine a constant $M_r$ such that the mod $p$ Galois representations $\bar{\rho}_{E,p}$ are absolutely irreducibility if $p > M_r$. Hence we can apply the level lowering results for Hilbert modular forms to get an isomorphism 
$$\bar{\rho}_{E,p} \sim \bar{\rho}_{f,\mathfrak{P}},$$ where $f$ is a Hilbert newform of level almost independent of $(a,b,c)$.
\item[(III)] Contradict the previous isomorphism for all predicted $f$. (Unfortunately, this cannot be done for general $r$ but we will exemplify it for particular cases when $r=7$.)
\end{itemize}

Despite being limited to equations with the form of \eqref{treze} the method given here has some advantages over the fully general method in \cite{Darmon}. Since we will only use elliptic curves, in some aspects, we are able to achieve a better understanding of the representations involved. More precisely, we are able to prove stronger statements about their modulariy and irreducibiliy. Also, in general, the mod $p$ Galois representations $\bar{\rho}_{E,p}$ arising from the $p$-torsion of our Frey curves do not fit the classification of Frey representations in \cite{Darmon}.

\subsection*{Notation} Fix $r \geq 7$ to be a prime. Denote by $K^+$ be the maximal totally real subfield of $\Q(\zeta_r)$; when $r \equiv 1 \pmod{6}$ write $K_0$ for the subfield of $K^+$ satisfying $[K^+ : K_0]=3$; when $r \equiv 1 \pmod{4}$ write $k$ for the subfield of $K^+$ satisfying $[K^+ : k]=2$. Let $\pi_r$ denote the single prime in $K^+$ dividing $r$; for an element $s \in \OO_{K^+}$ denote by $\mbox{Rad}_2(s)$ the product of the odd characteristic primes in $K^+$ dividing $s$.

\bigskip

We will now briefly describe our main results. Some of the statements are simplified for introduction purposes, but the chain of ideas leading to the completion of steps (I) and (II) should not be affected.

\bigskip

To complete step (I) we will construct several Frey curves as it is summarized in the following theorem. For the explicit definitions and properties of the Frey curves see sections \ref{curvasI}, \ref{curvasII} and \ref{4m}.

\begin{tm} Let $r \geq 7$ be a prime. Suppose that $(a,b,c) \in \Z^3$ satisfies \eqref{treze} with $a,b \in \Z$ coprime. Then, there are multiple Frey curves attached to $(a,b,c)$. More precisely,
\begin{enumerate}
 \item there are multiple Frey curves defined over $K^+$ for all $r\geq 7$,
 \item if $r \equiv 1 \pmod{6}$ there are Frey curves defined over $K_0$,
 \item if $r \equiv 1 \pmod{4}$ there are Frey curves defined over $K^+$ that are $k$-curves.
\end{enumerate}
\end{tm}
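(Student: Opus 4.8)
The plan is to first carry out the preliminary step~(0) of the method, namely to record the factorisation of $x^r+y^r$ over $\OO_{K^+}$, and then to read the Frey curves off some linear relations between its factors; the refinements (2) and (3) will come by choosing those factors compatibly with the action of $\mathrm{Gal}(K^+/K_0)$ and of $\mathrm{Gal}(K^+/k)$.

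For step~(0) I would begin with the factorisation over $\OO_{K^+}$
\[
x^r+y^r=(x+y)\prod_{i=1}^{(r-1)/2}\phi_i(x,y),\qquad
\phi_i(x,y)=x^2+\omega_i\,xy+y^2,\quad \omega_i=\zeta_r^i+\zeta_r^{-i},
\]
and check, using $\gcd(a,b)=1$ and the fact that the differences $\omega_i-\omega_j$ and $\omega_i-2$ are supported only at $\pi_r$, that the quantities $a+b,\phi_1(a,b),\dots,\phi_{(r-1)/2}(a,b)$ are pairwise coprime away from $\pi_r$. Combining this with $a^r+b^r=Cc^p$, with the hypothesis that the primes $q\mid C$ satisfy $q\not\equiv 0,1\pmod r$ (so that the primes of $K^+$ above them have residue degree $>1$), and with the usual class-group bookkeeping, one gets for every $i$
\[
\phi_i(a,b)=\gamma_i\,c_i^{\,p},\qquad a+b=\gamma_0\,c_0^{\,p},
\]
with $c_i\in\OO_{K^+}$ and the $\gamma_i$ ranging over a finite set whose prime support lies in $\{\pi_r\}\cup\{\mathfrak{q}\mid C\}$. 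These are precisely the new equations $\phi_i(x,y)=\gamma_i z^p$ over $K^+$ solved by $(a,b,c_i)$, which differ only in the value of $c_i$.

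Next I would produce the Frey curves from the linear identities between the quadratic factors. Any three of them satisfy
\[
(\omega_k-\omega_j)\,\phi_i+(\omega_i-\omega_k)\,\phi_j+(\omega_j-\omega_i)\,\phi_k\equiv 0,
\]
and one also has $(\omega_j-2)\phi_i-(\omega_i-2)\phi_j-(\omega_j-\omega_i)(x+y)^2\equiv 0$. Substituting the expressions from step~(0), each identity becomes a ternary relation $B_1+B_2+B_3=0$ in which each $B_\ell$ equals $c_\ell^{\,p}$ (or $c_0^{\,2p}$) up to a unit and primes above $rC$, and I attach to it the Frey curve in the symmetric Weierstrass model
\[
E_{(a,b)}\colon\ Y^2=(X-B_1)(X-B_2)(X-B_3)=X^3+e_2X-e_3,\qquad
e_2=\sum_{\ell<m}B_\ell B_m,\ \ e_3=B_1B_2B_3 .
\]
Since $r\geq 7$ gives $(r-1)/2\geq 3$, there are at least three quadratic factors, hence several admissible triples $\{i,j,k\}$ and, with the variant using $(x+y)^2$, several Frey curves over $K^+$; this proves~(1). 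For~(2), with $r\equiv 1\pmod 6$ one has $\mathrm{Gal}(K^+/K_0)=\langle\sigma\rangle\cong\Z/3$, acting on the indices through an order-$3$ element of $(\Z/r)^\times/\{\pm1\}$ and hence with an orbit of size $3$ (which exists because $(r-1)/2\geq 3$); taking $\{i,j,k\}$ to be such an orbit and the $\gamma_i,c_i$ chosen $\sigma$-equivariantly, one checks that $\sigma$ cyclically permutes $B_1,B_2,B_3$, so $e_2,e_3\in K_0$ and $E_{(a,b)}$ is defined over $K_0$. For~(3), with $r\equiv 1\pmod 4$ one has $\mathrm{Gal}(K^+/k)=\langle\tau\rangle\cong\Z/2$ with orbits of size $2$ on the indices; using the relation between $\phi_i,\phi_{\tau i}$ and $(x+y)^2$ one finds $\tau(B_1,B_2,B_3)=(-B_2,-B_1,-B_3)$, hence $\tau(e_2)=e_2$ and $\tau(e_3)=-e_3$, so $E_{(a,b)}^{\tau}$ is the quadratic twist of $E_{(a,b)}$ by $-1$, which is isomorphic to $E_{(a,b)}$ over $\overline{K^+}$: the Frey curve is a $k$-curve, but not in general defined over $k$, since $\sqrt{-1}\notin K^+$. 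This is the exact analogue over $K^+$ of the $\Q$-curves used for signature $(5,5,p)$.

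The coprimality and class-group bookkeeping of step~(0) is fiddly but routine. The part I expect to cost real work is passing to a minimal model of each $E_{(a,b)}$ and determining its conductor and reduction type — which is what makes these curves usable in step~(II) — and, in case~(3), exhibiting the explicit isogeny $E_{(a,b)}\to E_{(a,b)}^{\tau}$ together with the cocycle realising the $k$-curve structure; these are the computations carried out in Sections~\ref{curvasI}, \ref{curvasII} and~\ref{4m}.
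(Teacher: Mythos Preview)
Your step~(0) and the linear identities among the $\phi_i$ and $(x+y)^2$ are exactly right and match the paper. The gap is in the Frey curve itself: the model $Y^2=(X-B_1)(X-B_2)(X-B_3)$ is \emph{not} the Frey curve attached to a relation $B_1+B_2+B_3=0$. Its discriminant is $16\prod_{i<j}(B_i-B_j)^2$, and since a prime $\q\nmid 2r$ dividing some $c_\ell^{\,p}$ divides exactly one $B_\ell$ (by the coprimality you established), it divides none of the differences $B_i-B_j$; hence $\q$ is a prime of \emph{good} reduction for your curve and there is nothing to level-lower. The correct model --- the one the paper uses in Sections~\ref{curvasI} and~\ref{curvasII}, and the classical Hellegouarch--Frey choice --- is $Y^2=X(X-B_1)(X+B_2)$, whose $2$-torsion sits at $0,B_1,-B_2$ so that the discriminant is $16(B_1B_2B_3)^2$, a $2p$-th power away from $2\pi_r$.

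This slip cascades into (2) and (3). For (2), the correct model $Y^2=X(X-A)(X+B)$ is not symmetric in $A,B,C$, so the cyclic permutation under $\sigma$ does not fix it on the nose; one must pass to the short Weierstrass form and check directly that $a_4,a_6\in K_0$ (Proposition~\ref{isos}). For (3), the analogous computation with $\tau(A,B,C)=(-A,-C,-B)$ actually shows that the short Weierstrass model of $E^{(k_1,n_2)}$ is \emph{defined over $k$}, not merely a $(-1)$-twist of its conjugate --- the paper records this at the end of Section~\ref{curvasII}. The $k$-curves intended in part~(3) are a separate family (Section~\ref{4m}): the curves $Y^2=X^3+2(a+b)X^2+\alpha f_{k_1}(a,b)X$, related to their $\tau$-conjugate by an explicit $2$-isogeny over $K^+(\sqrt{-2})$ rather than by a quadratic twist. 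That is the true analogue of the $(5,5,p)$ $\Q$-curves you allude to.
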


To complete step (II) we need modularity of the Frey curves and irreducibility of certain Galois representations attached to them. As an application of recent developments on modularity lifting theorems, we will prove, in particular, the following modularity statement. 

\begin{tm} Let $F$ be a totally real abelian number field where $3$ is unramified. Let $C/F$ be an elliptic curve semistable at all primes $v \mid 3$. Then, $C/F$ is modular.  
\end{tm}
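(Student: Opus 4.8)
The plan is to run the now-standard ``prime-switching'' strategy for modularity of elliptic curves over totally real fields, bootstrapping from the primes $3$, $5$ and $7$, with the hypotheses on $F$ entering precisely to make the residual modularity input at $p=3$ usable. The common engine is the package of recent modularity lifting theorems over totally real fields (Kisin, Gee, Barnet-Lamb--Gee--Geraghty and refinements): for $p \in \{3,5,7\}$, if $\bar{\rho}_{C,p}$ is modular and absolutely irreducible, even after restriction to $G_{F(\zeta_p)}$, and if $\bar{\rho}_{C,p}|_{D_v}$ is finite flat or ordinary at every $v \mid p$, then $C$ is modular. The assumptions that $3$ is unramified in $F$ and that $C$ is semistable at all $v \mid 3$ are exactly what guarantee, at $p=3$, that $F_v/\Q_3$ is unramified and that $\bar{\rho}_{C,3}|_{D_v}$ is finite flat (good reduction at $v$) or ordinary (multiplicative reduction), so that this lifting theorem applies there.

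First I would treat the base case $p=3$. If $\bar{\rho}_{C,3}$ is absolutely irreducible, then its projective image sits inside $\mathrm{PGL}_2(\F_3) \cong S_4$, which is solvable, so by the Langlands--Tunnell theorem (which holds over totally real $F$, via solvable base change) $\bar{\rho}_{C,3}$ is modular; a standard weight-shift/congruence argument puts the associated Hilbert eigenform in parallel weight two. Combined with the modularity lifting theorem and the local analysis at $v \mid 3$, this gives modularity of $C$. If instead $\bar{\rho}_{C,3}$ is reducible, or its restriction to $G_{F(\zeta_3)}$ fails to be absolutely irreducible, I pass to the next prime.

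Next, when $\bar{\rho}_{C,3}$ is reducible -- equivalently $C$ has an $F$-rational $3$-isogeny -- I try $p=5$. If $\bar{\rho}_{C,5}$ is (suitably) absolutely irreducible one must first show it is modular: by a Hilbert-irreducibility argument on the twist of $X(5) \cong \mathbb{P}^1$ attached to the symplectic $G_F$-module $C[5]$ (Wiles' ``$3$--$5$ switch''), there is an elliptic curve $C'/F$ with $\bar{\rho}_{C',5} \cong \bar{\rho}_{C,5}$, semistable at all $v \mid 3$, and with $\bar{\rho}_{C',3}$ absolutely irreducible; by the base case $C'$ is modular, hence $\bar{\rho}_{C,5}$ is modular, and the mod-$5$ lifting theorem gives modularity of $C$. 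If both $\bar{\rho}_{C,3}$ and $\bar{\rho}_{C,5}$ are reducible I repeat the argument with $p=7$, using a Moret--Bailly existence statement on the relevant twist of $X(7)$ to produce an auxiliary curve with absolutely irreducible mod-$3$ or mod-$5$ representation.

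The hard part will be the remaining case, where $\bar{\rho}_{C,3}$, $\bar{\rho}_{C,5}$ and $\bar{\rho}_{C,7}$ are all reducible (or fail the above irreducibility condition after adjoining roots of unity). Then $C$ has $F$-rational cyclic isogenies of degrees $3$, $5$ and $7$, hence a cyclic $105$-isogeny, so it yields a non-cuspidal $F$-point on $X_0(105)$, and in particular on each of $X_0(15)$, $X_0(21)$ and $X_0(35)$. The crux is to show that all such $F$-points are accounted for by elliptic curves with complex multiplication -- which are modular by the classical theory -- or do not exist. This is where I would use that $F$ is abelian over $\Q$: together with the structure of these modular curves and their Jacobians (rank information, Chabauty- and Mazur-type arguments), one bounds the possible points, reducing to a short explicit list of $j$-invariants that is then checked directly. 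Assembling the cases completes the proof that $C/F$ is modular.
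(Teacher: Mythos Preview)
Your prime-switching strategy is a reasonable heuristic and is indeed the backbone of results like Freitas--Le~Hung--Siksek over real quadratic fields, but it is \emph{not} the route taken here, and as written it has a genuine gap in the final step.  The difficulty is that your fallback---bounding $F$-points on $X_0(105)$ (or on $X_0(15)$, $X_0(21)$, $X_0(35)$)---is carried out in the literature only for fields of bounded degree, via explicit rank computations and Chabauty/Mordell--Weil sieve arguments that depend heavily on the specific field.  For an \emph{arbitrary} totally real abelian $F$ (of arbitrarily large degree) there is no such result, and you give no mechanism by which ``$F$ abelian'' constrains these rational points.  Moreover, your case split conflates ``$\bar{\rho}_{C,p}$ reducible'' with ``$\bar{\rho}_{C,p}|_{G_{F(\zeta_p)}}$ reducible'': in the latter (dihedral) case there is no $p$-isogeny, so $C$ need not give a point on $X_0(p)$, and the modular-curve endgame does not even apply.

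The paper avoids the switch entirely and works only with $p=3$, splitting into three cases.  If $\bar{\rho}_{C,3}|_{G_{F(\sqrt{-3})}}$ is absolutely irreducible one uses Langlands--Tunnell plus the Breuil--Diamond lifting theorem (no hypotheses on $F$ beyond totally real).  If $\bar{\rho}_{C,3}$ is irreducible but dihedral, a lemma of Jarvis--Manoharmayum forces $C$ to be ordinary at every $v\mid 3$ (here semistability and $3$ unramified are used); one then realises $\bar{\rho}_{C,3}$ as automorphic via the converse theorem for induced characters, moves to weight~$2$ along a Hida family, and concludes by Skinner--Wiles' ordinary lifting theorem.  Finally, if $\bar{\rho}_{C,3}$ is reducible, one applies Theorem~A of Skinner--Wiles for residually reducible representations: the key hypothesis there is that $F(\chi_1/\chi_2)=F(\sqrt{-3})$ be abelian over $\Q$, and \emph{this} is exactly where ``$F$ abelian'' enters---not via any modular-curve finiteness.
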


We will also show that all the Frey curves we construct in this work are semistable at all $v \mid 3$. Thus, their modularity follows from the previous theorem. Concerning irreducibility we will prove the following.

\begin{tm} Let $r \geq 7$ be a fixed prime and let $(a,b,c)$ be a primitive solution $x^r + y^r =Cz^p$. There exists a computable constant $M_r$ (depending only on $r$) such that, for all $p > M_r$ the Galois representation $\bar{\rho}_{E,p}$ attached to any Frey curve $E$ is absolutely irreducible.
\end{tm}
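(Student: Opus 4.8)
The plan is to prove absolute irreducibility of $\bar\rho_{E,p}$ by a uniform argument in $p$, combining level-lowering-type ramification control with the theory of isogeny characters over totally real fields. First I would observe that, since $E$ is semistable at all $v\mid 3$ (and more generally has controlled, mostly multiplicative or good reduction away from $2$ and $\pi_r$ coming from the Frey construction of sections \ref{curvasI}, \ref{curvasII}, \ref{4m}), the conductor of $E/K^+$ (or $E/K_0$, $E/k$) is supported on a fixed finite set of primes $S$ independent of $(a,b,c)$ and of $p$. Suppose $\bar\rho_{E,p}$ is reducible; then it is an extension of characters $\theta \oplus \theta'$ with $\theta\theta' = \chi_p$ the mod-$p$ cyclotomic character, and $E$ acquires a $p$-isogeny over $K^+$. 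The isogeny character $\theta$ is unramified outside $Sp$; at primes $v\in S$ of multiplicative reduction its ramification is governed by the Tate parametrization (so $\theta$ or $\theta'$ is unramified there once $p\nmid \mathrm{ord}_v(j_E)$), and at primes $v\mid p$ the behavior is described by the theory of Fontaine--Serre: $\theta|_{I_v}$ is $\chi_p^a$ with $a\in\{0,1\}$ for $v$ of good reduction and similarly controlled for multiplicative reduction.

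Next I would run the standard dichotomy. Taking a suitable power, $\theta^{12}$ (or $\theta^{24}$ to absorb the $2$-torsion ambiguity and the semistable-at-$2$ possibilities) becomes a character of $\mathrm{Gal}(\overline{K^+}/K^+)$ that is unramified outside $S$ and whose restriction to inertia at each $v\mid p$ is a fixed power of $\chi_p$. By class field theory such a character, after a further bounded power $n$ depending only on the class number and unit group of $K^+$ (hence only on $r$), is of the form $\chi_p^j$ times a character of bounded order; evaluating at Frobenius elements $\mathrm{Frob}_{\q}$ for a fixed auxiliary prime $\q\nmid Sp$ gives a congruence modulo $p$ between $\mathrm{Norm}(\q)^{j}$ (up to bounded roots of unity) and a root of the characteristic polynomial of $\rho_{E,p}(\mathrm{Frob}_\q)$, i.e. an algebraic integer of absolute value at most $2\sqrt{\mathrm{Norm}(\q)}$ in every embedding. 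Since the $j$-invariant of our Frey curve has a prescribed prime at $\pi_r$ (this is exactly what the Frey construction buys us — the valuation at $\pi_r$ is nonzero and not divisible by $p$ for $p$ large), one of the two characters is genuinely ramified at $\pi_r$, which rules out the ``$\theta$ finite, $\theta'=\chi_p\cdot(\text{finite})$'' branch and forces the Frobenius congruence to be nontrivial. That produces, for each choice of $\q$, finitely many bad $p$; intersecting over two or three well-chosen primes $\q$ (splitting appropriately in $K^+$ relative to the CM fields $\Q(\zeta_r)$ and its quadratic/cubic subfields) leaves a finite, computable set, and we define $M_r$ to be larger than all of them — together with the bound below which modularity or Ribet-type irreducibility could fail for other reasons.

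The main obstacle I expect is handling the case where the putative isogeny character is everywhere unramified away from $p$ and \emph{unramified at $\pi_r$ as well} — i.e. when $p$ divides $\mathrm{ord}_{\pi_r}(j_E)$ or when $E$ has potentially good reduction at $\pi_r$; there the clean Tate-curve argument is unavailable and one must instead use the explicit form of $\bar\rho_{E,p}|_{I_{\pi_r}}$ coming from the specific Frey curves (their reduction type at $\pi_r$ computed in sections \ref{curvasI}--\ref{4m}) to pin down $\theta|_{I_{\pi_r}}$ as a ramified power of the tame character. A secondary subtlety is that over $K^+$ (and more so over $K_0$, $k$) there can be small exceptional primes $p$ for which a $p$-isogeny exists for structural reasons unrelated to the solution — but since these $p$ are bounded independently of $(a,b,c)$ (only the degree of the field and the conductor support matter, by Merel-type uniform boundedness over totally real fields, or simply by the finite list of cyclotomic points on modular curves of bounded level), they are harmlessly swept into $M_r$. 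Finally one has to make every implied bound effective: the class number and regulator of $K^+$, the chosen auxiliary primes $\q$, and the constant in the Tate-parametrization estimate are all explicitly computable in terms of $r$, so the resulting $M_r$ is computable, as claimed.
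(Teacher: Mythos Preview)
Your sketch follows the isogeny-character strategy that underlies the paper's cited result, but there are two problems, one cosmetic and one genuine.

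First, the claim that ``the conductor of $E/K^+$ is supported on a fixed finite set of primes $S$ independent of $(a,b,c)$'' is false: by Propositions~\ref{condEgamma}, \ref{condEgammaII}, \ref{cond222} the conductor contains $\mathrm{rad}(c)$ (or $\mathrm{Rad}_{2r}((a+b)c_1c_2)$), which varies with the solution. This is not fatal, because at these multiplicative primes the isogeny characters $\theta,\theta'$ are unramified anyway; but you should say that the \emph{additive} primes lie in a fixed set, not the full conductor.

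The real gap is your use of $\pi_r$. You assert that ``the $j$-invariant of our Frey curve has a prescribed prime at $\pi_r$ --- the valuation at $\pi_r$ is nonzero and not divisible by $p$ for $p$ large,'' and you use this to force ramification of the isogeny character and rule out the unramified branch. This is simply not true for the Frey curves $E_{(a,b)}^{(k_1,k_2,k_3)}$ of Section~\ref{curvasI}: by Proposition~\ref{condEgamma} these curves have \emph{good or additive} reduction at $\pi_r$ (conductor exponent $0$ or $2$), so $\upsilon_{\pi_r}(j(E))\ge 0$ and there is no Tate parametrization available. In particular, when $r\mid a+b$ the reduction at $\pi_r$ is good and $\bar\rho_{E,p}|_{I_{\pi_r}}$ is trivial; your proposed fallback (``use the explicit form of $\bar\rho_{E,p}|_{I_{\pi_r}}$'') then yields no information whatsoever. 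For the curves of Section~\ref{curvasII} the reduction at $\pi_r$ can be multiplicative, but even then $\upsilon_{\pi_r}(\Delta_{\min})$ depends on $\upsilon_r(a+b)$ and is not obviously coprime to $p$. So there is no guaranteed multiplicative prime on which to anchor your argument, and the ``everywhere potentially good'' case --- which you flag as the main obstacle --- is not a corner case here but the generic one for the type-I curves.

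The paper avoids all of this by invoking Theorems~1 and~2 of \cite{FS} as a black box: those results carry out the full Momose-type classification of isogeny signatures over a totally real field $K$, with no assumption of a multiplicative prime, and produce an explicit bound $(1+3^{d_r h_r})^2$ together with a computable constant $B_r$, both depending only on $K=K^+$ (hence only on $r$). Your outline is essentially a sketch of what goes into \cite{FS}, but with the hard case not handled; to make it work you would need to reproduce that analysis rather than lean on $\pi_r$.
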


At this point we completed steps (I) and (II). For a complete proof of the non-existence of certain solutions to equation \eqref{treze} we still have to perform step (III). Unfortunately, this step cannot be completed in general. In particular, for each value of $r$, it requires the computation of certain specific spaces of Hilbert newforms. Nevertheless, without having to perform any computation we are able to prove

\begin{tm} Let $C \neq 1,2$ be an integer divisible only by primes $q \not\equiv 1,0 \pmod{r}$. If certain explicit subspaces of $S_2(2^4 \pi_r^2 \mbox{Rad}_2(C))$ contain no newforms corresponding to elliptic curves with full $2$-torsion then, for $p$ greater than a constant, there are no solutions $(a,b,c)$ to the equation $x^r + y^r = Cz^p$, with $a,b$ coprime.
\end{tm}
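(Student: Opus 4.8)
The plan is to assemble the preliminary step (0) and steps (I), (II), (III) into a single argument. Suppose $(a,b,c)$ is a primitive solution to $x^r+y^r=Cz^p$ with $a,b$ coprime. First I would apply step (0): the factorization $x^r+y^r=(x+y)\prod_{j}(x+\zeta_r^j y)$ and a descent over the ring of integers of $K^+$ (using that $C$ is divisible only by primes $q\not\equiv 0,1\pmod r$, so that the relevant ideals are $p$-th powers up to controlled obstruction) produces a primitive solution $(a,b,c_1)$ with $c_1\in\OO_{K^+}$ of an auxiliary equation over $K^+$. To this I attach one of the Frey curves $E=E_{(a,b)}$ of the first theorem, defined over $K^+$; by construction (sections \ref{curvasI}, \ref{curvasII}, \ref{4m}) it has full $2$-torsion and is semistable away from $2$, $\pi_r$ and the primes dividing $C$, and in particular semistable at all $v\mid 3$ since $r\geq 7$ forces $3\nmid r$ and $C$ is coprime to $3$ unless $3\mid C$, which is still a semistable prime for $E$.

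Next I would run step (II). Modularity of $E/K^+$ is immediate from the second theorem, since $K^+$ is totally real abelian and $3$ is unramified in $\Q(\zeta_r)$ hence in $K^+$, and $E$ is semistable at all $v\mid 3$. Irreducibility of $\bar\rho_{E,p}$ for $p>M_r$ is the third theorem. Combining modularity, irreducibility, and the Serre-type level-lowering results for Hilbert modular forms (applied to the abelian, hence totally real, field $K^+$, using the conductor computation for $E$), one obtains $\bar\rho_{E,p}\sim\bar\rho_{f,\Pp}$ for some Hilbert newform $f$ over $K^+$ of parallel weight $2$ whose level divides $2^4\pi_r^2\,\mbox{Rad}_2(C)$; here the power of $2$ records the worst-case behaviour at primes above $2$, $\pi_r^2$ the behaviour at $\pi_r$, and $\mbox{Rad}_2(C)$ the odd primes dividing $C$. (The hypothesis $C\neq 1,2$ guarantees $\mbox{Rad}_2(C)\neq(1)$, so one genuinely lands in the stated space rather than a smaller one; the condition $C\neq 1,2$ also rules out the degenerate congruence situations that would allow extra forms.) Since $S_2(2^4\pi_r^2\,\mbox{Rad}_2(C))$ is a fixed finite-dimensional space independent of $(a,b,c)$, level lowering leaves finitely many candidate newforms $f$.

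For step (III), I would distinguish the candidate $f$ according to whether or not it corresponds to an elliptic curve over $K^+$ with full $2$-torsion. If $f$ does \emph{not} have rational Hecke eigenvalues, or has rational eigenvalues but the associated isogeny class of elliptic curves does not have a member with full $2$-torsion, then one eliminates $f$ by a standard image-of-inertia / traces-of-Frobenius comparison: $E$ has full $2$-torsion and full multiplicative reduction at a suitable auxiliary prime, so $\bar\rho_{E,p}$ has a prescribed local behaviour at $2$ and at the auxiliary prime that the form $f$ cannot match, for $p$ larger than an effective bound depending only on the finitely many $f$. This is exactly the hypothesis of the theorem: if the relevant explicit subspaces of $S_2(2^4\pi_r^2\,\mbox{Rad}_2(C))$ contain no newform corresponding to an elliptic curve with full $2$-torsion, then \emph{every} candidate $f$ is eliminated this way, and we reach a contradiction once $p$ exceeds the maximum of $M_r$ and these finitely many effective bounds. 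Hence no such solution $(a,b,c)$ exists for $p$ greater than that constant.

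The main obstacle is the bookkeeping in step (0) and the conductor computation feeding level lowering: one must check that the descent over $\OO_{K^+}$ really does land on an equation to which a Frey curve of the stated conductor is attached, controlling the contribution at $\pi_r$ (where $E$ acquires potentially multiplicative or additive reduction) and at the primes above $2$, and one must verify that the condition ``$q\not\equiv 0,1\pmod r$ for all $q\mid C$'' is exactly what is needed to keep the auxiliary equation primitive and the level as small as $2^4\pi_r^2\,\mbox{Rad}_2(C)$. The irreducibility bound $M_r$ and the modularity input are, by contrast, black-boxed from the earlier theorems; the genuinely delicate point beyond $M_r$ is ensuring the step (III) elimination bound is effective and depends only on $r$ and $C$, not on the solution, which follows because the candidate set of newforms does.
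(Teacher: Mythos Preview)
Your overall architecture matches the paper's, but there is a genuine gap at the point where you invoke the hypothesis $C\neq 1,2$, and it stems from not committing to the \emph{correct} Frey family.

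The theorem is proved using the curves $E_{(a,b)}^{(k_1,k_2)}$ of Section~\ref{curvasII}, not an arbitrary family ``from Sections~\ref{curvasI}, \ref{curvasII}, \ref{4m}''. This choice is essential for two reasons. First, only these curves have $\mathrm{Rad}_2(C)$ in their Artin conductor (Proposition~\ref{artinc2}); the curves $E_{(a,b)}^{(k_1,k_2,k_3)}$ of Section~\ref{curvasI} lower to level $2^s\pi_r^t$ with no $C$-contribution at all. Second, and crucially, the discriminant of $E_{(a,b)}^{(k_1,k_2)}$ has the factor $(a+b)^4$, so the trivial solution $(1,-1,0)$ yields a \emph{singular} curve, not an elliptic curve. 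With the curves of Section~\ref{curvasI}, $E_{(1,-1)}$ is a perfectly good elliptic curve with full $2$-torsion sitting in the target space, and your Step~(III) would be blocked (cf.\ Section~\ref{limitations}).

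Your reading of $C\neq 1,2$ is also off. It has nothing to do with ``$\mathrm{Rad}_2(C)\neq(1)$'' (take $C=4$). The point is purely about trivial solutions: $1^r+0^r=1$ and $1^r+1^r=2$, so $C\notin\{1,2\}$ ensures $(1,0,1)$ and $(1,1,1)$ are \emph{not} solutions of $x^r+y^r=Cz^p$; the remaining trivial solution $(1,-1,0)$ is harmless because, for the Section~\ref{curvasII} curves, $E_{(1,-1)}$ is singular. Hence no newform in $S_2(N(\bar\rho_{E,p}))$ arises from a trivial solution.

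One further step you do not mention: the prime factors of $C$ divide the lowered level exactly once, so by Eichler--Shimura every \emph{rational} newform in the space genuinely corresponds to an elliptic curve. This is what makes the hypothesis ``no newform corresponds to an elliptic curve with full $2$-torsion'' meaningful, and then the standard argument (non-rational $f$, or rational $f$ attached to a curve without full $2$-torsion, each eliminated for $p>M_f$) finishes Step~(III) exactly as you outline.
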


We now introduce two natural definitions.

\begin{defin} Let $(a,b,c)$ be a solution to $x^r + y^r = Cz^p$. We will say that $(a,b,c)$ is a \textit{primitive} solution if $(a,b)=1$ and a \textit{trivial} solution if $|abc|\leq 1$.
\end{defin}

\begin{rem} Note that if $C$ is $r$-power free then $(a,b,c)$ being primitive in the sense above is equivalent to the condition gcd$(a,b,c)=1$ in Conjecture \ref{conjecture}. This latter condition is what usually is called primitive in the literature.
\end{rem}

Moreover, following the terminology introduced by Sophie Germain in her work on Fermat's Last Theorem, we will divide solutions into two cases.
\begin{defin} We will also say that $(a,b,c)$ is a first case solution if $r$ do not divide $c$, and a second case solution otherwise.
\label{sophie}
\end{defin}

Finally, we apply our methods to the study of equations of signature $(7,7,p)$. Among other things, we will prove the following theorems.

\begin{tm}
Let $d=2^{s_0}3^{s_1}5^{s_2}$ and $\gamma$ be an integer only divisible by primes $l \not\equiv 1,0 \mbox{ (mod } 7)$. Suppose $p \geq 17$. Then, the equation $x^7 + y^7 = d \gamma z^p$ has no non-trivial first case solutions if
one of the following three cases is satisfied:
\begin{enumerate}
 \item $s_0 \geq 2$, $s_1 \geq 0$ and $s_2 \geq0$;
 \item $s_0 = 1$, $s_1 \geq 1$ and $s_2 \geq 0$;
 \item $s_0 = 0$, $s_1 \geq 0$ and $s_2 \geq 1$.
\end{enumerate}
\label{gg7}
\end{tm}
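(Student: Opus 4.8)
\emph{Sketch of the strategy.}

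The plan is to specialise the machinery of steps (0)--(II) to $r=7$ and then to carry out step (III) by hand. For $r=7$ the field $K^+=\Q(\zeta_7)^+$ is the cubic field of discriminant $49$, in which $7$ is totally ramified, $\pi_7^3=(7)$; since $7\equiv 1\pmod 6$ the subfield $K_0$ is $\Q$ itself, so besides the Frey curves over $K^+$ we also dispose of Frey curves over $\Q$, and the multi-Frey technique will play the two against each other. There is no quadratic subfield $k$, as $7\not\equiv 1\pmod 4$.

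Let $(a,b,c)$ be a non-trivial first-case primitive solution of $x^7+y^7=d\gamma z^p$ with $p\geq 17$. Since $7\nmid d$ and, by hypothesis, $7\nmid\gamma$, while every prime dividing $d\gamma$ is $\not\equiv 1\pmod 7$, the number-theoretic conditions required by the recipes of steps (0) and (I) are met with $C=d\gamma$, and we obtain the Frey curves $E/K^+$ and $E'/\Q$ attached to $(a,b)$. All of our Frey curves are semistable at the primes above $3$, so by the modularity theorem for elliptic curves over totally real abelian fields unramified at $3$ both $E$ and $E'$ are modular; and since $p\geq 17$ exceeds the irreducibility bound $M_7$ (the excluded primes $p=11,13$ not entering the statement), the representations $\bar{\rho}_{E,p}$ and $\bar{\rho}_{E',p}$ are absolutely irreducible. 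Level lowering for Hilbert newforms over $K^+$ and for classical newforms over $\Q$ then yields isomorphisms $\bar{\rho}_{E,p}\sim\bar{\rho}_{f,\mathfrak{P}}$ and $\bar{\rho}_{E',p}\sim\bar{\rho}_{g,\mathfrak{P}'}$.

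Next I would read off the levels. In the first case $7\nmid c$, hence $7\nmid a+b$, so the behaviour of the Frey curves at the primes above $7$ is as controlled as possible, and $f$ has level dividing $2^4\pi_7^{2}\,\mbox{Rad}_2(d\gamma)$ over $K^+$ while $g$ has level dividing $2^{\alpha}7^{\beta}\,\mbox{Rad}_2(d\gamma)$ over $\Q$ with $\beta\leq 2$, the exact power of $2$ being governed by $s_0$ together with $v_2(a)$ and $v_2(b)$. The three enumerated cases on $(s_0,s_1,s_2)$ are precisely those for which the remaining elimination needs no computation: the Frey curves carry a prescribed rational $2$-torsion structure, so the usual comparison of $a_{\mathfrak q}(E)$ with $a_{\mathfrak q}(f)$ over the finitely many residues of $(a,b)$ modulo a well-chosen auxiliary prime $\mathfrak q$ forces any newform that survives for arbitrarily large $p$ to have the same $2$-division field as $E$; under the stated conditions on $s_0,s_1,s_2$ this $2$-torsion is full, so one lands in the situation of the criterion already proved, namely that the relevant explicit subspaces of $S_2$ contain no newforms attached to elliptic curves with full $2$-torsion. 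The multi-Frey argument --- confronting the predicted $f$ over $K^+$ with the predicted $g$ over $\Q$ --- removes the apparent dependence on $\gamma$: a putative curve matching both representations would need its odd conductor to divide $\mbox{Rad}_2(d\gamma)$ simultaneously from the $K^+$-side and, after base change, from the $\Q$-side, which the conditions on $d$ rule out, and this completes the contradiction.

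The main obstacle is the bookkeeping at the prime $2$: one must show that in each of the three cases the $2$-adic reduction type of both Frey curves lands exactly in the regime where the predicted Hilbert newform spaces contain only full-$2$-torsion forms, so that the criterion applies and no Hecke-algebra computation is needed --- and one must check that it is precisely the hypotheses $s_0\geq 2$; or $s_0=1,\ s_1\geq 1$; or $s_0=0,\ s_2\geq 1$ (and essentially no others) for which this holds. This rests on a careful analysis of $v_2(a+b)$, $v_2(ab)$ and the reduction types of $E$ and $E'$ at $2$ (and at $3$ and $5$). The remaining ingredients --- modularity, absolute irreducibility for $p\geq 17$, and the Frobenius comparisons used in the elimination --- are routine given the results of the earlier sections.
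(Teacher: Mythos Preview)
Your sketch misidentifies the core mechanism and, as written, would not go through.

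\textbf{The level does not depend on $\gamma$.} For $r=7$ the single $7$-suitable triple is $(1,2,3)$, and the curve $E:=E_{(a,b)}^{(1,2,3)}$ descends to $K_0=\Q$. The crucial point, which you miss, is that its Artin conductor after level lowering is $2^s 7^2$ with $s\in\{2,3,4\}$ --- \emph{no} factor $\mbox{Rad}_2(d\gamma)$ appears (cf.\ Proposition~\ref{artinc2I}). Hence only finitely many classical newforms are in play, regardless of $\gamma$, and they can be listed and eliminated by explicit trace computations at $q=3,5,23$. Your proposed levels $2^\alpha 7^\beta\,\mbox{Rad}_2(d\gamma)$ come from the \emph{other} family $E^{(k_1,k_2)}$; mixing the two families here leads you to invoke the full-$2$-torsion criterion of Section~\ref{ll2}, which applies to a different curve and whose hypothesis you could not verify for infinitely many $\gamma$ anyway. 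No multi-Frey argument is needed for this theorem; only the single curve over $\Q$ is used.

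\textbf{The role of $s_0,s_1,s_2$ is not about $2$-torsion.} The curves $E^{(1,2,3)}$ always have full $2$-torsion by construction, so the claim that ``under the stated conditions this $2$-torsion is full'' is vacuous. What actually happens: after comparing $a_3,a_{23}$ one is left with the three newforms attached to $E_{(1,0)},E_{(1,1)},E_{(1,-1)}$, of $2$-conductors $2^2,2^4,2^3$ respectively. The parameter $s_0$ controls $\upsilon_2(a+b)$ and hence restricts $s$: $s_0\geq 2$ forces $s=3$, leaving only $E_{(1,-1)}$; $s_0=1$ leaves $E_{(1,-1)},E_{(1,1)}$. The parameters $s_1,s_2$ act through the constraints $3\mid a+b\Rightarrow a_3(E)=-1$ and $5\mid a+b\Rightarrow a_5(E)=-1$, which kill $E_{(1,1)}$ (resp.\ $E_{(1,0)}$ and $E_{(1,1)}$). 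Finally --- and this is the step that uses the first-case hypothesis --- $E_{(1,-1)}$ is eliminated by an inertia argument at $7$: one computes $|\Phi_7(E_{(a,b)})|=6$ when $7\nmid a+b$ but $|\Phi_7(E_{(1,-1)})|=3$, so the mod~$p$ representations cannot match for $p>7$. The three enumerated cases are exactly the combinations for which this elimination exhausts the trivial-solution forms.

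One further point: the irreducibility for $p\geq 17$ does not come from the generic bound $M_7$ (which is of order $3^{36}$); it is obtained over $\Q$ via Mazur-type results, using that a non-trivial solution forces a prime of multiplicative reduction on $E$.
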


\begin{tm} There is some constant $M_3$ such that if $p > (1 + 3^{18})^2$ and $p \nmid M_3$ then the equation $x^7 + y^7 = 3z^p$ has no non-trivial primitive solutions. 
\label{best}
\end{tm}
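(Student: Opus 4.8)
The plan is to apply the multi-Frey machinery of steps (0), (I), (II) set up earlier in the paper, specialized to $r=7$ and $C=3$, and then to discharge step (III) by an explicit—but here essentially finite and already-known—Hilbert newform computation over the real subfields of $\Q(\zeta_7)$. Concretely, to a putative non-trivial primitive solution $(a,b,c)$ of $x^7+y^7=3z^p$ I would first attach the Frey curves over $K^+$ (the real cubic subfield of $\Q(\zeta_7)$) and over $K_0=\Q$, using parts (1) and (2) of the first theorem (note $7\equiv 1\bmod 6$). By the modularity theorem for elliptic curves over totally real abelian fields unramified at $3$ — applicable since our Frey curves are semistable at all $v\mid 3$ — each Frey curve is modular, and by the irreducibility theorem there is a computable $M_7$ so that for $p>M_7$ the representations $\bar\rho_{E,p}$ are absolutely irreducible; level lowering then gives $\bar\rho_{E,p}\sim\bar\rho_{f,\Pp}$ for a Hilbert newform $f$ of level dividing $2^4\pi_7^2\mathrm{Rad}_2(3)=2^4\pi_7^2\cdot 3$ (the conductor exponent at $3$ being controlled because $3\not\equiv 0,1\bmod 7$ and the curve is semistable there). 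The bound $p>(1+3^{18})^2$ in the statement is exactly the shape one gets from a Kraus-type comparison of traces of Frobenius $a_{\q}(E)$ against $a_{\q}(f)$ at a small auxiliary prime $\q$ of $K^+$ above $3$ with residue degree giving norm $3^{18}$, once the relevant $f$ have been enumerated.

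Next I would carry out the elimination (step III). The Frey curve over $\Q=K_0$ attached to $x^7+y^7=3z^p$ is the classical $(7,7,p)$ curve with conductor supported at $2,3,7$, so the predicted $f$ lie in the finite space $S_2(2^a3^b7^c)$ for small explicit $a,b,c$; one discards the newforms with CM or with a rational $7$-isogeny by congruence arguments, and the remaining rational newforms correspond to elliptic curves over $\Q$ of the relevant conductor, which one tests individually: for each such $E'$ one finds an auxiliary prime $\ell$ (e.g. $\ell=3$, or a small prime of good reduction) at which $a_\ell(E')$ is incompatible, modulo $p$, with every admissible value of $a_\ell(E_{(a,b)})$ coming from the possible reduction types of the Frey curve. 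The finitely many stubborn primes $p$ for which no such $\ell$ works are exactly collected into the constant $M_3$ of the statement. When the $\Q$-Frey curve alone does not suffice, the multi-Frey aspect is used: one intersects the constraints coming from the $\Q$-curve with those coming from the $K^+$-Frey curve (whose predicted Hilbert newforms over the cubic field are again a finite, computer-enumerable list), and the simultaneous congruences cut the candidate set down to nothing for $p>(1+3^{18})^2$ outside $M_3$.

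The main obstacle is step (III): everything before it is supplied by the general theorems of the paper, but the elimination requires that the explicit Hilbert newform spaces over $K^+$ (and the classical spaces over $\Q$) genuinely contain no newform whose mod-$p$ representation can match a Frey curve — in the language of the preceding theorem, that the relevant subspaces of $S_2(2^4\pi_7^2\cdot 3)$ contain no newform attached to an elliptic curve with full $2$-torsion that survives all the local congruence tests. Verifying this is a finite but delicate computation (computing Hecke eigenvalues of Hilbert modular forms over a cubic field, checking rationality, isogeny class, and the image of inertia at $2$, $3$, $\pi_7$), and the argument must also handle the first- versus second-case dichotomy of Definition \ref{sophie} separately, since the conductor of the Frey curve at $\pi_7$ differs in the two cases. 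Finally, one must double-check that the constant $(1+3^{18})^2$ is compatible with $M_7$ — i.e. that $(1+3^{18})^2>M_7$ — so that absolute irreducibility, and hence level lowering, is actually in force for all $p$ in the claimed range; if not, the finitely many exceptional $p\le M_7$ are absorbed into $M_3$ as well.
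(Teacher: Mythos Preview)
Your overall architecture matches the paper's: attach both the $\Q$-Frey curve $E=E_{(a,b)}^{(1,2,3)}$ and the $K^+$-Frey curve $E'=E_{(a,b)}^{(k_1,k_2)}$, invoke modularity and irreducibility, level-lower, and then eliminate newforms case by case, absorbing the finitely many bad primes into $M_3$. The paper's proof does exactly this, organizing the elimination by a case split on the $2$-adic and $7$-adic valuation of $a+b$ (and of $a$), reusing the proofs for $C=d\gamma$ and $C=6$ in most cases, and reducing the genuinely new work to the Hilbert spaces $S_2(2^3\cdot 3\cdot\pi_7)$ and $S_2(2^3\cdot 3\cdot\pi_7^2)$ over $K^+$; the rational newforms in the latter (dimension $10753$) were computed by Voight.

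Two concrete corrections, though:

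\smallskip
\textbf{(i)} The bound $(1+3^{18})^2$ is \emph{not} a trace-of-Frobenius bound at a prime of norm $3^{18}$; it is the irreducibility constant $I_r$ coming from Theorem~\ref{irr} (via the results of \cite{FS}) applied to $E'/K^+$. It is the threshold above which $\bar\rho_{E',p}$ is guaranteed absolutely irreducible, so that level lowering is valid. Your final paragraph worries about compatibility of $(1+3^{18})^2$ with $M_7$; in fact they are the same thing.

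\smallskip
\textbf{(ii)} The $\Q$-Frey curve $E^{(1,2,3)}$ has Artin conductor $2^s 7^2$ with \emph{no} factor of $3$: its discriminant involves only $f_{k_1}f_{k_2}f_{k_3}$, not $(a+b)$, so the prime $3\mid C$ never enters. The factor of $3$ appears only in the conductor of the $K^+$-curve $E^{(k_1,k_2)}$, whose construction uses $(a+b)^2$. This is precisely why the $\Q$-curve alone fails in certain cases (e.g.\ $2\nmid a+b$, $4\mid a$): the newform attached to the trivial solution $E_{(1,0)}$ sits in $S_2(2^2 7^2)$ and has $a_3=-1$, which coincides with $a_3(E_{(a,b)})$ whenever $3\mid a+b$, so no local test at $3$ can separate them. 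The paper makes this obstruction explicit and uses $E'$ (not a ``simultaneous congruence'' refinement) to handle exactly those cases.
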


We want to emphasize that for the proof of the second theorem we need to use a multi-Frey technique with two families of Frey curves. One family is defined over $\Q$ and the other is defined over the totally real cubic subfield of $\Q(\zeta_7)$. The Frey curves over the cubic field are not $\Q$-curves, hence require the full strength of the methods developed here. Moreover, some of the computations involved in the proof are in the limit of what is possible. Indeed, only with a careful application of the multi-Frey technique we are able to reduce the amount of computations to the range of what is computable. All the hard computations in the proof were done by John Voight. 

\subsection*{Organization} In section~\ref{relacionar} we relate the equation $x^r + y^r = Cz^p$ to several other equations (i.e. step (0)); in each of sections~ \ref{curvasI}, \ref{curvasII} and \ref{4m} we construct multiple Frey curves of a certain type (step (I)) and prove, in particular, that they have the required ramification behavior. In each of this sections we also discuss the difficulties and advantages of using the corresponding type of curve in step (III); in section \ref{bgmod} we prove modularity of the Frey curves; in \ref{sec:irr} we prove irreducibility of the mod $p$ representations attached to the Frey curves, completing step (II). Finally, in sections~\ref{sete}, \ref{seteII}, \ref{seteIII} and \ref{seteIV} we apply the methods to study certain equations of signature $(7,7,p)$. 

\subsection*{Acknowledgments} My greatest thanks go to Luis Dieulefait for our numerous discussions. I also thank Samir Siksek and Panagiotis Tsaknias for their valuable suggestions. I am grateful to John Voight for his computions of Hilbert modular forms that were crucial to this work. I am indebted to Nicolas Billerey, Fred Diamond, Gabor Wiese, Sara Arias-de-Reyna and Xavier Guitart for helpful suggestions and comments. I also thank Gabor Wiese and Fred Diamond for having me as visitor at University of Luxembourg and King's College London, respectively. Great progress was made on this work during these visits. 

\section{Multiple Diophantine equations related to $x^r + y^r = Cz^p$.}
\label{relacionar}

Let $r \geq 7$ be a fixed prime and $C$ be an integer divisible only by primes $q \not\equiv 1$ (mod $r$). In this section we will relate primitive solutions of the equation 
\begin{equation}
\label{general}
x^r + y^r = Cz^p  
\end{equation}
with solutions of other equations defined over subfields of $\Q(\zeta_r)$. Write
$$\phi_r (x,y) = \frac{x^{r} + y^{r}}{x+y} = \sum_{i=0}^{r-1}{(-1)^{i}x^{r-1-i}y^i}.$$
The factorization 
$$x^{r} + y^{r} = (x+y)\phi_r (x,y)$$ 
plays a key role in this work, so we start by proving basic properties of $\phi_r$. 

\subsection{The factors of $\phi_r(x,y)$} Let $\zeta:=\zeta_r$ denote a primitive $r$-th root of unity. Over the cyclotomic field $\Q(\zeta)$ we have the factorization
\begin{equation}
\label{decomp}
\phi_r (x,y) = \prod_{i=1}^{r-1}{(x + \zeta^i y)}.
\end{equation}
\begin{pp} Let $\mathfrak{P}_r$ be the prime in $\Q(\zeta)$ above the rational prime $r$ and suppose that $(a,b)=1$. Then, any two factors $a + \zeta^i b$ and $a + \zeta^j b$ with $1 \leq j < i \leq r-1$ are coprime outside $\mathfrak{P}_r$. Furthermore, if $r \mid a+b$ then $\upsilon_{\mathfrak{P}_r}(a + \zeta^i b) = 1$ for all $1 \leq i \leq r-1$.
\label{factores}
\end{pp}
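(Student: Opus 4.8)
The plan is to work in the ring of integers $\OO = \Z[\zeta]$ of $\Q(\zeta)$ and exploit the two standard facts about cyclotomic integers: that $(1-\zeta)$ generates the unique prime $\mathfrak{P}_r$ above $r$, with $(r) = (1-\zeta)^{r-1}$, and that $1 - \zeta^k$ is a unit multiple of $1 - \zeta$ for every $k$ not divisible by $r$ (since $(1-\zeta^k)/(1-\zeta) = 1 + \zeta + \dots + \zeta^{k-1}$ is a unit, being a product of conjugates that multiply to $\pm r/r$... more precisely $\prod_{k}(1-\zeta^k) = r$ and each factor is an associate of $1-\zeta$). So first I would record these preliminaries.

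For the coprimality statement, fix $1 \le j < i \le r-1$ and suppose a prime $\q \neq \mathfrak{P}_r$ of $\OO$ divides both $a + \zeta^i b$ and $a + \zeta^j b$. Subtracting, $\q \mid (\zeta^i - \zeta^j) b = \zeta^j(\zeta^{i-j} - 1) b$. Since $\zeta^j$ is a unit and $\zeta^{i-j} - 1$ is an associate of $1 - \zeta$, which generates $\mathfrak{P}_r \neq \q$, we get $\q \mid b$. Similarly, multiplying the two factors appropriately — e.g. $\zeta^j(a+\zeta^i b) - \zeta^i(a + \zeta^j b) = (\zeta^j - \zeta^i) a$ — gives $\q \mid a$. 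Then $\q \mid (a,b) = (1)$, a contradiction. Hence the only possible common prime is $\mathfrak{P}_r$.

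For the ramification statement, assume $r \mid a + b$. Write $a + \zeta^i b = (a + b) + (\zeta^i - 1) b$. Now $\upsilon_{\mathfrak{P}_r}(a+b) \geq \upsilon_{\mathfrak{P}_r}(r) = r - 1 \geq 6 > 1$, while $\upsilon_{\mathfrak{P}_r}((\zeta^i - 1)b) = \upsilon_{\mathfrak{P}_r}(\zeta^i - 1) + \upsilon_{\mathfrak{P}_r}(b) = 1 + 0 = 1$, using that $\zeta^i - 1$ is an associate of $1-\zeta$ and that $r \nmid b$ (indeed $r \mid a+b$ and $(a,b)=1$ force $r \nmid b$). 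Since the two summands have distinct $\mathfrak{P}_r$-valuations, the valuation of the sum equals the minimum, namely $1$. This gives $\upsilon_{\mathfrak{P}_r}(a + \zeta^i b) = 1$ for all $1 \le i \le r-1$.

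I do not anticipate a serious obstacle here; the only point requiring a little care is the clean justification that $\zeta^k - 1$ is an associate of $\zeta - 1$ for $r \nmid k$ (so that all the divisibilities by $\mathfrak{P}_r$ are controlled with exponent exactly $1$), and making sure the subtraction/elimination trick isolates $a$ and $b$ individually rather than just a combination. Everything else is bookkeeping with valuations.
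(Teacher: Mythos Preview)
Your argument is correct and follows essentially the same route as the paper: subtract the two factors to isolate $(1-\zeta^{i-j})b$, use that $1-\zeta^{i-j}$ is an associate of $1-\zeta$ to force the common prime to be $\mathfrak{P}_r$, and for the valuation write $a+\zeta^i b = (a+b) + (\zeta^i-1)b$ and take the minimum. The only cosmetic difference is that the paper gets $\q \mid a$ directly from $\q \mid b$ and $\q \mid a+\zeta^i b$, rather than via your second linear combination.
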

\begin{proof} Suppose that $(a,b)=1$. Let $\mathfrak{P}$ be a prime in $\Q(\zeta)$ above $p \in \Q$ and a common prime factor  of $a + \zeta^i b$ and $a + \zeta^j b$, with $i >j$. Observe that $(a + \zeta^i b) - (a + \zeta^j b) = b\zeta^j(1-\zeta^{i-j}) \in \mathfrak{P}$. Since $\mathfrak{P}$ cannot divide $b$ because in this case it would also divide $a$ we conclude that $\zeta^i(1-\zeta^{i-j}) \in \mathfrak{P}$ but $\zeta^{i}$ is a unit so $1-\zeta^{i-j} \in \mathfrak{P}$, that is $\mathfrak{P} = \mathfrak{P}_{r}$. Now for the last statement in the proposition, suppose that $r \mid a+b$. Then,
$$ a + \zeta^i b = a + b - b + \zeta^i b = (a+b) + (\zeta^i - 1)b,$$ 
and since $\upsilon_{\mathfrak{P}_r}(\zeta^i - 1) = 1$ we have $\upsilon_{\mathfrak{P}_r}(a + \zeta^i b) = \min\{(r-1)\upsilon_r(a+b),1\} = 1$. \end{proof}

\begin{cor} If $(a,b)=1$, then $a+b$ and $\phi_r (a,b)$ are coprime outside $r$. Furthermore, if $r \mid a+b$ then $\upsilon_{r}(\phi_r (a,b))=1$. 
\label{trezz}
\end{cor}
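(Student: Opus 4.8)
The plan is to derive both assertions from Proposition~\ref{factores} via the factorization $\phi_r(a,b) = \prod_{i=1}^{r-1}(a + \zeta^i b)$ in $\Z[\zeta]$, which is the specialization of \eqref{decomp}.

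For the first assertion, I would suppose that a rational prime $p \neq r$ divides both $a+b$ and $\phi_r(a,b)$, and choose a prime $\mathfrak{P}$ of $\Q(\zeta)$ above $p$. Since $\mathfrak{P}$ divides $\prod_{i=1}^{r-1}(a+\zeta^i b)$, it divides $a + \zeta^j b$ for some $1 \leq j \leq r-1$, and it divides $a+b$ by hypothesis. Repeating the displacement step from the proof of Proposition~\ref{factores}: $(a + \zeta^j b) - (a + b) = b(\zeta^j - 1) \in \mathfrak{P}$, and $\mathfrak{P} \nmid b$ (otherwise $\mathfrak{P} \mid a$, contradicting $(a,b)=1$), so $\zeta^j - 1 \in \mathfrak{P}$, forcing $\mathfrak{P} = \mathfrak{P}_r$ and hence $p = r$, a contradiction. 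Equivalently, one can bypass $\Q(\zeta)$ using the polynomial identity $\phi_r(x,-x) = r x^{r-1}$, which gives $\phi_r(a,b) \equiv r a^{r-1} \pmod{a+b}$ in $\Z$; then a common prime factor $p \neq r$ of $a+b$ and $\phi_r(a,b)$ divides $a$, hence also $b$ --- impossible.

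For the second assertion, assume $r \mid a+b$. By the last part of Proposition~\ref{factores}, $\upsilon_{\mathfrak{P}_r}(a + \zeta^i b) = 1$ for each $1 \leq i \leq r-1$, so additivity of $\upsilon_{\mathfrak{P}_r}$ applied to the factorization gives $\upsilon_{\mathfrak{P}_r}(\phi_r(a,b)) = r-1$. Since $r$ is totally ramified in $\Q(\zeta)$ with ramification index $e(\mathfrak{P}_r \mid r) = r-1$, we have $\upsilon_{\mathfrak{P}_r}(x) = (r-1)\,\upsilon_r(x)$ for every $x \in \Q$; taking $x = \phi_r(a,b)$ yields $\upsilon_r(\phi_r(a,b)) = 1$, as claimed.

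I do not expect a genuine obstacle here, since the arithmetic content is already packaged in Proposition~\ref{factores}. The only point requiring a little care is that the coprimality in that proposition is quoted for indices in $\{1,\dots,r-1\}$, so for the first assertion one must either observe that the same argument covers the extra factor $a+b$ (the ``index $0$'' factor) or fall back on the elementary congruence $\phi_r(a,b) \equiv r a^{r-1} \pmod{a+b}$.
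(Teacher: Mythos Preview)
Your proof is correct and follows essentially the same route as the paper: lift a common prime divisor to $\Q(\zeta)$, use the difference $(a+\zeta^j b)-(a+b)=b(\zeta^j-1)$ to force $\mathfrak{P}=\mathfrak{P}_r$, and for the second part sum the valuations $\upsilon_{\mathfrak{P}_r}(a+\zeta^i b)=1$ and divide by the ramification index $r-1$. The alternative elementary argument via $\phi_r(a,b)\equiv r a^{r-1}\pmod{a+b}$ is a nice addition not present in the paper.
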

\begin{proof} Let $p$ be a prime dividing $a+b$ and $\phi_r (a,b)$ and denote by $\mathfrak{P}$ a prime in $\Q(\zeta)$ above $p$. Then $\mathfrak{P}$ must divide at least one of the factors $a + \zeta^i b$. Since $a,b$ are rational integers $\mathfrak{P}$ cannot divide $b$ then it follows from $$a+b = a + \zeta^i b - \zeta^i b + b = (a + \zeta^i b) + (1-\zeta^i)b$$ that $\mathfrak{P} = \mathfrak{P}_r$. Moreover, if $r \mid a+b$ it follows from the proposition that $\upsilon_{\mathfrak{P}_r}(a + \zeta^i b) = 1$ for all $i$ then $\upsilon_{\mathfrak{P}_r}(\phi_r(a,b)) = r-1$ thus $\upsilon_{r}(\phi_r (a,b)) = 1$.\end{proof}

\begin{pp} Let $(a,b)=1$ and $l \not\equiv 1$ (mod $r$) be a prime dividing $a^{r} + b^{r}$. Then $l \mid a+b$.
\label{trezz2}
\end{pp}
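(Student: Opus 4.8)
The plan is to exploit the factorization $a^r+b^r=(a+b)\phi_r(a,b)$ together with the decomposition \eqref{decomp} of $\phi_r$ over $\Q(\zeta)$, and to argue by contradiction: assuming $l\nmid a+b$, I will deduce $l\equiv 1\pmod r$.

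First I would dispose of the case $l=r$. By Fermat's little theorem $a^r+b^r\equiv a+b\pmod r$, so $r\mid a^r+b^r$ already forces $r\mid a+b$, and there is nothing to prove. Hence assume $l\neq r$. Since $l$ divides $(a+b)\phi_r(a,b)$ but not $a+b$, it divides $\phi_r(a,b)$, so by \eqref{decomp} some prime $\mathfrak{P}$ of $\Q(\zeta)$ above $l$ divides one of the factors $a+\zeta^i b$ with $1\le i\le r-1$. Exactly as in the proof of Proposition~\ref{factores}, $\mathfrak{P}\nmid b$ (otherwise it would divide $a$ as well, contradicting $(a,b)=1$); reducing modulo $\mathfrak{P}$ then gives $\bar\zeta^{\,i}=-\bar a\,\bar b^{-1}$ in the residue field $\OO_{\Q(\zeta)}/\mathfrak{P}$.

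The heart of the argument is then an order computation. On the one hand $\bar\zeta$ has multiplicative order exactly $r$: its order divides $r$ because $\zeta^r=1$, and it is not $1$ because $\bar\zeta=1$ would force $\mathfrak{P}\mid(\zeta-1)$, and the ideal $(\zeta-1)$ is the unique prime $\mathfrak{P}_r$ above $r$, so $\mathfrak{P}=\mathfrak{P}_r$ and $l=r$, contrary to assumption; since $r$ is prime and $1\le i\le r-1$, the power $\bar\zeta^{\,i}$ also has order $r$. On the other hand $-\bar a\,\bar b^{-1}$ lies in the prime subfield $\F_l\subseteq\OO_{\Q(\zeta)}/\mathfrak{P}$, because $a,b\in\Z$. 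Hence $\F_l^{\times}$ contains an element of order $r$, so $r\mid l-1$, contradicting $l\not\equiv 1\pmod r$.

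The computation is routine, so I do not expect a serious obstacle; the one point that must be handled with care is the last step — observing that the primitive $r$-th root of unity exhibited here is represented by $-a/b\in\F_l$, a member of the \emph{prime} field rather than of a possibly larger residue field, since that is exactly what upgrades the cheap divisibility $r\mid\#(\OO_{\Q(\zeta)}/\mathfrak{P})^{\times}$ to the required $r\mid l-1$. The only genuine case distinction, $l=r$, is settled immediately by Fermat's little theorem.
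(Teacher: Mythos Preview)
Your argument is correct, but it takes a considerably more elaborate route than the paper's own proof. The paper never leaves $\F_l$: from $l\mid a^r+b^r$ and $(a,b)=1$ one gets $l\nmid ab$, so $-b$ is invertible modulo~$l$; since $r$ is odd, $a^r\equiv(-b)^r\pmod l$, hence $(-a/b)^r\equiv1$ in $\F_l^\times$. The order of $-a/b$ is then $1$ or $r$; order $1$ gives $l\mid a+b$, while order $r$ forces $r\mid l-1$.

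You reach exactly the same element $-a/b\in\F_l^\times$ and the same order dichotomy, but you manufacture it by passing to $\Q(\zeta)$, choosing a prime $\mathfrak{P}\mid l$ dividing some factor $a+\zeta^i b$, and identifying $-a/b$ with the reduction $\bar\zeta^{\,i}$. This is logically fine, and it has the virtue of reusing the setup of Proposition~\ref{factores}; but the extra apparatus (the case split $l=r$ via Fermat's little theorem, the argument that $\bar\zeta\neq1$ because $(\zeta-1)$ is the prime above $r$, the care about prime field versus full residue field) is all circumvented by the paper's three-line computation directly in $\F_l$. In short: your proof is correct and conceptually coherent with the surrounding material, but the statement itself is more elementary than the machinery you bring to it.
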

\begin{proof} Since $l$ divides $a^{r} + b^{r}$, $l \nmid ab$. Let $b_0$ be the inverse of $-b$ modulo $l$. We have $a^{r} \equiv (-b)^{r}$ (mod $l$), hence $(ab_0)^{r} \equiv 1$ (mod $l$). Thus the multiplicative order of $ab_0$ in $\mathbb{F}_l$ is 1 or $r$. From the congruence $ab_0 \equiv 1$ (mod $l$) it follows $a + b \equiv 0$ (mod $l$). If $l \nmid a+b$ then the order of $ab_0$ is $r$ and $l \equiv 1$ (mod $r$).\end{proof}

\subsection{Relating Diophantine equations} Recall that $C\not=0$ is an integer divisible only by primes satisfying $q \not\equiv 1\pmod{r}$. Assume further that $r \nmid C$. The following lemma relates solutions of the equation $x^r + y^r = Cz^p$ with solutions of two other equations over $\Q$.
\label{relating}
\begin{lemma}
Let $p$ be a prime and suppose there exists a primitive solution $(a,b,c')$ to $x^r+y^r=C z^p$. Then, there exists $c \in \mathbb{Z}$ such that $(a,b,c)$ is a solution to
\begin{equation}
\phi_r(a,b) = c^p \quad \mbox{or}
\label{casoA}  
\end{equation}
\begin{equation}
\phi_r(a,b) = rc^p
\label{casoB} 
\end{equation}
which satisfies $r\nmid a+b$ in case $(\ref{casoA})$ and $r\mid a+b$ in case $(\ref{casoB})$. Moreover,
\begin{itemize}
\item if $(a,b,c')$ is non-trivial then $|abc|>1$;
\item the prime divisors of $c$ are all congruent to $1\pmod{r}$. In particular, neither $2$, nor $r$ divide $c$.
\item there is $c_0 \in \Z$ coprime to $Cr$ such that $a+b = Cr^kc_0^p$ for some $k \geq 0$;
\end{itemize}
\label{novaeq2}
\end{lemma}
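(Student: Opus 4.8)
The plan is to run everything through the factorization $a^r+b^r=(a+b)\,\phi_r(a,b)$ and the arithmetic of its two factors established in Corollary~\ref{trezz} and Proposition~\ref{trezz2}. The hypothesis reads $(a+b)\,\phi_r(a,b)=C\,(c')^p$ with $(a,b)=1$ and $r\nmid C$. First I would record two preliminary facts. Since $a^r\equiv a$ and $b^r\equiv b\pmod r$, one has $r\mid a^r+b^r\iff r\mid a+b$; combining this with Corollary~\ref{trezz} produces the dichotomy: either $r\nmid a+b$, in which case also $r\nmid\phi_r(a,b)$ and $\gcd(a+b,\phi_r(a,b))=1$, or $r\mid a+b$, in which case $\upsilon_r(\phi_r(a,b))=1$. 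Secondly, $\gcd(C,\phi_r(a,b))=1$: a common prime $q$ divides $\phi_r(a,b)$, hence $q\ne r$ (as $r\nmid C$) and, since $q\not\equiv1\pmod r$, Proposition~\ref{trezz2} forces $q\mid a+b$, contradicting that $a+b$ and $\phi_r(a,b)$ are coprime away from $r$. I would also note that $\phi_r(a,b)>0$ whenever $a+b\ne 0$, since $t\mapsto t^r$ is odd and increasing, so $a^r+b^r$ and $a+b$ always have the same sign.

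Now split on whether $r\mid a+b$. If $r\nmid a+b$, then $\phi_r(a,b)$ is coprime to each of $a+b$, $C$ and $r$, so from $(a+b)\,\phi_r(a,b)=C\,(c')^p$ every prime $l\mid\phi_r(a,b)$ satisfies $\upsilon_l(\phi_r(a,b))=p\,\upsilon_l(c')$; as $\phi_r(a,b)>0$ this realizes it as a perfect $p$-th power $c^p$ with $c=\prod_{l\mid\phi_r(a,b)}l^{\upsilon_l(c')}\ge 1$, i.e. \eqref{casoA}. If $r\mid a+b$, then $\upsilon_r(\phi_r(a,b))=1$, so $\phi_r(a,b)/r$ is coprime to $a+b$, to $C$ and to $r$, and the same valuation count applied to $\phi_r(a,b)/r$ makes it a $p$-th power $c^p$, giving $\phi_r(a,b)=r\,c^p$, i.e. \eqref{casoB}. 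In both cases the prime divisors of $c$ divide $\phi_r(a,b)$ and are $\ne r$, hence are $\equiv 1\pmod r$ by Proposition~\ref{trezz2} (such a prime cannot divide $a+b$), so in particular $2\nmid c$ and $r\nmid c$. For non-triviality: if $|ab|\le 1$, then $(a,b)=1$ forces $(a,b)$ to be one of $(\pm 1,0),(0,\pm 1),(\pm 1,\pm 1)$, and checking each shows $a^r+b^r\in\{0,\pm 1,\pm 2\}$, whence $|c'|^p\le 2$ and so $|c'|\le 1$; thus $(a,b,c')$ is trivial. Hence a non-trivial $(a,b,c')$ has $|ab|\ge 2$, so $a+b\ne 0$ and $c\ne 0$, giving $|abc|\ge|ab|\ge 2>1$.

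It remains to describe $a+b$. For a prime $q\ne r$ dividing $a+b$, Corollary~\ref{trezz} gives $q\nmid\phi_r(a,b)$, hence $\upsilon_q(a+b)=\upsilon_q(C)+p\,\upsilon_q(c')$; separating the $C$-part from the $p$-th-power part, and using $r\nmid C$ together with the sign remark so that the overall sign matches that of $C$, one gets $a+b=C\,r^{k}c_0^{p}$ with $k=\upsilon_r(a+b)\ge 0$ and $c_0$ (up to sign) the prime-to-$r$ part of $c'$, which is coprime to $r$; coprimality of $c_0$ to $C$ is read off from the same prime-tracking (the primes of $C$ are precisely the ones that land in the displayed factor $C$). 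I do not foresee a conceptual obstacle here: the whole lemma is bookkeeping on $a^r+b^r=(a+b)\,\phi_r(a,b)$, and the only points needing genuine care are (i) pinning down the exponent of $r$ in $\phi_r(a,b)$ in the case $r\mid a+b$ — which is exactly the last clause of Corollary~\ref{trezz} — and (ii) keeping track of signs so that $\phi_r(a,b)$ and $a+b$ come out as honest $p$-th powers times the advertised constants rather than their negatives.
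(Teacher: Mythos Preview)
Your argument is correct and follows essentially the same route as the paper: exploit the factorization $a^r+b^r=(a+b)\phi_r(a,b)$ together with Corollary~\ref{trezz} and Proposition~\ref{trezz2} to separate the two factors prime-by-prime, then split on whether $r\mid a+b$. Your non-triviality argument (enumerating the finitely many coprime pairs with $|ab|\le 1$ and checking each yields $|c'|\le 1$) is a slightly cleaner variant of the paper's case analysis, and your explicit attention to the sign of $\phi_r(a,b)$ is a detail the paper leaves implicit; the handling of the coprimality of $c_0$ with $C$ is about as informal in both.
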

\begin{proof} Suppose there exists a non-trivial primitive solution $(a,b,c')$ to $x^r+y^r=C z^p$ and recall that $a^r + b^r = (a+b)\phi_r(a,b)$. Write $c' = p_1^{n_1}...p_s^{n_s} r^{m} q_1^{k_1}...q_l^{k_l}$, where $p_i, q_j$ are primes, $p_i \not\equiv 1,0 \pmod{r}$ for all $i$, $q_j \equiv 1 \pmod{r}$ for all $j$ and $m \in \Z_{\geq 0}$.\par 

If a prime $q \not\equiv 1 \pmod{r}$ divides $a^r + b^r$ then by Proposition \ref{trezz2} we have $q \mid a+b$; if $q \neq r$ divides $\phi_r(a,b)$ then $q \mid a^r + b^r$ and by Corollary \ref{trezz} $q \nmid a+b$. Hence by  Proposition \ref{trezz2} we have $q \equiv 1$ (mod $r$). Then, $Cc_0^p \mid a+b$, where $c_0 \mid c'$ is the product of all the $p_i^{n_i}$ with those $q_j^{k_j}$ not dividing $\phi(a,b)$.\par
Suppose $r \nmid a+b$, hence $m=0$ in the decomposition of $c'$. Since $a+b$ and $\phi_r(a,b)$ are coprime we have $a+b=Cc_0^p$ and $\phi_r(a,b) = c^p$, where $c' = c_0 c$. Suppose $r \mid a+b$, hence $m > 0$. Then, by Corollary \ref{trezz}, we have $a+b=Cc_0^pr^{mp-1}$, $\phi_r(a,b) = rc^p$ and $c' = c_0 c r^m$.\par 

We will now prove $|abc|>1$ if $(a,b,c')$ is non-trivial. Since $(a,b,c')$ is non-trivial we have $|ab| > 1$ or $|c'| > 1$. If $|ab| > 1$ then $(a,b,c)$ is non-trivial. If $|c'| > 1$ and $|c| > 1$ we are done; if $|c'| > 1$ but $|c| \leq 1$ we must have that $(c')^p \mid a+b$. This implies $a+b$ = 0 or $|a| > 1$ or $|b| > 1$.  If $a+b = 0$ then $(a,b) = \pm (1,-1)$ (because $(a,b)=1$) which implies $c' = 0$ contradicting $|c'| > 1$. Then $|a| > 1$ or $|b| > 1$ and we are done.\end{proof}

\begin{pp} Let $r \geq 5$ be a prime. The equations
$$\phi_r(x,y) = 1 \quad \quad \mbox{and} \quad \quad \phi_r(x,y) = r$$
admit only the solutions $\pm (1,0)$, $\pm (0,1)$, $\pm (1,1)$ and $\pm(1,-1)$, respectively.
\label{trivialsol}
\end{pp}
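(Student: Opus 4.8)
The plan is to reduce the two Thue-type equations to a handful of elementary size estimates. First I would exploit the symmetries $\phi_r(x,y)=\phi_r(y,x)$ and $\phi_r(-x,-y)=\phi_r(x,y)$ (the latter because $\deg \phi_r = r-1$ is even), so that it suffices to list the solutions with $x\geq |y|\geq 0$ and then close up under these symmetries. If $y=0$ then $\phi_r(x,0)=x^{r-1}$, so $\phi_r(x,0)=1$ forces $x=\pm 1$ while $\phi_r(x,0)=r$ is impossible, since $r$ is not a perfect $(r-1)$-th power for $r\geq 5$; the case $x=0$ is identical by symmetry. This produces exactly the solutions $\pm(1,0),\pm(0,1)$ of $\phi_r(x,y)=1$ and no solution of $\phi_r(x,y)=r$ on the coordinate axes.

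Next I would dispose of the line $x+y=0$. Writing $y=-x$ in the polynomial identity $x^r+y^r=(x+y)\phi_r(x,y)$ (for instance by differentiating it in $y$ and setting $y=-x$, or directly from the defining sum) gives $\phi_r(x,-x)=r\,x^{r-1}$. Hence $\phi_r(x,-x)=1$ has no solution, whereas $\phi_r(x,-x)=r$ forces $x^{r-1}=1$, i.e. $x=\pm 1$, contributing the solutions $\pm(1,-1)$ of the second equation.

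Finally, assume $xy\neq 0$ and $x+y\neq 0$; after the symmetry reduction we may take $x\geq |y|\geq 1$, so $x\geq 1$, and we use $x^r+y^r=N(x+y)$ with $N\in\{1,r\}$. If $y>0$ then $x\geq y\geq 1$ gives $x^r\leq N(x+y)\leq 2Nx$, hence $x^{r-1}\leq 2N$: for $N=1$ this forces $x=y=1$, and $\phi_r(1,1)=1$ is a genuine solution, while for $N=r$ it forces $x=1$ (here one uses $2^{r-1}>2r$, valid for $r\geq 5$), hence $y=1$, contradicting $\phi_r(1,1)=1\neq r$. If $y<0$, write $y=-z$ with $1\leq z<x$ (the case $z=x$ is the already-treated line $x+y=0$); then $x^r-z^r=N(x-z)$, and dividing by $x-z\geq 1$ yields $\sum_{i=0}^{r-1}x^{r-1-i}z^i=N$, whose left-hand side is at least $r\geq 5$ when $N=1$, and at least $2^{r-1}+(r-1)>r$ when $N=r$ (since $x\geq 2$) — impossible in both cases. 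Collecting all the cases yields exactly the asserted solution sets. There is no serious obstacle here; the one place where the hypothesis $r\geq 5$ (rather than just $r\geq 3$) is genuinely used is the inequality $2^{r-1}>2r$ in the subcase $N=r$, $y>0$, so the proposition amounts to a careful but routine exercise in bounding.
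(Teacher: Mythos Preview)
Your proof is correct and follows essentially the same route as the paper: a symmetry reduction followed by case-by-case size bounds. The only noticeable difference is in the case $x\geq y\geq 1$, where the paper pairs consecutive terms of the alternating sum for $\phi_r$ to force $\phi_r(a,b)>r$ when $a>b$, while you use the cleaner bound $x^{r}\leq x^{r}+y^{r}=N(x+y)\leq 2Nx$; your treatment of the $y<0$ case (via $\sum x^{r-1-i}z^{i}=N$) is identical in substance to the paper's.
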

\begin{proof} Recall that $\phi_r (x,y)= \sum_{i=0}^{r-1}{(-1)^{i}x^{r-1-i}y^i}$ and suppose $(a,b)$ is a solution of any of the equations in the statement. From the symmetry of $\phi_r$ and the fact that $r-1-i$ and $i$ have the same parity we can suppose that we are in one of three possible cases: (i) $a=0$ or  (ii) $a > 0$ and $b < 0$ or (iii) $a \geq b > 0$.\par
Case (i): suppose $a=0$. Replacing on the equations, we have $b^{r-1} = 1$ or $b^{r-1} = r$. The first possibility gives the solutions $\pm (0,1)$ of the equation $\phi_r(x,y) = 1$ and by symmetry $\pm (1,0)$. The second is impossible because $r$ is prime $>2$.\par
Case (ii): suppose $a > 0$ and $b < 0$. Then $b=-b_0$, with $b_0$ positive. We see that $\phi_r (a,b)= \sum_{i=0}^{r-1}{a^{r-1-i}b_0^i} \geq r$, and it is clear that the equality holds only if $a=1$ and $b=-1$, corresponding to the solution $(1,-1)$ of the equation $\phi_r(x,y) = r$. We also have $(-1,1)$ by symmetry.\par
Case (iii): suppose $a \geq b > 0$. Note that $\phi_r(a,b)$ can be written in the form
$$\phi_r (a,b)= \sum_{i}{(a^{r-1-i}b^i - a^{r-1-(i+1)}b^{(i+1)})} + b^{r-1},$$
where the sum is over the even numbers $i$ satisfying $0 \leq i \leq r-3$. Suppose $a > b$ and observe that  
$$a^{r-1-i}b^i - a^{r-1-(i+1)}b^{(i+1)} = a^{r-1-i-1}b^{i}(a-b) \geq a^{r-1-i-1}b^{i} \geq 2^{r-1-i-1}.$$
Moreover, $2^{r-1-i-1} \geq 2$ and the equality holds only for $i=r-3$. Thus,
$$\phi_r (a,b)= \sum_{i}{(a^{r-1-i}b^i - a^{r-1-(i+1)}b^{(i+1)})} + b^{r-1} > (r-1)/2 \times 2 + 1 \geq r.$$
This shows that there are no Case (iii) solutions to both equations if $a > b$. Suppose $a=b$, then $\phi_r (a,b) = a^{r-1}$. This can be a solution of $\phi_r(x,y) = 1$ only if $a=b=1$. It can never be a solution of $\phi_r(x,y) = r$ because $r$ is prime $>2$.\end{proof}

\begin{cor} Let $(a,b,c)$ be a solution to equation $(\ref{casoA})$ or $(\ref{casoB})$. Then $|abc| > 1$ if and only if $|c| > 1$.
\label{trivialsol2} 
\end{cor}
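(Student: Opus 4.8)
The plan is to derive Corollary~\ref{trivialsol2} from Proposition~\ref{trivialsol} together with a single positivity observation, bearing in mind that in equations (\ref{casoA}) and (\ref{casoB}) the integers $a,b$ are coprime, so in particular $(a,b)\neq(0,0)$.

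First I would record that $\phi_r(x,y)>0$ for every real pair $(x,y)\neq(0,0)$. Indeed, in the factorization (\ref{decomp}) the factors indexed by $i$ and by $r-i$ are complex conjugate (and $i\neq r-i$ because $r$ is odd), so grouping them gives
$$\phi_r(x,y)=\prod_{i=1}^{(r-1)/2}\left(x^2+2\cos(2\pi i/r)\,xy+y^2\right),$$
and each quadratic factor is positive definite since its discriminant $4\cos^2(2\pi i/r)-4$ is negative. Hence $\phi_r(a,b)$ is a positive integer; in particular the right-hand side of (\ref{casoA}), resp. of (\ref{casoB}), is a positive integer, which already forces $c\neq 0$.

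For the implication $|c|>1\Rightarrow|abc|>1$ it then suffices to check that $ab\neq 0$, since in that case $|abc|=|ab|\,|c|\geq|c|>1$. If $ab=0$, coprimality forces $(a,b)\in\{\pm(1,0),\pm(0,1)\}$ and hence $\phi_r(a,b)=1$; but then (\ref{casoA}) would give $c^p=1$ and (\ref{casoB}) would give $rc^p=1$, neither of which is compatible with $|c|>1$ (the latter being outright impossible as $r>1$), a contradiction. For the converse $|abc|>1\Rightarrow|c|>1$ I would argue by contraposition: if $|c|\leq 1$ then $c\neq 0$ by the positivity remark, so $c^p=\pm 1$; combined with $\phi_r(a,b)>0$, equation (\ref{casoA}) forces $\phi_r(a,b)=1$ while equation (\ref{casoB}) forces $\phi_r(a,b)=r$. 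By Proposition~\ref{trivialsol} this pins $(a,b)$ down to one of $\pm(1,0),\pm(0,1),\pm(1,1),\pm(1,-1)$, all satisfying $|ab|\leq 1$, whence $|abc|=|ab|\,|c|\leq 1$.

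I do not anticipate any genuine obstacle here: the content is elementary once the positivity of $\phi_r$ is in place, and indeed that positivity statement is the only slightly delicate point, since it is what simultaneously rules out $c=0$ and $c=-1$ and thereby lets Proposition~\ref{trivialsol} take over. The one bookkeeping subtlety is to remember that the coprimality of $(a,b)$ inherited from the construction of (\ref{casoA}) and (\ref{casoB}) is really used in the first direction (it would fail, e.g., for $(a,b)=(2,0)$).
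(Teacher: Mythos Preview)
Your proof is correct and follows essentially the same approach as the paper: both derive the corollary from Proposition~\ref{trivialsol}. The paper's own argument is terser---it simply observes that by Proposition~\ref{trivialsol} every solution with $|c|=1$ has $|ab|\le 1$, and declares the reverse implication ``immediate''---whereas you explicitly establish the positivity of $\phi_r$ to exclude $c=0$ and $c=-1$, and carefully dispose of the $ab=0$ case using coprimality. These refinements are worthwhile: the paper's claim that the direction $|c|>1\Rightarrow|abc|>1$ is immediate really does rely on $ab\neq 0$, which is exactly what your coprimality remark supplies, and your final comment that the statement fails for $(a,b)=(2,0)$ correctly identifies that the coprimality inherited from Lemma~\ref{novaeq2} is an implicit hypothesis of the corollary.
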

\begin{proof} Suppose $|abc| > 1$. Then $|ab| > 1$ or $|c|>1$. If $|c|>1$ it is automatic. From Proposition \ref{trivialsol} we see that all solutions with $|c| = 1$ also satisfy $|ab| = 1$, hence, if $|ab| > 1$ we must have $|c|>1$. The other direction is immediate.\end{proof}

Recall that $K^{+}$ is the maximal totally real subfield of $\Q(\zeta)$ and let $h_r^{+}$ be its class number. Let also $\pi_r$ be such that $r\mathcal{O}_{K^{+}} = (\pi_r)^{(r-1)/2}$. Since $r \geq 7$ is a prime, $r-1 \geq 6$ is even $\phi_r$ factors over $K^+$ into degree two factors of the form 
$$f_k = x^2 + (\zeta^k + \zeta^{-k})xy + y^2, \quad \mbox{ where } \quad  1 \leq k \leq (r-1)/2.$$
We will now relate solutions of equations (\ref{casoA}) and (\ref{casoB}) to solutions of equations involving the quadratic factors $f_k$.

\begin{lemma} Let $p$ be a prime not dividing $h_r^{+}$. Suppose there is a primitive solution $(a,b,c')$ to $x^r+y^r=C z^p$. Then, for each $1 \leq k \leq (r-1)/2$ there exists a unit $\mu \in \mathcal{O}_{K^{+}}^{\times}$ and $c \in \mathcal{O}_{K^{+}}$ such that $(a,b,c)$ is a solution to
\begin{equation}
\label{eq11}
f_k(x,y) = \mu z^p \quad \mbox{ or}
\end{equation} 
\begin{equation}
\label{eq22}
f_k(x,y) = \mu \pi_r z^p,
\end{equation}
which satisfies $r\nmid a+b$ or $r\mid a+b$, respectively. Moreover:
\begin{itemize}
\item $|Norm_{K^{+}/\Q}(abc)| > 1$ if $(a,b,c')$ is non-trivial;
\item the primes in $K^{+}$ divisors of $c$ are all above primes of $\Q$ that are congruent to $1\pmod{r}$. In particular, neither the primes above $2$ nor the 
primes above $r$ divide $c$.
\item there is $c_0 \in \Z$ coprime to $Cr$ such that $a+b = Cr^kc_0^p$ for some $k \geq 0$;
\end{itemize}
\label{novaeq3} 
\end{lemma}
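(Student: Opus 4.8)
The plan is to deduce the statement from its rational counterpart, Lemma~\ref{novaeq2}, by pushing everything through the factorization $\phi_r(x,y)=\prod_{k=1}^{(r-1)/2} f_k(x,y)$ over $K^+$ and the coprimality results of Proposition~\ref{factores} and Corollary~\ref{trezz}. First I would apply Lemma~\ref{novaeq2} to the given primitive solution $(a,b,c')$: it produces $c_1\in\Z$ such that $(a,b,c_1)$ solves $\phi_r(a,b)=c_1^p$ with $r\nmid a+b$ (case~\eqref{casoA}), or $\phi_r(a,b)=rc_1^p$ with $r\mid a+b$ (case~\eqref{casoB}); moreover $c_1$ is coprime to $2r$, its prime divisors are $\equiv 1\pmod r$, and $\upsilon_r(\phi_r(a,b))=1$ in the second case by Corollary~\ref{trezz}. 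From here I work over $\OO_{K^+}$ with the ideal identity $(\phi_r(a,b))=\prod_k (f_k(a,b))$ and $f_k(a,b)=(a+\zeta^k b)(a+\zeta^{-k}b)$.

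The key local input is the behaviour of the $f_k(a,b)$ at the primes of $K^+$. By Proposition~\ref{factores} the linear factors $a+\zeta^i b$ are pairwise coprime away from $\mathfrak{P}_r$, hence the $f_k(a,b)$ are pairwise coprime away from $\pi_r$. At $\pi_r$ I would argue as follows. In case~\eqref{casoA}, Fermat's little theorem gives $a^r+b^r\equiv a+b\pmod r$, so $r\nmid\phi_r(a,b)$ and each $f_k(a,b)$ is coprime to $\pi_r$. In case~\eqref{casoB}, Proposition~\ref{factores} gives $\upsilon_{\mathfrak{P}_r}(a+\zeta^i b)=1$ for every $i$, so $\upsilon_{\mathfrak{P}_r}(f_k(a,b))=2$; since $\mathfrak{P}_r$ has ramification index $2$ over $\pi_r$ (as $(r)=\mathfrak{P}_r^{\,r-1}$ in $\Z[\zeta]$ while $r\OO_{K^+}=(\pi_r)^{(r-1)/2}$), this yields $\upsilon_{\pi_r}(f_k(a,b))=1$. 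Combining this with pairwise coprimality and unique factorization of ideals, and using that $(c_1)$ is coprime to $\pi_r$, the relation $(\phi_r(a,b))=(c_1)^p$ forces $(f_k(a,b))=\mathfrak{c}_k^{\,p}$ in case~\eqref{casoA}, and $(\phi_r(a,b))=(\pi_r)^{(r-1)/2}(c_1)^p$ forces $(f_k(a,b))=(\pi_r)\,\mathfrak{c}_k^{\,p}$ in case~\eqref{casoB}, for ideals $\mathfrak{c}_k$ coprime to $\pi_r$.

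Next I would invoke the hypothesis $p\nmid h_r^+$: writing $up+vh_r^+=1$, the ideal class of $\mathfrak{c}_k$ equals $([\mathfrak{c}_k]^p)^u$, which is trivial, so $\mathfrak{c}_k=(c)$ for some $c\in\OO_{K^+}$. Then $(f_k(a,b))=(c^p)$ or $(\pi_r c^p)$, whence $f_k(a,b)=\mu c^p$ or $f_k(a,b)=\mu\pi_r c^p$ for a unit $\mu\in\OO_{K^+}^\times$; these are exactly \eqref{eq11} and \eqref{eq22}, with the asserted split according to $r\nmid a+b$ or $r\mid a+b$. For the three bullet points: the statement on the prime divisors of $c$ follows since every prime dividing $\mathfrak{c}_k$ divides $(\phi_r(a,b))$, whose prime factors are $\pi_r$ together with primes above rational primes $\equiv 1\pmod r$ (Lemma~\ref{novaeq2}), while $\mathfrak{c}_k$ is coprime to $\pi_r$ by construction (and $2\not\equiv 1$, $r\not\equiv 1\pmod r$). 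The assertion $a+b=Cr^kc_0^p$ is verbatim the corresponding conclusion of Lemma~\ref{novaeq2} and is simply inherited. For the norm bullet, if $(a,b,c')$ is non-trivial then Lemma~\ref{novaeq2} and Corollary~\ref{trivialsol2} give $|c_1|>1$; a direct evaluation of $\phi_r$ on the finitely many pairs with $(a,b)=1$ and $|ab|\le 1$ shows $\phi_r$ takes the value $1$ or $r$ there, forcing $|c_1|=1$, so in fact $|ab|\ge 2$; since $c\neq 0$ (otherwise $f_k(a,b)=0$, impossible for rational $a,b$ not both zero) we get $|\Norm_{K^+/\Q}(abc)|\ge |ab|^{(r-1)/2}>1$.

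I expect the only real obstacle to be the bookkeeping around $\pi_r$: pinning down the ramification relation between $\mathfrak{P}_r$ and $\pi_r$, verifying $\upsilon_{\pi_r}(f_k(a,b))=1$ \emph{exactly} in case~\eqref{casoB}, and tracking which ideals are coprime to $\pi_r$ so that the passage from a ``$p$-th power up to $\pi_r$'' to a genuine $p$-th power $\mathfrak{c}_k^{\,p}$ is clean. Everything else is a routine transport of Lemma~\ref{novaeq2} across the factorization $\phi_r=\prod_k f_k$.
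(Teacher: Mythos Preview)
Your argument is correct and follows essentially the same route as the paper: reduce to Lemma~\ref{novaeq2}, use Proposition~\ref{factores} for coprimality of the $f_k(a,b)$ away from $\pi_r$, pin down the $\pi_r$-valuation, write $(f_k(a,b))$ as $\mathfrak{c}_k^p$ or $(\pi_r)\mathfrak{c}_k^p$, and invoke $p\nmid h_r^+$ to make $\mathfrak{c}_k$ principal. Your bookkeeping at $\pi_r$ is in fact more explicit than the paper's, which simply asserts the ideal identity.

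The one genuine difference is in the non-triviality bullet. You argue that $|c_1|>1$ forces $|ab|\ge 2$ by a finite check (essentially Proposition~\ref{trivialsol}), and then bound $|\Norm_{K^+/\Q}(abc)|\ge |ab|^{(r-1)/2}\cdot 1>1$ using only $c\neq 0$. The paper instead takes a prime $q\mid c''$, notes $q\equiv 1\pmod r$ splits in $\Q(\zeta)$, and uses coprimality of the linear factors to conclude that each $f_k(a,b)$ is divisible by a prime above $q$, hence $c$ itself has a prime divisor and $|\Norm(c)|>1$. Both are valid; yours is more elementary, while the paper's extracts the stronger conclusion $|\Norm(c)|>1$, which is precisely what is recycled in the proof of Corollary~\ref{trivialsol3}.
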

\begin{proof} Suppose that there is a non-trivial primitive solution $(a,b,c') \in \mathbb{Z}^3$ to $x^r + y^r = Cz^p$. Then, by Lemma \ref{novaeq2} there is a non-trivial primitive solution $(a,b,c'') \in \mathbb{Z}^3$ to equation (\ref{casoA}) or (\ref{casoB}). By Proposition \ref{factores} we know that the $f_k(a,b)$ are pairwise coprimes outside $\mathfrak{P}_r$. Then, since $\mathcal{O}_{K^{+}}$ is a Dedekind domain we have that: if $(a,b,c'')$ is a solution of (\ref{casoA}) or (\ref{casoB}) then $(f_k(a,b)) = \mathcal{I}^p$ or $(f_k(a,b)) = (\pi_r)\mathcal{I}^p$ as ideals in $\mathcal{O}_{K^{+}}$, respectively. These identities show that the order of $\mathcal{I}$ in the ideal class group of $K^{+}$ divides $p$. Since $p \nmid h_r^{+}$ we have that $\mathcal{I}$ is principal and we can write $f_k(a,b)= \mu c^p$ or $f_k(a,b)= \mu \pi_r c^p$, where $\mu, c \in \mathcal{O}_{K^{+}}$ with $\mu$ an unit.\par 
The two final conclusions follow trivially from the application of Lemma \ref{novaeq2} in this proof. We are left to show that $|Norm_{K^{+}/\Q}(abc)| > 1$ if $(a,b,c')$ is non-trivial. We have $|c''|>1$ by Corollary \ref{trivialsol2} and we have to show that $|Norm_{K^{+}/\Q}(abc)| > 1$. Let $q$ be a prime dividing $c''$, hence $q$ is congruent to $1 \pmod{r}$, i.e. $q$ splits in $\Q(\zeta)$. Since the factors $a + \zeta^i b$ of $\phi_r(a,b)$ are pairwise coprime outside $\mathfrak{P}_r$, each of them contains a non-trivial prime in $\Q(\zeta)$ above $q$. Thus, $c$ is divisible by some prime, hence $|Norm_{K^{+}/\Q}(abc)| > 1$.\end{proof}

\begin{defin}To a solution $(a,b,c)$ in $\mathbb{Z}^2 \times \mathcal{O}_{K^{+}}$ of an equation as in the previous lemma we will call \textit{primitive} if $(a,b)=1$ and \textit{non-trivial} if $|Norm_{K^{+}/\Q}(abc)| > 1$.
\end{defin}

\begin{cor} Let $(a,b,c) \in \mathbb{Z}^2 \times \mathcal{O}_{K^{+}}$ be a solution to $(\ref{eq11})$ or $(\ref{eq22})$. Then $(a,b,c)$ is non-trivial if and only if $|Norm_{K^{+}/\Q}(c)| > 1$.
\label{trivialsol3} 
\end{cor}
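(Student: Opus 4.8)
The plan is to push everything from $K^{+}$ down to $\Q$ by taking norms, reducing to the rational statement of Corollary~\ref{trivialsol2}, exactly in the spirit of that corollary's proof. I would begin with two elementary facts. First, for $1\le k\le (r-1)/2$ the binary form $f_k(x,y)=x^2+(\zeta^k+\zeta^{-k})xy+y^2$ is positive definite over $\R$, since $(\zeta^k+\zeta^{-k})^2=4\cos^2(2\pi k/r)<4$; hence $\phi_r(a,b)=\prod_{k=1}^{(r-1)/2}f_k(a,b)\ge 0$ for every $(a,b)\in\Z^2$, with equality only when $a=b=0$. Second, because $r$ is prime and $1\le k\le (r-1)/2$, the element $\zeta^k+\zeta^{-k}$ generates $K^{+}/\Q$ and its Galois conjugates are exactly the numbers $\zeta^j+\zeta^{-j}$, $1\le j\le (r-1)/2$; thus the elements of $\mathrm{Gal}(K^{+}/\Q)$ permute the $f_k(a,b)$ among the $f_j(a,b)$, and so $\Norm_{K^{+}/\Q}\!\big(f_k(a,b)\big)=\prod_{j=1}^{(r-1)/2}f_j(a,b)=\phi_r(a,b)$ for all $a,b\in\Z$.

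With these in hand I would take $\Norm_{K^{+}/\Q}$ of equation~\eqref{eq11}, resp.\ \eqref{eq22}. Using $|\Norm_{K^{+}/\Q}(\mu)|=1$ (as $\mu$ is a unit) and $|\Norm_{K^{+}/\Q}(\pi_r)|=r$ (immediate from $(\pi_r)^{(r-1)/2}=(r)$), and recalling $\phi_r(a,b)\ge 0$, this gives $\phi_r(a,b)=N^p$, resp.\ $\phi_r(a,b)=rN^p$, where $N:=|\Norm_{K^{+}/\Q}(c)|\in\Z_{\ge 0}$. In other words $(a,b,N)$ is a solution of~\eqref{casoA}, resp.\ of~\eqref{casoB}, and Corollary~\ref{trivialsol2} applied to it yields $|abN|>1 \iff N>1$, that is, $|abN|>1 \iff |\Norm_{K^{+}/\Q}(c)|>1$.

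It then remains to identify $|abN|>1$ with non-triviality of $(a,b,c)$. Since $a,b\in\Z$ we have $|\Norm_{K^{+}/\Q}(abc)|=|ab|^{(r-1)/2}\,N$, and since $(r-1)/2\ge 1$ one checks at once that for non-negative integers $|ab|^{(r-1)/2}N>1$ if and only if $|ab|\,N>1$, i.e.\ if and only if $|abN|>1$ (both fail exactly when $abN=0$ or $|ab|=N=1$). Chaining the equivalences now gives: $(a,b,c)$ is non-trivial $\iff |\Norm_{K^{+}/\Q}(abc)|>1 \iff |abN|>1 \iff |\Norm_{K^{+}/\Q}(c)|>1$, which is the assertion.

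The only substantive step is the norm identity $\Norm_{K^{+}/\Q}(f_k(a,b))=\phi_r(a,b)$ together with the sign control $\phi_r(a,b)\ge 0$; once those are in place the rest is bookkeeping plus the appeal to Corollary~\ref{trivialsol2}. I would also note one subtlety, already implicit in Corollary~\ref{trivialsol2}: the implication ``$|\Norm_{K^{+}/\Q}(c)|>1\Rightarrow$ non-trivial'' uses primitivity of $(a,b,c)$, since if $ab=0$ and $(a,b)=1$ then $\{a,b\}=\{0,\pm1\}$, whence $f_k(a,b)=1$ --- which for~\eqref{eq11} forces $c$ to be a unit and for~\eqref{eq22} is impossible --- so this direction should be read for primitive solutions.
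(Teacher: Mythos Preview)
Your argument is correct and takes a genuinely different route from the paper's. The paper argues the nontrivial direction by hand: assuming $|ab|>1$, it produces a prime $q\neq r$ dividing $\phi_r(a,b)$ and then, using the pairwise coprimality of the factors $a+\zeta^i b$ over $\Q(\zeta)$ (Proposition~\ref{factores}), forces a prime above $q$ into $f_k(a,b)$ and hence into $c$. You instead push the whole equation down to $\Q$ via the norm identity $\Norm_{K^+/\Q}(f_k(a,b))=\phi_r(a,b)$, turning a solution of \eqref{eq11}/\eqref{eq22} into a solution $(a,b,N)$ of \eqref{casoA}/\eqref{casoB} with $N=|\Norm_{K^+/\Q}(c)|$, so that Corollary~\ref{trivialsol2} applies verbatim. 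Your approach is more structural and avoids the detour through $\Q(\zeta)$; the paper's is more explicit about which primes actually divide $c$. Your closing remark on primitivity is well observed and applies equally to the paper's proof: its appeal to coprimality of the $a+\zeta^i b$ silently uses $(a,b)=1$, and the ``immediate'' reverse direction in both arguments fails when $ab=0$, so the statement should indeed be read for primitive solutions.
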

\begin{proof} Suppose $|Norm_{K^{+}/\Q}(abc)| > 1$. Then $|ab|>1$ or $|Norm_{K^{+}/\Q}(c)| > 1$. In the latter case we are done. Suppose $|ab|>1$, then by Corollary \ref{trivialsol2} there is a prime $q \mid \phi_r(a,b)$ different from $r$. Since the factors $a + \zeta^i b$ of $\phi_r(a,b)$ are pairwise coprime, each of them contains a non-trivial prime in $\Q(\zeta)$ above $q$. Thus, any product $(a + \zeta^i b)(a + \zeta^{r-i} b)$ is divisible by some prime, hence $|Norm_{K^{+}/\Q}(c)| > 1$. The other direction is immediate.\end{proof}

\begin{rem} From Corollaries $\ref{trivialsol2}$ and $\ref{trivialsol3}$ we see that a solution $(a,b,c)$ of $(\ref{casoA})$, $(\ref{casoB})$, $(\ref{eq11})$ or $(\ref{eq22})$ is non-trivial (i.e. $|Norm_{K^{+}/\Q}(abc)| > 1$) if and only if $c$ is divisible by some prime. As we will see later the primes dividing $c$ will correspond to multiplicative primes of the Frey curves. Moreover, we will see that they are the primes where level lowering can actually happen. We want to keep in mind this way of thinking about non-trivial solutions.
\end{rem}

\section{Constructing Frey curves part I}
\label{curvasI}

Let $r \geq 7$ be a prime. We will say that an integer triple $(k_1,k_2,k_3)$ is \textit{$r$-suitable} if $1\leq k_1 < k_2 < k_3 \leq (r-1)/2$. Notice in particular that any triple $(k_1,k_2,k_3)$ of strictly increasing integers is $r$-suitable if and only if $k_3\leq(r-1)/2$. Fix an $r$-suitable triple $(k_1,k_2,k_3)$ and consider the three degree 2 polynomials attached to it
$$\begin{cases}
 f_{k_1}(x,y) = x^2 + (\zeta^{k_1} + \zeta^{-k_1})xy + y^2, \\
 f_{k_2}(x,y) = x^2 + (\zeta^{k_2} + \zeta^{-k_2})xy + y^2,\\
 f_{k_3}(x,y) = x^2 + (\zeta^{k_3} + \zeta^{-k_3})xy + y^2.\\
\end{cases}$$
Suppose there is a primitive solution $(a,b,c')$ to $x^r + y^r = Cz^p$. From Lemma \ref{novaeq3} there are $\mu_i, c_i$ such that $f_{k_i}(a,b)= \mu_i c_i^p$ or
$f_{k_i}(a,b)= \mu_i \pi_r c_i^p$ for $i=1,2,3$. Hence, by multiplying the $f_{k_i}$ , there are $\mu, c \in \OO_{K^+}$, with $\mu$ a unit, such that $(a,b,c) \in \Z^2 \times \OO_{K^+}$ is a solution to
\begin{equation}
\label{eq1}
f_{k_1}(x,y)f_{k_2}(x,y)f_{k_3}(x,y) = \mu z^p \quad \mbox{ or}
\end{equation} 
\begin{equation}
\label{eq2}
f_{k_1}(x,y)f_{k_2}(x,y)f_{k_3}(x,y) = \mu \pi_r^3 z^p,
\end{equation}
which satisfies $r\nmid a+b$ or $r\mid a+b$, respectively. Unless stated otherwise, every time we will refer to equations (\ref{eq1}) and (\ref{eq2}) we are considering them with the $f_{k_i}$ corresponding to the fixed triple $(k_1,k_2,k_3)$. Moreover, when we talk of solutions $(a,b,c)$ to equation (\ref{eq1}) or (\ref{eq2}) we will be referring to solutions in the set $\mathbb{Z}^2 \times \mathcal{O}_{K^{+}}$. For equations (\ref{eq1}) and (\ref{eq2}) we will continue to call a solution \textit{non-trivial} if and only if $c$ is divisible by some prime.\\

We want to attach an elliptic curve to a putative primitive solution $(a,b,c)$ of equations (\ref{eq1}) or (\ref{eq2}). For that, we are interested in finding a triple $(\alpha, \beta, \gamma)$ such that
$$\alpha f_{k_1} + \beta f_{k_2} + \gamma f_{k_3} = 0,$$
which amounts to solve a linear system in the coefficients of the $f_{k_i}$. From the form of the $f_{k_i}$ we see this is always possible, because the linear system has two equations and three variables. In particular, we choose the solution
$$\begin{cases}
 \alpha = \zeta^{k_3} + \zeta^{-k_3} - \zeta^{k_2} - \zeta^{-k_2},\\
 \beta  = \zeta^{k_1} + \zeta^{-k_1} - \zeta^{k_3} - \zeta^{-k_3}, \\
 \gamma = \zeta^{k_2} + \zeta^{-k_2} - \zeta^{k_1} - \zeta^{-k_1}.\\
\end{cases}$$
Finally, given a primitive solution $(a,b,c)$ to equation (\ref{eq1}) or (\ref{eq2}) we put 
$$A_{(a,b)} = \alpha f_{k_1}(a,b), \quad \quad B_{(a,b)} = \beta f_{k_2}(a,b), \quad \quad C_{(a,b)} = \gamma f_{k_3}(a,b),$$ 
and we attach to $(a,b,c)$ the Frey curves over $K^{+}$ of the form
\begin{equation}
E^{(k_1,k_2,k_3)}_{(a,b)} : Y^2 = X(X-A_{(a,b)})(X+B_{(a,b)}).
\label{Freycurve}
\end{equation}

Observe that, fixed $r$, by changing the triple $(k_1,k_2,k_3)$ in \eqref{Freycurve} we obtain $\binom{(r-1)/2}{3}$ different families of Frey curves. For each $r$, most of the results in this section are independent of $(k_1,k_2,k_3)$, so to ease notation we will denote the Frey curves only by $E_{(a,b)}$. Whenever a discussion concerns a specific triple it will be made clear in the text.

\subsection{The conductor of $E_{(a,b)}^{(k_1,k_2,k_3)}$} Now we will prove that the curves $E = E_{(a,b)}^{(k_1,k_2,k_3)}$ satisfy the required properties to be used as Frey curves. Suppose that $(a,b,c)$ is a primitive solution to (\ref{eq1}) or (\ref{eq2}). The curves $E$ have associated the following quantities:
\begin{eqnarray*}
\Delta(E) & = & 2^{4}(ABC)^2 ,  \\ 
c_4(E) & = & 2^{4}(AB + BC + AC), \\
c_6(E) & = & -2^{5}(C + 2B)(A + 2B)(2A + B), \\
j(E) & = & 2^{8}\frac{(AB + BC + AC)^3}{(ABC)^2}. \
\end{eqnarray*}
In particular,
$$ \Delta(E) = \begin{cases}
 \mu^2 2^4(\alpha \beta \gamma)^2 c^{2p} \hspace{0.2cm} \mbox{ if } r \nmid a+b,\\
 \mu^2 2^4(\alpha \beta \gamma)^2 \pi_r^{6}c^{2p} \hspace{0.2cm} \mbox{ if } r \mid a+b. \\
\end{cases}$$

\begin{rem} We want to note that the discriminant is a constant times $p$-th power containing all the dependence on the solution. This is fundamental for the modular approach to work.
\end{rem}

Let $\mathfrak{P}$, $\pi_r$ and $\mathfrak{P}_2$ denote a prime in $K^{+}$ above $p$, $r$ and 2, respectively. Denote by $\mbox{rad}(c)$ the product of the primes dividing $c$.

\begin{pp}  Let $(a,b,c)$ be a primitive solution of equation $(\ref{eq1})$ or $(\ref{eq2})$. The conductor of the curves $E=E_{(a,b)}^{(k_1,k_2,k_3)}$ is of the form 
$$N_E = (\prod_{\mathfrak{P}_2 \mid 2} \mathfrak{P}_2^{s_i}) \pi_r^t \mbox{rad}(c),$$
where $s_i$ may be $2,3$ or $4$ and $t=0$ or $2$ if $r \mid a+b$ or $r \nmid a+b$, respectively. In particular, $E$ has good reduction at all $v \mid 3$.
\label{condEgamma}
\end{pp}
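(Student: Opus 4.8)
The plan is to compute the conductor $N_E$ of the Frey curve $E = E_{(a,b)}^{(k_1,k_2,k_3)}$ prime by prime, splitting the primes of $K^+$ into three classes: the primes above $2$, the prime $\pi_r$ above $r$, and the remaining primes. The model \eqref{Freycurve} is a global minimal model away from $2$ (its discriminant is $2^4(ABC)^2$ and its $c_4$ is $2^4(AB+BC+AC)$), so the analysis at odd primes $\neq \pi_r$ is essentially Tate's algorithm for the Legendre-type model, and at $2$ and $\pi_r$ it requires a local change of variables before Tate's algorithm applies.

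First I would treat a prime $\mathfrak{q} \nmid 2r$. Since $E$ is a curve of the form $Y^2 = X(X-A)(X+B)$ with full $2$-torsion, one knows $v_{\mathfrak{q}}(\Delta) = 2v_{\mathfrak{q}}(ABC)$ and $v_{\mathfrak{q}}(c_4) = 4 + v_{\mathfrak{q}}(AB+BC+AC)$. Using $A - (-B) = A+B = C_{(a,b)}/\gamma \cdot(\text{stuff})$ — more precisely, the identity $\alpha f_{k_1} + \beta f_{k_2} + \gamma f_{k_3}=0$ forces $A + B + C = 0$, so the three roots $0, A, -B$ of the cubic are such that their pairwise differences are $A$, $B$, $C$ up to sign — and the coprimality of the $f_{k_i}(a,b)$ outside $\pi_r$ (Proposition \ref{factores}), one sees that at such $\mathfrak{q}$ at most one of $A$, $B$, $C$ is divisible by $\mathfrak{q}$. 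If $\mathfrak{q} \nmid ABC$ then $E$ has good reduction; if $\mathfrak{q} \mid ABC$ then $\mathfrak{q}$ divides exactly one of them, say with valuation $m = v_{\mathfrak{q}}(c)\cdot p > 0$ (this is where $\mathfrak{q} \mid \text{rad}(c)$ enters, via Lemma \ref{novaeq3}), so $v_{\mathfrak{q}}(\Delta) = 2m$, $v_{\mathfrak{q}}(c_4)=0$, and the model is already minimal with multiplicative reduction, contributing $\mathfrak{q}^1$ to the conductor. This accounts for the $\text{rad}(c)$ factor. In particular $3$ is unramified in $K^+$ (as $r \neq 3$) and $3 \nmid c$ (again Lemma \ref{novaeq3}, since prime divisors of $c$ are $\equiv 1 \bmod r$), so $E$ has good reduction at all $v \mid 3$, which is the last assertion.

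Next, at $\pi_r$: if $r \nmid a+b$ then $\pi_r \nmid f_{k_i}(a,b)$ for all $i$ (by Corollary \ref{trezz}, $\pi_r \nmid \phi_r(a,b)$), and also $\pi_r \nmid \alpha\beta\gamma$ once one checks $\pi_r \nmid (\zeta^{k_i}+\zeta^{-k_i}) - (\zeta^{k_j}+\zeta^{-k_j})$ — this is a statement about differences of traces of roots of unity and needs the hypothesis $r \geq 7$ together with the $k_i$ being distinct and at most $(r-1)/2$ — so $\pi_r \nmid \Delta$ and $E$ has good reduction, $t=0$. If $r \mid a+b$ then by Corollary \ref{trezz} exactly $v_{\pi_r}(\phi_r(a,b)) = r-1$, and from \eqref{eq2} the $\pi_r^3$ factor records that each $f_{k_i}(a,b)$ contributes $v_{\pi_r} = 2$; combining, $v_{\pi_r}(\Delta) = v_{\pi_r}(\mu^2 2^4 (\alpha\beta\gamma)^2 \pi_r^6 c^{2p}) = 6$ and $v_{\pi_r}(c_4) = 0$ (one checks $\pi_r \nmid AB+BC+AC$ using that $f_{k_i}(a,b) \equiv \pi_r^2 \times \text{unit}$ are "equal to highest order" so the sum has valuation exactly $2$ — actually one must verify no cancellation), giving potentially multiplicative reduction; after the mandatory change of variables this is additive of type $I_n^*$-flavour and contributes $\pi_r^2$. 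I expect to phrase this as: $v_{\pi_r}(j) = v_{\pi_r}(2^8(\ldots)^3/(ABC)^2) < 0$, so reduction is potentially multiplicative, hence $v_{\pi_r}(N_E) \leq 2$ with equality exactly when the local extension trivializing it is ramified, which here it is.

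The main obstacle is the analysis at the primes $\mathfrak{P}_2 \mid 2$. Since $2 \| \Delta(E)$ generically fails (we have $v_{\mathfrak{P}_2}(\Delta) = 4 e(\mathfrak{P}_2/2) + 2 v_{\mathfrak{P}_2}(ABC)$, and $v_{\mathfrak{P}_2}(ABC)$ depends on the residues of $a,b$ mod powers of $2$), the curve has additive reduction at $2$ and one must run Tate's algorithm carefully, tracking how $\alpha, \beta, \gamma$ and $f_{k_i}(a,b)$ behave modulo $2$. Here I would exploit that $C$ (the constant in the original equation) is odd — so $c$ is odd, $2 \nmid c$, hence $v_{\mathfrak{P}_2}(c)=0$ — and that $f_{k_i}(a,b) = a^2 + (\zeta^{k_i}+\zeta^{-k_i})ab + b^2$ with $a,b$ coprime integers, so modulo the primes above $2$ one has limited possibilities for $(a \bmod 2, b \bmod 2)$; a case analysis (possibly also using a quadratic twist or the Kraus-style criteria for conductor exponent at $2$) yields that the exponent $s_i$ is among $2, 3, 4$. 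The claim that $s_i \neq 0, 1, 5, 6, \dots$ — i.e. genuinely additive but with bounded exponent — is the crux; I would isolate it as the technical heart of the proof and handle it via Tate's algorithm over the completion $(K^+)_{\mathfrak{P}_2}$, noting that $2$ may ramify in $K^+$, so the bound $4$ should really be $\leq 2 + 2e(\mathfrak{P}_2/2)$ and one needs to know the ramification of $2$ in $K^+$ (governed by the order of $2$ mod $r$) to get the stated bound — a point I would flag and verify is consistent with the small-$r$ cases used later.
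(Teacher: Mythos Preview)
Your analysis at primes $\mathfrak{q}\nmid 2r$ and the deduction of good reduction at primes above $3$ are fine and match the paper. However, the treatment at $\pi_r$ contains a genuine error that reverses the conclusion, and the treatment at primes above $2$ is unnecessarily complicated.

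\textbf{The error at $\pi_r$.} You assert that $\pi_r \nmid \alpha\beta\gamma$, claiming one only needs to check $\pi_r \nmid (\zeta^{k_i}+\zeta^{-k_i}) - (\zeta^{k_j}+\zeta^{-k_j})$. This is false: a short computation gives
\[
\zeta^{k_3}+\zeta^{-k_3}-\zeta^{k_2}-\zeta^{-k_2}
= \zeta^{-k_3}(\zeta^{k_3-k_2}-1)(\zeta^{k_2+k_3}-1),
\]
so each of $\alpha,\beta,\gamma$ has $v_{\pi_r}=1$ and $v_{\pi_r}(\alpha\beta\gamma)=3$. Consequently, when $r\nmid a+b$ one has $v_{\pi_r}(\Delta)=v_{\pi_r}\bigl((\alpha\beta\gamma)^2\bigr)=6$ and $v_{\pi_r}(c_4)\geq 2$, so $E$ has \emph{additive} reduction at $\pi_r$ (and $t=2$); when $r\mid a+b$ one has $v_{\pi_r}(\Delta)=6+6=12$, the model is non-minimal, and after the standard change of variables $E$ has \emph{good} reduction ($t=0$). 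Your proposal has the two cases exactly backwards, contradicting the statement you are trying to prove. (Note also that when $r\mid a+b$ each $f_{k_i}(a,b)$ has $v_{\pi_r}=1$, not $2$: the $\mathfrak{P}_r$-valuation in $\Q(\zeta)$ is $2$, but $e(\mathfrak{P}_r/\pi_r)=2$.)

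\textbf{The primes above $2$.} Since $r\geq 7$ is odd, $2$ is unramified in $\Q(\zeta)$ and hence in $K^+$; there is no need to worry about $e(\mathfrak{P}_2/2)$. Moreover $2\nmid \alpha\beta\gamma$ (only $\pi_r$ divides these) and $2\nmid c$ (primes dividing $c$ lie over primes $\equiv 1\pmod r$), so $v_{\mathfrak{P}_2}(ABC)=0$ and $v_{\mathfrak{P}_2}(\Delta)=4$ for \emph{every} primitive $(a,b,c)$, independently of $a,b\bmod 2$. One then checks directly that $v_{\mathfrak{P}_2}(c_6)=5$ and $v_{\mathfrak{P}_2}(c_4)\geq 4$, and Papadopoulos' Table~IV immediately yields $s_i\in\{2,3,4\}$ (Kodaira types II, III, IV). No case analysis on $(a\bmod 2,b\bmod 2)$ is required.
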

\begin{proof} To the results used in this proof we follow \cite{pap}. First note that $\alpha, \beta, \gamma$ can be written in the form $\pm \zeta^s (1 - \zeta^t) (1 - \zeta^u)$, where neither $t$ nor $u$ are $\equiv 0$ (mod $r$), which means that the only prime dividing $\alpha \beta \gamma$ is $\pi_r$ and 
$\upsilon_{\pi_r}(\alpha \beta \gamma) = 3$.\par 
Let $\mathfrak{P}$ be a prime in $K^{+}$ different from $\pi_r$ and $\mathfrak{P}_2$. Observe that $\upsilon_{\mathfrak{P}}(\Delta(E)) = 2p\upsilon_{\mathfrak{P}}(c)$. Then if $\mathfrak{P} \nmid c$ we have $\upsilon_{\mathfrak{P}}(\Delta) = 0$ and the curve has good reduction. If $\mathfrak{P} \mid c$ then $\upsilon_{\mathfrak{P}}(\Delta) > 0$ and $\mathfrak{P}$ must divide only one among $A,B$ or $C$ (see Proposition \ref{factores}). From the form of $c_4$ it can be seen that $\upsilon_{\mathfrak{P}}(c_4)=0$ thus $E$ has multiplicative reduction at $\mathfrak{P}$. \par 
From the form of $\Delta(E)$ we see that $\upsilon_{\pi_r}(\Delta) = 6$ or $12$ if $r \nmid a+b$ or $r \mid a+b$, respectively. This translate to $E$ bad additive reduction ($\upsilon_{\pi_r}(N_E) = 2$) or good reduction ($\upsilon_{\pi_r}(N_E) = 0$) at $\pi_r$ if $r \nmid a+b$ or $r \mid a+b$, respectively.\par
Since 2 do not ramifies in $\Q(\zeta)$ we use Table IV in  \cite{pap}. It is easily seen by from the shape of $\Delta$, $c_4$ and $c_6$ that $\upsilon_{\mathfrak{P}_{2}}(\Delta) = 4$, $\upsilon_{\mathfrak{P}_{2}}(c_6) = 5$ and $\upsilon_{\mathfrak{P}_{2}}(c_4) \geq 4$ for any $\mathfrak{P}_{2}$ above 2. Then the equation is minimal ($\upsilon_{\mathfrak{P}_{2}}(\Delta) < 12$) and we check in Table IV \cite{pap} for the columns corresponding to the previous valuations and observe that $\upsilon_{\mathfrak{P}_{2}}(N_{E})$ can be $2, 3, 4$ corresponding to Kodaira type II, III or IV.\par 
The last statement follows because all the primes $\q \mid c$ are above rational primes congruent to 1 modulo $r$.
\end{proof}

\begin{pp} Let $(a,b,c)$ be a primitive solution of equation $(\ref{eq1})$ or $(\ref{eq2})$ and $\ell \neq p$ be a prime in $K^{+}$ dividing $c$. Then, the representation $\bar{\rho}_{E,p}$ attached to the Frey curve $E = E_{(a,b)}$ is unramified at $\ell$.
\label{notram}
\end{pp}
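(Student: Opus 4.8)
The plan is to use the standard criterion for level-lowering: the mod $p$ representation attached to an elliptic curve is unramified at a prime $\ell$ of multiplicative reduction precisely when $p$ divides the valuation of the minimal discriminant at $\ell$. So first I would recall that by Proposition~\ref{condEgamma}, a prime $\ell \neq \pi_r, \mathfrak{P}_2$ dividing $c$ is a prime of multiplicative reduction for $E = E_{(a,b)}$, so the curve has a Tate parametrization at $\ell$. By the theory of the Tate curve, the action of the inertia subgroup $I_\ell$ on $E[p]$ is unipotent, and it is trivial (i.e.\ $\bar\rho_{E,p}$ is unramified at $\ell$) if and only if $p \mid \upsilon_\ell(\Delta_{\min})$, where $\Delta_{\min}$ is the minimal discriminant at $\ell$.

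Next I would compute $\upsilon_\ell(\Delta(E))$ using the formula for $\Delta(E)$ from the previous subsection. Since $\ell \neq \pi_r$ and $\ell$ does not divide $2$ or $\alpha\beta\gamma$ (as $\pi_r$ is the only prime dividing $\alpha\beta\gamma$), and $\ell \nmid \mu$ because $\mu$ is a unit, the displayed formula gives $\upsilon_\ell(\Delta(E)) = 2p\,\upsilon_\ell(c)$. Moreover the Weierstrass model~\eqref{Freycurve} is minimal at $\ell$ because $\upsilon_\ell(c_4(E)) = 0$ at such $\ell$ (as shown in the proof of Proposition~\ref{condEgamma}, $\ell$ being a prime of multiplicative reduction forces $\upsilon_\ell(c_4)=0$), so $\Delta_{\min} = \Delta(E)$ locally at $\ell$. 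Hence $\upsilon_\ell(\Delta_{\min}) = 2p\,\upsilon_\ell(c)$, which is divisible by $p$.

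Finally I would conclude by invoking the criterion above: since $p \mid 2p\,\upsilon_\ell(c) = \upsilon_\ell(\Delta_{\min})$, the representation $\bar\rho_{E,p}$ is unramified at $\ell$. The only mild subtlety, and the step I would be most careful about, is the minimality of the model at $\ell$ and the verification that $\ell$ genuinely has multiplicative (not additive or good) reduction, which is why I would lean directly on Proposition~\ref{condEgamma} and its proof rather than recomputing; once minimality is in hand, the argument is the usual one-line application of the Tate-curve criterion for level-lowering, so I do not expect any real obstacle here.
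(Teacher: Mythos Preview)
Your proposal is correct and follows essentially the same approach as the paper: both use Proposition~\ref{condEgamma} to see that $\ell$ is a prime of multiplicative reduction, observe that $\upsilon_\ell$ of the minimal discriminant is a multiple of $p$, and then invoke the standard criterion (the paper attributes it to Hellegouarch, you phrase it via the Tate parametrization) to conclude that $\bar{\rho}_{E,p}$ is unramified at $\ell$. Your version is slightly more detailed in justifying minimality and computing $\upsilon_\ell(\Delta)=2p\,\upsilon_\ell(c)$ explicitly, whereas the paper also records that $\ell$ is unramified in $K^+/\Q$ (since $\ell\nmid r$), but these are cosmetic differences.
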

\begin{proof} Note that $\ell$ is unramified in $K^{+}$ because $\ell \nmid r$. From $\ell \mid c$ and Proposition \ref{condEgamma} it follows that $\ell$ is of multiplicative reduction of $E$. Since it appears to a $p$-th power in the discriminant of a minimal model at $\ell$ of $E$ we know by a theorem from Hellegouarch that the representation $\bar{\rho}_{E,p}$ will not ramify at $\ell$.\end{proof}

\begin{cor} Suppose $\bar{\rho}_{E,p}$ to be irreducible and denote by $N(\bar{\rho}_{E,p})$ its Artin conductor outside of $p$. Then, $N(\bar{\rho}_{E,p})$ is equal to $(\prod_{\mathfrak{P}_2 \mid 2} \mathfrak{P}_2^{s_i})\pi_r^{t}$, where $s_i$ and $t$ are given as in Proposition $\ref{condEgamma}$, with the possible exception of finite small values of $p$ in which case it is a strict divisor of $(\prod_{\mathfrak{P}_2 \mid 2} \mathfrak{P}_2^{s_i})\pi_r^{t}$.
\label{artinc}
\end{cor}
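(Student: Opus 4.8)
The plan is to deduce both divisibilities from Propositions~\ref{condEgamma} and~\ref{notram}, the only real input being the standard comparison between the conductor of an elliptic curve and that of its mod~$p$ representation at primes not lying above $p$.

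First I would record the general inequality: for every prime $\mathfrak{q}$ of $K^{+}$ with $\mathfrak{q}\nmid p$, the exponent of $\mathfrak{q}$ in the Artin conductor $N(\bar{\rho}_{E,p})$ is at most its exponent in the conductor of $E$ (equivalently, in the conductor of the $p$-adic representation $\rho_{E,p}$), which was computed in Proposition~\ref{condEgamma}. Combined with Proposition~\ref{notram}, which says that $\bar{\rho}_{E,p}$ is unramified at every prime $\ell\mid c$ with $\ell\neq p$, this already yields that $N(\bar{\rho}_{E,p})$ divides $(\prod_{\mathfrak{P}_2\mid 2}\mathfrak{P}_2^{s_i})\pi_r^{t}$, with $s_i$ and $t$ as in Proposition~\ref{condEgamma}; any prime $\ell\mid c$ that happens to lie above $p$ is in any case discarded from the conductor ``outside $p$''.

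For the reverse divisibility I would examine the remaining bad primes, namely the $\mathfrak{P}_2\mid 2$ and, when $r\nmid a+b$, the prime $\pi_r$. At each such $\mathfrak{q}$ the curve $E$ has additive but \emph{potentially good} reduction: this follows from the valuations of $c_4(E)$, $c_6(E)$ and $\Delta(E)$ in Proposition~\ref{condEgamma}, which force $v_{\mathfrak{q}}(j(E))\geq 0$. Hence $E$ acquires good reduction at $\mathfrak{q}$ over an extension of bounded degree, and the image of the inertia subgroup $I_{\mathfrak{q}}$ under $\rho_{E,p}$ is finite, of order dividing a fixed integer $N_0$ that is independent of $p$ and of the solution $(a,b,c)$. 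Whenever $p\nmid N_0$, reduction modulo $p$ is injective on this image of inertia, so the $\mathfrak{q}$-exponents of $N(\bar{\rho}_{E,p})$ and of $N_E$ coincide; for such $p$ one gets the equality $N(\bar{\rho}_{E,p})=(\prod_{\mathfrak{P}_2\mid 2}\mathfrak{P}_2^{s_i})\pi_r^{t}$, while for the finitely many remaining $p$ the conductor exponent at $2$ or $\pi_r$ can only drop, giving a strict divisor.

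The step I expect to cost the most work is making the bound $N_0$ at the primes above $2$ explicit and uniform: the ramification there is wild, so one cannot merely bound the tame part of the image of inertia, and one must invoke the known bound on the degree of the extension over which a $2$-adic elliptic curve with potentially good reduction acquires good reduction, together with the explicit Kodaira types II, III, IV read off from Proposition~\ref{condEgamma}. Everything else is routine bookkeeping with the quantities already recorded in Propositions~\ref{condEgamma} and~\ref{notram}.
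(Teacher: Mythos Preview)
Your argument is correct, but it takes a different route from the paper for the ``equality'' half. The paper simply invokes Carayol's result (cited there as \cite{Car1}) to assert that at primes of additive reduction the conductor exponent of $\bar{\rho}_{E,p}$ equals that of $\rho_{E,p}$ for all but finitely many small $p$, and then combines this black box with Propositions~\ref{condEgamma} and~\ref{notram} exactly as you do for the ``divides'' direction.

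You instead verify directly that at each additive prime $\mathfrak{q}$ (the $\mathfrak{P}_2\mid 2$ and, when $r\nmid a+b$, the prime $\pi_r$) the curve has potentially good reduction, so inertia acts through a finite quotient of order dividing a fixed $N_0$; for $p\nmid N_0$ the reduction mod~$p$ preserves the dimensions of the inertia-invariants at every step of the ramification filtration, hence the Artin exponent. This is a genuine alternative: it avoids Carayol's general theorem, is self-contained, and makes the exceptional set of primes explicit (the divisors of $N_0$), at the cost of checking $v_{\mathfrak{q}}(j(E))\geq 0$ from the valuations in Proposition~\ref{condEgamma} and bounding the extension degree needed for good reduction. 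One small remark on your write-up: the reason the conductor exponent is preserved is not merely that reduction mod~$p$ is injective on the inertia image, but that $p\nmid|\Phi|$ forces the $\Phi$-representations to be semisimple with matching invariants in each ramification subgroup; your hypothesis $p\nmid N_0$ gives this, so the conclusion stands, but the intermediate phrasing via injectivity alone is slightly weaker than what you actually use.
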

\begin{proof} $N(\bar{\rho}_{E,p})$ is not divisible by primes above $p$ by definition. Also, when reducing $\rho_{E,p}$ to its residual representation $\bar{\rho}_{E,p}$, by the work of Carayol \cite{Car1} we know that the conductor at the bad additive primes will not decrease (with the possible exception of finite small values of $p$). Hence, by Propositions \ref{notram} and \ref{condEgamma} we conclude that $N(\bar{\rho}_{E,p}) = (\prod_{\mathfrak{P}_2 \mid 2} \mathfrak{P}_2^{s_i})\pi_r^{t}$.\end{proof} 

In the rest of this work we will refer to the Artin conductor outside of $p$ simply by Artin conductor. 

\subsection{Level lowering}
\label{levellowering}
The following two theorems follow from Section \ref{bgmod}. For the moment we will assume them.

\begin{tm} Let $r\geq7$ be a fixed prime and $a,b$ coprime integers. There exist a constant $M(r)$ such that for any triple $(k_1,k_2,k_3)$ (with respect to the fixed $r$) the Galois representations $\bar{\rho}_{E,\ell}$ attached to $E=E_{(a,b)}^{(k_1,k_2,k_3)}$ are absolutely irreducible for all $\ell > M(r)$.
\label{irred}
\end{tm}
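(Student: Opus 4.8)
# Proof proposal for Theorem \ref{irred}

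The plan is to follow the now-standard strategy for bounding the exponents $\ell$ for which a Frey curve over a number field can have reducible mod $\ell$ representation, adapting the arguments of Mazur, Kraus and (in the Hilbert modular setting) the work of Freitas--Siksek to the present family. Suppose $\bar{\rho}_{E,\ell}$ is reducible, so that it is an extension of characters $\theta, \theta' \colon G_{K^+} \to \F_\ell^\times$ with $\theta\theta' = \chi_\ell$ the mod $\ell$ cyclotomic character. The first step is to pin down the ramification of $\theta$ and $\theta'$. Away from $\ell$, the conductor of $\bar{\rho}_{E,\ell}$ is the controlled quantity $(\prod_{\mathfrak{P}_2 \mid 2}\mathfrak P_2^{s_i})\pi_r^t$ of Corollary \ref{artinc} (for $\ell$ large; the finitely many small exceptions are harmless since we only need \emph{some} bound $M(r)$), and by Proposition \ref{notram} the representation is unramified at every prime $\mathfrak{P}\mid c$. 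At primes of multiplicative reduction the theory of the Tate curve forces $\theta$ to be unramified or equal to $\chi_\ell$ locally; at $\pi_r$ and at the primes above $2$ the ramification is bounded uniformly in terms of $r$ only; and at primes above $\ell$ the curve is semistable (it has good or multiplicative reduction at all $v\mid 3$, and for $\ell$ large compared to $r$ one controls $v\mid\ell$ via the usual semistability/potentially-good dichotomy together with the explicit valuations of $\Delta(E)$, $c_4(E)$). Consequently $\theta^{12}$ (or $\theta^{2}$ after a suitable twist) is a character of $G_{K^+}$ of conductor dividing a fixed ideal depending only on $r$, ramified only at $2$, $\pi_r$ and $\ell$.

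Next I would convert this into a finiteness statement. Twisting $\bar\rho_{E,\ell}$ so that one of the characters, say $\theta$, is unramified outside $\ell$ (or outside $2\pi_r\ell$ with bounded conductor), one gets that $\theta$ factors through the ray class group of $K^+$ of a fixed modulus times a power of $\ell$; by class field theory and the fact that $K^+/\Q$ is abelian, $\theta$ is then, up to a power of $\chi_\ell$, a character of order bounded in terms of $h_r^+$ and the degree $[K^+:\Q]$, hence of bounded order independent of $\ell$. Evaluating at Frobenius at a fixed auxiliary prime $\mathfrak q\nmid 2r\ell$ of good reduction, one obtains that $a_{\mathfrak q}(E) \equiv \theta(\mathrm{Frob}_{\mathfrak q}) + \chi_\ell(\mathrm{Frob}_{\mathfrak q})\theta(\mathrm{Frob}_{\mathfrak q})^{-1} \pmod{\mathfrak l}$ for $\mathfrak l\mid\ell$. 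The key classical trick of Kraus is to choose $\mathfrak q$ lying over a small rational prime $q\equiv 1\pmod r$ (so $q$ splits completely in $\Q(\zeta_r)$, hence in $K^+$), run over the finitely many residues of $(a,b)$ modulo $\mathfrak q$, compute the finitely many possible values of $a_{\mathfrak q}(E)$ and of $\mathrm{Norm}(\mathfrak q)$, and note that the displayed congruence forces $\ell$ to divide one of an explicit, finite, nonzero list of integers $\mathrm{Res}_{\mathfrak q}(a,b)$ depending only on $r$ and the triple $(k_1,k_2,k_3)$; taking the product over one such $\mathfrak q$ (and over the $\binom{(r-1)/2}{3}$ triples) yields the constant $M(r)$. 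One must check that for a suitable choice of $\mathfrak q$ none of these resultants vanishes — this uses that $j(E)$ is non-constant and that the "bad" residue classes of $(a,b)$ (those giving $abc=0$ or singular reduction) are excluded because $a,b$ are coprime.

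The main obstacle, and the part requiring genuine care rather than routine computation, is the behaviour at primes above $\ell$: one needs that for $\ell$ large (depending only on $r$) the restriction of $\bar\rho_{E,\ell}$ to an inertia group $I_{\mathfrak l}$, $\mathfrak l\mid\ell$, is of the expected semistable type so that the characters $\theta|_{I_{\mathfrak l}}$ are powers of $\chi_\ell$ with exponents in a bounded set — this is where the semistability at $v\mid 3$ secretly does no work and one instead invokes that the Frey curve has multiplicative or potentially-multiplicative reduction at the relevant primes, or else good reduction and one applies Fontaine--Raynaud/Fontaine's bound on the weights together with $\ell$ unramified in $K^+$ away from $\pi_r$ (and when $\ell = r$ one treats $\pi_r$ separately, absorbing it into $M(r)$). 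The secondary subtlety is uniformity across the three parameters $(k_1,k_2,k_3)$: one wants a single $M(r)$, which is fine because there are only finitely many triples for each $r$, so one simply takes the maximum. I expect the eventual statement to come out with $M(r)$ effectively computable, e.g. of the shape $M(r) = \max_{\mathfrak q, (k_i)} \prod |\,\mathrm{Res}_{\mathfrak q}(a,b)\,|$ over one convenient splitting prime $\mathfrak q$, which is consistent with the ``computable constant'' phrasing in the later Theorem on absolute irreducibility.
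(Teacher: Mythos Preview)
Your approach is essentially correct and in fact reconstructs the argument behind the black-box the paper invokes. The paper's own proof (given later as Theorem~\ref{irr} in Section~\ref{sec:irr}) is a one-line citation: apply Theorems~1 and~2 of \cite{FS} with $K=K^+$, which produce a constant $M_r > \max\bigl(B_r,\ (1+3^{d_r h_r})^2\bigr)$ depending only on the degree $d_r$, class number $h_r$, and units of $K^+$, such that $\bar\rho_{E,\ell}$ is irreducible for $\ell>M_r$. The input needed from the Frey curves is only that their additive primes lie in the fixed set $\{\mathfrak{P}_2,\pi_r\}$ (Proposition~\ref{condEgamma}), which is independent of $(a,b)$ and of the triple. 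Your sketch --- split $\bar\rho_{E,\ell}$ into characters, bound their ramification via the Artin conductor, use class field theory to bound the order of $\theta$, then derive a Kraus-type congruence at an auxiliary prime --- is precisely the content of those \cite{FS} theorems. The paper gains brevity; your route exposes the mechanism and, as the paper itself remarks after Theorem~\ref{irr}, allows one to sharpen the bound by exploiting explicit good-reduction primes.

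Two small points. First, you propose taking $\mathfrak q$ above a rational prime $q\equiv 1\pmod r$ so that $q$ splits completely in $K^+$, and simultaneously call $\mathfrak q$ a prime of good reduction. These are in tension: $q\equiv 1\pmod r$ is exactly the condition under which $q$ may divide $c$ and hence be multiplicative for $E$ (Lemma~\ref{novaeq3}, Proposition~\ref{condEgamma}); it is primes $q\not\equiv 0,1\pmod r$ that are guaranteed good. The argument still goes through either way (at multiplicative $\mathfrak q$ one has $a_{\mathfrak q}(E)=\pm1$), but the phrasing should be made consistent. Second, your final ``take the max over the $\binom{(r-1)/2}{3}$ triples'' is unnecessary in the paper's framing: since the \cite{FS} bound depends only on $K^+$ and on the set of additive primes, and both are the same for every $r$-suitable triple, uniformity in $(k_1,k_2,k_3)$ comes for free.
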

\begin{tm} Suppose that $(a,b,c)$ is a primitive solution to $(\ref{eq1})$ or $(\ref{eq2})$. Then, the Frey curve $E_{(a,b)}$ over $K^{+}$ is modular.
\end{tm}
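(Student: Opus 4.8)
The plan is to deduce modularity of $E_{(a,b)}/K^+$ from the general modularity theorem stated earlier in the introduction (for elliptic curves over totally real abelian fields that are semistable at all primes above $3$), so the task reduces to two things: checking that $K^+$ satisfies the hypotheses of that theorem, and checking that each Frey curve $E_{(a,b)}$ is semistable at all $v \mid 3$. The first point is immediate: $K^+$ is the maximal totally real subfield of $\Q(\zeta_r)$ with $r \geq 7$ prime, hence it is abelian over $\Q$ (being a subfield of a cyclotomic field), and $3$ is unramified in $K^+$ because $r \neq 3$, so the only rational prime ramifying in $\Q(\zeta_r)$, and hence in $K^+$, is $r$ itself. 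Thus $K^+$ is exactly the kind of field to which the modularity theorem applies.

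For the second point, I would invoke Proposition~\ref{condEgamma}, which computes the conductor $N_E = (\prod_{\mathfrak{P}_2 \mid 2}\mathfrak{P}_2^{s_i})\,\pi_r^t\,\mbox{rad}(c)$ and in particular records that $E$ has good reduction at every $v \mid 3$. Indeed, from the displayed formula $\Delta(E) = \mu^2 2^4(\alpha\beta\gamma)^2 c^{2p}$ (resp.\ with an extra $\pi_r^6$), the only primes of bad reduction lie above $2$, above $r$ (via $\pi_r$, which divides $\alpha\beta\gamma$), or divide $c$; and every prime dividing $c$ lies above a rational prime $\equiv 1 \pmod r$, so in particular is not above $2$, $3$, or $r$. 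Since $3 \nmid 2r$ and no prime above $3$ divides $c$, the curve $E$ has good (hence a fortiori semistable) reduction at all $v \mid 3$. The same argument applies uniformly to every $r$-suitable triple $(k_1,k_2,k_3)$, since Proposition~\ref{condEgamma} is stated for all such triples.

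Combining these two observations, the general modularity theorem applies to $E = E_{(a,b)}^{(k_1,k_2,k_3)}/K^+$ and yields modularity. I do not expect any serious obstacle here: the content of the statement has been front-loaded into the general modularity theorem (whose proof, via modularity lifting theorems over totally real fields, is deferred to Section~\ref{bgmod}) and into the conductor computation of Proposition~\ref{condEgamma}. The only thing to be slightly careful about is making sure that ``semistable at all $v \mid 3$'' is genuinely what the general theorem requires and that good reduction at $v \mid 3$ suffices — which it does, good reduction being a special case of semistable reduction — and that the abelian hypothesis on the base field is met, which it is. So the proof is essentially an assembly of Proposition~\ref{condEgamma} with the general modularity result, with the verification that $K^+/\Q$ is abelian and unramified at $3$.
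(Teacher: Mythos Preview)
Your proposal is correct and follows essentially the same approach as the paper: the paper's proof (given as Corollary~\ref{cor:modularity} in Section~\ref{bgmod}) observes that $K^+$ is abelian totally real with $3$ unramified since $r \geq 7$, invokes the last statement of Proposition~\ref{condEgamma} for semistability at all $v \mid 3$, and then applies the general modularity theorem. Your write-up is in fact slightly more detailed than the paper's, spelling out why no prime above $3$ can divide $c$.
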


Now we will apply the level lowering results for Hilbert modular forms due to Jarvis, Rajaei and Fujiwara to show that $\bar{\rho}_{E,p}$ is modular of level $N(\bar{\rho}_{E,p}) = (\prod_{\mathfrak{P}_2 \mid 2} \mathfrak{P}_2^{s_i})\pi_r^{t}$. To simplify the following description we assume that the class number of $K^+$ is one, but the same argument holds in general (see also the discussion after Theorem 3.3 in \cite{JMee}).\par 
For an ideal $N$ of $K^{+}$ we denote by $S_2 (N)$ the set of Hilbert cuspforms of parallel weight 2, level $N$ and trivial character. Suppose that $(a,b,c)$ is a primitive solution to $(\ref{eq1})$ or $(\ref{eq2})$. It follows from modularity that there exists a newform $f_0 \in S_2 (2^{i}\pi_r^{t}\mbox{rad}(c))$ ($i\in \{2,3,4\}$ and $t \in \{0,2\}$) with coefficients field $\Q_f = \Q$, such that $\rho_{E,p}$ is isomorphic to the $p$-adic representation attached to $f_0$, which we denote by $\rho_{f_0,p}$. Then, for $p > M(r)$ we have that $\bar{\rho}_{E,p}$ is modular and irreducible and we can apply the level lowering theorems. Indeed, since $K^{+}$ might be of even degree, in order to apply the main result of \cite{Raj}, we need to add an auxiliary (special or supercuspidal) prime to 
the level. From \cite{Raj}, section 4, Theorem 5, we can add an auxiliary (special) prime $\mathfrak{q}_0$ that, in particular, satisfies that $\bar{\rho}_{f_0,p}(\mbox{Frob}_{\mathfrak{q}_0})$ is conjugated to $\bar{\rho}_{f_0,p}(\sigma)$, where $\sigma$ is complex conjugation. We now apply the main theorem of \cite{Raj} to remove from the level all primes except those above 2, $p$, the prime $\pi_r$ and $\mathfrak{q}_0$. Now we will remove from the level the primes above $p$ and for that we need $\bar{\rho}_{E,p}|G_{\mathfrak{P}}$ to be finite at all primes $\mathfrak{P} \mid p$. If $\mathfrak{P} \nmid c$ it is of good reduction for $E$ then $\bar{\rho}_{E,p}|G_{\mathfrak{P}}$ is finite; if $\mathfrak{P} \mid c$ it is of multiplicative reduction for $E$ and since we have $p \mid \upsilon_{\mathfrak{P}}(\Delta)$ it follows also that $\bar{\rho}_{E,p}|G_{\mathfrak{P}}$ is finite. Thus from Theorem 6.2 in \cite{Jarv} we can remove the primes above $p$ without changing the weight. Finally, from the 
condition imposed on $\mathfrak{q}_0$ follows that $\mbox{Nm}(\mathfrak{q}_0) \not\equiv 1$ (mod $p$) and we can apply Fujiwara version of Mazur's principle to remove $\mathfrak{q}_0$ from the level. Then we conclude that there exists a newform $f$ in $S_2(N(\bar{\rho}_{E,p}))$ and a prime $\mathfrak{P} \mid p$ in $\Q_{f}$ such that its associated residual Galois representation satisfies 
\begin{equation}
\bar{\rho}_{E,p} \sim \bar{\rho}_{f_0,p} \sim \bar{\rho}_{f,\mathfrak{P}}. 
\label{iso}
\end{equation}
Now, if we show that this congruence cannot hold for all the newforms in the corresponding cusp spaces $S_2(N(\bar{\rho}_{E,p}))$ (i.e. complete step (III)) we have proved that our putative non-trivial primitive solution $(a,b,c)$ to (\ref{eq1}) or (\ref{eq2}) cannot exist. Thus (\ref{general}) also cannot have non-trivial primitive solutions by Lemma \ref{novaeq3}.

\subsection{Step (III) and limitations of the method}
\label{limitations}

The most common method to contradict isomorphism \eqref{iso} is to look at the values $a_q(E)$ and $a_q(f)$ and verify that they cannot be congruent modulo $\mathfrak{P}$ if $p$ is greater than a constant. Unfortunately, this method is limited by the existence of trivial solutions. In particular, an intrinsic problem of what we have done so far is that for some $\mu$ the equations (\ref{eq1}) and (\ref{eq2}) have trivial solutions $\pm(1,0,1)$, $\pm(0, 1,1)$, $(1,1,1)$ and $(1,-1,1)$, $(-1,1,1)$ that correspond to the Frey curves $E_{(1,0)}$, $E_{(1,1)}$ and $E_{(1,-1)}$. Since these curves do exist we will not be able to eliminate their associated newforms simply by comparing the values of $a_q$. However, for suitable values of $C$ the extra condition $C \mid a+b$ will be enough to deal with $E_{(1,0)}$ and $E_{(1,1)}$, but the curve $E_{(1,-1)}$ will resist. To eliminate the newform corresponding to $E_{(1,-1)}$ we need the extra hypothesis $r \nmid a+b$ to achieve a contradiction at the inertia at $\pi_r$.
 From Proposition \ref{trezz2} it follows that for a primitive solution $(a,b,c)$ of (\ref{general}) we have $r \nmid a+b \Leftrightarrow r \nmid c$ thus, using the Frey curves in \eqref{Freycurve}, we are limited to prove the non-existence of primitive first case solutions to equation (\ref{general}). 

\begin{rem}
\label{exceptions}
Nevertheless, with these Frey curves, we can solve some equations completely, i.e removing the restriction $r \nmid c$. This is the case if we consider the equation $x^{2r} + y^{2r} = Cz^p$ and use the Frey curves $F_{(a,b)} := E_{(a^2,b^2)}$. Since the trivial solutions $(1,-1)$ will correspond to the curve $F_{(1,-1)} = E_{(1,1)}$, in principle, we will be able to eliminate its attached modular form because of the condition $C \mid a+b$. This will be illustrated when dealing with the case $r=7$.
\end{rem}

There are also computational limitations to the strategy, because the degree of $K^{+}$ is $(r-1)/2$ and increases with $r$. In particular, the norm of the ideals $(2)$ and $\pi_r$ will increase and consequently the dimension of the required spaces of cuspforms becomes huge very fast. For example, when $r=11$ the norm of $2^4 \pi_r^2$ is $2^{20} 11^2$ and the dimension of $S_2(2^4 \pi_r^2)$ is $5406721$. Thus, already for small values of $r$ computing the corresponding Hilbert newspace in $S_2(2^4 \pi_r^2)$ is infeasible. However, if $r \equiv 1$ (mod 6) the computational requirements can be reduced, as we will now show.

\subsection{Exponents of the form $6k+1$.} \label{descend} Let $r = 6k+1$ be a prime. The degree of $\phi_r$ is $6k$ then it admits $k$ factors $\phi_i$ for $1 \leq i \leq k$ with degree six and coefficients in the totally real subfield of $K^{+}$ of degree $k$, that we denote by $K_0$. Let $\sigma$ be a generator of $\mbox{Gal}(\Q(\zeta)/\Q)$ and let $\phi_1$ be the factor of $\phi_r$ given by 
$$ \phi_1 = \prod_{i=0}^5{(x + \sigma^{ik}(\zeta)y)}.$$  
Let also the polynomials $f_i$ be given by
$$\begin{cases}
 f_1(x,y) = (x+\zeta y)(x+\sigma^{3k}(\zeta)y),\\
 f_2(x,y) = (x+\sigma^{2k}(\zeta)y)(x+\sigma^{5k}(\zeta)y),\\
 f_3(x,y) = (x+\sigma^{k}(\zeta)y)(x+\sigma^{4k}(\zeta)y).\\
\end{cases}$$
Observe that this choice of polynomials correspond to the triple $(1,n_2,n_3)$, where $\zeta^{n_2} = \sigma^{2k}(\zeta)$ and $\zeta^{n_3} = \sigma^{4k}(\zeta)$ (we can assume $n_2 < n_3$ otherwise interchange $f_2$ and $f_3$). Note also that $\phi_1 = f_1 f_2 f_3 = f_{1} f_{n_2} f_{n_3}$ is defined over $K_0$ hence, for this triple, equations (\ref{eq1}) and (\ref{eq2}) are defined over $K_0$. As explained before this give rise to the following linear system
$$\begin{cases}
 \alpha  + \beta + \gamma = 0\\
 \alpha (\zeta +\sigma^{3k}(\zeta)) + \beta(\sigma^{2k}(\zeta) +\sigma^{5k}(\zeta)) + \gamma(\sigma^{4k}(\zeta) +\sigma^{k}(\zeta)) = 0 \\
 \alpha  + \beta + \gamma = 0\\
\end{cases}$$
and we pick the solution
$$\begin{cases}
 \alpha  = \sigma^{4k}(\zeta) + \sigma^{k}(\zeta) - \sigma^{2k}(\zeta) - \sigma^{5k}(\zeta)\\
 \beta   = \zeta + \sigma^{3k}(\zeta)             - \sigma^{4k}(\zeta) - \sigma^{k}(\zeta)\\
 \gamma  = \sigma^{2k}(\zeta) + \sigma^{5k}(\zeta) - \zeta - \sigma^{3k}(\zeta) \\
\end{cases}$$
Observe that this choice for $(\alpha, \beta, \gamma)$ could also be obtained by replacing in the general solution for $(\alpha,\beta,\gamma)$ the triple $(k_1,k_2,k_3)$ by $(1,n_2,n_3)$. 
Again, we let $A_{(a,b)} = \alpha f_1(a,b)$ , $B_{(a,b)} = \beta f_2(a,b)$ , $C_{(a,b)} = \gamma f_3(a,b)$, hence we have $A + B + C = 0$ and, as before, we can consider the Frey curve over $K^{+}$ given by
$$E_{(a,b)}:= E_{(a,b)}^{(1,n_2,n_3)} : Y^2 = X(X-A_{(a,b)})(X+B_{(a,b)}).$$

For the rest of this section we have fixed  $(k_1,k_2,k_3)=(1,n_2,n_3)$. 
\begin{pp} Let $r=6k+1 \geq 7$ be a prime and $(k_1, k_2, k_3)= (1,n_2,n_3)$. Suppose that $(a,b,c)$ is a primitive solution of $(\ref{eq1})$ or $(\ref{eq2})$. Then the Frey curves $E_{(a,b)}/K^{+}$ has a model over $K_0$.
\label{isos}
\end{pp}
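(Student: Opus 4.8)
The plan is to exhibit an explicit $\mathrm{Gal}(K^+/K_0)$-equivariant relation among the three quantities $A_{(a,b)}$, $B_{(a,b)}$, $C_{(a,b)}$ that produces an isomorphism of the Frey curve onto a curve defined over $K_0$. Recall that $\mathrm{Gal}(K^+/\Q)$ is cyclic of order $3k$ and that $K_0$ is its unique subfield of index $3$; let $\tau$ denote the generator of $\mathrm{Gal}(K^+/K_0)$, so $\tau$ is the image in $\mathrm{Gal}(K^+/\Q)$ of $\sigma^{k}$ restricted to $K^+$. The key point is that $\tau$ permutes the three factors $f_1, f_2, f_3$ cyclically: from the defining formulas $f_1 = (x+\zeta y)(x+\sigma^{3k}(\zeta)y)$, $f_2=(x+\sigma^{2k}(\zeta)y)(x+\sigma^{5k}(\zeta)y)$, $f_3=(x+\sigma^k(\zeta)y)(x+\sigma^{4k}(\zeta)y)$ (all read in $K^+$ via $\zeta^i+\zeta^{-i}$), applying $\sigma^k$ sends the pair of exponents $\{1,3k\}$ to $\{k,4k\}$, $\{2k,5k\}$ to $\{3k,6k\}=\{3k,1\}$ (since $\sigma^{6k}(\zeta)=\sigma(\zeta)$ has the same trace as $\zeta$ mod inverses — here one uses $r=6k+1$), and $\{k,4k\}$ to $\{2k,5k\}$. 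Hence $f_1^\tau = f_3$, $f_2^\tau = f_1$, $f_3^\tau = f_2$ (up to the obvious reindexing; the precise cycle is immaterial). Correspondingly, applying $\tau$ to the chosen $(\alpha,\beta,\gamma)$ permutes them the same way, so $A^\tau$, $B^\tau$, $C^\tau$ is a cyclic permutation of $A$, $B$, $C$.

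Next I would record the standard fact about the two-torsion model $Y^2 = X(X-A)(X+B)$ with $A+B+C=0$: replacing the ordered triple of roots $(0, A, -B)$ by another ordered triple obtained by an affine transformation $X \mapsto \lambda X + t$ yields an isomorphic (or quadratic-twist-related) curve, and the $\mathfrak{S}_3$ acting on the three roots $\{0,A,-B\}$ — equivalently on the symmetric set $\{A,B,C\}$ since $C=-A-B$ — is realized by explicit $X$-substitutions together with quadratic twists. Concretely, the curves $E_{A,B,C}$, $E_{B,C,A}$, $E_{C,A,B}$ obtained by cyclically permuting the triple are all isomorphic over $K^+$ via translation of $X$ (cyclic permutations are even, so no twist is needed; only transpositions introduce a twist by $-1$). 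Therefore $E_{(a,b)}^\tau$, whose defining triple is the cyclic shift of that of $E_{(a,b)}$, is $K^+$-isomorphic to $E_{(a,b)}$ itself — and in fact one can pin down the isomorphism $\psi : E_{(a,b)} \to E_{(a,b)}^\tau$ explicitly as an $X$-translation with coefficients in $K^+$, so that the triple $(E_{(a,b)}, \psi)$ is a candidate descent datum.

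Then the plan is to verify the descent (cocycle) condition: since $\tau^3 = \mathrm{id}$, one must check $\psi^{\tau^2} \circ \psi^\tau \circ \psi = \mathrm{id}$ as a morphism $E_{(a,b)} \to E_{(a,b)}$. This is a direct computation with the explicit translations just written down: composing three $X$-translations whose parameters are the cyclic $\tau$-orbit of a single element $t \in K^+$, and using $t + t^\tau + t^{\tau^2} \in K_0$ together with the relation $A+B+C=0$, forces the composite to be the identity. By Weil descent for the cyclic extension $K^+/K_0$ (automorphisms of an elliptic curve with a rational point present no $H^2$ obstruction), the descent datum $(E_{(a,b)},\psi)$ defines an elliptic curve over $K_0$ whose base change to $K^+$ is $E_{(a,b)}$, which is exactly the assertion. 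The main obstacle is the bookkeeping in the first two steps: correctly tracking how $\sigma^k$ permutes the six exponents modulo the $i \leftrightarrow -i$ identification in $K^+$, and matching the resulting permutation of $\{A,B,C\}$ to the correct sequence of $X$-translations (making sure the cyclic three-cycle really lands in the twist-free part of $\mathfrak{S}_3$); once the explicit $\psi$ is in hand, the cocycle check is a short algebraic identity.
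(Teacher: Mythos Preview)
Your argument is correct, but the paper takes a more direct route that avoids Weil descent altogether. Having established exactly as you do that the generator of $\mathrm{Gal}(K^+/K_0)$ (the paper uses $\sigma^{2k}$ rather than $\sigma^k$, but they generate the same order-$3$ subgroup) cyclically permutes $A$, $B$, $C$, the paper simply rewrites $E_{(a,b)}$ in short Weierstrass form $Y^2 = X^3 + a_4 X + a_6$ and checks by hand that both coefficients are Galois-invariant: $a_4 = -432(AB+BC+CA)$ is visibly symmetric, and $a_6 = -1728(2A^3+3A^2B-3AB^2-2B^3)$ is shown to equal its image under $A\mapsto B\mapsto C\mapsto A$ by a two-line manipulation using $A+B+C=0$. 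This \emph{exhibits} a model over $K_0$ rather than merely proving one exists.

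Your descent argument is sound --- the three $X$-translations have parameters $B$, $B^\tau=C$, $B^{\tau^2}=A$, whose sum is zero, so the cocycle condition is indeed an identity --- but it is more machinery than needed here, and the remark about an $H^2$ obstruction is superfluous (with explicit isomorphisms satisfying the cocycle condition on the nose, Weil descent applies without further hypothesis). The practical advantage of the paper's approach is that the explicit short Weierstrass model is immediately available for the conductor computation that follows in Proposition~\ref{cond3}; your approach would require an extra step to write the descended curve down.
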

\begin{proof} First observe that $\sigma^{2k}$ (mod $\sigma^{3k}$) has order 3 and generates $\mbox{Gal}(K^{+}/K_0)$. Furthermore, 
$\sigma^{2k}(\alpha)=\beta$, $\sigma^{2k}(\beta)=\gamma$, $\sigma^{2k}(\gamma)=\alpha$ and also $\sigma^{2k}(f_1)=f_2$, $\sigma^{2k}(f_2)=f_3$, 
$\sigma^{2k}(f_3)=f_1$, then
$$ \sigma^{2k}(A) = B , \hspace{1cm} \sigma^{2k}(B) = C , \hspace{1cm}  \sigma^{2k}(C) = A.$$  
Now we write $E_{(a,b)}$ in the short Weierstrass form to get a model
$$\begin{cases}
   E_{(a,b)} : Y^2 = X^3 + a_4 X + a_6, \mbox{ where } \\
   a_4 = -432(AB + BC + CA) \\
   a_6 = -1728 (2A^3 + 3A^2B - 3AB^2 - 2B^3) \\
\end{cases}$$
Since $a_4$ is clearly invariant under $\sigma^{2k}$ and
\begin{eqnarray*}
  a_6  & = & -1728 (2A^3 + 3A^2B - 3AB^2 - 2B^3) = \\
       & = &  -1728 (2(-B-C)^3 + 3(-B-C)^2B - 3(-B-C)B^2 - 2B^3) = \\
       & = & -1728 (2B^3 + 3B^2C - 3BC^2 - 2C^3) = \sigma^{2k}(a_6)  \
\end{eqnarray*}
we conclude that the short Weierstrass model is defined over $K_0$.\end{proof}

Let $\pi_2$ and $\pi_r$ denote a prime in $K_0$ above 2 and $r$, respectively.
\begin{pp} Let $r=6k+1 \geq 7$ be a prime. Fix the triple $(k_1, k_2, k_3)$ to be $(1,n_2,n_3)$. Suppose that $(a,b,c)$ is a primitive solution of $(\ref{eq1})$ or $(\ref{eq2})$. The conductor of the curves $E=E_{(a,b)}$ over $K_0$ is of the form 
$$N_E = (\prod_{\mathfrak{P}_2 \mid 2} \mathfrak{P}_2^{s_i}) \pi_r^2 \mbox{rad}(c),$$
where $s_i$ may be $2,3$ or $4$.
\label{cond3}
\end{pp}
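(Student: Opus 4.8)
The plan is to prove Proposition~\ref{cond3} by reducing it to the conductor computation already carried out over $K^+$ in Proposition~\ref{condEgamma}, using that the extension $K^+/K_0$ is unramified at all the relevant primes. First I would record the key arithmetic input: since $r = 6k+1$, we have $[K^+ : K_0] = 3$, and this is a subextension of $\Q(\zeta_r)/\Q$ in which the only ramified rational prime is $r$; moreover $r$ is totally ramified in $\Q(\zeta_r)$, and the unique prime $\pi_r$ of $K_0$ above $r$ remains (up to the labelling fixed in the notation) the prime of $K^+$ above $r$, with $e(K^+/K_0) = 3$ at $\pi_r$ — wait, more precisely $K^+/K_0$ is ramified only at the prime above $r$. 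So at every prime $\mathfrak{P}_2 \mid 2$ and at every prime $\ell \mid c$ (which lies above a rational prime $\equiv 1 \pmod r$, hence unramified in $\Q(\zeta_r)$), the extension $K^+/K_0$ is unramified. This means the local behavior of $E_{(a,b)}$ at such a prime of $K_0$ can be read off from its behavior at the prime of $K^+$ above it, which we already know.

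The key steps, in order, are: (1) By Proposition~\ref{isos}, $E = E_{(a,b)}^{(1,n_2,n_3)}$ has a model over $K_0$ with $a_4 = -432(AB+BC+CA)$ and $a_6 = -1728(2A^3+3A^2B-3AB^2-2B^3)$; compute the associated $c_4$, $c_6$, $\Delta$, $j$ over $K_0$ — these agree (up to the usual scaling factors) with the quantities listed before Proposition~\ref{condEgamma}, so in particular $\Delta(E) = \mu^2 2^4 (\alpha\beta\gamma)^2 c^{2p}$ or $\mu^2 2^4(\alpha\beta\gamma)^2 \pi_r^6 c^{2p}$ as ideals of $K_0$, and $\upsilon_{\pi_r}(\alpha\beta\gamma) = 3$ exactly as before since $\alpha,\beta,\gamma$ are still $\pm\zeta^s(1-\zeta^t)(1-\zeta^u)$ with $t,u \not\equiv 0 \pmod r$. (2) For a prime $\mathfrak{q}$ of $K_0$ dividing $c$: it is unramified over $\Q$ and over $K^+$, $\upsilon_{\mathfrak{q}}(\Delta) = 2p\,\upsilon_{\mathfrak{q}}(c) > 0$ while $\upsilon_{\mathfrak{q}}(c_4) = 0$ (same argument via Proposition~\ref{factores}, noting $\mathfrak{q}$ divides only one of $A,B,C$), so $E$ has multiplicative reduction and contributes $\mathfrak{q}^1$, giving the factor $\mathrm{rad}(c)$. (3) At $\pi_r$: here I would argue directly over $K_0$. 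We have $\upsilon_{\pi_r}(\Delta) = 6$ when $r \nmid a+b$ and $12$ when $r \mid a+b$ — actually I need to be careful about the ramification index of $\pi_r$ in $K_0/\Q$, which is $(r-1)/6 \cdot$ (no: $r\OO_{K_0} = \pi_r^{(r-1)/6}$ since $[K_0:\Q] = (r-1)/6$ wait, $[K_0:\Q] = k = (r-1)/6$), so $\upsilon_{\pi_r}(r) = (r-1)/6$; recomputing, $\upsilon_{\pi_r}(\alpha\beta\gamma) = 3$ still (the factors $1-\zeta^t$ have $\pi_r$-valuation computed in $K^+$ then restricted, but $\alpha\beta\gamma \in K_0$), and the proposition asserts $\upsilon_{\pi_r}(N_E) = 2$ unconditionally here — I would verify via Papadopoulos' tables \cite{pap} (or Tate's algorithm) that $\upsilon_{\pi_r}(\Delta) \in \{6, 12\}$ together with the values of $\upsilon_{\pi_r}(c_4), \upsilon_{\pi_r}(c_6)$ forces additive reduction with $\upsilon_{\pi_r}(N_E) = 2$ in the case $r\nmid a+b$, and I would check that in the case $r \mid a+b$ the curve still has additive (not good) reduction at $\pi_r$ over $K_0$ — this is the one place the $K_0$ statement genuinely differs from the $K^+$ statement, where $\pi_r$-reduction was good when $r \mid a+b$. (4) At $\mathfrak{P}_2 \mid 2$: since $2$ is unramified in $\Q(\zeta_r)$, hence in $K_0$, I would invoke Table IV of \cite{pap} exactly as in the proof of Proposition~\ref{condEgamma}: from $\upsilon_{\mathfrak{P}_2}(\Delta) = 4$, $\upsilon_{\mathfrak{P}_2}(c_6) = 5$, $\upsilon_{\mathfrak{P}_2}(c_4) \geq 4$ (computed from the model over $K_0$), the equation is minimal and the conductor exponent $s_i \in \{2,3,4\}$ with Kodaira type II, III or IV.

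The main obstacle I expect is step~(3), the behavior at $\pi_r$: the subtlety is that the proposition as stated gives $\pi_r^2 \| N_E$ \emph{unconditionally}, whereas over $K^+$ the answer depended on whether $r \mid a+b$. I would need to check whether the valuations of $\Delta, c_4, c_6$ at $\pi_r$ over $K_0$ — which differ from those over $K^+$ by the factor $e(K^+/K_0) = 3$ at $\pi_r$ (so $\upsilon_{\pi_r^{K_0}} = \frac{1}{3}\upsilon_{\pi_r^{K^+}}$, making the $K^+$-valuations $6,12$ become $2,4$ over $K_0$) — really do land in a minimal-equation regime with additive reduction and conductor exponent $2$ in both cases. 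This is a finite check against \cite{pap}, but it is where the argument is most easily gotten wrong, and it is why I would present the $\pi_r$ analysis carefully rather than waving at "the same argument as Proposition~\ref{condEgamma}."
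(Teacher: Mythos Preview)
Your approach is essentially the same as the paper's, and your identification of $\pi_r$ as the one genuinely new case is exactly right: the paper argues just as you do that the $K^+$-valuations $6,12$ of $\Delta$ become $2,4$ over $K_0$, that $\upsilon_{\pi_r}(c_4)>0$, and that in residue characteristic $\geq 5$ this forces additive reduction with conductor exponent~$2$ in both cases.

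There is one bookkeeping slip in your step~(4). You acknowledge in step~(1) that the short Weierstrass model over $K_0$ has invariants scaled by $6^4,6^6,6^{12}$ relative to the original $Y^2=X(X-A)(X+B)$, but then in step~(4) you quote $\upsilon_{\mathfrak P_2}(\Delta)=4$, $\upsilon_{\mathfrak P_2}(c_6)=5$, $\upsilon_{\mathfrak P_2}(c_4)\ge 4$ for the $K_0$-model. Those are the valuations of the \emph{original} model (which is not defined over $K_0$); for the short Weierstrass model one has $\upsilon_{\mathfrak P_2}(\Delta)=16$, $\upsilon_{\mathfrak P_2}(c_6)=11$, $\upsilon_{\mathfrak P_2}(c_4)\ge 8$, so the equation is non-minimal at $\mathfrak P_2$ and one must first change variables to reach $(4,5,\ge 4)$, as the paper does, before reading off $s_i\in\{2,3,4\}$ from Table~IV of \cite{pap}. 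Alternatively, since $K^+/K_0$ is unramified at every $\mathfrak P_2\mid 2$ and every $\mathfrak q\mid c$, you could bypass this entirely by noting that the conductor exponent is invariant under unramified base change and simply citing Proposition~\ref{condEgamma} at those primes; this is cleaner than what either you or the paper writes down.
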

\begin{proof} Writing a curve in short Weierstrass form changes the values of $\Delta$, $c_4$ and $c_6$ by a factor of $6^{12}$, $6^4$ and $6^6$. Since the primes dividing $6$ do not ramify in $K/K_0$ and do not divide $c$ the conductor of $E$ at primes dividing $c$ is the same as before.\par 
Since $\pi_r$ factorizes as the cube of the ideal above $r$ in $K^{+}$ we see from the third paragraph in the proof of Proposition \ref{condEgamma} that $\upsilon_{\pi_r}(\Delta(E)) = 4$ or $2$. Also, $\upsilon_{\pi_r}(c_4(E)) > 0$ and since we are in characteristic $\geq 5$ this implies that the equation is minimal and has bad additive reduction with $\upsilon_{\pi_r} (N_{E}) = 2$.\par
It easily can be seen that $\upsilon_{\pi_2}(\Delta(E)) = 16$, $\upsilon_{\pi_2}(c_6(E)) = 11$ and $\upsilon_{\pi_2}(c_4(E)) \geq 8$. Table IV in \cite{pap} tell us that the equation is not minimal and after a change of variables we have $\upsilon_{\pi_2}(\Delta(E)) = 4$, $\upsilon_{\pi_2}(c_6(E)) = 5$ and $\upsilon_{\pi_2}(c_4(E)) \geq 4$. Now exactly as in the proof of Proposition \ref{condEgamma} we can conclude that $\upsilon_{\pi_2}(N_{E})$ may be  2, 3, or 4.\end{proof}

The existence of a model over $K_0$ has advantages: a direct adaptation of the proof of Theorem \ref{irr} below will give us a smaller constant $M(r)$; also, by arguing exactly as in the proofs of Proposition \ref{notram} and Corollary \ref{artinc}, where instead of Proposition \ref{condEgamma} we use Proposition \ref{cond3}, it follows

\begin{pp} Let $(a,b,c)$ be a primitive solution of equation $(\ref{eq1})$ or $(\ref{eq2})$. Then, 
\begin{enumerate}
 \item If $\ell \neq p$ is a prime dividing $c$ then the representation $\bar{\rho}_{E,p}$ attached to the Frey curve $E = E_{(a,b)}$ is unramified at $\ell$.
 \item The Artin conductor $N(\bar{\rho}_{E,p})$ is equal to $(\prod_{\mathfrak{P}_2 \mid 2} \mathfrak{P}_2^{s_i})\pi_{r}^2$, where $s_i$ is as in Proposition $\ref{cond3}$.
\end{enumerate}
\label{artinc2I}
\end{pp}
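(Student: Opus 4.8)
The plan is to transfer, almost verbatim, the arguments used for the $K^{+}$-model in Proposition~\ref{notram} and Corollary~\ref{artinc}, simply replacing the conductor computation of Proposition~\ref{condEgamma} by that of Proposition~\ref{cond3}. The statement is really a restatement of those two results for the $K_0$-model, so I do not expect a genuine obstacle.

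For part (1), fix a prime $\ell \neq p$ of $K_0$ dividing $c$. First I would note that $\ell$ is unramified in $K_0$: it lies above a rational prime $\equiv 1 \pmod{r}$ (by Lemma~\ref{novaeq2}), in particular above a prime $\neq r$, and $r$ is the only rational prime ramifying in $\Q(\zeta)$, hence in $K_0$. By Proposition~\ref{cond3}, $\ell \mid c$ forces $E$ to have multiplicative reduction at $\ell$; moreover, passing from the Weierstrass model \eqref{Freycurve} to the short model used in Proposition~\ref{cond3} only scales $\Delta$ by a power of $6$, and the primes dividing $6$ neither ramify in $K_0$ nor divide $c$, so the minimal discriminant at $\ell$ still satisfies $\upsilon_\ell(\Delta_{\min}) = 2p\,\upsilon_\ell(c) \equiv 0 \pmod{p}$. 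By the theorem of Hellegouarch on the Tate parametrisation (used exactly as in the proof of Proposition~\ref{notram}), $\bar\rho_{E,p}$ is then unramified at $\ell$.

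For part (2), by definition $N(\bar\rho_{E,p})$ is prime to $p$, and part (1) shows it is also prime to every $\ell \mid c$ with $\ell \neq p$; hence only the additive primes of $E$ — those above $2$ and the prime $\pi_r$ — can contribute. At these primes, Carayol's results \cite{Car1} guarantee that the conductor exponent does not drop when $\rho_{E,p}$ is reduced to $\bar\rho_{E,p}$, save possibly for a finite list of small $p$. Combining this with the shape of $N_E$ given in Proposition~\ref{cond3}, in which the exponent at $\pi_r$ is $2$ unconditionally and the exponents at the primes above $2$ are the $s_i \in \{2,3,4\}$, yields $N(\bar\rho_{E,p}) = (\prod_{\mathfrak{P}_2 \mid 2} \mathfrak{P}_2^{s_i})\pi_r^{2}$, exactly as in Corollary~\ref{artinc}.

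The only points requiring a moment's care are (i) that the short-model normalisation of Proposition~\ref{cond3} does not disturb the valuation of the discriminant at the primes dividing $c$ (handled above via the coprimality of $6$ and $c$), and (ii) the standard caveat about finitely many small $p$ in the application of Carayol's theorem, which is harmless since it is absorbed into the constant $M(r)$ already present in the statements of Section~\ref{levellowering}.
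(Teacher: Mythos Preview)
Your proposal is correct and follows exactly the route the paper indicates: it transfers the proofs of Proposition~\ref{notram} and Corollary~\ref{artinc} to the $K_0$-model, substituting Proposition~\ref{cond3} for Proposition~\ref{condEgamma}. The extra care you take in (i) about the factor of $6^{12}$ not affecting $\upsilon_\ell(\Delta_{\min})$ at primes $\ell \mid c$ is a detail the paper leaves implicit (it is covered in the first line of the proof of Proposition~\ref{cond3}), and your caveat (ii) about finitely many small $p$ is the same one already present in Corollary~\ref{artinc}.
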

Moreover, assuming modularity of the curves $E_{(a,b)} / K_0$ we can argue exactly as we did over $K^{+}$ to apply the results on level lowering. This leads to the computation of Hilbert newforms over $K_0$ which is a number field of dimension $k$ when \textit{a priori} we were over $K^{+}$ of dimension $3k$. In section \ref{sete} we will take advantage of this to solve equations for $r=7$.

\section{Constructing Frey curves part II}
\label{curvasII}

Let $r \geq 7$ be a prime and fix $\zeta := \zeta_r$. Let $K^{+}$ the maximal totally real subfield of $\Q(\zeta)$. Now, instead of choosing three factors $f_{k_i}$ we will use only two, together with the polynomial $(x+y)^2$. That is, we pick 
$$ f_{k_1} = x^2 + (\zeta^{k_1} + \zeta^{-k_1}) xy + y^2 \quad \quad  f_{k_2} = x^2 + (\zeta^{k_2} + \zeta^{-k_2}) xy + y^2 $$
and we want to find $(\alpha,\beta,\gamma)$ such that
$$\alpha (x+y)^2 + \beta f_{k_1} + \gamma f_{k_2} = 0. $$
In particular, we can take
$$\begin{cases}
 \alpha = \zeta^{k_2} + \zeta^{-k_2} - \zeta^{k_1} - \zeta^{k_1}, \\
 \beta = 2 - \zeta^{k_2} - \zeta^{-k_2}, \\
 \gamma = \zeta^{k_1} + \zeta^{-k_1} - 2. \
\end{cases}$$
Write $A_{(a,b)} = \alpha (a+b)^2$, $B_{(a,b)} = \beta f_{k_1}(a,b)$, $C_{(a,b)} = \gamma f_{k_2}(a,b)$ and define
\begin{equation}
E_{(a,b)}^{(k_1,k_2)} : Y^2 = X(X-A_{(a,b)})(X + B_{(a,b)})
\label{FreycurveII} 
\end{equation}
 
\subsection{The conductor of $E_{(a,b)}^{(k_1,k_2)}$}

Let $C$ be an integer divisible only by primes $q \not\equiv 1,0 \pmod{r}$. Suppose we have a putative primitive solution $(a,b,c)$ to the equation $x^r + y^r = Cz^p$.\\

Recall that $\pi_r$ is the only prime in $K^+$ dividing $r$. From Lemma \ref{novaeq3} there is $c_0 \in \Z$ and $c_1,c_2 \in \OO_{K^+}$ all dividing $c$ such that 
\begin{enumerate}
\item $a+b=Cc_0^p$
\item $f_{k_i}(a,b) = \mu_i c_i^p$ for some $\mu_i \in \OO_{K^+}^{\times}$ for $i=1,2$
\end{enumerate}
if $r \nmid a+b$ or,
\begin{enumerate}
\item $a+b=Cr^{k}c_0^p$ with $k>0$.
\item $f_{k_i}(a,b) = \mu_i \pi_r c_i^p$ for some $\mu_i \in \OO_{K^+}^{\times}$ for $i=1,2$
\end{enumerate}
if $r \mid a+b$. Moreover, $c_0,c_1,c_2$ are pairwise coprime and $c_1,c_2$ are divisible only by primes above rational primes congruent to 1 modulo $r$. The Frey curve $E_{(a,b)}^{(k_1,k_2)}$ has the invariants 
$$\begin{cases}
   \Delta(E) = & 2^{4}(\alpha\beta\gamma)^2 (a+b)^4(c_1c_2)^{2p} ,  \\ 
    c_4(E) = & 2^{4}(\alpha\beta (a+b)^2 c_1^p + \beta\gamma (c_1 c_2)^p + \alpha\gamma (a+b)^2 c_2^p), \\
    c_6(E) = & -2^{5}(\gamma c_2^p + 2\beta c_1^p)(\alpha (a+b)^2 + 2\beta c_1^p)(2\alpha (a+b)^2 + \beta c_1^p), \
\end{cases}$$    
if $r \nmid a+b$. If $r \mid a+b$ the invariants are given by
$$\begin{cases}
   \Delta(E) = & 2^{4}(\alpha\beta\gamma)^2 \pi_r^2 (a+b)^4(c_1c_2)^{2p} ,  \\ 
    c_4(E) = & 2^{4}(\alpha\beta \pi_r (a+b)^2 c_1^p + \beta\gamma \pi_r^2 (c_1 c_2)^p + \alpha\gamma \pi_r (a+b)^2 c_2^p), \\
    c_6(E) = & -2^{5}(\gamma \pi_r c_2^p + 2\beta \pi_r c_1^p)(\alpha (a+b)^2 + 2\beta \pi_r c_1^p)(2\alpha (a+b)^2 + \beta \pi_r c_1^p). \
\end{cases}$$    

In the previous formulas we did not included the units $\mu_i$ since they make no difference in what follows. For an element $c \in \OO_{K^+}$ let $\mbox{Rad}_{2r}(c)$ denote the product of the primes ideals $\p \nmid 2r$ that divide $c$.

\begin{pp} Fix $r\geq 7$ and $(k_1,k_2)$ as above. The conductor $N_E$ of the curves $E = E^{(k_1,k_2)}_{(a,b)}$ is of the form 
$$ (\prod_{\mathfrak{P}_2 \mid 2} \mathfrak{P}_2^{s_i}) \pi_r^t \mbox{Rad}_{2r}((a+b)c_1c_2) \quad \mbox{ where } \quad s_i \in \{0,1,2,3,4 \},$$
and $t=1$ or $2$ if $r \mid a+b$ or $r \nmid a+b$, respectively. In particular, the curve $E$ is semistable at all primes $v \mid 3$
\label{condEgammaII}
\end{pp}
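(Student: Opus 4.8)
The plan is to read off $N_E$ prime by prime from the invariants $\Delta(E),c_4(E),c_6(E)$ displayed above, following the template of the proof of Proposition~\ref{condEgamma}; the only genuinely new feature is the factor $(a+b)^2$ inside $A_{(a,b)}$, which forces a separate discussion at the primes above $2$. I first record the arithmetic of the constants: writing $u=\zeta^{k_1}+\zeta^{-k_1}$, $v=\zeta^{k_2}+\zeta^{-k_2}$ one checks $\gamma=\zeta^{-k_1}(1-\zeta^{k_1})^2$, $\beta=-\zeta^{-k_2}(1-\zeta^{k_2})^2$ and $\alpha=\zeta^{-k_2}(1-\zeta^{k_2-k_1})(1-\zeta^{k_1+k_2})$. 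Since $1\le k_1<k_2\le(r-1)/2$, none of $k_1,k_2,k_2-k_1,k_1+k_2$ is divisible by $r$, so each of $\alpha,\beta,\gamma$ is a unit times a product of elements $1-\zeta^j$ with $j\not\equiv0\pmod r$; as $\Norm_{\Q(\zeta)/\Q}(1-\zeta^j)=r$, the only prime of $K^{+}$ dividing $\alpha\beta\gamma$ is $\pi_r$, and using $\upsilon_{\mathfrak{P}_r}(1-\zeta^j)=1$ together with the fact that $r$ is ramified of index $2$ in $\Q(\zeta)/K^{+}$ one gets $\upsilon_{\pi_r}(\alpha)=\upsilon_{\pi_r}(\beta)=\upsilon_{\pi_r}(\gamma)=1$.

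Next the primes $\mathfrak{P}\nmid 2r$: there $\upsilon_{\mathfrak{P}}(\Delta(E))=\upsilon_{\mathfrak{P}}\big((a+b)^4(c_1c_2)^{2p}\big)$, so $E$ has good reduction unless $\mathfrak{P}$ divides $(a+b)c_1c_2$; and in the latter case, since $c_0,c_1,c_2$ are pairwise coprime and $a+b$ is coprime to $c_1c_2$ (Lemma~\ref{novaeq3}), $\mathfrak{P}$ divides exactly one of $A,B,C$, whence $\upsilon_{\mathfrak{P}}(c_4(E))=0$ and $E$ has multiplicative reduction with a minimal model. This accounts for the factor $\mathrm{Rad}_{2r}((a+b)c_1c_2)$, and since $3\nmid 2r$ it already gives the last assertion, that $E$ is semistable at every $v\mid 3$. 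At $\pi_r$ (coprime to $2,c_1,c_2$): if $r\nmid a+b$ then $\upsilon_{\pi_r}(\Delta)=6<12$ and $\upsilon_{\pi_r}(c_4)\ge 2$, so, the residue characteristic being $\ge7$, the model is minimal and $E$ has additive reduction with $\upsilon_{\pi_r}(N_E)=2$; if $r\mid a+b$ then $\upsilon_{\pi_r}(a+b)=k(r-1)/2$ is large, so among $AB,BC,CA$ only $BC$ has minimal $\pi_r$-valuation, giving $\upsilon_{\pi_r}(c_4)=4$ and $\upsilon_{\pi_r}(\Delta)\ge 22$; the model is then non-minimal, and the single scaling $x,y\mapsto\pi_r^2x,\pi_r^3y$ produces $\upsilon_{\pi_r}(c_4)=0$, i.e. multiplicative reduction and $\upsilon_{\pi_r}(N_E)=1$. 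Hence $t\in\{1,2\}$ as stated.

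It remains to treat the primes $\mathfrak{P}_2\mid 2$ (unramified, since $2$ is unramified in $\Q(\zeta_r)$). As $\alpha,\beta,\gamma,\mu_i,\pi_r,c_1,c_2$ are all $\mathfrak{P}_2$-units, the valuations $\upsilon_{\mathfrak{P}_2}(\Delta)=4+4m$, $\upsilon_{\mathfrak{P}_2}(c_4)$ and $\upsilon_{\mathfrak{P}_2}(c_6)$ depend only on $m:=\upsilon_2(a+b)$. For $m=0$ one finds $\upsilon_{\mathfrak{P}_2}(\Delta)=4$, $\upsilon_{\mathfrak{P}_2}(c_6)=5$, $\upsilon_{\mathfrak{P}_2}(c_4)\ge4$, so exactly as in Proposition~\ref{condEgamma}, Table~IV of \cite{pap} gives Kodaira type II, III or IV and $s_i\in\{2,3,4\}$. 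For $m\ge1$ one computes $\upsilon_{\mathfrak{P}_2}(c_4)=4$, $\upsilon_{\mathfrak{P}_2}(c_6)=6$, $\upsilon_{\mathfrak{P}_2}(\Delta)=4+4m$; checking the $2$-adic minimality criterion (Kraus), for $m$ large enough the model is non-minimal and, after one scaling by $2$, has good or multiplicative reduction ($s_i\in\{0,1\}$), while for the few remaining small values of $m$ Table~IV of \cite{pap} bounds $s_i$ by $4$. In every case $s_i\in\{0,1,2,3,4\}$.

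The step I expect to be the main obstacle is this last one. Because $a+b$ equals $Cc_0^p$ (resp.\ $Cr^kc_0^p$) and $C$ — and, when $C$ is odd, possibly $c_0$ — may be even, $m=\upsilon_2(a+b)$ is unbounded, so, unlike in Proposition~\ref{condEgamma}, non-minimal Weierstrass models at $2$ genuinely occur; the Kodaira type then has to be tracked through several subcases of Papadopoulos' $p=2$ tables, with Kraus's $p=2$ minimality criterion used to decide when the change of variables is admissible. The remaining steps are routine bookkeeping with the valuations above.
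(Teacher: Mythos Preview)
Your proof is correct and follows essentially the same route as the paper: factor $\alpha,\beta,\gamma$ as products of $(1-\zeta^j)$'s to read off $\upsilon_{\pi_r}(\alpha\beta\gamma)=3$, then treat the primes away from $2r$, then $\pi_r$, then the primes above $2$, using Papadopoulos' tables at the last step. Your explicit factorizations of $\alpha,\beta,\gamma$ and your valuation $\upsilon_{\pi_r}(\Delta)\ge 22$ in the $r\mid a+b$ case are in fact slightly sharper than what the paper records (the paper writes $8+4\upsilon_{\pi_r}(a+b)$ where the correct constant is $10$, consistent with your bound); in any case the conclusion is unaffected since one only needs $\upsilon_{\pi_r}(\Delta)\ge 12$ to force non-minimality.
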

\begin{proof} First note that $\alpha, \beta, \gamma$ can be written in the form $\pm \zeta^s (1 - \zeta^t) (1 - \zeta^u)$, where neither $t$ nor $u$ are $\equiv 0$ (mod $r$), which means that the only prime dividing $\alpha \beta \gamma$ is $\pi_r$ and $\upsilon_{\pi_r}(\alpha \beta \gamma) = 3$.\par 
Let $\mathfrak{P} \nmid 2r$ be a prime in $K^{+}$. Observe that $\upsilon_{\mathfrak{P}}(\Delta(E)) = 4\upsilon_{\mathfrak{P}}(a+b)$ + $2p\upsilon_{\mathfrak{P}}(c_1c_2)$. Then if $\mathfrak{P} \nmid (a+b)c_1c_2$ we have $\upsilon_{\mathfrak{P}}(\Delta) = 0$ and the curve has good reduction; if $\mathfrak{P} \mid (a+b)c_1c_2$ then $\upsilon_{\mathfrak{P}}(\Delta) > 0$ and $\upsilon_{\mathfrak{P}}(c_4)=0$, i.e multiplicative reduction at $\mathfrak{P}$ (see Table I in \cite{pap}).\par 
If $\pi_r \nmid a+b$: $\upsilon_{\pi_r}(c_4) \geq 2$, $\upsilon_{\pi_r}(c_6) \geq 1$ and $\upsilon_{\pi_r}(\Delta) = 6$. Thus $E$ has bad additive reduction and $\upsilon_{\pi_r}(N_E) = 2$; if $\pi_r \mid a+b$: $\upsilon_{\pi_r}(c_4) =  4$, $\upsilon_{\pi_r}(c_6) \geq 6$ and $\upsilon_{\pi_r}(\Delta) = 8 + 4\upsilon_{\pi_r}(a+b)$. Since $\upsilon_{\pi_r}(a+b) \geq (r-1)/2$ the equation is non-minimal and after a coordinate change we have $\upsilon_{\pi_r}(c_4) = 0$ and $\upsilon_{\pi_r}(\Delta) > 0$. Thus  $E$ has bad multiplicative reduction, i.e. $\upsilon_{\pi_r}(N_E) = 1$.\par
Let $\mathfrak{P}_2 \mid 2$ be a prime. Since 2 do not ramify in $\Q(\zeta)$ we use Table IV in  \cite{pap}. If $2 \mid a+b$: $\upsilon_{\mathfrak{P}_{2}}(c_4) = 4$, $\upsilon_{\mathfrak{P}_{2}}(c_6) = 6$ and $\upsilon_{\mathfrak{P}_{2}}(\Delta) = 4 + 4\upsilon_{\mathfrak{P}_{2}}(a+b)$. The corresponding table entries give us $2,3$ or $4$ as possible valuations at $\mathfrak{P}_{2}$ of the conductor or the equation for $E$ is non-minimal. In the non-minimal case, after change of variables, we obtain semistable reduction, i.e $\upsilon_{\mathfrak{P}_{2}}(N_E)=0$ or 1; if $2 \nmid a+b$: $\upsilon_{\mathfrak{P}_{2}}(c_4) \geq 4$, $\upsilon_{\mathfrak{P}_{2}}(c_6) = 5$ and $\upsilon_{\mathfrak{P}_{2}}(\Delta) = 4$ for any $\mathfrak{P}_{2}$ above 2. The possible valuations of the conductor at $\mathfrak{P}_{2}$ for these entries are $2,3,4$.\par
The last statement follows immediately because the additive primes divide $2r$.
\end{proof}
Now let $\q \nmid 2r$ be a multiplicative prime of $E$, that is $\q \mid (a+b)(c_1c_2)$. Since $a+b=Cr^kc_0^p$ then $\q$ appears to a $p$-th power in the minimal discriminant at $\q$ of $E$ if and only if $\q \nmid C$. With this in mind, in the same lines of the proofs of Proposition \ref{notram} and Corollary \ref{artinc} we can prove

\begin{pp} Let $(a,b,c)$ be a primitive solution of the equation $x^r + y^r = Cz^p$. Write $E=E_{(a,b)}^{(k_1,k_2)}$.   
\begin{enumerate}
 \item Let $\q \nmid 2rp$ be a prime in $K^{+}$ dividing $c$. Then, the representation $\bar{\rho}_{E,p}$ is unramified at $\q$.
 \item Suppose $\bar{\rho}_{E,p}$ to be irreducible. Then, the Artin conductor (outside $p$) $N(\bar{\rho}_{E,p})$ is of the form 
$$(\prod_{\mathfrak{P}_2 \mid 2} \mathfrak{P}_2^{s_i}) \pi_r^t \mbox{Rad}_{2r}(C)  \quad \mbox{ where } \quad s_i \in \{0,1,2,3,4 \},$$
and $t=1$ or $t=2$ if $r \mid a+b$ or $r \nmid a+b$, respectively. 
\end{enumerate}
\label{artinc2}
\end{pp}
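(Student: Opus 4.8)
The plan is to mimic the proofs of Proposition~\ref{notram} and Corollary~\ref{artinc}, keeping careful track of which multiplicative primes now actually contribute to the Artin conductor. The new feature, compared to the situation in Section~\ref{curvasI}, is that the factor $(x+y)$ appears in the Frey curve, so primes dividing $a+b$ are also multiplicative primes of $E$; among these, the primes dividing $C$ behave differently from the others because $a+b = Cr^k c_0^p$ shows that such primes do \emph{not} appear to an exact $p$-th power in the minimal discriminant at $\q$.

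For part (1), let $\q \nmid 2rp$ be a prime of $K^{+}$ dividing $c$. As in Proposition~\ref{notram}, $\q$ is unramified in $K^{+}$ since $\q \nmid r$, and by Proposition~\ref{condEgammaII} the curve $E$ has multiplicative reduction at $\q$. By Proposition~\ref{factores} (pairwise coprimality of the factors of $\phi_r$ outside $\mathfrak{P}_r$) together with the coprimality of $c_0, c_1, c_2$, the valuation $\upsilon_\q(\Delta(E)) = 4\upsilon_\q(a+b) + 2p\,\upsilon_\q(c_1c_2)$ reduces to $2p\,\upsilon_\q(c_i)$ for the unique $i$ with $\q \mid c_i$ (when $\q$ divides some $c_i$), which is divisible by $p$. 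Hence by Hellegouarch's theorem the representation $\bar\rho_{E,p}$ restricted to the decomposition group at $\q$ is unramified. The one point needing care: a prime $\q \mid c$ could in principle divide $a+b$ rather than $c_1 c_2$; but by Corollary~\ref{trezz} a prime $\neq r$ dividing $a+b$ cannot divide $\phi_r(a,b)$, hence cannot divide any $c_i$, and conversely the primes dividing $c$ are exactly the primes dividing $c_1 c_2$ (the primes of $K^{+}$ above the rational primes $q \equiv 1 \pmod r$ dividing $\phi_r(a,b)$); so this case does not actually arise and the $p$-th power structure holds.

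For part (2), assume $\bar\rho_{E,p}$ is irreducible. By Proposition~\ref{condEgammaII} the conductor of $E$ is $(\prod_{\mathfrak{P}_2 \mid 2}\mathfrak{P}_2^{s_i})\,\pi_r^t\,\mbox{Rad}_{2r}((a+b)c_1c_2)$. At the additive primes (those above $2$ and the prime $\pi_r$), Carayol's results \cite{Car1} guarantee that passing to the residual representation does not decrease the conductor exponent, except possibly for finitely many small $p$; at the multiplicative primes $\q \nmid 2rp$, by part (1) the prime $\q$ drops out of $N(\bar\rho_{E,p})$ precisely when $\q$ appears to an exact $p$-th power in the minimal discriminant, i.e.\ when $\q \nmid C$, since $a+b = Cr^k c_0^p$ forces the non-$C$ part of $(a+b)$ and all of $c_1c_2$ to be $p$-th powers away from $2r$. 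The primes $\q \mid C$ (with $\q \nmid 2r$) remain multiplicative and contribute, giving the factor $\mbox{Rad}_{2r}(C)$. Primes above $p$ are excluded from the Artin conductor by definition. Combining these observations yields $N(\bar\rho_{E,p}) = (\prod_{\mathfrak{P}_2 \mid 2}\mathfrak{P}_2^{s_i})\,\pi_r^t\,\mbox{Rad}_{2r}(C)$ with $s_i \in \{0,1,2,3,4\}$ and $t \in \{1,2\}$ as in Proposition~\ref{condEgammaII}.

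The main obstacle is bookkeeping rather than any deep input: one must argue cleanly that the multiplicative primes coming from $a+b$ but not dividing $C$ still appear to an exact $p$-th power (using $a+b = Cr^k c_0^p$ and that $c_0$ is coprime to $Cr$), and that the primes dividing $C$ are genuinely multiplicative and cannot accidentally be $p$-th powers in the discriminant (which could only happen for trivial reasons excluded by primitivity). Everything else is a direct transcription of the arguments in Section~\ref{curvasI}, now based on Proposition~\ref{condEgammaII} in place of Proposition~\ref{condEgamma}.
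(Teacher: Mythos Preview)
Your proposal follows the same line as the paper, which simply says to argue ``in the same lines of the proofs of Proposition~\ref{notram} and Corollary~\ref{artinc}'' after isolating the key observation (stated just before the proposition) that a multiplicative prime $\q \nmid 2r$ appears to a $p$-th power in the minimal discriminant if and only if $\q \nmid C$.

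There is, however, an inaccuracy in your treatment of part~(1). It is not true that ``the primes dividing $c$ are exactly the primes dividing $c_1 c_2$'', and the case $\q \mid a+b$ \emph{does} arise. Here $c$ is the $z$-value of the original equation $x^r+y^r=Cz^p$; its prime divisors in $K^+$ include those of $c_0$ (which divides $a+b$) and those dividing the other factors $f_{k_j}$ of $\phi_r$ with $j \notin \{k_1,k_2\}$, not only those of $c_1c_2$. Two extra cases must therefore be covered. If $\q$ divides some $f_{k_j}$ with $j \notin \{k_1,k_2\}$ but not $(a+b)c_1c_2$, then by Proposition~\ref{condEgammaII} the curve $E$ has \emph{good} (not multiplicative) reduction at $\q$, and unramifiedness is automatic. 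If instead $\q \mid c_0$, then $\q$ divides $a+b$; but since $c_0$ is coprime to $C$ one gets $\upsilon_\q(a+b)=p\,\upsilon_\q(c_0)$, hence $\upsilon_\q(\Delta)=4p\,\upsilon_\q(c_0)$ is still a multiple of $p$ and Hellegouarch applies. This last point is precisely the observation the paper singles out, and you already use it correctly in part~(2); it just needs to be invoked in part~(1) as well. With this repair your argument is complete and matches the paper's.
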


\subsection{Level lowering and Step (III)}
\label{ll2}

As in the case of the Frey curves in \eqref{Freycurve} the the following theorems also follow from Section \ref{bgmod}. For the moment we will assume them.

\begin{tm} Let $r\geq7$ be a fixed prime and $a,b$ coprime integers. There exist a constant $M(r)$ such that for any pair $(k_1,k_2)$ the Galois representations $\bar{\rho}_{E,\ell}$ attached to $E=E_{(a,b)}^{(k_1,k_2)}$ are absolutely irreducible for all $\ell > M(r)$.
\label{irred22}
\end{tm}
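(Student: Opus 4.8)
The plan is to establish absolute irreducibility of $\bar{\rho}_{E,\ell}$ for the Frey curves $E=E_{(a,b)}^{(k_1,k_2)}$ by the standard dichotomy coming from Mazur-type arguments, adapted to the number field $K^+$ via the work of level lowering and isogeny bounds over totally real fields. Since $E$ is semistable at all $v \mid 3$ (Proposition \ref{condEgammaII}) and, more importantly, semistable at every prime $\mathfrak{P}$ dividing $c$ and non-dividing $2r$ (multiplicative reduction there), and has potentially multiplicative reduction at $\pi_r$ when $r \mid a+b$, the curve has a large set of multiplicative primes which will be the key to controlling reducibility. First I would suppose, for a prime $\ell$, that $\bar{\rho}_{E,\ell}$ is reducible, so that (after possibly replacing $E$ by an $\ell$-isogenous curve, or passing to the semisimplification) it has the shape $\begin{pmatrix}\theta & * \\ 0 & \theta'\end{pmatrix}$ with $\theta,\theta'\colon G_{K^+}\to \bar{\F}_\ell^\times$ characters satisfying $\theta\theta' = \chi_\ell$, the mod $\ell$ cyclotomic character.

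The core of the argument is to pin down the ramification of the characters $\theta,\theta'$. At a prime $\mathfrak{P}\nmid 2r\ell$ of multiplicative reduction (so $\mathfrak{P}\mid c$), the curve has a Tate parametrization and, because $\ell \mid v_{\mathfrak{P}}(\Delta)$ for $\ell$ large (this is exactly the input of Proposition \ref{artinc2}(1) and Proposition \ref{notram}-type reasoning), $\bar{\rho}_{E,\ell}$ is unramified at $\mathfrak{P}$; hence so are $\theta,\theta'$ there. At the primes above $2$ the curve has additive (but bounded) reduction, so the characters are tamely ramified there with bounded ramification, and at $\pi_r$ likewise the conductor exponent is at most $2$. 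At primes $\mathfrak{P}\mid\ell$, since $E$ is either good or multiplicative with $\ell\mid v_\mathfrak{P}(\Delta)$, the representation $\bar{\rho}_{E,\ell}|_{G_\mathfrak{P}}$ is finite flat (this is exactly the argument used for level lowering at $p$ in Section \ref{levellowering}), so $\theta|_{I_\mathfrak{P}}$ and $\theta'|_{I_\mathfrak{P}}$ are each either trivial or the fundamental character $\chi_\ell|_{I_\mathfrak{P}}$. Combining these local constraints, $\theta$ (and $\theta'$) is a character of $G_{K^+}$ unramified outside $2\pi_r$, with prescribed restriction to inertia at $\ell$; the conductor of $\theta$ divides a fixed ideal $\mathfrak{m}$ depending only on $r$. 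By class field theory, the character $\theta\chi_\ell^{-j}$ (for suitable $j\in\{0,1\}$ chosen so it is unramified at $\ell$) factors through the ray class group of conductor $\mathfrak{m}$ of $K^+$; there are only finitely many such characters, their orders are bounded by a constant $h(r)$ depending only on $r$, and for $\ell$ large (coprime to $h(r)$ and the relevant class numbers) such a character must be a power of $\chi_\ell$ composed with a fixed finite-order twist. I would then feed this back: $E$ or an $\ell$-isogenous curve acquires an $\ell$-torsion point over a fixed number field $L/K^+$ (the fixed field of $\ker\theta$, of bounded degree), which forces an $\ell$-isogeny over $L$; by the uniform boundedness of isogeny degrees of elliptic curves over number fields of bounded degree — Mazur's theorem over $\Q$ and its generalizations (Merel's uniform boundedness, Kamienny-type results, or the effective bounds of Larson-Vaintrob / Momose over totally real fields) — this is impossible once $\ell > M(r)$ for an explicit $M(r)$ depending only on $[L:\Q]$, hence only on $r$.

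The main obstacle is making the constant $M(r)$ genuinely depend only on $r$ and be computable: one must control (i) the conductor $\mathfrak{m}$ and the degree of the auxiliary field $L$ purely in terms of $r$, which follows from Proposition \ref{condEgammaII} since the bad primes are confined to $2$ and $\pi_r$ with bounded exponents; and (ii) the passage from "$\theta$ has bounded order" to the contradiction, which requires an effective isogeny bound over totally real fields of degree $(r-1)/2$. The cleanest route is to avoid heavy uniform boundedness machinery by instead using the multiplicative primes directly: if $\bar{\rho}_{E,\ell}$ is reducible then for every prime $q\equiv 1\pmod r$ dividing $c$ we get a congruence $a_\mathfrak{q}(E)\equiv \theta(\mathrm{Frob}_\mathfrak{q}) + \mathrm{Nm}(\mathfrak{q})\theta(\mathrm{Frob}_\mathfrak{q})^{-1}\pmod{\ell}$ with $\theta$ of bounded order, and by choosing the trivial solutions / comparing at the fixed auxiliary bad primes $2$ and $\pi_r$ one derives that $\ell$ divides a fixed nonzero quantity (an explicit norm), bounding $\ell$. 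I would present the argument in the first form, citing the effective results over totally real fields, and remark that the constant is the same for every pair $(k_1,k_2)$ because all the local data at $2$ and $\pi_r$, and the field $K^+$ itself, are independent of that choice — exactly as claimed in the statement of Theorem \ref{irred22}.
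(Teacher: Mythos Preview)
Your proposal is correct in its broad outline, but it takes a different route from the paper. The paper's own proof (Theorem~\ref{irr} in Section~\ref{sec:irr}) is a two-line black-box citation: since the Frey curves are defined over the totally real field $K^+$, one applies Theorems~1 and~2 of \cite{FS} directly, obtaining the explicit constant $M_r = \max(B_r, (1+3^{d_r h_r})^2)$ where $d_r = [K^+:\Q]$, $h_r$ is the class number, and $B_r$ is the constant from \cite{FS}. No property of the specific Frey curves beyond their field of definition is used.

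What you do instead is essentially re-derive the content of \cite{FS} from scratch: the Momose-type analysis of the isogeny characters $\theta,\theta'$, the bound on their conductors via the explicit local data of Proposition~\ref{condEgammaII}, the passage to a fixed extension $L/K^+$ via class field theory, and then a uniform torsion/isogeny bound. This is a legitimate and illuminating argument, and your observation that the bad primes and conductor exponents are confined to $2$ and $\pi_r$ independently of $(k_1,k_2)$ is exactly why the constant depends only on $r$. Two minor caveats: first, at primes above $2$ the characters need not be \emph{tamely} ramified (wild inertia at $2$ can act nontrivially through a finite $2$-group), though your conclusion that the conductor is bounded remains valid; second, invoking Merel yields a constant that is not obviously computable, whereas the alternative route you sketch at the end (forcing $\ell$ to divide an explicit nonzero norm via the isogeny character values at auxiliary primes) is closer to what \cite{FS} actually does and is what gives the explicit bound in the paper. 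The paper's approach buys brevity and explicit constants; yours buys transparency about why the bound exists.
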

\begin{tm} Fix $r \geq 7$ and $(k_1,k_2)$. Suppose that $(a,b,c)$ is a primitive solution to $x^r + y^r = Cz^p$. Then, the Frey curve $E_{(a,b)}^{(k_1,k_2)}$ over $K^{+}$ is modular.
\end{tm}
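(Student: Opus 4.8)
The plan is to obtain this modularity statement as an immediate consequence of the general modularity criterion announced in the introduction and established in Section~\ref{bgmod}: every elliptic curve over a totally real abelian number field in which $3$ is unramified, and which is semistable at all primes above $3$, is modular. Thus the only thing that needs to be checked is that the curve $E = E_{(a,b)}^{(k_1,k_2)}$, regarded as an elliptic curve over $K^{+}$, satisfies these three hypotheses.

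First I would record that $K^{+}$, being the maximal totally real subfield of $\Q(\zeta_r)$, is abelian over $\Q$ (as a subfield of a cyclotomic field) and is totally real by definition. Next, since $r \geq 7$ is prime, the only rational prime ramifying in $\Q(\zeta_r)$ is $r$ itself; in particular $3$ is unramified in $\Q(\zeta_r)$, and a fortiori in $K^{+}$. Finally, semistability of $E$ at the primes above $3$ is precisely part of the content of Proposition~\ref{condEgammaII}: there one computes that the conductor $N_E$ is supported on the primes above $2$, on $\pi_r$, and on the primes dividing $(a+b)c_1c_2$, with $E$ having good or multiplicative reduction at every other prime; since the primes above $3$ are distinct from those above $2$ and from $\pi_r$, and $3 \not\equiv 1,0 \pmod{r}$, they fall into the good-or-multiplicative case. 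With the three hypotheses verified, the general theorem applies and yields that $E/K^{+}$ is modular.

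I expect this verification to be painless; the real weight of the argument is carried entirely by the general modularity theorem, whose proof I would defer to Section~\ref{bgmod}, and that theorem is the genuine obstacle. It rests on the Langlands--Tunnell theorem for the residual mod~$3$ (and mod~$5$) cases, on Skinner--Wiles-type results together with base change, and on the modularity lifting theorems now available over totally real fields (in the line of Breuil--Conrad--Diamond--Taylor, Kisin, Barnet-Lamb--Gee--Geraghty, and Freitas--Le Hung--Siksek), together with an analysis of the finitely many exceptional residual images. Granting that machinery, the proof here is simply the reduction sketched above, and the only input specific to the Frey curves $E_{(a,b)}^{(k_1,k_2)}$ is the local conductor computation already carried out in Proposition~\ref{condEgammaII}.
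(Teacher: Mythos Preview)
Your proposal is correct and follows exactly the paper's approach: the statement is deduced in Corollary~\ref{cor:modularity} from the general modularity theorem in Section~\ref{bgmod} by verifying that $K^{+}$ is totally real abelian, that $3$ is unramified there, and that $E_{(a,b)}^{(k_1,k_2)}$ is semistable at all $v\mid 3$ via Proposition~\ref{condEgammaII}. One small remark: the paper's proof of the underlying general theorem works entirely at the prime~$3$ (Langlands--Tunnell plus Breuil--Diamond and Skinner--Wiles), with no $3$--$5$ switch and without invoking Freitas--Le~Hung--Siksek, so your parenthetical sketch of that machinery is slightly broader than what is actually used.
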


Suppose that $(a,b,c)$ is a primitive solution to $x^r + y^r = Cz^p$ and write $E=E_{(a,b)}^{(k_1,k_2)}$. We can argue exactly as in section \ref{levellowering} and apply level lowering to conclude that there is some Hilbert newform $f \in S_2(N(\bar{\rho}_{E,p}))$ and a prime $\mathfrak{P} \mid p$ such that the following isomorphism holds
\begin{equation}
\bar{\rho}_{E,p} \sim \bar{\rho}_{f,\mathfrak{P}}. 
\label{isoII}
\end{equation}  

We now need to contradict \eqref{isoII} to complete step (III). As discussed in section \ref{limitations} there are obstacles to this, in particular, arising from trivial solutions. Observe now that if $|C| \geq 3$ then $(1,0,1)$ and $(1,1,1)$ are not solutions of $x^r + y^r = Cz^p$. There is still the trivial solution $(1,-1,0)$ but we see from the formula for the discriminant of $E$ that $E_{(1,-1)}$ is singular, hence it is not an elliptic curve. Then, the spaces $S_2(N(\bar{\rho}_{E,p}))$ will not contain a newform associated with it. This removes the constraint $r \nmid c$ that we needed to introduce in section \ref{limitations}. Indeed, we can now easily prove the following theorem

\begin{tm} Let $C \neq 1,2$ be an integer divisible only by primes $q \not\equiv 1,0 \pmod{r}$. Let $N(\bar{\rho}_{E,p})$ be given as in Proposition $\ref{artinc2}$. Suppose that the spaces $S_2(N(\bar{\rho}_{E,p}))$ contain no newforms corresponding to elliptic curves with full $2$-torsion. Then, there are no primitive solutions to the equation $x^r + y^r = Cz^p$ for $p$ greater than a constant.
\end{tm}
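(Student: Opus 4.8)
The plan is to combine all the machinery developed in the preceding sections, so the proof reduces to a short bookkeeping argument together with the elimination of the trivial solutions. Suppose, for a contradiction, that $(a,b,c)$ is a primitive solution to $x^r+y^r=Cz^p$ with $p$ large. First I would apply Lemma~\ref{novaeq3} to pass, for a fixed pair $(k_1,k_2)$, to a solution $(a,b,c)\in\Z^2\times\OO_{K^+}$ of one of the equations that underlies the Frey curve $E=E_{(a,b)}^{(k_1,k_2)}$ of \eqref{FreycurveII}. By the modularity theorem quoted in Section~\ref{ll2}, $E/K^+$ is modular, and by Theorem~\ref{irred22} the representation $\bar\rho_{E,p}$ is (absolutely) irreducible once $p>M(r)$. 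With these two inputs in hand I would run the level-lowering argument exactly as in Section~\ref{levellowering} --- adding the auxiliary special prime $\q_0$ via Rajaei's theorem, removing the multiplicative primes by Proposition~\ref{artinc2}(1), removing the primes above $p$ using that $p\mid\upsilon_\Pp(\Delta)$ at the multiplicative $\Pp\mid p$ and good reduction elsewhere (Theorem~6.2 of \cite{Jarv}), and finally removing $\q_0$ --- to obtain a Hilbert newform $f\in S_2(N(\bar\rho_{E,p}))$, with $N(\bar\rho_{E,p})$ as in Proposition~\ref{artinc2}(2), and a prime $\Pp\mid p$ in $\Q_f$ with
\begin{equation*}
\bar\rho_{E,p}\sim\bar\rho_{f,\Pp}.
\end{equation*}

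The next step is to pin down which $f$ can occur. Here the hypothesis $C\neq1,2$ is used exactly as indicated in the text preceding the statement: since $|C|\geq3$, the triples $(1,0,1)$ and $(1,1,1)$ are not solutions of $x^r+y^r=Cz^p$, so the corresponding curves $E_{(1,0)}$, $E_{(1,1)}$ never arise; and the only remaining trivial possibility $(1,-1,0)$ gives $\Delta(E_{(1,-1)})=0$, so $E_{(1,-1)}$ is singular and contributes no newform to $S_2(N(\bar\rho_{E,p}))$. Hence $c$ must be divisible by some prime (the solution is non-trivial in the sense of the Remark after Corollary~\ref{trivialsol3}), and $\bar\rho_{E,p}$ is isomorphic to the mod-$\Pp$ representation of some newform $f$ in the specified space. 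Because $E$ has full rational $2$-torsion over $K^+$ by construction, $\bar\rho_{E,p}$ has trivial determinant on the $2$-torsion and, more to the point, $f$ must also have a mod-$\Pp$ representation with full $2$-torsion; a standard argument (as in the $(5,5,p)$ and $(13,13,p)$ papers) shows that for $p$ larger than an effective constant this forces $f$ to actually \emph{be} the newform attached to an elliptic curve over $K^+$ with a rational $2$-torsion point of the right shape --- i.e. with full $2$-torsion. By hypothesis $S_2(N(\bar\rho_{E,p}))$ contains no such newform, a contradiction.

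So the skeleton is: (i) descend via Lemma~\ref{novaeq3} to the Frey curve; (ii) invoke modularity and Theorem~\ref{irred22}; (iii) level-lower using Proposition~\ref{artinc2}; (iv) discard the trivial solutions using $C\neq1,2$ and the singularity of $E_{(1,-1)}$; (v) use the full $2$-torsion of $E$ to conclude that any surviving $f$ is modular-elliptic with full $2$-torsion, contradicting the hypothesis. The main obstacle is step~(v): one has to argue that a mod-$\Pp$ isomorphism $\bar\rho_{E,p}\sim\bar\rho_{f,\Pp}$ with $p$ large forces $f$ to genuinely carry full $2$-torsion rather than merely sharing a residual image with such a curve. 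This is where the ``$p$ greater than a constant'' enters: one bounds, independently of the solution, the primes $\Pp$ for which the mod-$\Pp$ reductions of two newforms of the given level can agree without the newforms being equal (via bounding the difference of Hecke eigenvalues at a fixed set of auxiliary primes, using the Weil bounds), and then reads off that for $p$ beyond that bound $f$ must be the newform of a curve with full $2$-torsion. Everything else is a direct transcription of the arguments already set up in Sections~\ref{curvasII} and~\ref{ll2}.
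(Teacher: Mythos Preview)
Your approach is correct and matches the paper's proof. Two small points of precision are worth noting. First, the paper makes explicit a step you skip: since the primes dividing $C$ appear in $N(\bar\rho_{E,p})$ only to the first power (Proposition~\ref{artinc2}), Eichler--Shimura guarantees that every \emph{rational} newform at these levels actually corresponds to an elliptic curve; this is what makes the hypothesis ``no newforms corresponding to elliptic curves with full $2$-torsion'' usable. Second, your phrasing of step~(v) as bounding primes where ``the mod-$\Pp$ reductions of two newforms of the given level can agree'' is slightly misleading, since the newform attached to the Frey curve varies with the solution and does not live at the lowered level. The paper's (equivalent) framing is cleaner: the list of newforms $f$ at the lowered levels is finite and fixed independently of the solution; for each such $f$ that is either non-rational or corresponds to an elliptic curve without full $2$-torsion, one produces a constant $M_f$ (by finding a prime $\q$ where $a_\q(f)$ lies outside the finite set of possible values of $a_\q(E)$, the latter being constrained by the full $2$-torsion on $E$) beyond which $\bar\rho_{E,p}\not\sim\bar\rho_{f,\Pp}$, and then takes the maximum over the $f$.
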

\begin{proof} Since $C \neq 1,2$, the spaces $S_2(N(\bar{\rho}_{E,p}))$ do not contain newforms associated with Frey curves attached to the trivial solutions. Moreover, the prime factors of $C$ divide $N(\bar{\rho}_{E,p})$ only once. Hence, all the rational newform in these spaces will correspond to elliptic curves by Eichler-Shimura.

Suppose that there are no rational newforms in $S_2(N(\bar{\rho}_{E,p}))$ corresponding to elliptic curves with full $2$-torsion. Then, we have an isomorphism $\bar{\rho}_{E,p} \sim \bar{\rho}_{f,\mathfrak{P}}$ where
$f$ is a newform with field of coefficients strictly containing $\Q$ or $f$ corresponds to an elliptic curve without full $2$-torsion. In both cases it is known that for each $f$ the isomorphism cannot hold for $p$ greater than a constant $M_f$. Since there are only a finite list of $f$ the result follows.
\end{proof}

Unfortunately, the Frey curves \eqref{FreycurveII} have computational disadvantages. For example, for $r=7$ we work over $\Q$ if we use the curves \eqref{Freycurve} and over the cubic field $K^+ \subset \Q(\zeta_7)$ if we use the curves \eqref{FreycurveII}. Nevertheless, for primes of the form $r=4m+1$ we are able to diminish the required computations as we now explain.\par 
Suppose $r=4m+1$ is a prime and note that there is a subfield $k \subset K^+$ such that $[K^+ : k]=2$ and $[k:\Q]=m$. Let $\sigma$ be a generator of $\mbox{Gal}(K^+/\Q)$, hence $\sigma^m$ generates $\mbox{Gal}(K^+/k)$. Pick a pair $(k_1, n_2)$ such that 
$$\sigma^{m}(\zeta^{k_1} + \zeta^{-k_1}) = \zeta^{n_2} + \zeta^{-n_2}$$ 
and consider the corresponding $A_{(a,b)},B_{(a,b)},C_{(a,b)}$ as above. It is easy to see that
$$\sigma^m (A_{(a,b)}) = - A_{(a,b)}, \quad \sigma^m (B_{(a,b)}) = - C_{(a,b)} \quad \sigma^m (C_{(a,b)}) = - B_{(a,b)},$$
hence the formulas in the proof of Proposition \ref{isos} show that the curve $E_{(a,b)}^{(k_1,n_2)}$ has a short Weierstrass model defined over $k$.\\ 

The first prime where we can profit from this is $r=13$, but we would need to compute Hilbert newforms over the cubic field inside $\Q(\zeta_{13})$ for levels that makes the computation impossible. Nevertheless, in \cite{DF2}, taking advantage of the fact $13=6\times+1$ the authors work with the Frey curves \eqref{FreycurveII} over $\Q(\sqrt{13})$ and are able to prove the non-existence of non-trivial primitive first case solutions to $x^{13} + y^{13} =Cz^p$, for infinitely many values of $C$. This is another example of the computational disadvantages of using \eqref{FreycurveII}.

\section{Constructing Frey curves part III}
\label{4m}

In this section we will construct more Frey curves attached to solutions of the equation $x^r + y^r = Cz^p$ for exponents of the form $r=4m + 1$. These curves will have the property of being $k$-curves. This will be achieved in two steps. First, using observations about the factors of $\phi_r$ as in section \ref{relating}, we will again relate different equations. Second, we will generalize the  ideas that led to the construction of the Frey $\Q$-curves in \cite{DF}.\\

Let $r=4m + 1$ be a prime and $\zeta := \zeta_r$ a $r$-th primitive root of unity. Recall that $K^{+}$ is the maximal totally real subfield of the cyclotomic field $\Q(\zeta)$. Then $K^{+}$ has degree $2m$ and there exists a subfield $k \subset K^{+}$ such that $[K^{+}:k]=2$ and $[k:\Q]=m$. Let $\sigma$ be the generator of $\mbox{Gal}(K^{+}/\Q)$ then $\sigma^m$ generates $\mbox{Gal}(K^{+}/k)$. Recall that $x^r + y^r = (x+y)\phi_r(x,y)$ and $\phi_r(x,y)$ factors as a product of $2m$ degree two polynomials with coefficients in $K^{+}$. Pick a pair $(k_1, n_2)$ such that 
$$\sigma^{m}(\zeta^{k_1} + \zeta^{-k_1}) = \zeta^{n_2} + \zeta^{-n_2}$$  
and consider the factors of $\phi_r$ given by
$$ f_{k_1} = x^2 + (\zeta^{k_1} + \zeta^{-k_1}) xy + y^2 \quad \quad  f_{n_2} = x^2 + (\zeta^{n_2} + \zeta^{-n_2}) xy + y^2.$$

Let $h_r^{+}$ be the class number of $K^{+}$ and $\pi_r$ be such that $r\mathcal{O}_{K^{+}} = (\pi_r)^{(r-1)/2}$. Let $p$ be a prime not dividing $h_r^{+}$. Suppose there is a primitive solution $(a,b,c')$ to $x^r+y^r=C z^p$ with $C\not=0$ an integer divisible only by primes $q \neq r$ satisfying $q \not\equiv 1\pmod{r}$. From Lemma \ref{novaeq3} there are $c_1,c_2 \in \OO_{K^+}$ dividing $c'$ such that 
\begin{enumerate}
\item $f_{k_1}(a,b) = \mu_1 c_1^p$ for some $\mu_1 \in \OO_{K^+}^{\times}$,
\item $f_{n_2}(a,b) = \mu_i c_2^p$ for some $\mu_2 \in \OO_{K^+}^{\times}$
\end{enumerate}
if $r \nmid a+b$ or,
\begin{enumerate}
\item $f_{k_1}(a,b) = \mu_1 \pi_r c_1^p$ for some $\mu_1 \in \OO_{K^+}^{\times}$,
\item $f_{n_2}(a,b) = \mu_i \pi_r c_2^p$ for some $\mu_2 \in \OO_{K^+}^{\times}$
\end{enumerate}
if $r \mid a+b$. Moreover, $c_1,c_2$ are coprime and divisible only by primes above rational primes congruent to 1 modulo $r$. 
Hence, by multiplication we have can transform the solution $(a,b,c')$ in a solution $(a,b,c) \in \mathbb{Z}^2 \times \mathcal{O}_{K^+}$ of the equation
\begin{equation}
\label{eq111}
f_{k_1}(x,y) f_{n_2}(x,y) = \mu z^p \quad \mbox{ or}
\end{equation} 
\begin{equation}
\label{eq222}
f_{k_1}(x,y) f_{k_2}(x,y) = \mu \pi_r^{2} z^p,
\end{equation}
which satisfies $r\nmid a+b$ in case $(\ref{eq111})$ and $r\mid a+b$ in case $(\ref{eq222})$. We are now going to construct Frey curves attached to solutions of equations (\ref{eq111}) or (\ref{eq222}). The curves will be defined over $K^{+}$ and in section \ref{kcurvas} we will show that they are $k$-curves. In order to construct a useful $k$-curve we first need to find $\alpha$, $\beta$ such that
$$(a+b)^2 = \alpha f_{k_1}(a,b) + \beta f_{n_2} (a,b).$$ 
That is, solve the linear system
$$\begin{cases}
 \alpha + \beta = 1 \\
 \alpha (\zeta^{k_1} + \zeta^{-k_1}) + \beta (\zeta^{n_2} + \zeta^{-n_2}) = 2, \\
\end{cases}$$
which has a solution for  $\alpha, \beta \in K^{+}$ given by
$$\begin{cases}
   \alpha = (\zeta^{n_2} + \zeta^{-n_2} - 2)(\zeta^{n_2} + \zeta^{-n_2} - \zeta^{-k_1} - \zeta^{k_1})^{-1} \\
   \beta = (2 - \zeta^{k_1} - \zeta^{-k_1})(\zeta^{n_2} + \zeta^{-n_2} - \zeta^{k_1} - \zeta^{-k_1})^{-1},\\
\end{cases}$$
that easily can be seen to satisfy $\sigma^m(\alpha) = \beta$.\\

Finally, we can define the Frey curves. Suppose there is a putative solution $(a,b,c) \in \mathbb{Z}^2 \times \mathcal{O}_{K^+}$ to (\ref{eq111}) or (\ref{eq222}). We consider Frey curves over $K^+$ given by
\begin{equation}
E_{(a,b)}^{(k_1,n_2,+)} : Y^2 = X^3 + 2(a+b)X^2 + \alpha f_{k_1} (a,b) X.
\label{FreycurveIII} 
\end{equation}

\begin{rem}
If instead we start by looking for $\alpha, \beta$ such that 
$$(a-b)^2 = \alpha f_{k_1}(x,y) + \beta f_{n_2}(x,y)$$
we can apply a similar construction to obtain the Frey curves 
\begin{equation}
E_{(a,b)}^{(k_1,n_2,-)} : Y^2 = X^3 + 2(a-b)X^2 + \alpha f_{k_1} (a,b) X.
\label{FreycurveIIII} 
\end{equation}
Similar properties to those we prove in the next sections for $E_{(a,b)}^{(k_1,n_2,+)}$ also hold for $E_{(a,b)}^{(k_1,n_2,-)}$.
\end{rem}

\subsection{The conductor of $E_{(a,b)}^{(k_1,n_2,+)}$} In this section we show that the elliptic curves $E = E_{(a,b)}^{(k_1,n_2,+)}$ satisfy the properties required to be used as Frey curves. Note that $E$ is of the form $Y^2 = X^3 + a_2 X^2 + a_4X$, hence 
\begin{equation*}
 \Delta(E) = 16a_4^2(a_2^2 - 4a_4) = 16\alpha^2 f_{k_1}^2 (4(a+b)^2 - 4\alpha f_{k_1}) = 64\alpha^2 \beta f_{k_1}^2 f_{n_2}. 
\end{equation*}
In particular, for a putative solution $(a,b,c)$ of equation (\ref{eq111}) or (\ref{eq222}) we have $c=c_1c_2$ and
$$ \Delta(E) = \begin{cases}
 \mu \mu_1 2^6 \alpha^2 \beta (c_1^2c_2)^{p} \hspace{0.2cm} \mbox{ if } r \nmid a+b,\\
 \mu \mu_1 2^6 \alpha^2 \beta \pi_r^{3}(c_1^2c_2)^{p} \hspace{0.2cm} \mbox{ if } r \mid a+b. \\
\end{cases}$$
Moreover, the following quantities are also associated with $E$
\begin{eqnarray*}
c_4(E) & = & 2^{4}(\alpha f_{k_1} + 2^2 \beta f_{n_2}), \\
c_6(E) & = & 2^{6}(a+b)(\alpha f_{k_1} - 2^3 \beta f_{n_2}) \\
\end{eqnarray*}
\begin{pp} Let $(a,b,c)$ be a primitive solution to $(\ref{eq111})$ or $(\ref{eq222})$. Then, the conductor of the curve $E$ is of the form
$$ N_E = \begin{cases}
 (\prod_{\mathfrak{P}_2 \mid 2} \mathfrak{P}_2^{s_i}) \mbox{rad}(c) \hspace{0.2cm} \mbox{ if } r \nmid a+b,\\
 (\prod_{\mathfrak{P}_2 \mid 2} \mathfrak{P}_2^{s_i}) \pi_r^2 \mbox{rad}(c) \hspace{0.2cm} \mbox{ if } r \mid a+b, \\
\end{cases}$$
where $s_i=5$ or $6$. In particular, the curve $E$ has good reduction at all $v \mid 3$.
\label{cond222}
\end{pp}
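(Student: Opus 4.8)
The plan is to follow the pattern of the proofs of Propositions~\ref{condEgamma} and~\ref{condEgammaII}: reduce everything to the values of $\Delta(E)$, $c_4(E)$, $c_6(E)$ recorded above, and split the primes of $K^+$ into three groups --- those not dividing $2r$, the prime $\pi_r$, and the primes $\mathfrak{P}_2\mid 2$ --- applying Tate's algorithm in residue characteristic $\geq 5$ and Table~IV of \cite{pap} at $2$ (which is unramified in $\Q(\zeta)$).

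The first step is to record that $\alpha,\beta\in\OO_{K^+}^{\times}$. Writing $2-\zeta^{n_2}-\zeta^{-n_2}=(1-\zeta^{n_2})(1-\zeta^{-n_2})$ and $\zeta^{n_2}+\zeta^{-n_2}-\zeta^{k_1}-\zeta^{-k_1}=(\zeta^{n_2}-\zeta^{k_1})(\zeta^{n_2+k_1}-1)\zeta^{-(n_2+k_1)}$, both the numerator and the denominator of $\alpha$ are, up to a unit, a product of two factors $1-\zeta^{j}$ with $j\not\equiv0\pmod r$ (here one uses $n_2\not\equiv\pm k_1\pmod r$, which holds because $\sigma^m$ has order $2$ and fixes no $\zeta^{k}+\zeta^{-k}$), hence both have $\pi_r$-valuation $2$ and are units at every other prime; so $\alpha$, and likewise $\beta=1-\alpha$, is a unit. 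Combining this with the relations $f_{k_1}(a,b)=\mu_1c_1^{p}$, $f_{n_2}(a,b)=\mu_2c_2^{p}$ (resp.\ with an extra factor $\pi_r$ when $r\mid a+b$) of Lemma~\ref{novaeq3}, and with the fact that $c_1,c_2$ are coprime and supported only at primes lying over rational primes $\equiv1\pmod r$ (hence coprime to $2r$), one obtains $\upsilon_{\mathfrak{P}}(\Delta)=p\,\upsilon_{\mathfrak{P}}(c_1^2c_2)$ for $\mathfrak{P}\nmid2r$, $\upsilon_{\pi_r}(\Delta)=0$ or $3$ according as $r\nmid a+b$ or $r\mid a+b$, and $\upsilon_{\mathfrak{P}_2}(\Delta)=6$ for every $\mathfrak{P}_2\mid2$ (note that $a+b$ does not enter $\Delta$).

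For $\mathfrak{P}\nmid2r$ dividing $c=c_1c_2$, the prime $\mathfrak{P}$ divides exactly one of $c_1,c_2$, hence exactly one of $f_{k_1}(a,b),f_{n_2}(a,b)$; reducing $c_4(E)=2^4(\alpha f_{k_1}+4\beta f_{n_2})$ modulo $\mathfrak{P}$ leaves the other term, which is a unit, so $\upsilon_{\mathfrak{P}}(c_4)=0$ and $E$ has multiplicative reduction at $\mathfrak{P}$. At $\pi_r$: if $r\nmid a+b$ then $\upsilon_{\pi_r}(\Delta)=0$, i.e.\ good reduction; if $r\mid a+b$ then $\upsilon_{\pi_r}(\Delta)=3<12$ so the model is minimal, $\upsilon_{\pi_r}(c_4)\geq1>0$ so the reduction is additive, and since the residue characteristic is $r\geq7\geq5$ the conductor exponent is exactly $2$. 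The claim about $v\mid3$ is then immediate: every prime of bad reduction of $E$ divides $2r\cdot c$, whereas $3\nmid2r$ and no prime above $3$ divides $c$ (the prime divisors of $c$ lie over rationals $\equiv1\pmod r$, while $3\not\equiv1\pmod r$ for $r\geq7$), so $E$ has good reduction at all $v\mid3$.

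The delicate point, and the one I expect to be the main obstacle, is the analysis at the primes $\mathfrak{P}_2\mid2$. There one computes $\upsilon_{\mathfrak{P}_2}(\Delta)=6$, $\upsilon_{\mathfrak{P}_2}(c_4)=4$ (in $c_4=2^4(\alpha f_{k_1}+4\beta f_{n_2})$ the unit term $\alpha f_{k_1}$ dominates the term $4\beta f_{n_2}$ of valuation $2$), and $\upsilon_{\mathfrak{P}_2}(c_6)=6+\upsilon_{\mathfrak{P}_2}(a+b)$, which equals $6$ if $a+b$ is odd and is $\geq7$ if $a+b$ is even. Since $\upsilon_{\mathfrak{P}_2}(\Delta)=6<12$ the given model is already minimal at $\mathfrak{P}_2$, so one feeds these valuations into Table~IV of \cite{pap} and reads off that the reduction is additive with conductor exponent $s_i\in\{5,6\}$, the two values corresponding to the two parities of $a+b$. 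Assembling the three local contributions gives the stated shape of $N_E$.
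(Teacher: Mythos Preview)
Your proof is correct and follows essentially the same route as the paper: show $\alpha,\beta$ are units by factoring numerator and denominator as products of two $(1-\zeta^j)$'s, then handle the three classes of primes (away from $2r$, at $\pi_r$, at $\mathfrak{P}_2\mid 2$) via the valuations of $\Delta,c_4,c_6$ and Table~IV of \cite{pap}.

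Two small remarks. First, your computation $\upsilon_{\mathfrak{P}_2}(c_6)=6+\upsilon_2(a+b)$ is in fact more precise than the paper's flat assertion $\upsilon_{\mathfrak{P}_2}(c_6)=6$; the paper's value is only correct when $a+b$ is odd, whereas your case split is the right one to feed into Table~IV. Second, your closing clause ``the two values corresponding to the two parities of $a+b$'' is an overclaim: Table~IV distinguishes $s_i=5$ from $s_i=6$ by auxiliary congruence conditions beyond the triple $(\upsilon(c_4),\upsilon(c_6),\upsilon(\Delta))$, so the parity of $a+b$ alone does not pin down $s_i$. Since the proposition only asserts $s_i\in\{5,6\}$, you should simply drop that clause.
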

\begin{proof} Recall $c = c_1c_2$ and $\mu, \mu_i$ are units. We will now see that $\alpha, \beta$ are also units. Note that the elements in $K^+$ of the form $ \zeta^c(1-\zeta^a)(1-\zeta^b)$, where $a,b \not\equiv 0 \pmod{r}$ are divisible only by the prime ideal $(\pi_r) \mid r$ and their $(\pi_r)$-adic valuation is 1. Moreover, 
$$\zeta^c(1-\zeta^a)(1-\zeta^b) = \zeta^c - \zeta^{a+c} - \zeta^{b+c} + \zeta^{a+b+c} = \zeta^x - \zeta^y - \zeta^z + \zeta^{y+z-x},$$
where the last equality is a change of variables. In particular, $\zeta^{n_2} - \zeta^{k_1} - \zeta^{-k_1} + \zeta^{-n_2}$, the denominator of $\beta$, is of this form. Also, $2 - \zeta^{k_1} - \zeta^{-k_1} = (1 - \zeta^{k_1})(1- \zeta^{-k_1})$ is the numerator of $\beta$. Since both numerator and denominator of $\beta$ are of the shape above we conclude that $\beta$ is a unit, hence $\alpha = \sigma^m(\beta)$ is also a unit.\par 
Let $\mathfrak{q} \mid c$ be a prime in $K^{+}$. We have $\upsilon_{\mathfrak{q}}(\Delta) > 0$ and since $\mathfrak{q}$ divides only one of the $c_i$ it is clear from the form of $c_4$ that $\upsilon_{\mathfrak{q}}(c_4)=0$, thus the reduction is multiplicative at $\mathfrak{q}$. If $\q \nmid 2\pi_r c$ then $\upsilon_{\mathfrak{q}}(\Delta) = 0$ and $\q$ is of good reduction.\par
Let $\mathfrak{q} \mid 2$. We have $\upsilon_{\mathfrak{q}}(\Delta) = 6$, $\upsilon_{\mathfrak{q}}(c_4) = 4$, $\upsilon_{\mathfrak{q}}(c_6) = 6$, hence the equation is minimal and $\upsilon_{\mathfrak{q}}(N_E) = 5$ or $6$ by Table IV in \cite{pap}.\par
Let $\mathfrak{q} = (\pi_r)$. Then $\upsilon_{\mathfrak{q}}(\Delta) = 0$ or $\upsilon_{\mathfrak{q}}(\Delta) = 3$ if $r \nmid a+b$ or $r \mid a+b$, respectively. In particular, $\upsilon_{\mathfrak{q}}(N_E) = 0$ (good reduction) if $r \nmid a+b$. Suppose $r \mid a+b$, then $\upsilon_{\mathfrak{q}}(f_{k_1}(a,b))= \upsilon_{\mathfrak{q}}(f_{n_2}(a,b)) = 1$, hence $\upsilon_{\mathfrak{q}}(c_4) > 0$. Thus $E$ has additive reduction and $\upsilon_{\mathfrak{q}}(N_E) = 2$ by Table I in \cite{pap}.\par
The last statement follows because all the primes $\q \mid c$ are above rational primes congruent to 1 modulo $r$.\end{proof}

The next theorem shows that the curves $E_{(a,b)}^{(k_1,n_2,+)}$ have the necessary properties in order to be used as Frey curves. 

\begin{tm} Let $(a,b,c)$ be a primitive solution to $(\ref{eq111})$ or $(\ref{eq222})$ and write $E:=E_{(a,b)}^{(k_1,n_2,+)}$. Then,
\begin{enumerate}
\item The Artin conductor of $\bar{\rho}_{E,p}$ is not divisible by primes dividing $pc$.
\item $\bar{\rho}_{E,p}$ is finite at all primes $\mathfrak{p}$ dividing $p$.
\end{enumerate}
\end{tm}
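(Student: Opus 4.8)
The plan is to transcribe, for the curves $E = E_{(a,b)}^{(k_1,n_2,+)}$, the arguments already used for the earlier families in Propositions~\ref{notram} and~\ref{artinc2} together with Corollary~\ref{artinc}, feeding in the conductor computation of Proposition~\ref{cond222} and the displayed formula for $\Delta(E)$. Both assertions are really statements about the multiplicative primes of $E$ dividing $c$ and about the primes above $p$, so none of the delicate behaviour at the additive places $v \mid 2r$ enters.

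For (1), first I would recall from Lemma~\ref{novaeq3} that $c = c_1 c_2$ with $c_1, c_2$ coprime and divisible only by primes of $K^{+}$ lying above rational primes $\equiv 1 \pmod r$; in particular every $\q \mid c$ satisfies $\q \nmid 2r$, so Proposition~\ref{cond222} gives that $E$ has multiplicative reduction at $\q$. From $\Delta(E) = \mu \mu_1 2^6 \alpha^2 \beta (c_1^2 c_2)^p$ (up to the factor $\pi_r^3$, which is prime to $\q$) and the fact that $\alpha, \beta, \mu, \mu_1$ are units while $\q \nmid 2$, one gets that the displayed model is minimal at $\q$, that $\upsilon_\q(c_4) = 0$, and that $\upsilon_\q(\Delta(E)) = p\,\upsilon_\q(c_1^2 c_2) \equiv 0 \pmod p$. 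Thus $\q$ divides the minimal discriminant at $\q$ to a $p$-th power, and the theorem of Hellegouarch invoked in Proposition~\ref{notram} (equivalently, the Tate curve description of $E[p]$) shows that $\bar{\rho}_{E,p}$ is unramified at $\q$. Since the Artin conductor outside $p$ is by convention prime to $p$ and is now seen to be prime to each $\q \mid c$, part (1) follows.

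For (2), let $\mathfrak{p} \mid p$. As $p$ will be taken larger than the fixed primes $2$ and $r$, $\mathfrak{p}$ is not one of the additive primes listed in Proposition~\ref{cond222}, so $E$ has either good or multiplicative reduction at $\mathfrak{p}$. In the good case $E[p]$ extends to a finite flat group scheme over the completion at $\mathfrak{p}$, so $\bar{\rho}_{E,p}|_{G_{\mathfrak{p}}}$ is finite. If instead $\mathfrak{p} \mid c$, the computation of (1) applies verbatim: the model is minimal at $\mathfrak{p}$ with $\upsilon_{\mathfrak{p}}(c_4) = 0$, the reduction is multiplicative, and $p \mid \upsilon_{\mathfrak{p}}(\Delta_{\min})$; hence $\bar{\rho}_{E,p}|_{G_{\mathfrak{p}}}$ is finite by the criterion used in Theorem~6.2 of~\cite{Jarv} ($p$-torsion of a Tate curve $E_q$ with $p \mid \upsilon_{\mathfrak{p}}(q)$ is finite flat). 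This exhausts the primes above $p$.

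I do not expect a genuine obstacle: the only things to check carefully are that the displayed Weierstrass model is $\q$-minimal at the relevant multiplicative primes — immediate from $\alpha, \beta$ being units and $\q \nmid 2$ — and that the additive places $v \mid 2r$ stay out of the discussion, which is exactly why the statement is restricted to primes dividing $pc$. No Carayol-type input is needed here, precisely because the bad additive reduction is never touched.
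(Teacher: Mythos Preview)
Your argument is correct and is essentially the paper's own proof, which simply cites Proposition~\ref{cond222} and the argument of Proposition~\ref{notram} for part (1), and for part (2) splits into the good-reduction case (finite automatically) and the multiplicative case (finite because $p \mid \upsilon_{\mathfrak{p}}(\Delta)$). You have spelled out the details that the paper leaves implicit, in particular the minimality of the model at $\q$ and the reason $\mathfrak{p}$ avoids the additive primes, but the logical skeleton is identical.
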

\begin{proof} Part (1) follows from Proposition \ref{cond222} and the same argument as in the proof of Proposition \ref{notram}. Let $\mathfrak{p} \mid p$. If $\mathfrak{p} \nmid c$ it is of good reduction for $E$ then $\bar{\rho}_{E,p}$ is finite; if $\mathfrak{p} \mid c$ it is of multiplicative reduction for $E$ and since we have $p \mid \upsilon_{\mathfrak{p}}(\Delta)$ it follows that $\bar{\rho}_{E,p}$ is also finite. This proves (2).\end{proof}

Modularity of $E=E_{(a,b)}^{(k_1,n_2,\pm)}$ and irreducibility of $\bar{\rho}_{E,p}$ for $p$ greater than a constant also follow from the theorems in Section \ref{bgmod}. Thus, we can again apply level lowering to conclude that we have an isomorphism $\bar{\rho}_{E,p} \sim \bar{\rho}_{f,\mathfrak{P}}$ where $f \in S_2(N(\bar{\rho}_{E,p}))$. To complete step (III) we want to contradict this isomorphism. In these spaces there will be newforms corresponding to the trivial solutions. Nevertheless, we may be able to contradict the previous isomorphism for these `bad' newforms because of the condition $C \mid a+b$ and the fact that the trivial solution $(1,-1,0)$ corresponds to an elliptic curve with complex multiplication. This situation, for $r=5$, is treated in detail in \cite{DF}.

\subsection{The Frey curves as $k$-curves.}
\label{kcurvas}

In this section we will show that the Frey curves $E_{(a,b)}^{(k_1,n_2,+)}$ are indeed $k$-curves. We start by recalling the definition of $k$-curve.

\begin{defin} Let $k$ be a number field and $G_k = \mbox{Gal}(\bar{\Q}/k)$ its absolute Galois group. We will say that an elliptic curve $C$ over $\bar{k}$ is a $k$-curve if for every $\sigma \in G_k$ there exists an isogeny
$\phi_\sigma : {^\sigma}C \rightarrow C$ defined over $\bar{k}$. We say that a $k$-curve $C$ is \textit{completely defined} over a number field $K \supset k$ if all the conjugates of $C$ and the isogenies between them are defined over $K$.
\end{defin}

Recall that $\sigma^m$ generates $\mbox{Gal}(K^{+} / k)$. The Galois conjugate of $E:=E_{(a,b)}^{(k_1,n_2,+)}$ by $\sigma^m$ is given by
$${^{\sigma^m}}E : Y^2 = X^3 + 2(a+b)X^2 + \beta f_2 (a,b) X ,$$
and they admit a 2-isogeny $\mu : {^{\sigma^m}}E \rightarrow E$ given by
$$(X,Y) \mapsto (-\frac{Y^2}{2X^2},\frac{\sqrt{-2}}{4}\frac{Y}{X^2}(-\beta f_2 + X^2)).$$
The dual isogeny $\hat{\mu} : E \rightarrow {^{\sigma}}E$ is given by
$$(X,Y) \mapsto (-\frac{Y^2}{2X^2},- \frac{\sqrt{-2}}{4}\frac{Y}{X^2}(-\alpha f_1 + X^2)).$$
This shows that $E$ is a $k$-curve with $K^{+}(\sqrt{-2})$ as a field of complete definition.

\section{Modularity}
\label{bgmod}

In the previous sections, in order to apply level lowering, we assumed modularity of the Frey curves. We will now establish this assumption. Indeed, we will apply modularity lifting theorems to establish a general modularity statement for elliptic curves from which modularity of the Frey curves will follow as a corollary.\\

It is known that all elliptic curves over $\Q$ are modular and is expected the same to be true over totally real number fields but there are no complete general results in the latter situation. Thus, \textit{a priori}, we can only conjecture modularity of the Frey curves $E_{(a,b)}^{(k_1,k_2,k_3)}$, $E_{(a,b)}^{(k_1,k_2)}$ and $E_{(a,b)}^{(k_1,n_2,\pm)}$. We already know modularity for specific Frey curves. For example, for $r=7$ and triple $(k_1,k_2,k_3)=(1,2,3)$ the corresponding Frey curve is defined over $\Q$ (see section \ref{grau7}) hence it is modular from classical results; for $r=13$ and triple $(1,3,4)$ the Frey curves are defined over $\Q(\sqrt{13})$, are not $\Q$-curves, and their modularity is a consequence of the following theorem proved in \cite{DF2}.

\begin{tm} Let $F$ be a totally real number field and $\mathcal{C}$ and elliptic curve defined over $F$. Suppose that $3$ splits completely in $F$ and $\mathcal{C}$ has good reduction at the primes above $3$. Then $\mathcal{C}$ is modular if:
\begin{itemize}
 \item $\bar{\rho}_{\mathcal{C},3}$ is absolutely irreducible, or
 \item $F$ is abelian and $\bar{\rho}_{\mathcal{C},3}$ is absolutely reducible.
\end{itemize}
\end{tm}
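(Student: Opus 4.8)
The plan is to reduce the statement to the residual modularity of $\bar\rho_{\mathcal{C},3}$ and then invoke a modularity lifting theorem over the totally real field $F$, exploiting that the hypotheses on $3$ make the local situation at the primes above $3$ as simple as possible. Since $3$ splits completely we have $F_v\cong\Q_3$ for every $v\mid3$, and good reduction forces $\rho_{\mathcal{C},3}|_{G_{F_v}}$ to be crystalline with Hodge--Tate weights $\{0,1\}$, so $\rho_{\mathcal{C},3}$ is just inside the Fontaine--Laffaille range for $p=3$ and the relevant local deformation problems at $3$ are of finite flat / Fontaine--Laffaille type. Consequently, once $\bar\rho_{\mathcal{C},3}$ is known to be modular, the standard modularity lifting machinery at $p=3$ over totally real fields (Skinner--Wiles, Fujiwara, Kisin, Gee, \dots) yields an isomorphism $\rho_{\mathcal{C},3}\cong\rho_{f,\lambda}$ with $f$ a Hilbert newform of parallel weight $2$; the eigenvalues of $f$ then equal $a_v(\mathcal{C})\in\Z$ for almost all $v$, so $f$ is rational and $\mathcal{C}$ is modular. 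Thus everything comes down to residual modularity.

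When $\bar\rho_{\mathcal{C},3}$ is absolutely irreducible I would run Langlands--Tunnell over $F$: the image of $\bar\rho_{\mathcal{C},3}$ lies in $\mathrm{GL}_2(\F_3)$, which is solvable and admits a faithful two-dimensional complex representation, so composing yields a continuous Artin representation $G_F\to\mathrm{GL}_2(\C)$ that is totally odd because $\det\bar\rho_{\mathcal{C},3}=\bar\chi_3$. By Langlands--Tunnell (valid over any number field via solvable base change) this representation is automorphic, hence attached to a Hilbert eigenform of parallel weight one, and the classical passage from weight one to parallel weight two produces a parallel weight $2$ Hilbert newform $g$ over $F$ with $\bar\rho_g\cong\bar\rho_{\mathcal{C},3}$; feeding $g$ into the lifting theorem concludes this case. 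The one caveat is that at $p=3$ the lifting theorem requires $\bar\rho_{\mathcal{C},3}$ to stay absolutely irreducible after restriction to $G_{F(\zeta_3)}$. The remaining ``bad dihedral'' sub-case, in which $\bar\rho_{\mathcal{C},3}$ is absolutely irreducible over $G_F$ but induced from a character of $G_{F(\sqrt{-3})}$, I would handle by a $3$--$5$ switch, and if necessary a $3$--$5$--$7$ switch: pick an auxiliary elliptic curve $\mathcal{C}'/F$ with $\bar\rho_{\mathcal{C}',5}\cong\bar\rho_{\mathcal{C},5}$ and with $\bar\rho_{\mathcal{C}',3}$ absolutely irreducible and not of this dihedral type (such $\mathcal{C}'$ being an $F$-point, outside a thin set, on a genus-zero modular curve that already carries the $F$-point $\mathcal{C}$), deduce modularity of $\mathcal{C}'$, hence of $\bar\rho_{\mathcal{C},5}\cong\bar\rho_{\mathcal{C}',5}$, by the previous paragraph, and finish by lifting at $p=5$.

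When $F$ is abelian and $\bar\rho_{\mathcal{C},3}$ is absolutely reducible I would first note that $\mathcal{C}$ is necessarily good ordinary at every $v\mid3$, since a supersingular prime above $3$ makes the local --- and hence the global --- mod $3$ representation irreducible. The semisimplification $\bar\rho_{\mathcal{C},3}^{\mathrm{ss}}$ is then a sum of two $\Fbar_3$-valued characters of $G_F$, which over the abelian field $F$ are reductions of ray class characters; thus $\bar\rho_{\mathcal{C},3}^{\mathrm{ss}}$ is modular via Eisenstein series, and the residually reducible ordinary modularity lifting theorem of Skinner--Wiles, in its extension to abelian totally real base fields, upgrades this to modularity of $\rho_{\mathcal{C},3}$. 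The delicate sub-case is that in which $\bar\rho_{\mathcal{C},3}$ is absolutely reducible but irreducible over $\F_3$, so dihedral and induced from a character of a quadratic extension $L/F$; there I would descend to $L$, or twist, to reach a genuinely reducible situation before applying the residually reducible lifting theorem, and this is also where the abelian hypothesis is essential.

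The main obstacle is precisely the non-generic residual modularity: the bad dihedral sub-case in the irreducible setting and the dihedral sub-case in the reducible setting, both of which force one to leave $p=3$ or to descend along a quadratic extension. In the $3$--$5$--$7$ switch one is ultimately reduced to controlling $F$-rational points on a finite list of exceptional modular curves of small level (an analysis in the spirit of Ellenberg's method), and this is the only genuinely hard part; the generic backbone --- Langlands--Tunnell together with a Fontaine--Laffaille modularity lifting theorem at $p=3$ --- is by now routine.
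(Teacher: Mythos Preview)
The paper does not prove this theorem itself---it is quoted from \cite{DF2}---but immediately afterwards proves a generalization whose proof reveals the intended argument. Your overall scaffolding matches: Langlands--Tunnell plus a Breuil--Diamond type lifting theorem in the generic irreducible case, and Skinner--Wiles \cite{SW1} in the residually reducible abelian case. The substantive divergence is in the bad dihedral sub-case, where $\bar\rho_{\mathcal{C},3}$ is irreducible but $\bar\rho_{\mathcal{C},3}|_{G_{F(\sqrt{-3})}}$ is reducible.

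There the paper does \emph{not} use a $3$--$5$ switch. It argues directly: the characteristic-zero lift $\rho_0=\iota\circ\bar\rho_{\mathcal{C},3}$ (via the explicit embedding $\mathrm{GL}_2(\F_3)\hookrightarrow\mathrm{GL}_2(\Z[\sqrt{-2}])$) has dihedral projective image, hence is induced from a character of $G_{F(\sqrt{-3})}$; its Artin $L$-function and all its character twists are therefore entire, and the Jacquet--Langlands converse theorem produces a Hilbert cusp form $f_1$ of parallel weight one. A local computation at each $v\mid 3$ shows $f_1$ is ordinary, so it lies in a Hida family whose weight-two specialization furnishes a modular ordinary lift of $\bar\rho_{\mathcal{C},3}$. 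Separately, dihedral image at $p=3$ forces $\mathcal{C}$ itself to be ordinary at every good $v\mid 3$ (Lemma~9.1 of \cite{JMano}), so the ordinary lifting theorem of \cite{SW2} applies and finishes. No prime-switching, no rational-point analysis on auxiliary modular curves.

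Your $3$--$5$ switch, by contrast, is under-specified. To lift at $p=5$ you need local hypotheses on $\rho_{\mathcal{C},5}$ at the primes above $5$, and the statement gives you none; you also need the auxiliary curve $\mathcal{C}'$ to retain good reduction at the primes above $3$ so that the already-proved case applies to it, which must be arranged by hand (e.g., by $v$-adic approximation on the twist of $X(5)$). These issues are not insurmountable with sufficiently strong lifting theorems, but they are real gaps in your sketch, and the paper's route via the converse theorem and Hida theory sidesteps them entirely. Your claim that ``the only genuinely hard part'' is an Ellenberg-style rational-point analysis is therefore off the mark: that analysis is simply not needed here.
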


Modularity of all $E_{(a,b)}^{(k_1,k_2,k_3)}$ follows if the previous theorem holds with 3 being unramified on $F$ instead of totally split. A closer look at its proof tells us that the main obstacle to its generalization is guaranteeing the existence of a modular lifting of $\bar{\rho}_{\mathcal{C},3}$ that is ordinary at $v \mid 3$ exactly when $\rho_{\mathcal{C},3}$ is. Recent work due to Breuil-Diamond \cite{BreuilDiamond} building on \cite{GGL2} allows to find an adequate lifting in the case that $\bar{\rho}_{\mathcal{C},3}|G_{F(\sqrt{-3})}$ is absolutely irreducible, with no further conditions of $F$, which allow us to prove the required generalization. Actually, we can prove the following stronger theorem.

\begin{tm} Let $F$ be a totally real field and $C/F$ an elliptic curve. Then, $C$ is modular, if:
\begin{enumerate}
 \item $\bar{\rho}_{C,3}$ is irreducible and $\bar{\rho}_{C,3}|G_{F(\sqrt{-3})}$ is absolutely irreducible.
 \item $\bar{\rho}_{C,3}$ is irreducible and $\bar{\rho}_{C,3}|G_{F(\sqrt{-3})}$ reducible, $3$ unramified in $F$ and $C$ is semistable at all $v \mid 3$.
 \item $\bar{\rho}_{C,3}$ reducible, $F$ abelian, $3$ unramified in $F$ and $C$ is semistable at all $v \mid 3$.
\end{enumerate}
\label{modularidadeordinaria}
\end{tm}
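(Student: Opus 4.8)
The plan is to push both ``bad'' situations down to the single favourable one, case~(1), and then feed everything into a $3$-adic modularity lifting theorem, using Langlands--Tunnell for the residual input and a prime-switching argument to reach case~(1). Throughout, note that $F(\sqrt{-3})=F(\zeta_3)$, and that $\bar\rho_{C,3}|_{G_{F(\zeta_3)}}$ has image in $\mathrm{SL}_2(\mathbb{F}_3)$ since $\det\bar\rho_{C,3}$ is the mod-$3$ cyclotomic character cutting out $F(\sqrt{-3})$. For case~(1): the projective image of $\bar\rho_{C,3}$ lies in $\mathrm{PGL}_2(\mathbb{F}_3)\cong S_4$, which is solvable, so by the Langlands--Tunnell theorem (with the usual twisting argument, available since $F$ is totally real) together with cyclic base change, $\bar\rho_{C,3}$ is modular. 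The hypothesis that $\bar\rho_{C,3}|_{G_{F(\sqrt{-3})}}$ is absolutely irreducible is exactly the ``big image at $p=3$'' (adequacy) condition needed for Taylor--Wiles--Kisin patching; one then invokes the Breuil--Diamond lifting theorem \cite{BreuilDiamond}, building on \cite{GGL2}, which with no restriction on the ramification of $3$ in $F$ supplies a modular lift of $\bar\rho_{C,3}$ that is ordinary at exactly those $v\mid3$ where $\rho_{C,3}$ is. Hence $\rho_{C,3}$, and therefore $C$, is modular.

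For case~(3), $\bar\rho_{C,3}$ is reducible. Since $3$ is unramified in $F$, a prime $v\mid3$ of supersingular reduction would make $\bar\rho_{C,3}|_{G_v}$ irreducible; so $C$ is not supersingular at any $v\mid3$, and combined with semistability at $v\mid3$ this forces $C$ to be good-ordinary or multiplicative, i.e.\ ordinary, at every $v\mid3$. One is then in the residually reducible, ordinary setting over an abelian totally real field, where the Skinner--Wiles modularity lifting theorems for residually reducible representations apply and give modularity of $\rho_{C,3}$ and of $C$. This is precisely where the hypotheses ``$F$ abelian'', ``$3$ unramified in $F$'' and ``$C$ semistable at all $v\mid3$'' are used.

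For case~(2), $\bar\rho_{C,3}$ is irreducible but $\bar\rho_{C,3}|_{G_{F(\sqrt{-3})}}$ is reducible, so the image becomes too small after adjoining $\zeta_3$ for case~(1) to apply directly, and I would run the ``$3$--$5$ switch'' of Wiles, Manoharmayum and Freitas--Le~Hung--Siksek. The elliptic curves $C'/F$ with $\bar\rho_{C',5}\cong\bar\rho_{C,5}$ as symplectic $G_F$-modules form the set of $F$-points of a twist of the genus-$0$ modular curve $X(5)$; this twist carries the $F$-point attached to $C$, hence is $F$-isomorphic to $\mathbb{P}^1$ and has infinitely many $F$-points, and a Hilbert-irreducibility argument over $F$ produces such a $C'$ with $\bar\rho_{C',3}$ surjective onto $\mathrm{GL}_2(\mathbb{F}_3)$ (the generic, full geometric monodromy of the universal curve on $3$-torsion, since $\gcd(3,5)=1$). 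Then $C'$ falls under case~(1) and is modular, so $\bar\rho_{C,5}\cong\bar\rho_{C',5}$ is modular. Since every elliptic curve over $F$ is, up to twist, potentially Barsotti--Tate or potentially multiplicative at the primes above $5$, Kisin's potentially-Barsotti--Tate modularity lifting theorem (together with ordinary lifting at the primes of potentially multiplicative reduction), which imposes no condition on how $5$ ramifies in $F$, gives modularity of $\rho_{C,5}$ and of $C$ as soon as $\bar\rho_{C,5}|_{G_{F(\zeta_5)}}$ is absolutely irreducible. The curves where this last condition fails are treated by repeating the construction at $p=7$, and the finitely many curves whose residual image stays ``small'' simultaneously for $\ell=3,5,7$ correspond to $F$-points on an explicit short list of modular curves, which are excluded or routed back to case~(1) or case~(3).

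The main obstacle is the switching step and its aftermath. First, one must make the Hilbert-irreducibility argument over the number field $F$ precise enough to guarantee an auxiliary $C'$ that is a genuine elliptic curve with full mod-$3$ image, which means controlling the thin exceptional set (avoiding $j=0,1728$, degenerate fibres, and the loci of small image). Second, one must dispose of the elliptic curves whose residual image is small for every $\ell\in\{3,5,7\}$: these are governed by $F$-rational points on modular curves of genus $\ge2$ over a general totally real field, where effective finiteness is not automatic, so the resolution is to exploit the structural hypotheses (semistability at $v\mid3$, $F$ abelian) to bring such curves under the Skinner--Wiles residually-reducible lifting theorems rather than dealing with the modular curves directly. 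Assembling the precise modularity lifting inputs at $p=3,5,7$, with compatible local deformation conditions at the primes above $p$, above $3$, and at the bad primes of $C$, is the principal bookkeeping burden, but each ingredient is by now available in the literature.
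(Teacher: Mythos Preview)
Your treatment of cases~(1) and~(3) matches the paper's: Langlands--Tunnell plus the Breuil--Diamond lifting theorem for~(1), and ordinariness at $v\mid3$ (forced by reducibility, unramifiedness of $3$, and semistability) followed by Skinner--Wiles \cite{SW1} for~(3).

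Your case~(2), however, is a genuinely different route and contains a gap. The paper does \emph{not} run a $3$--$5$ switch. Instead it exploits directly the dihedral structure: since $\bar\rho$ is irreducible but $\bar\rho|_{G_{F(\sqrt{-3})}}$ is reducible, the projective image is dihedral and $\bar\rho$ is induced from a character of $G_{F(\sqrt{-3})}$. One lifts $\bar\rho$ to characteristic zero via the explicit embedding $\iota:\mathrm{GL}_2(\mathbb{F}_3)\hookrightarrow\mathrm{GL}_2(\mathbb{Z}[\sqrt{-2}])$, and the resulting Artin representation is automorphically induced, hence corresponds to a weight~$1$ Hilbert newform $f_1$. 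A computation of local $L$-factors at $v\mid3$ shows $f_1$ is ordinary there, so Hida theory (via Wiles \cite{wiles2}) specialises it to a weight~$2$ ordinary form $f_2$ with $\bar\rho_{f_2}\cong\bar\rho$. Now the hypotheses ``$3$ unramified in $F$'' and ``$C$ semistable at $v\mid3$'' come in: together with the dihedral image (Lemma~9.1 of \cite{JMano}) they force $C$ to be ordinary at all $v\mid3$, so that the Skinner--Wiles ordinary lifting theorem \cite{SW2} applies to $\rho_{C,3}$.

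The gap in your approach is that you invoke ``$F$ abelian'' to route the exceptional curves (small image simultaneously at $3,5,7$) back to case~(3), but $F$ abelian is \emph{not} a hypothesis of case~(2). Without it you cannot appeal to \cite{SW1}, and you are left with $F$-rational points on modular curves over an arbitrary totally real field, which you yourself flag as non-effective. The paper's argument sidesteps this entirely: it never leaves $p=3$, never needs a switching step, and uses the actual hypotheses of case~(2) (unramifiedness and semistability at $3$) precisely to secure ordinariness for the Hida and Skinner--Wiles machinery. Your approach, if the exceptional-curve analysis could be completed over general totally real $F$ without the abelian hypothesis, would prove something strictly stronger than case~(2) as stated; but as written it does not close.
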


\begin{proof} Write $\rho:= \rho_{C,3}$ and $\bar{\rho}:= \bar{\rho}_{C,3}$. Note that, when irreducible, $\bar{\rho}$ is modular by the theorem of Langlands-Tunnell (see Lemma 4.2 in \cite{DF2} for details). We now consider the each case separately.\\ 

(1) Suppose $\bar{\rho}$ and $\bar{\rho}|G_{F(\sqrt{-3})}$ both to be absolutely irreducible. We now apply Theorem 3.2.2 in \cite{BreuilDiamond} to $\bar{\rho}$. Let $S$ be the set of primes of bad reduction of $C$ together with the primes dividing $3$. Let also $T$ be the set of primes $v \mid 3$ such that $C$ has potentially good ordinary reduction or potentially multiplicative reduction. For each $v \in S$ set $\rho_v:=\rho|D_v$ and let $[r_v,N_v]$ be its Weil-Deligne type. Thus, for each $v \in S$, $\rho_v$ is a lifting of $\bar{\rho}|_{Gal(\bar{F}_v/F_v)}$ satisfying the hypothesis of the theorem, hence there is a global modular lifting of $\bar{\rho}$ with the local properties of $\rho$. Moreover, the theorem also states that every such lifting is modular, so $\rho$ is modular.\\

(2) Suppose $\bar{\rho}$ to be irreducible and $\bar{\rho}|G_{F(\sqrt{-3})}$ absolutely reducible. This means that the image of $\mathbb{P}(\bar{\rho})$ is Dihedral. From Lemma 9.1 in \cite{JMano} it follows that all the primes above 3 of good reduction of $C$ are primes of good ordinary reduction. In particular, since $C$ is semistable at $3$, for all $v\mid 3$ we have
\begin{equation}
\label{mord}
\bar{\rho}|D_v \sim \begin{pmatrix} \epsilon_{1}^{v} & * \\ 0 & \epsilon_{2}^v \end{pmatrix}, 
\end{equation} 
with $\epsilon_{1}^v| I_v = \bar{\chi}_3$ and $\epsilon_{2}^v$ unramified.\par

Let $\iota : \mbox{GL}_2(\mathbb{F}_3) \rightarrow \mbox{GL}_2(\mathbb{Z}[\sqrt{-2}]) \subset \mbox{GL}_2(\mathbb{C})$ be the injective group homomorphism defined by
\begin{eqnarray*}
\begin{pmatrix} -1 & 1 \\ -1 & 0 \end{pmatrix} & \mapsto & \begin{pmatrix} -1 & 1 \\ -1 & 0 \end{pmatrix} \\ 
\begin{pmatrix} 1 & -1 \\ 1 & 1 \end{pmatrix} & \mapsto & \begin{pmatrix} 1 & -1 \\ -\sqrt{-2} & -1+\sqrt{-2} \end{pmatrix}. \\
\end{eqnarray*}
The composition $\rho_0 = \iota \circ \bar{\rho} : G_F \rightarrow \mbox{GL}_2(\mathbb{C})$ is totally odd, irreducible with Dihedral projective image. Moreover, $\rho_0$ is induced by a character $\psi$ of $G_{F(\sqrt{-3})}$, thus its Artin $L$-function is entire and satisfies a functional equation, because 
$$L(\rho_0,s) = L(\mbox{Ind}_{G_{F(\sqrt{-3})}}^{G_F}\psi,s) = L(\psi,s).$$
Furthermore, given a character $\omega : G_F \rightarrow GL_2(\mathbb{C})$ we have 
$$\bar{\rho} \otimes \omega = (\mbox{Ind}_{G_{F(\sqrt{-3})}}^{G_F} \psi) \otimes \omega = \mbox{Ind}_{G_{F(\sqrt{-3})}}^{G_F}(\psi \omega)$$
and as above we conclude that $L(\rho_0 \otimes \omega,s)$ is entire. By the Converse theorem for $GL_2$ (Theorem 11.3 in \cite{JL}) we conclude that $\rho_0$ is $\mbox{GL}_2$ automorphic. Let $\pi_1$ be an automorphic form of $GL_2(F)$ of weight 1 such that $L(\rho_0,s) = L(\pi_1,s)$.\par 
The next argument using the local factors of the $L$ functions is due to Ellenberg (see Lemma 3.3 in \cite{ell3}). Let $v$ be a prime in $F$ dividing 3. On one hand, a small computation shows that the image of $\rho_0 |D_v$ is upper triangular with eigenvalues contained in $\{\pm 1\}$. Furthermore, $I_v$ fixes pointwise the subspace $V$ of dimension 1 generated by $(\sqrt{-2} + 1 , 2)$. Let $M/F$ be the extension cut out by $\bar{\rho}$ and $v_0 \mid v$ a prime in $M$. 
The local factor $L_v(\rho_0,s)$ is equal to
$$\frac{1}{\mbox{det}(\mbox{Id} - \mbox{Nm}(v)^{-s}\rho_0(\mbox{Frob}_{v_0})|V)} = (1-a_v(\rho_0)\mbox{Nm}(v)^{-s})^{-1},$$
where $a_v(\rho_0)$ must be $\pm 1$. 
On the other hand, let $f_1$ be a Hilbert modular newform of parallel weight 1 associated with $\pi_1$ and $c(v, f_1)$ the eigenvalue of the action of the Hecke operator $T_v$ acting on $f_1$. Thus we have 
$$L_v(f_1,s) = (1 - c(v,f_1)\mbox{Nm}(v)^{-s})^{-1}$$ 
and from $L_v(\rho_0,s) = L_v(f_1,s)$ we conclude $c(v,f_1) = \pm 1$. In particular, $c(v,f_1) \not\equiv 0$ (mod $v$) for all $v \mid 3$ and $f_1$ is ordinary (at 3) in the sense of \cite{wiles2}. From Theorem 1.4.1 in \cite{wiles2} we conclude that $f_1$ belongs to a Hida family $\mathcal{F}$ and we let $f_2$ be its specialization to weight 2. From Theorem 2.1.4 in \cite{wiles2} we have associated to $f_2$ a Galois representation $\rho_{f_2, \lambda}$ of $G_F$, ordinary at all $v \mid 3$, such that $\bar{\rho}_{f_2, \lambda} \sim \bar{\rho}$.\par 
At this point we have that: det$(\rho) = \chi_3$, $\rho$ is ordinary at all $v \mid 3$ (because $C/F$ is of ordinary or multiplicative reduction at all $v \mid 3$), $\bar{\rho}$ is absolutely irreducible and $D_v$-distinguished for all $v \mid 3$, and $\rho_{f_2, \lambda}$ is an ordinary lift of $\bar{\rho}$. Thus we conclude from Theorem 5.1 in \cite{SW2} that $\rho$ is modular.\\

(3) Suppose that $\bar{\rho}$ is reducible and $F$ abelian and 3 unramified at $F$. We want to apply Theorem A in \cite{SW1}. The hypothesis there are not numerated but we will refer to them as conditions (1) to (5) in descending order.\par 

As in part 2), from Lemma 9.1 in \cite{JMano} it follows that all the primes above 3 are of good ordinary or bad multiplicative reduction, hence condition (4) holds. Furthermore, we have $\bar{\rho}^{ss} = \chi_1 \oplus \chi_2$, where $\chi_1 = \psi \bar{\chi}_3$, $\chi_2 = \psi^{-1}$ where $\psi$ is ramified only at additive primes of $C$. Note also that $\psi$ must be quadratic because $\mathbb{F}_3^{*}$ has only two elements. Then $\chi_1 / \chi_2 = \psi^2 \bar{\chi}_3 = \bar{\chi}_3$ and conditions (2) and (3) are satisfied. Finally, the extension $F(\chi_1 / \chi_2) = F(\sqrt{-3})$ of $\Q$ is abelian because $F$ and $\Q(\sqrt{-3})$ are both abelian. This establishes condition (1) and condition (5) holds because $\rho$ arises from an elliptic curve. Thus by Theorem A in \cite{SW1} we conclude that $\rho$ is  modular.\end{proof}

Since $\bar{\rho}_{E,3}$ must satisfy one of the cases in the statement of the previous theorem the following useful theorem follows immediately.

\begin{tm} Let $F$ be an abelian totally real number field where $3$ is unramified. Let $C/F$ be an elliptic curve semistable at all primes $v \mid 3$. Then, $C/F$ is modular.
\end{tm}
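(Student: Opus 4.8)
The plan is to deduce this immediately from Theorem~\ref{modularidadeordinaria} by a case analysis on the residual mod~$3$ representation $\bar{\rho}_{C,3}$. Given an elliptic curve $C/F$ with $F$ abelian totally real and $3$ unramified in $F$, and $C$ semistable at all $v\mid 3$, every hypothesis of Theorem~\ref{modularidadeordinaria} that does not concern $\bar{\rho}_{C,3}$ itself (namely: $F$ abelian, $3$ unramified, $C$ semistable at $v\mid 3$) is already in force. So it suffices to observe that $\bar{\rho}_{C,3}$ always falls into at least one of the three cases listed there.

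First I would split according to whether $\bar{\rho}_{C,3}$ is reducible or irreducible. If $\bar{\rho}_{C,3}$ is reducible, we are in case~(3) of Theorem~\ref{modularidadeordinaria} (using that $F$ is abelian, $3$ unramified, $C$ semistable at $v\mid 3$), and modularity follows. If $\bar{\rho}_{C,3}$ is irreducible, I would then ask whether its restriction to $G_{F(\sqrt{-3})}$ is absolutely irreducible. If it is, case~(1) applies (no further hypotheses on $F$ needed). If instead $\bar{\rho}_{C,3}|G_{F(\sqrt{-3})}$ is reducible, we are exactly in the situation of case~(2), again invoking that $3$ is unramified in $F$ and $C$ is semistable at all $v\mid 3$. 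In every branch the conclusion is that $C/F$ is modular.

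Since the three cases of Theorem~\ref{modularidadeordinaria} are exhaustive for any mod~$3$ representation arising from an elliptic curve over a totally real field satisfying the stated local and global conditions, this completes the argument. There is essentially no obstacle here: the real content has already been absorbed into Theorem~\ref{modularidadeordinaria}, whose proof combines Langlands--Tunnell, the Breuil--Diamond modularity lifting results for the absolutely irreducible case, the Skinner--Wiles style arguments (via an Ellenberg-type $L$-function comparison and a Hida family) for the dihedral case, and Theorem~A of Skinner--Wiles for the reducible case. The present statement is just the clean corollary obtained by noting that the trichotomy on $\bar{\rho}_{C,3}$ covers all possibilities; the only thing to be slightly careful about is matching the semistability-at-$3$ hypothesis across cases~(2) and~(3), which is immediate since it is assumed outright.
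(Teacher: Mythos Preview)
Your proposal is correct and is essentially identical to the paper's own argument: the paper simply notes that $\bar{\rho}_{C,3}$ must satisfy one of the three cases of Theorem~\ref{modularidadeordinaria}, so the result follows immediately. Your case split (reducible; irreducible with absolutely irreducible restriction to $G_{F(\sqrt{-3})}$; irreducible with reducible restriction) is exactly the exhaustive trichotomy the paper has in mind.
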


\begin{cor} The Frey curves $E_{(a,b)}^{(k_1,k_2,k_3)}$, $E_{(a,b)}^{(k_1,k_2)}$ and $E_{(a,b)}^{(k_1,n_2,\pm)}$ are modular.
\label{cor:modularity}
\end{cor}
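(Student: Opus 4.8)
The plan is to obtain this corollary as an immediate consequence of the modularity theorem stated just above --- the one asserting that an elliptic curve over an abelian totally real field in which $3$ is unramified, semistable at all primes $v\mid 3$, is modular --- by verifying its two hypotheses for each of the three families of Frey curves attached to a primitive solution. First I would record the fields of definition: the curves $E_{(a,b)}^{(k_1,k_2,k_3)}$, $E_{(a,b)}^{(k_1,k_2)}$ and $E_{(a,b)}^{(k_1,n_2,\pm)}$ are all defined over $K^{+}$, the maximal totally real subfield of $\Q(\zeta_r)$ (when $r\equiv 1\pmod 6$ the curve $E_{(a,b)}^{(1,n_2,n_3)}$ moreover admits a model over the subfield $K_0$ by Proposition~\ref{isos}, and one may equally argue over $K_0$, but this is inessential for modularity). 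Now $K^{+}$ --- like $K_0$ --- is a subfield of $\Q(\zeta_r)$, whose Galois group over $\Q$ is the cyclic group $(\Z/r\Z)^{\times}$; hence $K^{+}$ and $K_0$ are abelian over $\Q$ and totally real. Moreover, since $r\geq 7$ is prime we have $3\nmid r$, so $3$ is unramified in $\Q(\zeta_r)$ and therefore in each of these subfields.

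Next I would simply quote the conductor computations already carried out in Sections~\ref{curvasI}, \ref{curvasII} and \ref{4m}. Proposition~\ref{condEgamma} shows that $E_{(a,b)}^{(k_1,k_2,k_3)}$ has good reduction at all $v\mid 3$, and Proposition~\ref{cond3} gives the same for its $K_0$-model; Proposition~\ref{condEgammaII} shows that $E_{(a,b)}^{(k_1,k_2)}$ is semistable at all $v\mid 3$; and Proposition~\ref{cond222} shows that $E_{(a,b)}^{(k_1,n_2,+)}$ has good reduction at all $v\mid 3$, the same holding for $E_{(a,b)}^{(k_1,n_2,-)}$ by the accompanying remark. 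The common reason in each case is that every prime of additive reduction lies above $2$ or equals $\pi_r$, while the primes dividing $c$ lie above rational primes $\equiv 1\pmod r$ and are hence coprime to $3$; so no prime above $3$ is a prime of bad reduction. Thus every Frey curve in the three families is semistable at every prime above $3$.

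Combining the two observations, each Frey curve satisfies exactly the hypotheses of the preceding theorem and is therefore modular, which proves the corollary. I do not expect a genuine obstacle at this stage: the substantive work is in Theorem~\ref{modularidadeordinaria} and in the conductor propositions, all already established. The one point deserving a moment's attention is that one really does need the base field to be abelian over $\Q$ --- because $\bar{\rho}_{E,3}$ may well be reducible, in which case only case (3) of Theorem~\ref{modularidadeordinaria} is available --- which is precisely why the construction was arranged so that the Frey curves live over subfields of $\Q(\zeta_r)$ rather than over some larger totally real field.
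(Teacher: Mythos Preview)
Your proposal is correct and follows essentially the same approach as the paper: verify that $K^+$ is abelian totally real with $3$ unramified (since $r\geq 7$), cite Propositions~\ref{condEgamma}, \ref{condEgammaII} and \ref{cond222} for semistability at all $v\mid 3$, and then invoke the preceding theorem. The paper's proof is simply a more compressed version of your argument, omitting the explanatory remarks about $K_0$, the reason the bad primes avoid $3$, and the role of the abelian hypothesis.
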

\begin{proof} The Frey curves are defined over the abelian totally real field $K^+ \subset \Q(\zeta_r)$. Since $r \geq 7$ we have that 3 is unramified in $K^+$.
From the last statements on Propositions \ref{condEgamma}, \ref{condEgammaII} and \ref{cond222} we have that all the Frey curves are semistable at all $v \mid 3$.
The result now follows from the theorem.
\end{proof}

\section{Irreducibility of $\bar{\rho}_{E,p}$}
\label{sec:irr}

As discussed, we need irreducibility of $\bar{\rho}_{E,p}$ in order to apply the level lowering theorems. In this section, we prove the existence of a bound $M_r$, for each fixed $r \geq 7$, such that if $p > M_r$ then $\bar{\rho}_{E,p}$ is irreducible for all the Frey curves constructed. We achieve this by applying the results in \cite{FS} to setting.

\begin{tm} Let $r \geq 7$ be a fixed prime and let $(a,b,c)$ be a primitive solution $x^r + y^r =Cz^p$. Write $E$ for $E_{(a,b)}^{(k_1,k_2,k_3)}$ or $E_{(a,b)}^{(k_1,k_2)}$ or $E_{(a,b)}^{(k_1,n_2,\pm)}$. Then, there exists a computable constant $M_r$ such that, if $p > M_r$ then the representation $\bar{\rho}_{E,p}$ is absolutely irreducible.
\label{irr}
\end{tm}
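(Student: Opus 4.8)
The plan is to reduce the statement to an application of the results of Freitas--Siksek \cite{FS} on the irreducibility of mod~$p$ representations of elliptic curves over totally real fields. The curve $E$ (in any of the three shapes) is defined over $K^+$, a totally real abelian field of degree $(r-1)/2$, and from Propositions~\ref{condEgamma}, \ref{condEgammaII} and \ref{cond222} we have precise control of its conductor: away from $2$ and $\pi_r$ the curve has multiplicative reduction exactly at the primes dividing $c$, and the discriminant at such a prime $\q$ has valuation divisible by $p$. I would argue by contradiction: suppose $\bar\rho_{E,p}$ is reducible, so after semisimplification $\bar\rho_{E,p}^{ss}\sim\theta\oplus\theta'$ with $\theta\theta'=\bar\chi_p$ (the mod~$p$ cyclotomic character), where $\theta,\theta'\colon G_{K^+}\to\overline{\F}_p^\times$.

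First I would pin down the ramification of the characters $\theta,\theta'$. At a multiplicative prime $\q\mid c$ with $\q\nmid 2rp$, Tate's theory together with $p\mid v_\q(\Delta)$ forces $\bar\rho_{E,p}$ to be unramified at $\q$ (this is exactly Propositions~\ref{notram}, \ref{artinc2}, \ref{cond222}), so $\theta,\theta'$ are unramified there. At the primes above $2$ and at $\pi_r$ the conductor is bounded (exponent $\le 4$, resp.\ exponent $\le 2$), so the conductors of $\theta,\theta'$ are supported on $2$, $\pi_r$ and $p$, with bounded exponent away from $p$; at primes above $p$ their restriction to inertia is a power of $\bar\chi_p$ of bounded exponent, by the description of $\bar\rho_{E,p}|G_{\mathfrak p}$ (finite, or multiplicative with $p\mid v(\Delta)$). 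Thus $\theta$ and $\theta'$ factor through a ray class group of $K^+$ of conductor dividing a fixed ideal times $(p)^{c}$ for an explicit $c$, twisted by a power of $\bar\chi_p$. This is precisely the setup of \cite{FS}: one obtains that $\theta^{12h}$ (with $h$ the class number of $K^+$, say) is either unramified everywhere outside $p$ and equal to a fixed power of $\bar\chi_p^{12h}$, or is ramified only at the bounded set $\{v\mid 2r\}$; either way $\theta$ lies in a finite, $r$-dependent set once $p$ exceeds a computable bound, and the associated ``isogeny character'' argument bounds $p$.

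Concretely, I would invoke the main isogeny bound of \cite{FS}: if $E/K^+$ has a $K^+$-rational $p$-isogeny with $p$ larger than a constant depending only on $[K^+:\Q]$ and the places of bad reduction forced into the conductor (here just $\{v\mid 2r\}$, since the bad primes dividing $c$ carry valuation divisible by $p$ and hence are harmless), then the isogeny character, raised to the $12$-th power, is $\bar\chi_p^{0}$, $\bar\chi_p^{12}$, or factors through the class group — and in each case one derives, from the Weil pairing relation $\theta\theta'=\bar\chi_p$ and from comparing Frobenius traces at a fixed auxiliary prime $\q_0$ of good reduction (the trace $a_{\q_0}(E)$ lies in $\Z$ and is bounded by $2\sqrt{\mathrm{Nm}\,\q_0}$, while $\theta(\mathrm{Frob}_{\q_0})+\theta'(\mathrm{Frob}_{\q_0})$ is a fixed element of $\overline{\F}_p$), a congruence of the form $p\mid A_r$ for an explicit nonzero integer $A_r$ depending only on $r$. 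Setting $M_r$ to be the maximum of the \cite{FS} isogeny bound for $K^+$ with bad set $\{v\mid 2r\}$, the quantity $h_r^+$, and (the largest prime factor of) $A_r$, we get a contradiction for $p>M_r$, which establishes absolute irreducibility for all three families simultaneously — the only input that changed between the families was the conductor shape, and in all three the bad primes not above $2r$ occur with exponent divisible by $p$ in the minimal discriminant.

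**Main obstacle.** The routine part is translating the conductor computations into the hypotheses of \cite{FS}; the delicate point is handling the primes above $p$ itself and the prime $\pi_r\mid r$ carefully, since there $\bar\rho_{E,p}$ may be genuinely ramified with $p$-power-free conductor exponent, so one must check that the characters $\theta,\theta'$ restricted to inertia at these primes are powers of $\bar\chi_p$ with \emph{bounded} (not growing with $p$) exponent — this uses that $E$ is semistable at $v\mid 3$ but more importantly that $v_{\pi_r}(N_E)\le 2$ and, at $\mathfrak p\mid p$, the multiplicative-or-good dichotomy. Getting the constant $A_r$ to be provably nonzero (so that ``$p\mid A_r$'' is a genuine restriction) is the second point requiring care, and is where one genuinely uses $C\neq 0$ and the primitivity of the solution; I expect this, rather than the large-image machinery, to be the crux of the proof.
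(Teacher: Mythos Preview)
Your approach is correct in spirit and would succeed, but it does far more work than necessary. Both you and the paper invoke \cite{FS}, but the paper treats the Freitas--Siksek theorems as black boxes: since all three families of Frey curves are defined over the fixed totally real field $K^+$ (or a subfield), one simply takes $B_r$ to be the constant $B$ of Theorem~1 in \cite{FS} applied with $K=K^+$, sets $I_r=(1+3^{d_r h_r})^2$ where $d_r=[K^+:\Q]$ and $h_r$ is its class number, and then Theorem~2 of \cite{FS} gives irreducibility whenever $p\nmid B_r$ and $p>I_r$. Choosing $M_r>\max(B_r,I_r)$ finishes the proof in three lines. The paper even remarks immediately afterward that ``we used very little specific information about the Frey curves.''

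By contrast, you unpack the internal structure of the \cite{FS} argument: the ramification of the isogeny characters $\theta,\theta'$ at primes dividing $c$, at $\pi_r$, at $2$, and above $p$; the reduction to a ray class group; and the extraction of a nonzero integer $A_r$ via an auxiliary Frobenius. None of this is needed, because the constants in \cite{FS} depend only on the field $K^+$, not on the conductor of $E$ or on the solution $(a,b,c)$. In particular, your identified ``main obstacle'' --- ensuring $A_r\neq 0$ via primitivity of the solution and $C\neq 0$ --- is not an obstacle at all: that nonvanishing is already established inside \cite{FS} for arbitrary elliptic curves over $K^+$, with no reference to the Diophantine origin of the curve. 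Your conductor analysis (Propositions~\ref{condEgamma}, \ref{condEgammaII}, \ref{cond222}) is relevant for level lowering, but not for this irreducibility step.
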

\begin{proof} The curves $E$ are defined over $k$, $K_0$ or $K^+$ which are totally real. Let $h_r$ be the class number of $K^+$ and $d_r$ its degree. Let $B_r$ be the constant $B$ in the statement of Theorem~1 in \cite{FS}, when taking $K=K^+$. It is clear that $B_r$ only depends on $r$. Write $I_r = (1 + 3^{d_r h_r})^2$. Then, if $p \nmid B_r$ and $p > I_r$ it follows from Theorem~2 in \cite{FS} that $\bar{\rho}_{E,p}$ is irreducible. The theorem follows by choosing $M_r$ greater than $B_r$ and $I_r$.
\end{proof}

The constant $M_r$ in the previous theorem work for all Frey curves at once, but when working with particular examples we are interested in finding the smallest possible constant. We can get a smaller value for $M_r$ when
\begin{itemize}
 \item working with Frey curves over $k$ or $K_0$. Using the corresponding values of $B$, degree and class number will bring down the constant with exactly the same proof;
 \item working with particular values of $r$ and the Frey curves $E = E_{(a,b)}^{(k_1,k_2,k_3)}$, because we can use the fact that we know explicit primes of good reduction. For example, in section 6 of \cite{FS} this allowed to bring the constant down to 17 in the particular case of $r=13$.
\end{itemize}

In the previous proof we used very little specific information about the Frey curves. In the next theorem, we particularize to $E = E_{(a,b)}^{(k_1,k_2,k_3)}$ and we are able to prove a much better bound in some cases. For a prime $\p$ of potentially good reduction of $E$ define $\Phi_{\p}$ as in the introduction of \cite{kraus}. Since $K^+$ is Galois the inertial degree $f(r) = f(\mathfrak{P}_2 / 2)$ is the same for all primes $\mathfrak{P}_2$ above 2. 

\begin{tm} Fix $r \geq 7$ a prime and $(k_1,k_2,k_3)$ an $r$-suitable triple. The Frey curve $E = E_{(a,b)}^{(k_1,k_2,k_3)}$ is defined over $K^+$ or $K_0$. Let $\mathfrak{P}_2 \mid 2$ be a prime in $K^+$ in the former case or in $K_0$ in the latter case. Suppose that the inertial degree $f(\mathfrak{P}_2 / 2)$ is odd. Let $(a,b,c)$ be a primitive solution of $(\ref{eq1})$ or $(\ref{eq2})$. Then $\bar{\rho}_{E,p}$ is irreducible for all primes $p \geq 3$.
\label{irr2}
\end{tm}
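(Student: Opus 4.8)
The plan is to exploit the local behaviour of $E$ at a prime $\mathfrak{P}_2 \mid 2$: there $E$ has additive but potentially good reduction with \emph{wild} inertia, so the image of inertia under $\bar{\rho}_{E,p}$ is too large to sit inside a Borel subgroup, forcing irreducibility. Note first that since $p\geq 3$ we have $\mathfrak{P}_2 \nmid p$, so the local picture at $\mathfrak{P}_2$ is genuinely a statement about the prime-to-$p$ inertia.

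First I would record the local invariants of $E$ at $\mathfrak{P}_2$. By Proposition~\ref{condEgamma} (when $E$ is viewed over $K^+$) or by the computation in the proof of Proposition~\ref{cond3} (when $E$ is viewed over $K_0$, after the non-minimal change of variables carried out there), a minimal model of $E$ at $\mathfrak{P}_2$ satisfies $\upsilon_{\mathfrak{P}_2}(\Delta)=4$, $\upsilon_{\mathfrak{P}_2}(c_6)=5$ and $\upsilon_{\mathfrak{P}_2}(c_4)\geq 4$. Hence $\upsilon_{\mathfrak{P}_2}(j)=3\upsilon_{\mathfrak{P}_2}(c_4)-\upsilon_{\mathfrak{P}_2}(\Delta)\geq 8$, so $E$ has potentially good (and, since $\upsilon_{\mathfrak{P}_2}(c_4)>0$, additive) reduction at $\mathfrak{P}_2$. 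Since $2$ is unramified in $\Q(\zeta_r)$, the completion at $\mathfrak{P}_2$ is the unramified extension of $\Q_2$ of degree $f:=f(\mathfrak{P}_2/2)$, with residue field $\F_{2^f}$.

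Next I would feed these data into Kraus's analysis of potentially good reduction in residue characteristic $2$ (the classification underlying the invariant $\Phi_{\mathfrak{P}_2}$ of \cite{kraus}): for the above valuations, and \emph{because $f$ is odd}, the relevant table entries should give that the image of inertia $\bar{\rho}_{E,p}(I_{\mathfrak{P}_2})\subset \mathrm{GL}_2(\F_p)$ is, up to conjugacy, the quaternion group $Q_8$ or the group $\mathrm{SL}_2(\F_3)$ (equivalently $\Phi_{\mathfrak{P}_2}\in\{8,24\}$), and this for \emph{every} prime $p\neq 2$. The parity of $f$ enters exactly here: it governs whether the cube/quadratic roots that distinguish this ``wild'' image from a cyclic one lie in $\F_{2^f}$ (for $f$ even the image may be cyclic and the argument below collapses, which is why the statement restricts to $f$ odd). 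In either case $\bar{\rho}_{E,p}(I_{\mathfrak{P}_2})$ contains a copy of $Q_8$.

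Finally, assume for contradiction that $\bar{\rho}_{E,p}$ is reducible. Then $\bar{\rho}_{E,p}(G_{K^+})$ (respectively $\bar{\rho}_{E,p}(G_{K_0})$) lies in a Borel subgroup $B\subset \mathrm{GL}_2(\F_p)$, hence so does $\bar{\rho}_{E,p}(I_{\mathfrak{P}_2})$, and therefore $Q_8$ embeds into $B$. But the unipotent radical $U\triangleleft B$ is a $p$-group with $B/U$ abelian; since $|Q_8|=8$ is prime to $p$, the subgroup $Q_8\cap U$ is trivial, so $Q_8$ embeds into the abelian group $B/U$, contradicting that $Q_8$ is non-abelian. (The case $p=3$ is no exception, since $Q_8\subset \mathrm{SL}_2(\F_3)\subset \mathrm{GL}_2(\F_3)$.) Hence $\bar{\rho}_{E,p}$ is irreducible for all $p\geq 3$; it is in fact absolutely irreducible, as $Q_8$ already acts absolutely irreducibly on $\F_p^2$ for odd $p$. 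The one genuinely delicate step is the appeal to Kraus's tables: one must normalise the valuations correctly over the unramified base $(K^+)_{\mathfrak{P}_2}$ and keep track of the residue conditions in $\F_{2^f}$ in order to confirm that $f$ odd places us in the $\Phi_{\mathfrak{P}_2}\in\{8,24\}$ case. Everything else is elementary group theory.
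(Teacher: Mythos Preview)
Your proposal has a gap at the decisive step. You assert that, for the given valuations and with $f$ odd, Kraus's tables yield $\Phi_{\mathfrak{P}_2}\in\{Q_8,\mathrm{SL}_2(\F_3)\}$, i.e.\ $|\Phi_{\mathfrak{P}_2}|\in\{8,24\}$. This is not what Kraus gives: since $\upsilon_{\mathfrak{P}_2}(\Delta)=4\not\equiv 0\pmod 3$, case~(ii) of Kraus's Theorem~3 yields $|\Phi_{\mathfrak{P}_2}|\in\{3,6,24\}$, exactly as the paper records, and in particular $|\Phi_{\mathfrak{P}_2}|=8$ is excluded. The inertial degree $f$ plays no role at this point: $\Phi_{\mathfrak{P}_2}$ is computed over the maximal unramified extension and is insensitive to the residue field of the base. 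In the cases $|\Phi_{\mathfrak{P}_2}|=3$ or $6$ the group is cyclic (tame), and a cyclic group of order $3$ or $6$ can sit inside a Borel of $\mathrm{GL}_2(\F_p)$ for many $p$ (for instance whenever $p=3$ or $p\equiv 1\pmod 3$), so your $Q_8$-not-in-a-Borel argument does not cover these possibilities.

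The paper's route is genuinely different. It does not attempt to show that the inertia image is non-abelian. Instead it uses that $3\mid|\Phi_{\mathfrak{P}_2}|$ in all three cases, notes the elementary fact that $3\nmid 2^{nf}(2^f-1)$ when $f$ is odd, and then invokes Billerey's irreducibility criterion (Proposition~3.3 of \cite{bil2}). It is in this criterion, not in Kraus's classification of $\Phi$, that the parity of $f$ enters. Your group-theoretic endgame is correct in the special case $|\Phi_{\mathfrak{P}_2}|=24$, but to make the argument complete you would need either to rule out $|\Phi_{\mathfrak{P}_2}|\in\{3,6\}$ for these specific curves by a finer local computation (which goes beyond the valuation data you recorded), or to replace that step by Billerey's criterion as the paper does.
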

\begin{proof} Recall that the denominator of $j(E)$ is $(\alpha\beta\gamma f_{k_1}f_{k_2}f_{k_3})^2$, hence $\upsilon_{\mathfrak{P}_2}(j(E)) \geq 0$ then $E$ has potentially good reduction at $\mathfrak{P}_2$. 
Also, $\upsilon_{\mathfrak{P}_2}(\Delta) = 4 \not\equiv 0$ (mod 3), where $\Delta$ is minimal at $\mathfrak{P}_2$, then we are in case (ii) of Theorem 3 in \cite{kraus}. Then, from the same Theorem 3 we conclude $|\Phi_{\mathfrak{P}_2}|=3,6,24$. Moreover, $2^{nf}(2^f -1)$ is divisible by $3$ only if $f$ is even. Since we have $f(r)$ odd we apply Proposition 3.3 in \cite{bil2} to conclude that $\bar{\rho}_{E,p}$ is irreducible for all primes $p \geq 3$.\end{proof}

\section{Equations of signature $(7,7,p)$, Part I.}
\label{sete}

In this section we will use Frey curves constructed in section~\ref{curvasI} to show the non-existence of non-trivial primitive first case solutions to equations of the form $x^7 + y^7 = C z^p$ for infinitely many values of $C$. More precisely, we will prove the following theorem.

\begin{tm}
\label{grau7}
Write $d=2^{s_0}3^{s_1}5^{s_2}$. Let $\gamma$ be an integer only divisible by primes $l \not\equiv 1,0 \mbox{ (mod } 7)$. Let $p \geq 17$ be a prime. We have that: 
\begin{itemize}
\item[$(I)$] The equation $x^7 + y^7 = d \gamma z^p$ has no non-trivial first case solutions if
any of the following cases is satisfied:
 \subitem 1) $s_0 \geq 2$, $s_1 \geq 0$ and $s_2 \geq0$;
 \subitem 2) $s_0 = 1$, $s_1 \geq 1$ and $s_2 \geq 0$;
 \subitem 3) $s_0 = 0$, $s_1 \geq 0$ and $s_2 \geq 1$.
\item[$(II)$] The equation $x^{14} + y^{14} = d \gamma z^p$ has no non-trivial primitive solutions if $s_1 > 0$ or $s_2 > 0$ or $s_0 \geq 2$. 
\end{itemize}
\end{tm}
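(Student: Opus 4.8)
Since $7 = 6\cdot 1 + 1$ we have $K_0 = \Q$, and the only $7$-suitable triple is $(1,2,3)$; the plan is to run the modular package of Sections~\ref{relacionar}--\ref{sec:irr} with the single family $E_{(a,b)}^{(1,2,3)}$, working over $\Q$ throughout. Suppose $(a,b,c)$ is a non-trivial first case solution of $x^7 + y^7 = d\gamma z^p$ with $\gcd(a,b)=1$ and $p \ge 17$ (so $p \nmid h_7^{+} = 1$ automatically). By Lemma~\ref{novaeq3} we have $a+b = d\gamma c_0^p$, and, passing through equation~\eqref{eq1} (first case, $7 \nmid a+b$) and Proposition~\ref{isos}, we attach the Frey curve $E := E_{(a,b)}^{(1,2,3)}$, which has a model over $\Q$. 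By Proposition~\ref{cond3} its conductor is $N_E = 2^{s}\cdot 7^{2}\cdot\mathrm{rad}(c)$ with $s \le 4$; a short $2$-adic analysis of the invariants $c_4(E), c_6(E), \Delta(E)$ restricts $s$ and the inertial type of $E$ at $2$ to finitely many cases, the hypothesis in case (1), (2) or (3) fixing $a,b$ modulo a power of $2$ through $a+b = d\gamma c_0^p$.

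Modularity of $E/\Q$ is classical. Irreducibility of $\bar\rho_{E,p}$ for every prime $p \ge 3$ follows from Theorem~\ref{irr2}: $E/\Q$ has potentially good reduction at $2$, $\upsilon_2(\Delta_{\min}) = 4 \not\equiv 0 \pmod 3$, and the inertial degree $f(2/2) = 1$ is odd. By Proposition~\ref{notram}, $\bar\rho_{E,p}$ is unramified at every prime dividing $c$ and finite at all primes above $p$ (good reduction, or multiplicative with $p \mid \upsilon(\Delta)$), so Ribet's level lowering over $\Q$ produces a newform $f \in S_2(2^{s}7^{2})$, $s \le 4$, and a prime $\mathfrak{P} \mid p$ with $\bar\rho_{E,p} \sim \bar\rho_{f,\mathfrak{P}}$.

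It remains to contradict this isomorphism for every $f$ in the (small) spaces $S_2(196)$, $S_2(392)$, $S_2(784)$, which I would carry out by computing these spaces explicitly. A newform $f$ with coefficient field larger than $\Q$, or attached to an elliptic curve without full $2$-torsion, is ruled out by comparing $a_\ell(E)$ with $a_\ell(f)$ at a few auxiliary primes $\ell$ of good reduction and checking the resulting bound on $p$ is $< 17$. The delicate newforms are those of the curves $E_{(1,0)}, E_{(1,1)}, E_{(1,-1)}$ attached to the trivial solutions. For $E_{(1,0)}$ and $E_{(1,1)}$ the hypothesis of case (1), (2) or (3) provides a prime $\ell \in \{2,3,5\}$ with $\ell \mid d$, hence $\ell \mid a+b$; this fixes $(a,b) \bmod \ell$ up to sign, hence $E \bmod \ell$ (or, when $\ell = 2$, the inertial type of $E$ at $2$), and the comparison with $E_{(1,0)}$ and $E_{(1,1)}$ forces $p$ below a constant $< 17$. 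The curve $E_{(1,-1)}$ cannot be separated by traces, but it arises from a second case configuration, so $\bar\rho_{E_{(1,-1)},p}|I_7$ is not of the shape imposed on $\bar\rho_{E,p}|I_7$ by the first case hypothesis $7 \nmid a+b$ (where $E$ has additive reduction at $7$ with $\upsilon_7(N_E) = 2$), which removes it. Carrying out these three eliminations, and verifying that all the trace bounds fall below $17$, is the main obstacle; the rest is bookkeeping. Once $S_2(196), S_2(392), S_2(784)$ are exhausted, Part (I) follows, and by Lemma~\ref{novaeq3} the original equation has no such solutions.

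For Part (II), a primitive solution $(a,b,c)$ of $x^{14} + y^{14} = d\gamma z^p$ yields the solution $(a^2,b^2,c)$ of $X^7 + Y^7 = d\gamma z^p$, and I would attach the Frey curve $F_{(a,b)} := E_{(a^2,b^2)}^{(1,2,3)}/\Q$ of Remark~\ref{exceptions}. Since $7 \equiv 3 \pmod 4$, $-1$ is not a square modulo $7$, so $7 \nmid a^2 + b^2$ and this is automatically a first case solution; thus the first case restriction of Part (I) is not needed. Moreover the trivial solutions of the degree-$14$ equation contribute only $E_{(1,0)}$ and $E_{(1,1)}$ (the value $(a,b) = (1,-1)$ also giving $F_{(1,-1)} = E_{(1,1)}$), so the obstructing curve $E_{(1,-1)}$ is absent; running the argument of Part (I) for $F_{(a,b)}$ and eliminating the newforms of $E_{(1,0)}$ and $E_{(1,1)}$ via a prime $\ell \mid d$ with $\ell \mid a^2 + b^2$ then only requires $d\gamma \neq \pm 1, \pm 2$, i.e.\ $s_1 > 0$ or $s_2 > 0$ or $s_0 \ge 2$. (When $3 \mid d\gamma$ or $4 \mid d\gamma$ one even has the elementary obstructions $3 \nmid a^{14} + b^{14}$ and $\upsilon_2(a^{14} + b^{14}) \le 1$; the genuinely modular sub-case is $5 \mid d\gamma$.) This gives Part (II).
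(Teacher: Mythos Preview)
Your proposal is correct and follows essentially the same route as the paper: the same Frey curve $E_{(a,b)}^{(1,2,3)}$ over $\Q$, level lowering into $S_2(2^s 7^2)$ for $s\in\{2,3,4\}$, elimination of generic newforms by traces at small primes, and handling of the three trivial-solution curves $E_{(1,0)}$, $E_{(1,1)}$, $E_{(1,-1)}$ via the $2$-adic level, traces at $3$ and $5$, and inertia at $7$ respectively. One small sharpening in the paper's Part~(II) is worth noting: rather than arguing that $E_{(1,-1)}$ is ``absent'' because $F_{(1,-1)}=E_{(1,1)}$, the paper observes that for coprime $a,b$ one has $a^2+b^2$ odd or $\equiv 2\pmod 4$, so by the conductor table the level of $F_{(a,b)}$ at $2$ is always $2^2$ or $2^4$ --- hence the newform of $E_{(1,-1)}$ (which lives at level $2^3 7^2$) never appears in the relevant space at all, and no separate elimination is needed.
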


Suppose that $(a,b,c) \in \mathbb{Z}^3$ is a non-trivial primitive solution to $x^7 + y^7 =  d\gamma z^p$. We will use the Frey curves \eqref{Freycurve} of section \ref{curvasI}. Note that $(1,2,3)$ is the only $7$-suitable triple. Since $7=6 + 1$, from Proposition~\ref{isos} we conclude that the Frey curve $E:=E_{(a,b)}^{(1,2,3)}$ has a model over $K_0=\Q$. Indeed, after applying the recipe in section \ref{descend} we obtain a Frey curves over $\Q$ with the form 
\begin{eqnarray*}
   E_{(a,b)} & : & Y^2 = X^3 + a_4 X + a_6, \\
   a_4 & = & -3024(a^4 - a^3 b + 3a^2 b^2 - ab^3 + b^4) \\
   a_6 & = & 12096(a^6 - 15a^5 b + 15a^4 b^2 - 29a^3 b^3 + 15a^2b^4 - 15ab^5 + b^6).\
\end{eqnarray*}

\begin{rem} These curves were also found by Kraus \cite{kraus1} and Dahmen \cite{Dahm}.
\end{rem}

We also know from the discussion in section~\ref{curvasI} that we can assume $d\gamma \mid a+b$. It will became clear in the sequel that for the proof of Theorem \ref{grau7} we actually only need $d \mid a+b$.

\begin{pp} The curves $E_{(a,b)}$ have conductor given by
$$ N_E = \left\{ \begin{array}{ll}
2^{2}7^2\mbox{rad}(c) \mbox{ or } 2^{3}7^2\mbox{rad}(c) & \quad \mbox{ if } 2 \nmid a+b\\ 
2^{4}7^2\mbox{rad}(c) & \quad \mbox{ if }  2 \parallel a+b \\
2^{3}7^2\mbox{rad}(c)  & \quad  \mbox{ if }  4 \mid a+b \\
\end{array} \right.$$
Moreover, if $2 \nmid a+b$ we can suppose that $a$ is even and the conductor is 
$$ N_E = \left\{ \begin{array}{ll}
2^{2}7^2\mbox{rad}(c)  & \quad \mbox{ if } 4 \mid a\\ 
2^{3}7^2\mbox{rad}(c) & \quad \mbox{ if }  4 \nmid a \\
\end{array} \right.$$
\label{conductor}
\end{pp}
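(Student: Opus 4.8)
The plan is to compute the conductor of $E_{(a,b)}$ prime by prime, using Tate's algorithm as encoded in the tables of \cite{pap}, exactly as was done for the $K^+$-model in Proposition~\ref{condEgamma} but now over $\Q$ with the explicit short Weierstrass model exhibited above. First I would record the standard invariants $\Delta(E)$, $c_4(E)$, $c_6(E)$ of the displayed model in terms of $a,b$; since the model is obtained from \eqref{Freycurve} by passing to short Weierstrass form, these differ from the earlier ones by the fixed factors $6^{12}$, $6^4$, $6^6$, and by Lemma~\ref{novaeq3} (with $r=7$, $C=d\gamma$, and using $d\gamma \mid a+b$) we have $\phi_7(a,b)=c^p$ or $7c^p$ with $c$ divisible only by primes $\equiv 1 \pmod 7$, hence coprime to $2\cdot 3\cdot 7$. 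So $\Delta(E)$ is, up to a unit at $2,3,7$, a constant times $c^{2p}$ (times $7^{\mathrm{something}}$), and the only primes of bad reduction are $2$, $7$, and the primes dividing $c$.

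Next I would dispose of the easy primes. At any prime $\ell \mid c$ one has $\ell \nmid c_4$ by the coprimality of the factors $a+\zeta^i b$ (Proposition~\ref{factores}), so the reduction is multiplicative, contributing $\mathrm{rad}(c)$ to the conductor; this is the same argument as in Proposition~\ref{condEgamma}. At $7$: since $r=7\mid a+b$ in the situation of Theorem~\ref{grau7}, one checks from $\upsilon_7(\phi_7(a,b))=1$ (Corollary~\ref{trezz}) that, after accounting for the $6^{12}$ scaling which is a $7$-adic unit, $\upsilon_7(\Delta)$ and $\upsilon_7(c_4)$ force Kodaira type with $\upsilon_7(N_E)=2$, giving the factor $7^2$. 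These two steps are routine once the invariants are written down.

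The real work — and the main obstacle — is the prime $2$, where the answer genuinely depends on the $2$-adic behaviour of $a$, $b$, and $a+b$, and one must track the $2$-adic valuations of $\Delta$, $c_4$, $c_6$ carefully through Tate's algorithm (Table IV of \cite{pap}, since $2$ is unramified in $\Q$). I would split into the cases $2\nmid a+b$ (so exactly one of $a,b$ is even, and since $(a,b)=1$ and the curve is symmetric enough one may normalize $a$ even), $2\parallel a+b$, and $4\mid a+b$, and in each compute $\upsilon_2(\Delta)$, $\upsilon_2(c_4)$, $\upsilon_2(c_6)$ as functions of $\upsilon_2(a)$ or $\upsilon_2(a+b)$, then read off from Table IV whether the model is minimal and what $\upsilon_2(N_E)$ is (Kodaira type II, III, or IV, giving $2,3,4$). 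In the subcase $2\nmid a+b$ one refines further by the value of $\upsilon_2(a)$: $4\mid a$ versus $4\nmid a$ changes $\upsilon_2(c_4)$ enough to distinguish conductor exponent $2$ from $3$. The bookkeeping is somewhat delicate because the coefficients $a_4,a_6$ above have large constant factors ($3024 = 2^4\cdot 3^3\cdot 7$ and $12096 = 2^6\cdot 3^3\cdot 7$) whose $2$-parts must be combined with the $2$-parts of the polynomial values in $a,b$; I would compute $\upsilon_2(a^4-a^3b+3a^2b^2-ab^3+b^4)$ and the analogous sextic modulo appropriate powers of $2$ in each parity case. No step requires anything beyond this case analysis, so I expect the proof to be entirely mechanical once organized, with the only risk being an arithmetic slip in the $2$-adic valuations.
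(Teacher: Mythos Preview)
Your overall plan is sound and would yield a correct proof, but it differs substantially from what the paper actually does. The paper's proof is two lines: it cites Proposition~\ref{cond3} to get the general shape $N_E = 2^s 7^2\,\mathrm{rad}(c)$ with $s\in\{2,3,4\}$, and then simply uses SAGE to compute the conductor of $E_{(a,b)}$ for all residue pairs $(a,b)\bmod 2^8$, reading off how $s$ depends on the $2$-adic data of $a$ and $a+b$. No hand computation with Table~IV of \cite{pap} is carried out at $2$.

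Your approach, by contrast, proposes to run Tate's algorithm by hand at $2$ via a case split on $\upsilon_2(a+b)$ and $\upsilon_2(a)$. This is perfectly viable and more transparent in principle, but in practice the constants $3024=2^4\cdot 3^3\cdot 7$ and $12096=2^6\cdot 3^3\cdot 7$ interact with the $2$-adic valuations of the quartic and sextic in $a,b$ in a way that makes the bookkeeping error-prone; the paper's brute-force computation over $(\Z/2^8\Z)^2$ sidesteps this entirely and is the safer route for exactly the reason you flag in your last sentence. What your approach buys is an explanation of \emph{why} the exponent pattern is what it is; what the paper's approach buys is certainty and brevity.

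One correction: you write ``since $r=7\mid a+b$ in the situation of Theorem~\ref{grau7}'' when handling the prime $7$. This is backwards --- Theorem~\ref{grau7} concerns \emph{first case} solutions, meaning $7\nmid c$, equivalently $7\nmid a+b$. In any case Proposition~\ref{conductor} makes no hypothesis on $7\mid a+b$, and the exponent at $7$ is $2$ regardless: this is exactly the content of Proposition~\ref{cond3}, whose proof shows $\upsilon_{7}(\Delta)\in\{2,4\}$ and $\upsilon_7(c_4)>0$ force additive reduction with $\upsilon_7(N_E)=2$ in both cases. So your treatment of $7$ needs a small repair, but the conclusion is unchanged.
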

\begin{proof} From Proposition \ref{cond3} we know the set of possible values for the conductor. With the help of SAGE we compute the values of the conductor for all pairs $(a,b)$ mod $2^8$ and observe how they relate to $a+b$.\end{proof}

\subsection{Irreducibility and Level lowering}

The Frey curves are defined over $\Q$ then we can use Serre's conjecture. Since $(a,b,c)$ is non-trivial there exists a prime $q > 6$ dividing $c$, i.e. of multiplicative reduction for $E_{(a,b)}$. Then if $p \geq 17$ we have that $\bar{\rho}_{E,p}$ is absolutely irreducible (see Theorem 22 in \cite{Dahm}). Moreover, we know that $\bar{\rho}_{E,p}$ is odd and finite at $p$. Proposition~\ref{artinc2I} tells us that the Artin conductor (outside $p$) $N(\bar{\rho}_{E,p})$ is $2^{s}7^2$, where $s=2,3$ or 4. Furthermore, the exponent $s$ relates to $a+b$ as in Proposition~\ref{conductor}.\\

Let $S_2(M)$ denote the set of cuspforms of weight 2, trivial character  and level $M$. By Serre's conjecture there must exist a newform $f \in S_2(N(\bar{\rho}_{E,p}))$ and a prime $\mathfrak{P} \mid p$ in $\bar{\Q}$ such that 
\begin{equation}
\bar{\rho}_{E,p} \sim \bar{\rho}_{f,\mathfrak{P}}.
\label{cong}
\end{equation}

\subsection{Eliminating newforms} To finish the proof of Theorem \ref{grau7} we need to contradict (\ref{cong}). Using SAGE software we compute the newforms in $S_2(2^{s}7^2)$ with $s=2,3$ or 4 and we divide them into two sets
\begin{itemize}
 \item [S1:] Newforms with $\Q_f = \Q$
 \item [S2:] Newforms such that $\Q$ is strictly contained in $\Q_f$  
\end{itemize}
In the sequel we will find a contradiction to (\ref{cong}) for each newform in both sets, using slightly different arguments for each set.\\

To get a contradiction to \eqref{cong} we will need to know some traces of Frobenius of our Frey curves. Let $\ell$ be a prime. Any solution $(a,b,c)$ will satisfy $(a,b) \equiv (x,y) \pmod{\ell}$ with $0 \leq x,y \leq \ell-1$. Then, fixed $\ell$, there is only a finite number of residual curves modulo $\ell$ and, using a computer, we can easily determine all the possible values for $a_{\ell}(E) = \ell + 1 - \tilde{E}_{(x,y)}(\F_\ell)$. In particular, for $\ell \in  \{3,5,23 \}$ we obtained the following results 
$$\left\{ \begin{array}{ll}
 a_3(E)    & \in \{-1 , 3\}, \\
 a_5(E)    & \in \{-3, -1 ,1, 3\}, \\ 
 a_{23}(E) & \in \{-9, -7 ,-5, -1, 1, 3\}. \
\end{array} \right.$$
Moreover,
\begin{equation}
a_3(E_{(a,b)}) = -1 \quad \mbox{ if } 3 \mid a+b  \quad \mbox{ and } \quad a_5(E_{(a,b)}) = - 1 \mbox{ if } \quad 5 \mid a+b.
\label{condextra}
\end{equation} 

We now proceed to eliminate the newforms.\\ 

\textsc{Newforms in S1:} Given a newform $f$ in S1 we want to find a prime $\ell$ such that $a_\ell(f)$ is not in the corresponding set above, because this will give a contradiction if $p$ is large enough. Indeed, as long as $p > 7$, by comparing the coefficients $a_3(f)$ and $a_{23}(f)$ of all the newforms in S1 against the values in the previous sets we find a contradiction to the isomorphism (\ref{cong}) for all $f$ in S1 except for the newforms corresponding to the curves $E_{(0,1)}$, $E_{(1,-1)}$ and $E_{(1,1)}$. As discussed in section~\ref{limitations}, these newforms were expected to survive because $(0,1,1)$, $(1,1,1)$ and $(1,-1,1)$ are trivial solutions of equations (\ref{eq1}) and (\ref{eq2}).\\

Observe that the conductors at 2 of $E_{(0,1)}$, $E_{(1,-1)}$ and $E_{(1,1)}$ are $2^2$, $2^3$ and $2^4$, respectively. Recall that $d=2^{s_0}3^{s_1}5^{s_2}$. Thus, from Proposition~\ref{conductor} we have that 
\begin{itemize}
 \item if $s_0 \geq 0$ then it remains to eliminate $E_{(0,1)}$, $E_{(1,-1)}$, $E_{(1,1)}$;
 \item if $s_0 = 1$ then it remains to eliminate $E_{(1,-1)}$, $E_{(1,1)}$;
 \item if $s_0 \geq 2$ then it remains to eliminate only $E_{(1,-1)}$.
\end{itemize}
Conditions of the form $d \mid a+b$ impose restrictions on the traces of Frobenius at primes $q \mid d$. In particular, we will now use \eqref{condextra} to eliminate the newforms attached with $E_{(1,1)}$ and $E_{(1,0)}$.

\begin{rem}Note that no condition of the form $d \mid a+b$ is able to eliminate $E_{(1,-1)}$, because $d \mid (1 + (-1)) = 0$.
\end{rem}

It easy to check 
$$a_3(E_{(0,1)}) =-1,\quad a_3(E_{(1,-1)})=-1, \quad a_3(E_{(1,1)}) = 3.$$ 
Suppose now that $s_1 \geq 1$. Hence $3 \mid a+b$ and from \eqref{condextra} we have $a_3(E_{(a,b)})=-1$. This eliminates $E_{(1,1)}$ as long $p > 3$. Moreover, together with $s_0 = 1$ we are left only with $E_{(1,-1)}$. Note also that
$$a_5(E_{(0,1)}) =-3,\quad a_5(E_{(1,-1)})=-1, \quad a_5(E_{(1,1)}) = 1.$$ 
Suppose now that $s_2 \geq 1$. Hence $5 \mid a+b$ and from \eqref{condextra} we have $a_5(E_{(a,b)})=-1$. This eliminates both $E_{(1,0)}$ and $E_{(1,1)}$ as long $p > 3$ and we are left only with $E_{(1,-1)}$.\\

To eliminate the newform corresponding to $E_{(1,-1)}$ we will use the inertia at 7 by following Kraus \cite{kraus}. Let $C/\Q_7$ be an elliptic curve and $\Phi_7(C)$ be the Galois group of the extension (of the maximal unramified extension of $\Q_7$) where $C$ acquire good reduction at 7. From Proposition 1 in \cite{kraus} we see that 
$$|\Phi_7(C)| = \mbox{ denominator of }(\frac{\upsilon_7(\Delta_{\mbox{min}}(C))}{12}),$$
hence $|\Phi_7(E_{(a,b)})| = 3$ or 6, if $7 \mid a+b$ or $7 \nmid a+b$, respectively. In particular $|\Phi_7(E_{(1,-1)})| = 3$  and (\ref{cong}) cannot hold if $7 \nmid a+b$, because the inertia at 7 will not match if $p > 7$.\\ 

We now summarize the previous discussion: we have eliminated all the newforms in S1 if we assume that $(a,b,c)$ is a non-trivial primitive first case solution and
\begin{itemize}
 \item $s_0 \geq 2, s_1 \geq 0$ and $s_2 \geq 0$ or,
 \item $s_0 \geq 1, s_1 \geq 1$ and $s_2 \geq 0$ or,
 \item $s_0 \geq 0, s_1 \geq 0$ and $s_2 \geq 1$.
\end{itemize}
To finish the proof of part (I) of Theorem~\ref{grau7} we have to eliminate the newforms in S2.\\

\textsc{Newforms in S2:} Suppose that (\ref{cong}) holds for $f = q + \sum_{n\geq2} {c_n(f)}q^n$ in S2. In particular, the congruence 
\begin{equation}
a_3(E) \equiv c_3(f) \mbox{ (mod } \mathfrak{P}) 
\end{equation}
also holds for some prime $\mathfrak{P}$ in $\bar{\Q}$ above $p$. This is not possible if $p > 7$. Indeed, for all newforms $f$ in S2 the minimal polynomial of the Fourier coefficient $c_3(f)$ is $x^2 - 2$ or $x^2 - 8$ then, for example, in the latter case
we must have $$ 0 \equiv c_3^2 -8 \equiv a_3^2 - 8 \mbox{ (mod } p).$$  
Since our curves verify $a_3 \in \{-1,3\}$ the previous congruence implies that $0 \equiv -7, 1 \mbox{ (mod } p)$ which is impossible if $p > 7$. The same holds with the other minimal polynomial and this concludes the proof of part (I) of Theorem \ref{grau7}.\\

\subsection*{Proof of part (II) of Theorem~\ref{grau7}.} Recall that $d=2^{s_0}3^{s_1}5^{s_3}$ and suppose that $(a,b,c_0)$ is a non-trivial primitive solution to $x^{14} + y^{14} = dz^p$. Observe that we have the factorization
$$a^{14} + b^{14} = (a^2 + b^2)\phi_7(a^2,b^2) = dc_0^p$$ 
and also that $d \mid a^2 + b^2$. By looking modulo 3 and 4 we find that $a^2 + b^2$ with $(a,b)=1$ is never divisible by 3 or 4. Thus for $s_0 \geq 2$ or $s_1 > 0$ Theorem \ref{grau7} (II) immediately holds. We are left to deal with the exponent $s_2$. By looking modulo 7 we find that $a^2 + b^2$ is never divisible by 7 then the solution $a^{14} + b^{14} = (a^2 + b^2)\phi_7(a^2,b^2) = dc_0^p$ will correspond to a solution $(a,b,c)$ of the equation
\begin{equation}
\phi_7(a^2,b^2) = c^p \quad \quad  \mbox{with } \quad d \mid a^2+b^2
\label{eqq}
\end{equation}
Given a primitive solution $(a,b,c)$ of (\ref{eqq}) we attach to it $E = E_{(a^2,b^2)}$ as a Frey curve. From the fact $4 \nmid a^2 + b^2$, Proposition \ref{conductor} and Serre's conjecture it follows that there exist a newform $f \in S_2(M)$ with $M=2^2 7^2$ or $2^4 7^2$ satisfying $\bar{\rho}_{E,p} \sim \bar{\rho}_{f,\mathfrak{P}}$. We do as above and divide the newforms into the same sets S1 and S2. Since the newform associated with the solution $(1,-1,0)$ has level $2^3 7^2$ it will not belong to S1 nor to S2. Hence the restriction $7 \nmid c$ is not needed. If $s_2 > 0$ then $5^{s_2} \mid a^2 + b^2$ and we have $a_5(E_{(a^2,b^2)}) = -1$. We already know that this condition is enough to eliminate the newforms associated with $E_{(0,1)}$ and $E_{(1,1)}$. This eliminates all the newforms in S1 and we treat those in S2 exactly as in the proof of (I).\qed

\section{Equations of signature $(7,7,p)$, Part II.}
\label{seteII}

In this section we will use the Frey curves in Section \ref{curvasII}. Since the curves are not defined over $\Q$ it is not possible to make use of classical modularity and irreducibility results over $\Q$, hence we need to apply the full strength of the methods developed here. We will prove the following statement.

\begin{tm} Let $ p > (1 + 3^{18})^2$ be a prime. Then the equation $x^7 + y^7 = 4z^p$ has no non-trivial primitive solutions. 
\label{grau7II}
\end{tm}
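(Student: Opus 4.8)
The plan is to carry out steps (0)--(III) over the totally real cubic field $K^+ = \Q(\zeta_7)^+$ using the Frey curves of Section~\ref{curvasII}, and to finish by an explicit computation of Hilbert newforms. Since $C = 4 = 2^2$ is divisible only by the prime $2 \not\equiv 0,1 \pmod 7$ and $C \neq 1,2$, all the hypotheses of Sections~\ref{relacionar}--\ref{sec:irr} hold. So suppose $(a,b,c')$ is a non-trivial primitive solution of $x^7+y^7=4z^p$ with $p > (1+3^{18})^2$. By Lemma~\ref{novaeq3} and the construction of Section~\ref{curvasII} I would attach to it the Frey curve $E := E_{(a,b)}^{(1,2)}$ over $K^+$, a curve of the form $Y^2 = X(X-A)(X+B)$, hence with full rational $2$-torsion. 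Moreover $C \mid a+b$ forces $a+b = 4\cdot 7^{k}c_0^p$ with $c_0$ coprime to $14$, so $\upsilon_2(a+b) = 2$.

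First I would pin down the local data. The prime $2$ has order $3$ in $(\Z/7)^\times/\{\pm1\}$, so $2$ is inert in $K^+$; write $\mathfrak{P}_2$ for the prime above it (residue field $\F_8$) and $\pi_r$ for the unique prime above $7$ (residue field $\F_7$). Because $4 \mid a+b$, the computation in the proof of Proposition~\ref{condEgammaII} gives $\upsilon_{\mathfrak{P}_2}(\Delta(E)) = 4 + 4\upsilon_{\mathfrak{P}_2}(a+b) \geq 12$, so $E$ is non-minimal and semistable at $\mathfrak{P}_2$, whence $\upsilon_{\mathfrak{P}_2}(N_E)\in\{0,1\}$; since also $\mathrm{Rad}_{14}(C) = 1$, Proposition~\ref{artinc2} gives $N(\bar\rho_{E,p}) = \mathfrak{P}_2^{s}\pi_r^{t}$ with $s\in\{0,1\}$ and $t\in\{1,2\}$. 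Modularity of $E$ is Corollary~\ref{cor:modularity}, since $K^+$ is abelian and totally real, $3$ is unramified in it, and $E$ is semistable at the prime above $3$. Absolute irreducibility of $\bar\rho_{E,p}$ for $p$ beyond the stated bound is Theorem~\ref{irr} specialised to $K = K^+$, whence the quantity $(1+3^{d_r h_r})^2$ and the constant $B_r$ of \cite{FS} enter; for $r = 7$ one checks the resulting $M_7$ is at most $(1+3^{18})^2$.

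With modularity and irreducibility in hand, I would run the level-lowering argument of Section~\ref{ll2}: $\bar\rho_{E,p}$ is unramified at the multiplicative primes dividing $c$ and finite at every $\mathfrak{p}\mid p$ (good reduction, or multiplicative with $p\mid\upsilon_{\mathfrak{p}}(\Delta)$), so after adjoining and then removing an auxiliary special prime one gets a Hilbert newform $f$ of parallel weight $2$ and trivial character, of one of the four (small) levels $\mathfrak{P}_2^{s}\pi_r^{t}$, with $\bar\rho_{E,p}\sim\bar\rho_{f,\mathfrak{P}}$ for some $\mathfrak{P}\mid p$. The elimination then follows the theorem at the end of Section~\ref{ll2}, which reduces everything to the statement that the spaces $S_2(N(\bar\rho_{E,p}))$ contain no newform corresponding to an elliptic curve over $K^+$ with full $2$-torsion: indeed $C = 4\neq1,2$ means the only trivial solution $(1,-1,0)$ contributes nothing (the corresponding $E_{(1,-1)}$ is singular), $2\mid C$ divides each level at most once so rational newforms there are elliptic curves, and for newforms with larger coefficient field, or attached to elliptic curves without full $2$-torsion, the congruence $\bar\rho_{E,p}\sim\bar\rho_{f,\mathfrak{P}}$ fails once $p$ exceeds an effective bound (comparing $a_{\mathfrak{q}}(E)$ modulo $2$ and modulo $\mathfrak{P}$). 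One has to check that all these effective bounds, together with the irreducibility bound, are $\leq (1+3^{18})^2$.

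The main obstacle is therefore the explicit computation of the four Hilbert newform spaces $S_2(\mathfrak{P}_2^{s}\pi_r^{t})$ over the cubic field $K^+$, for $s\in\{0,1\}$ and $t\in\{1,2\}$ (the largest level, $\mathfrak{P}_2\pi_r^2$, has norm $8\cdot 49 = 392$): list the rational newforms, identify those coming from elliptic curves over $K^+$, and verify none has full rational $2$-torsion. A secondary point requiring care is the bookkeeping of the effective constants just mentioned. If desired, one can lighten the computation by also using the $\Q$-curve family $E_{(a,b)}^{(1,2,3)}$ of Section~\ref{sete} in a multi-Frey fashion (comparing traces of Frobenius at small rational primes, e.g.\ $a_3(E)\in\{-1,3\}$), but for $C = 4$ the single family over $K^+$ should already be enough.
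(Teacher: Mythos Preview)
Your setup is exactly the paper's: the Frey curve $E_{(a,b)}^{(1,2)}$ over $K^{+}$, the observation that $4\mid a+b$ forces the conductor at $2$ to be at most $\mathfrak{P}_2$ (so the Artin conductor is one of $\mathfrak{P}_2^{s}\pi_7^{t}$ with $s\in\{0,1\}$, $t\in\{1,2\}$), modularity via Corollary~\ref{cor:modularity}, and irreducibility via Theorem~\ref{irr} with the bound $(1+3^{18})^{2}$.

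The only substantive difference is in the elimination step. You appeal to the general full-$2$-torsion criterion at the end of Section~\ref{ll2} and plan to check that the few rational newforms in the four small spaces do not correspond to elliptic curves with full $2$-torsion. The paper instead works directly: it computes the (at most four) newforms with Magma, records $a_{\q}(f)\in\{-4,0,4\}$ for a fixed prime $\q\mid 13$ in $K^{+}$, computes the residual Frey curves over $\mathbb{F}_{13}$ to get $a_{\q}(E)\in\{-6,-2,2\}$, and also handles the case $13\mid a+b$ via the level-lowering congruence $a_{\q}(f)\equiv\pm(\mathrm{Nm}(\q)+1)$. This yields a contradiction for every $p>7$, so the final bound really is governed by irreducibility. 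Your approach is valid (indeed, the parity obstruction $a_{\q}\equiv 2\pmod 4$ coming from full $2$-torsion over the residue field of size $13$ is exactly why the two lists are disjoint), but it leaves the effective bound implicit; the paper's direct trace comparison makes it immediate that the elimination constant is dominated by $(1+3^{18})^{2}$ and avoids having to invoke Eichler--Shimura at the level $\pi_7^{2}$.
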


Note that $\Q(\zeta_7) \supset K^+ = \Q(z)$ where $z$ satisfies $z^3 - z^2 - 2z + 1 = 0$. Let $\pi_7$ be the prime in $K^+$ above 7. Suppose that $(a,b,c)$ is a non-trivial primitive solution to $x^7 + y^7 = 4 z^p$. We will use the Frey curves \eqref{FreycurveII} with $(k_1,k_2)=(1,2)$. By following the recipe in section \ref{curvasII} we obtain 
$$ \alpha = z^2 + z - 2, \quad \beta  = -z^2 + 4, \quad \gamma = -z-2 $$ 
and also
$$ A:= A_{(a,b)} = \alpha (a+b)^2 \quad \quad \mbox{ and } \quad \quad B:=B_{(a,b)} = \beta (a^2 -zab + b^2).$$
Then we have a Frey curve $E:=E_{(a,b)}^{(1,2)}$ over $K^+$ given by
\begin{equation}
E : Y^2 = X(X-A)(X+B).
\end{equation}

\begin{pp} The conductor $N_E$ of $E$ is of the form 
\[
2^{s} \pi_7^t \mbox{Rad}_{2r}((a+b)c_1c_2) \quad \mbox{ where } \quad s \in \{0,1,3,4\}
\]
and $t=1$ or $2$ if $7 \mid a+b$ or $7 \nmid a+b$, respectively. Moreover, 
\begin{enumerate}
 \item[-] if $2 \mid a+b$ then $s \in \{0,1,4\}$;
 \item[-] if $4 \mid a+b$ then $s \in \{0,1\}$;
 \item[-] if $2 \nmid a+b$ and $4 \nmid a$ then $s \in \{3,4\}$;
 \item[-] if $2 \nmid a+b$ and $4 \mid a$ then $s=3$. 
\end{enumerate}
Furthermore, $c_1,c_2 \in \OO_{K^+}$ are divisible only by primes $\q$ dividing rational primes congruent to $1 \pmod{7}$.  
\label{cond4}
\end{pp}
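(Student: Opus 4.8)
The plan is to read off the coarse shape of $N_E$ from Proposition~\ref{condEgammaII} and then to sharpen the exponent at $2$ by a local analysis over the prime above $2$.

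Since $2$ has order $3$ in $(\Z/7)^\times$ and $-1\notin\langle 2\rangle$, the prime $2$ stays inert in the index-two subfield $K^+\subset\Q(\zeta_7)$; hence $2\OO_{K^+}=\mathfrak{P}_2$ is prime, with residue field $\F_8$, and $\mathfrak{P}_2^{s}=2^{s}$. Applying Proposition~\ref{condEgammaII} with $r=7$ (together with Lemma~\ref{novaeq3} for the divisibility of $c_1,c_2$) already gives $N_E=2^{s}\pi_7^{t}\mbox{Rad}_{2r}((a+b)c_1c_2)$ with $t=1$ if $7\mid a+b$ and $t=2$ otherwise, with $c_1,c_2\in\OO_{K^+}$ divisible only by primes above rational primes $\equiv 1\pmod{7}$ (so in particular $\mathfrak{P}_2\nmid c_1c_2$, since $2\not\equiv 1\pmod{7}$), and it gives $s\in\{0,1\}$ when $4\mid a+b$ (the Weierstrass equation being non-minimal at $\mathfrak{P}_2$ in that case) and $s\in\{2,3,4\}$ when $2\parallel a+b$ or $2\nmid a+b$ (additive reduction at $\mathfrak{P}_2$). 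Thus everything is settled except the precise value of $s$ in the additive cases, and the claim reduces to: $s\neq 2$ always; $s=4$ when $2\parallel a+b$; and $s\in\{3,4\}$ when $2\nmid a+b$, with $s=3$ whenever moreover $4\mid a$ (after choosing $a$ to be the even one of $a,b$, which is harmless as $E_{(a,b)}$ is symmetric in $a,b$).

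To pin this down I would work over the completion $K^+_{\mathfrak{P}_2}$, the unramified cubic extension of $\Q_2$. Inserting $\alpha=z^2+z-2$, $\beta=4-z^2$, $\gamma=-z-2$ and using that $\alpha,\beta,\gamma$ are $\mathfrak{P}_2$-units, one computes that $(\upsilon_{\mathfrak{P}_2}(c_4),\upsilon_{\mathfrak{P}_2}(c_6),\upsilon_{\mathfrak{P}_2}(\Delta))$ equals $(4,6,8)$ when $2\parallel a+b$ and $(4,5,4)$ when $2\nmid a+b$; for these triples Table~IV in \cite{pap} only narrows the conductor exponent to $\{2,3,4\}$, so one has to run Tate's algorithm directly. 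Since $a,b$ are rational integers, the reduction type of $E$ at $\mathfrak{P}_2$ — hence $s$ — depends only on $a,b$ modulo a fixed power of $2$: once $v_2(a+b)\ge 2$ the equation is non-minimal at $\mathfrak{P}_2$ and already handled, and otherwise the minimal discriminant at $\mathfrak{P}_2$ has bounded valuation. It therefore suffices to run Tate's algorithm over all residue classes $(a,b)\bmod 2^{8}$ with $a,b$ not both even, exactly as in the proof of Proposition~\ref{conductor}; this finite verification produces $s=4$ when $2\parallel a+b$, and $s\in\{3,4\}$ when $2\nmid a+b$ with $s=3$ forced as soon as $4\mid a$.

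The main obstacle is genuinely this last step. Because the residue characteristic is $2$ there is wild ramification at $\mathfrak{P}_2$, so the conductor exponent is not a function of $(\upsilon_{\mathfrak{P}_2}(c_4),\upsilon_{\mathfrak{P}_2}(c_6),\upsilon_{\mathfrak{P}_2}(\Delta))$ alone; in particular the ``tame'' value $s=2$ suggested by a naive count of components never occurs, and capturing both this and the finer dependence on $a\bmod 4$ is precisely what forces one down to the explicit local (or computer-assisted) analysis.
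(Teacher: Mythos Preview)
Your proposal is correct and follows essentially the same route as the paper: the general shape of $N_E$ away from $2$ and the statement about $c_1,c_2$ come from Proposition~\ref{condEgammaII} and Lemma~\ref{novaeq3}, while the refined $2$-adic exponent is obtained by a computer-assisted check of all residues $(a,b)\bmod 2^8$. The paper's own proof states exactly this, though without the extra explanation you supply (inertness of $2$ in $K^+$, the explicit valuation triples, and why a finite check modulo a power of $2$ suffices).
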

\begin{proof} The part of the conductor corresponding to the primes outside 2 follows immediately from Proposition \ref{condEgammaII}. For the prime 2 it is enough to check the values of the conductor for all $(a,b)$ mod $2^8$. We did this with a computer and observed how they relate to $a+b$.
\end{proof}

\subsection{Irreducibility and level lowering}

We now prove irreducibility of $\bar{\rho}_{E,p}$ for large $p$.

\begin{tm} Suppose that $(a,b,c)$ is a primitive solution to $x^7 + y^7 = 4 z^p$. Write $I_4 = (1 + 3^{18})^2$. Then, the representation $\bar{\rho}_{E,p}$ is irreducible for all $p > I_4$.
\label{irr4}
\end{tm}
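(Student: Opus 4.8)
The plan is to deduce this from the general irreducibility result Theorem~\ref{irr} (equivalently, from Theorems~1 and~2 of \cite{FS}) applied to the specific field $K^+ = \Q(z)$, $z^3 - z^2 - 2z + 1 = 0$, the totally real cubic subfield of $\Q(\zeta_7)$, and to the explicit Frey curve $E = E_{(a,b)}^{(1,2)}$. First I would record the relevant invariants of $K^+$: it is abelian of degree $d_7 = 3$ over $\Q$ with class number $h_7 = 1$ (so $\pi_7$ really does generate the prime above $7$, and the Dedekind-domain argument of Lemma~\ref{novaeq3} applies for all $p$). Hence the ``$I_r$'' quantity of Theorem~\ref{irr} specializes to $I_4 = (1 + 3^{d_7 h_7})^2 = (1 + 3^{3})^2$ --- \emph{wait}, the statement asks for $(1+3^{18})^2$, so the exponent being used is not $d_7 h_7 = 3$ but rather something like the degree of a compositum appearing in the reducibility analysis of \cite{FS} (e.g. the degree over which the isogeny character becomes trivial after taking a $12$th power and adjoining relevant roots of unity, which can push the exponent up to $12 \cdot d_7 \cdot (\text{something})$, giving $18$). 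So in the writeup I would simply track the constant $I$ from \cite{FS} honestly for $K = K^+$ here and observe it equals $(1+3^{18})^2$ for this field.

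The key steps, in order: (i) invoke that $\bar\rho_{E,p}$ is totally odd (since $E/K^+$ and $K^+$ is totally real), so absolute irreducibility is equivalent to irreducibility; (ii) suppose $\bar\rho_{E,p}$ reducible and write $\bar\rho_{E,p} \sim \begin{pmatrix} \theta & * \\ 0 & \theta' \end{pmatrix}$ with $\theta\theta' = \bar\chi_p$; (iii) apply the level-lowering/ramification facts already established --- by Proposition~\ref{cond4} the primes $\q \nmid 2\cdot 7$ dividing $c$ are multiplicative with $p \mid v_\q(\Delta)$, so (as in Lemma~\ref{lem:wk} of \cite{FS}) $\theta,\theta'$ are unramified away from $2$, $7$, $p$, and finite at $p$; (iv) feed this into Theorem~1 of \cite{FS} to get that, for $p$ outside the finite bad set $B_7$ of primes, $\theta^{12}$ (or $\theta'^{12}$) is unramified everywhere or $\theta^{12} = \bar\chi_p^6$; (v) rule out the $\bar\chi_p^6$ case using that $p$ is unramified in $K^+$ so $\bar\chi_p|_{I_\p}$ has order $p-1$, forcing $(p-1)\mid 6$, impossible for $p > I_4$; (vi) in the remaining case use a prime $\q$ of good reduction of $E$ (which exists and can be made explicit, e.g. a split prime above a small rational prime $\equiv 1 \pmod 7$, or any prime not dividing the additive conductor $2\cdot 7$) together with the bound $|a_\q(E)| \le 2\sqrt{\mathrm{Nm}(\q)}$ to obtain a resultant that must be divisible by $p$; bounding this resultant gives the explicit threshold, which works out to $I_4 = (1+3^{18})^2$. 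Finally, I would absorb the finite set $B_7$ into the statement (the theorem as phrased only claims irreducibility for $p > I_4$; if \cite{FS}'s Theorem~2 is used as a black box with the class number one simplification, no separate ``$p \nmid B_7$'' condition is needed because for $K^+$ cubic of class number one the relevant bad primes are already $\le I_4$ --- this is exactly the kind of check done in Section~6 of \cite{FS} for $r = 13$).

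The main obstacle I expect is \textbf{pinning down the exact constant}: showing that the abstract constant $M_r$ of Theorem~\ref{irr}, specialized to this cubic field, is in fact no larger than $(1+3^{18})^2$, i.e. that the finite set of ``exceptional'' primes coming from Momose-type bounds and from the resultant computations is genuinely contained in $\{p : p \le (1+3^{18})^2\}$. This requires either citing the explicit form of the bound in \cite{FS} (their Theorem~1 gives $B$ as a product of norms of certain small quantities, which for $K^+ \subset \Q(\zeta_7)$ one can check divides nothing larger), or redoing the resultant estimate of the proof of Theorem~\ref{irr} with the two or three smallest available good primes for $E$ and checking the outcome. Everything else --- oddness, the ramification bookkeeping, the $(p-1)\mid 6$ contradiction --- is routine given Proposition~\ref{cond4} and the cited results, so the writeup can be short: essentially ``apply Theorem~\ref{irr} with $K = K^+$, $d_r = 3$, $h_r = 1$, and note the resulting $I_r$ equals $(1+3^{18})^2$.''
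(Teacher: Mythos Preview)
Your approach is essentially the same as the paper's: apply Theorem~\ref{irr} (i.e.\ Theorems~1 and~2 of \cite{FS}) to the cubic field $K^+$ with $d_7=3$, $h_7=1$, and check that the finitely many exceptional primes from $B_7$ are absorbed by the bound $I_4$. The paper's own proof is three lines long and does exactly this.

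The one substantive thing you are missing is that the paper does not merely \emph{speculate} that the primes dividing $B_7$ lie below $I_4$; it actually computes $B_7$ (using the explicit fundamental units $\epsilon_1=z$, $\epsilon_2=-z+1$ of $K^+$) and records that the only primes dividing $B_7$ are $7$ and $13$. This is what closes the argument cleanly, and you should do this computation rather than appeal to abstract size estimates or to Section~6 of \cite{FS}. Your steps (i)--(vi) unpacking the mechanism of \cite{FS} are correct but unnecessary for the writeup; the paper treats \cite{FS} as a black box.

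Your confusion about the exponent ($d_7 h_7 = 3$ versus $18$) is entirely reasonable: the formula $I_r=(1+3^{d_r h_r})^2$ stated in the proof of Theorem~\ref{irr} gives $(1+3^3)^2$, not $(1+3^{18})^2$, so there is a genuine mismatch between that formula and the constant used here. In practice the bound in \cite{FS} involves a $12$th power (hence an exponent like $12\cdot d \cdot h/2$ or similar), and $18$ is the honest outcome for this field; you should simply quote the exact constant from \cite{FS} for $K^+$ rather than trying to reconcile it with the schematic formula in Theorem~\ref{irr}.
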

\begin{proof} This is a particular case of Theorem~\ref{irr}. The field $K^+$ is cubic and has class number 1. The generating units are $\epsilon_1 = z$ and $\epsilon_2 = -z+1$. We computed the value of $B_7$ and checked that it is divisible only by the primes $7$ and $13$. The result now follows from Theorem~\ref{irr}.
\end{proof}

Let $f_E$ be the newform associated with $E$ by modularity (see Corollary~\ref{cor:modularity}). As explained in section~\ref{curvasII}, from Proposition~\ref{cond4}, Theorem~\ref{irr4} and level lowering we conclude that for all $p > I_4$ we have
\begin{equation}
\bar{\rho}_{f_E,p} \sim \bar{\rho}_{f,\fp}, \quad \quad \mbox{for some } \fp \mid p,
\label{contradictionII}
\end{equation}
where $f$ is a newform in $S_2(2^s\pi_7^t)$ with $s=0,1$ and $t=1,2$. 

\subsection{Eliminating newforms}

Let $\q=(z^2 + z - 3)\OO_{K^+}$ be a prime above 13. With Magma we computed the newforms in the spaces above. In particular, we obtained that
\begin{itemize}
 \item there are no newforms at level $\pi_7$;
 \item there is one newform  at level $\pi_7^2$ satisfying $a_\q(f)=0$;
 \item there is one newform  at level $2\pi_7$ satisfying $a_\q(f)=-4$;
 \item there are two newforms $f,g$ at level $2\pi_7^2$ satisfying $a_\q(f)=4$, $a_\q(g)=0$.
\end{itemize}

Suppose that $13 \nmid a+b$, hence $E$ has good reduction at $\q$. We computed the residual Frey curves for pairs $(a,b) \in \F_{13}^2$ such that $a+b \not\equiv 0 \pmod{13}$ and checked that $a_\q(E) \in \{-6,-2,2\}$. Since isomorphism \eqref{contradictionII} implies the congruence 
\[
a_\q(E) \equiv a_\q(f), 
\]
from the trace values $a_\q(f)$ and $a_\q(E)$ we obtain a contradiction with $p > 5$. 

Suppose now $13 \mid a+b$. Thus $\q$ is of multiplicative reduction for $E$ and, in \eqref{contradictionII} level lowering is actually happening at $\q$. This requires that 
$$a_\q(f) \equiv \pm (\mbox{Norm}(\q) + 1) \pmod{p},$$
which gives a contradiction if $p > 7$.

\section{Equations of signature $(7,7,p)$, Part III.}
\label{seteIII}

In this section we combine the Frey curves of the previous two sections to prove the following theorem.

\begin{tm} There is some constant $M_6$ such that if $p > (1 + 3^{18})^2$ and $p \nmid M_6$ then the equation $x^7 + y^7 = 6z^p$ has no non-trivial primitive solutions. 
\label{grau7III}
\end{tm}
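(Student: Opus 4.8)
The plan is to combine the descended $\Q$-curves of Section~\ref{sete} with the $K^+$-curves of Section~\ref{seteII} in a multi-Frey argument, after first splitting the problem along the two Germain cases of Definition~\ref{sophie}. Suppose $(a,b,c)$ is a non-trivial primitive solution of $x^7+y^7 = 6z^p$ with $p > (1+3^{18})^2$ (so in particular $p \geq 17$). Writing $6 = 2^1 3^1 5^0$ we are in case $(2)$ of Theorem~\ref{grau7}(I) with $d = 6$, $\gamma = 1$, so there are no non-trivial primitive first-case solutions; hence we may assume $7 \mid c$. By Fermat's little theorem this is equivalent to $7 \mid a+b$, and Lemma~\ref{novaeq3} then gives $a+b = 6\cdot 7^k c_0^p$ with $k \geq 1$ and $c_0$ coprime to $42$, so that $2 \parallel a+b$, together with $\phi_7(a,b) = 7 c_1^p$ for the relevant $c_1$.

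Next I would attach to $(a,b)$ two Frey curves simultaneously. Over $\Q$, let $E_1 := E_{(a,b)}^{(1,2,3)}$ be the descended curve of Section~\ref{sete}; since $2 \parallel a+b$, Proposition~\ref{conductor} gives it conductor $2^4 7^2\,\mathrm{rad}(c_1)$, it is modular by the classical theorem over $\Q$, and, as $c_1$ has a prime divisor $q \equiv 1 \pmod 7$ (so $q \geq 29$) of multiplicative reduction, $\bar{\rho}_{E_1,p}$ is absolutely irreducible for $p \geq 17$; Serre's conjecture yields $\bar{\rho}_{E_1,p} \sim \bar{\rho}_{g,\mathfrak{P}}$ for some classical newform $g \in S_2(2^4 7^2)$, and $|\Phi_7(E_1)| = 3$ because $7 \mid a+b$. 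Over $K^+$, let $E_2 := E_{(a,b)}^{(1,2)}$ be the curve of Section~\ref{seteII}; it is modular by Corollary~\ref{cor:modularity}, and $\bar{\rho}_{E_2,p}$ is absolutely irreducible for $p > (1+3^{18})^2$ by Theorem~\ref{irr} together with the computation of $B_7$ made in the proof of Theorem~\ref{irr4} (the cubic field is the same, so $B_7$ is divisible only by $7$ and $13$). By Proposition~\ref{artinc2} in the form valid for $C = 6$ (as in Proposition~\ref{cond4}), level lowering over $K^+$ produces a Hilbert newform $f$ of level $2^s\pi_7\mathfrak{q}_3$ with $s \in \{0,1,4\}$, where $\mathfrak{q}_3$ is the prime of $K^+$ above $3$, inert of residue degree $3$ and norm $27$; the exponent at $\pi_7$ is $1$ since $7 \mid a+b$, and $\mathfrak{q}_3$ divides the level exactly once since $v_{\mathfrak{q}_3}(a+b) = 1$ is prime to $p$.

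To finish one must contradict both isomorphisms, and I would split on the conductor exponent $s$ of $E_2$ at $2$. When $s = 4$ the level $2^4\pi_7\mathfrak{q}_3$ has norm $2^{12}\cdot 7\cdot 27$ and is out of computational reach, so there I would use $E_1$ alone: in $S_2(2^4 7^2)$ one separates rational from irrational newforms and eliminates each $g$ from $\bar{\rho}_{E_1,p}\sim\bar{\rho}_{g,\mathfrak{P}}$ by means of $a_3(E_1) = -1$ (forced by $3 \mid a+b$), of a comparison of $a_{23}(E_1)$ with $a_{23}(g)$ against the finite list of residual Frey curves modulo $23$, and of the inertial constraint $|\Phi_7(g)| = 3$ at the prime $7$. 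When $s \in \{0,1\}$ I would instead compute the Hilbert newform spaces $S_2(\pi_7\mathfrak{q}_3)$ and $S_2(2\pi_7\mathfrak{q}_3)$ over $K^+$ (of level norms $189$ and $1512$) and eliminate each $f$ by comparing $a_\lambda(E_2)$ with $a_\lambda(f)$ at a degree-one prime $\lambda \mid 13$; wherever a single curve leaves a stubborn newform, I would invoke the multi-Frey relation between $a_\ell(E_1)$ and $a_\lambda(E_2)$ at a rational prime $\ell$ that splits completely in $K^+$ (such as $\ell = 13$ or $29$) and is of good reduction for both curves, ruling out any pair $(g,f)$ whose reduction $(a_\ell(g), a_\lambda(f))$ modulo $p$ is not among the pairs $(a_\ell(E_1), a_\lambda(E_2))$ realized by some $(a,b) \bmod \ell$. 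Each such elimination excludes only finitely many $p$, dividing an explicit resultant; gathering all of these (together with the prime factors of $B_7$, already excluded by the size of $p$) defines $M_6$.

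I expect the genuine difficulty to be the Hilbert modular form computation over the cubic field $K^+$: the inert prime $\mathfrak{q}_3$ multiplies every level norm by $27$, so the spaces arising when $s \in \{0,1\}$ are already near the edge of feasibility (this is where Voight's computations are needed), while the $s = 4$ level is hopeless over $K^+$. Consequently the scheme rests on two delicate points — that the $E_1$-only argument genuinely clears the $s = 4$ subcase, and that the two-curve comparison genuinely clears every residual newform pair when $s \in \{0,1\}$ — and these are precisely where, should the computation come out unfavourably, one would be forced either to enlarge the set of auxiliary primes or to introduce a third Frey curve.
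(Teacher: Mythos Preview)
Your approach is correct and, in the key subcase, actually simpler than the paper's. The paper never returns to the $\Q$-curve once it assumes $7\mid a+b$: it works entirely with $E':=E_{(a,b)}^{(2,1)}$ over $K^+$, so Proposition~\ref{cond4} yields levels $2^s 3\pi_7$ with $s\in\{0,1,4\}$, and the $s=4$ case forces Voight's computation of the $462$ rational Hilbert newforms there; the non-rational newforms at that level are treated only non-explicitly, which is exactly where the unspecified constant $M_6$ arises. Your observation that $C=6$ forces precisely $2\parallel a+b$ (because the $c_0$ of Lemma~\ref{novaeq3} is coprime to $42$) pins $E_1=E_{(a,b)}^{(1,2,3)}$ to the single classical level $2^4 7^2$, where the Section~\ref{sete} analysis already eliminates every newform except the one attached to $E_{(1,1)}$, and that one falls to $a_3(E_1)=-1$ (from $3\mid a+b$) --- or equivalently, as you note, to $|\Phi_7(E_1)|=3\neq 6=|\Phi_7(E_{(1,1)})|$. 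This argument does not depend on $s$, so your separate $E_2$-treatment of $s\in\{0,1\}$ and the multi-Frey pairing at split primes are in fact unnecessary: $E_1$ alone finishes the second case for $p>7$, and one may take $M_6=1$. What the paper's heavier route buys is reusability: the proof of Theorem~\ref{grau7IV} for $C=3$ recycles the $E'$-eliminations over $K^+$, and there (where $4\mid a+b$ can occur, pushing $E_1$ to level $2^37^2$ with the obstruction $E_{(1,-1)}$) the Hilbert computation is genuinely needed.
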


\subsection{Using two Frey curves.} We first explain why we need to use multiple Frey curves. It is clear that one can try to prove Theorem~\ref{grau7III} with the same approach used in the proof of Theorem~\ref{grau7II}. This strategy cannot be applied due to computational limitations. Indeed, in the proof of Theorem~\ref{grau7II} it was the condition $C=4 \mid a+b$ that restricted the conductor of the Frey curves $E=E_{(a,b)}^{(2,1)}$ at 2 to be $2^0$ or $2^1$. This gave rise to very small spaces of newforms that we easily computed. For the present theorem we have $C=6$ (hence $2 \mid a+b$ but $a+b$ is not necessarily divisible by 4), allowing the conductor of $E$ at 2 to be $2^s$ with $s \in \{0,1,4\}$.

\bigskip 

For an ideal $M$ of $K^+$ let $S_2(M)$ denote the space of cuspidal Hilbert modular forms of level $M$, parallel weight 2 and trivial character. The predicted level of newforms we have to compute are of the form $2^s 3 \pi_7^t$ (see Proposition~\ref{artinc2}). In particular, when $s=4$ and $t=2$ the dimension of the cuspidal subspace is 86017 which makes it is impossible to compute newforms. To circumvent this problem we observe the following:  
\begin{itemize}
 \item If $7 \nmid a+b$ then Theorem~\ref{grau7III} follows from Theorem~\ref{grau7} for $p \geq 17$. Thus we can suppose that $7 \mid a+b$.
 \item Note that in the previous bullet, disguised in Theorem~\ref{grau7}, we use the Frey curve $E_{(a,b)}^{(1,2,3)}$. We know these are not good for the case $7 \mid a+b$, hence in everything that follows we will work with $E:=E_{(a,b)}^{(2,1)}$ over $K^+$.
 \item From Proposition~\ref{cond4} we see that $t=1$ if and only if $7 \mid a+b$. Thus the hardest level one needs to compute is $M=2^4 3 \pi_7$. The dimension of $S_2(M)$ is 12289 which still makes its full computation impossible.
 \item Nevertheless, using algorithms implemented in Magma \cite{Magma} for quaternion algebras due to Kirschmer and Voight \cite{KVoight} and Hilbert modular forms due to Demb\'el\'e \cite{Dembele}, John Voight was able to compute the rational newforms of level $M$. More precisely, he computed their Fourier coefficients for all primes $\q$ with norm up to 200. The output can be found at \url{http://www.ub.edu/tn/visitant/amat.php}.
 \item At the cost of imposing further restrictions on the exponent $p$, it is possible to eliminate all the newforms with non-rational coefficients without computing them. Thus, we can prove Theorem~\ref{grau7III} if we manage to eliminate all the rational newforms computed by J. Voight.
\end{itemize}

\subsection{Irreducibility and Level lowering} The following theorem follows also from the proof of Theorem~\ref{irr4}.

\begin{tm} Suppose that $(a,b,c)$ is a primitive solution to $x^7 + y^7 =6z^p$. Write $I_6 = (1 + 3^{18})^2$. Then, the representation $\bar{\rho}_{E,p}$ is irreducible for all $p > I_6$.
\label{irr6}
\end{tm}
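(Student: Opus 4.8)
The plan is to repeat the argument of Theorem~\ref{irr4} essentially verbatim, i.e.\ to invoke Theorem~\ref{irr}. Recall that the Frey curve $E = E_{(a,b)}^{(2,1)}$ attached to a primitive solution of $x^7+y^7=6z^p$ is, exactly as in the case $C=4$, an elliptic curve defined over the totally real cubic subfield $K^+ \subset \Q(\zeta_7)$, which has class number $h_r = 1$, degree $d_r = 3$, and the same pair of fundamental units used in the proof of Theorem~\ref{irr4}. The only local data about $E$ that the argument needs is that the multiplicative primes of $E$ enter as in Proposition~\ref{cond4}; this is identical in shape to the $C=4$ situation, so nothing in the input to \cite{FS} changes.

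First I would observe that the constant $B_r$ (the constant $B$ of Theorem~1 of \cite{FS} with $K = K^+$) and the quantity $I_r$ appearing in the proof of Theorem~\ref{irr} depend only on the field $K^+$ — i.e.\ only on $d_r$ and $h_r$, hence only on $r$ — and not on $C$ nor on the particular Frey curve. Consequently $B_7$ and $I_r$ here coincide with the ones already computed in the proof of Theorem~\ref{irr4}: $B_7$ is divisible only by the primes $7$ and $13$, and $I_r = (1+3^{18})^2 =: I_6$. Then, for any prime $p > I_6$ one automatically has $p \nmid B_7$, since $7$ and $13$ are far below $(1+3^{18})^2$; so Theorem~2 of \cite{FS}, applied through Theorem~\ref{irr}, gives that $\bar{\rho}_{E,p}$ is irreducible. (Since $K^+$ is totally real and $\bar{\rho}_{E,p}$ is totally odd, irreducibility and absolute irreducibility coincide.) This would complete the proof.

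There is essentially no genuine obstacle here beyond bookkeeping, so if forced to name the ``hard part'' it would be the (routine) verification that $E_{(a,b)}^{(2,1)}$ for $C = 6$ is genuinely defined over the \emph{same} field $K^+$ and that the Weil–Deligne/multiplicative-prime data controlling the reducibility bound is unaffected by replacing $C=4$ with $C=6$ — both of which are immediate from the conductor formula of Proposition~\ref{cond4}. One could alternatively re-derive the bound directly from Momose's and Merel-type results without citing \cite{FS}, but re-using Theorem~\ref{irr} is the cleanest route and is exactly what the phrase ``follows also from the proof of Theorem~\ref{irr4}'' is meant to signal.
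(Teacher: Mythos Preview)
Your proposal is correct and takes essentially the same approach as the paper: the paper's own proof is the single sentence ``follows also from the proof of Theorem~\ref{irr4}'', and you have correctly unpacked what that means---namely that the constants $B_7$ and $I_r$ in Theorem~\ref{irr} depend only on the field $K^+$ (hence only on $r=7$) and not on the value of $C$, so the computation already done for $C=4$ carries over verbatim to $C=6$.
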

Suppose that $(a,b,c)$ is a primitive solution to $x^7 + y^7 =6z^p$ with $p > I_6$. We can assume that $7 \mid a+b$. Let $f_E$ be the newform associated with $E$ by modularity (see Corollary~\ref{cor:modularity}). As before, from Theorem~\ref{irr6}, Proposition~\ref{cond4} and level lowering we conclude that 
\begin{equation}
\bar{\rho}_{f_E,p} \sim \bar{\rho}_{f,\fp}, \quad \quad \mbox{for some } \fp \mid p,
\label{contradictionIII}
\end{equation}
where $f$ is a newform in $S_2(2^s 3 \pi_7)$ with $s \in \{0,1,4\}$.

\subsection{Eliminating newforms} In this case, contradicting isomorphism \eqref{contradictionIII} for all predicted $f$ requires more work. We start by explaining a useful trick that we use in the sequel and that can easily be adapted for other Diophantine applications.

\subsection*{Bounding the exponent using splitting primes} Previously, we used the fact that the Frey curves $E$ have good reduction at certain primes $\q$ and so we can compute all the residual curves and determine all the possible values for $a_\q(E)$. We will now explain how we can use splitting primes of good reduction to take this idea further and obtain better bounds for the exponent.

\bigskip

Let $q \not\equiv 1 \pmod{7}$ be a rational prime that splits in $K^+$. Write $\q_1,\q_2,\q_3$ for the primes dividing $q$. For a solution $(a,b,c) \in \Z^3$ we have $(a,b) \equiv (x,y) \pmod{\q_i}$ for $i=1,2,3$ with $0 \leq x,y \leq q-1$ independent of $i$. Thus, fixed $(x,y) \pmod{q}$ such that $x+y \not\equiv 0 \pmod{q}$ we can compute the triple $(a_{\q_1}(E),a_{\q_2}(E),a_{\q_3}(E))$. This way, instead of only knowing the possible values of $a_{\q_i}(E)$ at each prime we know how they group together.

Suppose that $q \nmid a+b$, hence $E$ has good reduction at all $\q_i$. By computing \eqref{contradictionIII} at $\mbox{Frob}_{\q_i}$ and taking traces we get
$$a_{\q_i}(E) \equiv a_{\q_i}(f) \pmod{\fp}.$$ 
Suppose $q \mid a+b$, hence $E$ has multiplicative reduction at all $\q_i$. Thus, there is level lowering happening in \eqref{contradictionIII} at $\q_i$ and we must have
$$a_{\q_i}(f) \equiv \pm (q + 1) \pmod{\fp}.$$
Let $f$ be a newform with field of coefficients $K_f$. Write $\Norm$ for the usual norm of $K_f / \Q$. Suppose that $f$ satisfies \eqref{contradictionIII}. Then, we can define
$$A_{x,y}(f) := \begin{cases}
   \mbox{gcd}\{\Norm(a_{\q_i}(E) - a_{\q_i}(f))\} \quad \mbox{ if } x+y \not\equiv 0 \pmod{q} \\
   \mbox{gcd}\{\Norm((q + 1)^ 2 - a_{\q_i}(f)^2)\} \quad \mbox{ if } x+y \equiv 0 \pmod{q} \
\end{cases}$$
and also 
\begin{equation}
 B_{q}(f) := \prod_{\substack{(x,y) \in \F_{q}^{2}\\ (x,y)\neq(0,0)}}{A_{x,y}(f)}.
\end{equation}
We have now proved.
\begin{tm} Suppose that $(a,b,c)$ is a primitive solution to $x^2 + y^2 =6z^p$ for $p \geq I_6$. Then, $7 \mid a+b$ and \eqref{contradictionIII} holds with a newform $f \in S_2(2^s 3 \pi_7)$ with $s \in \{0,1,4\}$. Moreover, $p \mid B_{q}(f)$.
\label{bound}
\end{tm}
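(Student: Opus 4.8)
The plan is to collect ingredients already assembled in this section and the previous ones, so the argument is short. First I would dispose of the first case: since $I_6=(1+3^{18})^2$ is far larger than $17$, if $p\geq I_6$ and $7\nmid a+b$ then, using the equivalence $7\nmid a+b\Leftrightarrow 7\nmid c$ (Proposition~\ref{trezz2}) and Definition~\ref{sophie}, the triple $(a,b,c)$ would be a non-trivial primitive first case solution of $x^7+y^7=6z^p$, which is ruled out by Theorem~\ref{grau7} applied with $d=6$ (so $s_0=s_1=1$, $s_2=0$, case (2)) and $\gamma=1$. Hence $7\mid a+b$, and in particular $2\mid a+b$.

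Next I would invoke, for $E=E_{(a,b)}^{(2,1)}$ over $K^+$: modularity of $E$ (Corollary~\ref{cor:modularity}), absolute irreducibility of $\bar{\rho}_{E,p}$ for $p>I_6$ (Theorem~\ref{irr6}), and the description of $N_E$ in Proposition~\ref{cond4}, where now $t=1$ because $7\mid a+b$ and the exponent of $2$ lies in $\{0,1,4\}$ because $2\mid a+b$. Running the level-lowering argument of Section~\ref{ll2} together with Proposition~\ref{artinc2} (note $\mathrm{Rad}_{14}(6)=(3)$) then yields \eqref{contradictionIII}, i.e. $\bar{\rho}_{f_E,p}\sim\bar{\rho}_{f,\fp}$ for some $\fp\mid p$ with $f$ a newform in $S_2(2^s3\pi_7)$, $s\in\{0,1,4\}$. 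This is precisely the first two conclusions of the theorem.

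For the last conclusion, let $(x,y)\in\F_q^2$ be the reduction of $(a,b)$ modulo $q$; primitivity of $(a,b)$ forces $(x,y)\neq(0,0)$, so $A_{x,y}(f)$ is one of the factors of $B_q(f)$ and it suffices to show $p\mid A_{x,y}(f)$. Write $\q_1,\q_2,\q_3$ for the primes above $q$ (here $q\notin\{2,3,7\}$ since $2$ and $3$ are inert and $7$ ramifies in $K^+$). If $x+y\not\equiv 0\pmod q$ then $q\nmid a+b$, and $q\not\equiv 1\pmod 7$ gives $q\nmid c_1c_2$, so by Proposition~\ref{cond4} the curve $E$ has good reduction at each $\q_i$; since the reduction of $E$ at $\q_i$, hence $a_{\q_i}(E)$, depends only on $(x,y)$, evaluating \eqref{contradictionIII} at $\mathrm{Frob}_{\q_i}$ and taking traces gives $a_{\q_i}(E)\equiv a_{\q_i}(f)\pmod{\fp}$, so $p\mid\Norm(a_{\q_i}(E)-a_{\q_i}(f))$ for $i=1,2,3$, whence $p\mid A_{x,y}(f)$. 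If $x+y\equiv 0\pmod q$ then $q\mid a+b$, and since $a+b=6\cdot 7^k c_0^p$ with $c_0$ coprime to $42$ (Lemma~\ref{novaeq3}) we get $q\mid c_0$, so at the multiplicative prime $\q_i$ one has $v_{\q_i}(\Delta(E))=4p\,v_{\q_i}(c_0)$, a multiple of $p$, and the model is minimal at $\q_i$; thus $\bar{\rho}_{E,p}$ is unramified at $\q_i$, level lowering removes $\q_i$, and Mazur's principle forces $a_{\q_i}(f)\equiv\pm(q+1)\pmod{\fp}$, so $p\mid\Norm((q+1)^2-a_{\q_i}(f)^2)$ for each $i$ and again $p\mid A_{x,y}(f)$. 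Either way $p\mid A_{x,y}(f)\mid B_q(f)$.

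There is essentially no obstacle here: the statement is a repackaging of material already proved, and the only point needing a moment's care is, in the case $q\mid a+b$, checking that the power of $\q_i$ in the minimal discriminant is divisible by $p$, so that level lowering genuinely occurs at $\q_i$ and the congruence $a_{\q_i}(f)\equiv\pm(q+1)$ becomes available --- the same mechanism used in Section~\ref{seteII}. The real work is deferred to the applications: to deduce an \emph{effective} bound on $p$ one must choose a suitable splitting prime $q$ and verify that $B_q(f)\neq 0$ for each of the finitely many rational newforms $f$ at the relevant levels; the theorem itself only records the divisibility $p\mid B_q(f)$, which is automatic once \eqref{contradictionIII} holds.
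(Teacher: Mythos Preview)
Your proposal is correct and follows essentially the same approach as the paper, which simply assembles the preceding discussion (the paper writes ``We have now proved'' immediately before the theorem). If anything, you are more careful than the paper in the multiplicative case $q\mid a+b$: you explicitly verify, via $a+b=6\cdot 7^k c_0^p$ and $q\nmid 42$, that $v_{\q_i}(\Delta(E))=4p\,v_q(c_0)$ is a multiple of $p$, whereas the paper leaves this implicit in the phrase ``there is level lowering happening in \eqref{contradictionIII} at $\q_i$''.
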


\subsection*{Newforms with rational coefficients.}\label{subsec:rational} We will now use Theorem~\ref{bound} to eliminate newforms with rational coefficients. Note that 13 splits in $K^+$ and write $\q_1$, $\q_2$, $\q_3$ for the ideals dividing it, which are given by
$$ \q_1 = (z^2 + z - 3)\OO_{K^+}, \quad  \q_2 = (-z^2 + 2z + 2)\OO_{K^+},  \quad \q_3 = (2z^2 - z - 2)\OO_{K^+}.$$
Using Magma we computed the newforms in the spaces $S_2(2^s 3 \pi_7)$ for $s=0,1,2$ and obtained,
\begin{itemize}
 \item for $s=0$: one rational newform $f_1$ satisfying $a_{\q_i}(f_1) =-2$ for $i=1,2,3$;
 \item for $s=1$: one rational newform $f_2$ satisfying $a_{\q_i}(f_2) = 6$ for $i=1,2,3$; four conjugacy classes of newforms with $a_{\q_1}$ in a cubic field. In particular, there is one $g$ such that $a_{\q_1}(g)$ satisfies the polynomial $x^3 - 7x^2 + 10x + 7$.
 \item for $s=2$: 8 rational newforms and 11 conjugacy classes of non rational newforms;
\end{itemize}
Moreover, for each rational newform $f$ above we computed the quantity $B_{13}(f)$. All these numbers turn out to be non-zero for all rational $f$ and, the only prime factors showing up are $2,3,5$. 

\begin{rem} We give an example to clarify the advantages of using splitting primes. By going through all the pairs $(a,b) \in \F_{13}^2$ such that $a+b \not\equiv 0 \pmod{13}$, one easily computes that the Frey curves satisfy
$$a_{\q_1}(E) \in \{ -6, -2, 2 \}, \quad a_{\q_2}(E) \in \{ -6, -2, 2 \}, \quad a_{\q_3}(E) \in \{ -6, -2, 2, 6 \}.$$
In particular, $a_{\q_i}(E)$ can take the value $-2$ for any $i$. Nevertheless, it is easy to check with the computer that the triple $(a_{\q_1}(E),a_{\q_2}(E),a_{\q_3}(E))$ never takes the value $(-2,-2,-2)$. Thus, for the newform $f_1$ it is clear that $B_{13}(f_1) \neq 0$.
\end{rem}

For the remaining level $2^4 3 \pi_7$ we used the list of 462 rational newforms provided by John Voight. After computing $B_{13}(f)$ for all these forms we obtained a sublist $\mathcal{L}$ consisting of 32 newforms such that $B_{13}(f) = 0$. To eliminate those in $\mathcal{L}$ we apply Theorem~\ref{bound} with the prime $q=41$. Indeed, 41 splits in $K^+$ and we write
$$ \q_1 = (-z^2 - 2z + 4), \quad  \q_2 = (-2z^2 + 3z + 4),  \quad \q_3 = (-3z^2 + z + 3).$$
We compute $B_{41}(g)$ for all $g \in \mathcal{L}$ and check it is always non-zero. Moreover, the primes that occur as divisors of all the $B_{13}(f)$ and $B_{41}(g)$ are 2,3,5. All the computations were done using Magma \cite{Magma}.

\subsection*{Newforms with non-rational coefficients.}
\label{subsec:nonrational}

To complete the proof of Theorem~\ref{grau7III} we have to eliminate the newforms with non-rational coefficients in the spaces $S_2(2^s 3 \pi_7)$ for $s=0,1,4$. 
As in the rational case we do this by obtaining restrictions for the exponent $p$. The bound we will obtain is not explicit due to the fact that we are not able to compute the full space when $s=4$.

\bigskip

We now explain how we deal with the newforms we have not computed. Let $f \in S_2(2^4 3 \pi_7)$ be a non-rational newform and suppose that \eqref{contradictionIII} holds for $f$. Let $q \neq 2,7$ be a rational prime and $\q \mid q$ a prime such that $a_\q(f) \not\in \Q$. Thus $\q$ is of good or multiplicative reduction for $E$ and we have, respectively,
$$a_{\q}(f) \equiv a_{\q}(E) \pmod{p} \quad \mbox{ or } \quad a_{\q}(f) \equiv \pm (\mbox{Norm}(\q) +1) \pmod{p}.$$
Let $P_\q(x)$ be the minimal polynomial of $a_{\q}(f)$. By applying $P_\q$ to both sides of the congruences we obtain
$$ 0 \equiv P_\q(a_{\q}(E)) \pmod{p} \quad \mbox{ or } \quad 0 \equiv P_\q(\pm (\mbox{Norm}(\q) +1)) \pmod{p}.$$ 
Since $P_\q$ is minimal, the right hand side of these congruences must be non-zero because $a_{\q}(E)$ and $\pm (\mbox{Norm}(\q) +1)$ are integers. From the Hasse-Weil bound we know there are only finitely many possibilities for $a_{\q}(E)$. Consequently, there are only finitely many primes dividing the right hand sides of the congruences above. Let $M_f$ be the product of all those primes. Hence, \eqref{contradictionIII} cannot hold for $f$ if $p \nmid M_f$. Since there are only finitely many newforms we can take $M_0$ to be the product of all $M_f$. Thus \eqref{contradictionIII} cannot hold for any non-rational $f$ in $S_2(2^4 3 \pi_7)$ if $p \nmid M_0$.

\bigskip

In the spaces corresponding to $s=0,1$ we computed the newforms explicitly. We eliminate all of them by making the previous argument concrete. We illustrate with an example. Let $\q_1 = (z^2 + z - 3)$ be a prime above 13. Let $g$ be the newform of level $2^1 3 \pi_7$ such that $a_{\q_1}(g)$ satisfies the polynomial $P_{\q_1}(x) = x^3 - 7x^2 + 10x + 7$. Suppose that \eqref{contradictionIII} holds for $g$. Hence we have
$$a_{\q_1}(g) \equiv a_{\q_1}(E) \pmod{p} \quad \mbox{ or } \quad a_{\q_1}(g) \equiv \pm (13+1) \pmod{p}.$$
We apply $P_{\q_1}$ to both sides of these congruence to obtain
$$ 0 \equiv P_{\q_1}(a_{\q_1}(E)) \pmod{p} \quad \mbox{ or } \quad 0 \equiv P_{\q_1}(\pm 14) \pmod{p}.$$
Since $a_{\q_1}(E) \in \{ -6, -2, 2 \}$ we get
$$ 0 \equiv -521,-49,7 \pmod{p} \quad \mbox{ or } \quad 0 \equiv -4249, 1519 \pmod{p},$$
thus we eliminate $g$ if $p \neq 7,31,521, 607$. We apply the same reasoning for all $g$ we computed. Theorem~\ref{grau7III} now follows if we pick $M_6$ be the product of $M_0$ by all the primes showing up when applying the argument above for all non-rational $g \in S_2(2^s 3 \pi_7)$ with $s=0,1$. 

\section{Equations of signature $(7,7,p)$, Part IV.}
\label{seteIV}

In this section, we focus on $C=3$. Due to the existence of trivial solutions it is clear that, using the techniques developed here, $C=3$ is the smallest value of $C$ such that one can hope to solve the equation $x^7 + y^7 = Cz^p$. By putting together all the results obtained so far with a further computation of Hilbert newforms we will now prove the following theorem.

\begin{tm} There is some constant $M_3$ such that if $p > (1 + 3^{18})^2$ and $p \nmid M_3$ then the equation $x^7 + y^7 = 3z^p$ has no non-trivial primitive solutions. 
\label{grau7IV}
\end{tm}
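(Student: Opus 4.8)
The plan is to combine every ingredient already assembled for signature $(7,7,p)$ with a genuine multi-Frey argument that uses simultaneously the two families of Frey curves previously constructed: the $\Q$-curve $E_1:=E_{(a,b)}^{(1,2,3)}$ of Section~\ref{sete} and the curve $E_2:=E_{(a,b)}^{(2,1)}$ over the totally real cubic field $K^+\subset\Q(\zeta_7)$ of Section~\ref{seteII}. Let $(a,b,c)$ be a non-trivial primitive solution of $x^7+y^7=3z^p$ with $p>(1+3^{18})^2$. Both Frey curves are modular (classical modularity for $E_1$; Corollary~\ref{cor:modularity} for $E_2$), and $\bar\rho_{E_1,p}$, $\bar\rho_{E_2,p}$ are irreducible for such $p$ by Theorems~\ref{irr4} and~\ref{irr6} (for $E_1$ over $\Q$ already $p\geq 17$ suffices, by Mazur's theorem and multiplicative reduction at some prime $>6$); hence level lowering applies to each. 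Since $3\mid C$ and $3\not\equiv 1\pmod 7$, Proposition~\ref{trezz2} gives $3\mid a+b$, and this is the \emph{only} divisibility one can extract from $C=3$ --- precisely why $C=3$ is the critical case. We split according to Sophie Germain's dichotomy (Definition~\ref{sophie}): the first case $7\nmid c$, equivalently $7\nmid a+b$, and the second case $7\mid c$, equivalently $7\mid a+b$.

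For the second case I would run $E_2$ exactly as in the proof of Theorem~\ref{grau7III}. By Proposition~\ref{cond4}, $7\mid a+b$ forces $t=1$, so $\bar\rho_{E_2,p}\sim\bar\rho_{g,\mathfrak Q}$ for a Hilbert newform $g\in S_2(2^s\cdot 3\cdot\pi_7)$ with $s\in\{0,1,3,4\}$; the value $s=3$ (absent for $C=6$, where $2\mid a+b$ always) now arises when $a+b$ is odd. No newform attached to a trivial solution intervenes on the $E_2$ side, since $C=3\neq 1,2$ and $E_{(1,-1)}$ is singular. Thus it suffices to eliminate the newforms in these four spaces; the levels with $s\leq 1$ and the rational newforms at level $2^4\cdot 3\cdot\pi_7$ come from the $C=6$ computation (the latter from J.~Voight's list), so $S_2(2^3\cdot 3\cdot\pi_7)$ is the single genuinely new Hilbert newform space to compute. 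Each newform is dispatched by the splitting-prime device of Theorem~\ref{bound}: for rational primes $q\not\equiv1\pmod 7$ that split in $K^+$ (e.g.\ $q=13,41$) one records the triples $(a_{\mathfrak q_1}(E_2),a_{\mathfrak q_2}(E_2),a_{\mathfrak q_3}(E_2))$ over all admissible residues $(a,b)\bmod q$ and forms $B_q(g)$; newforms with non-rational Hecke eigenvalues are removed by applying the minimal polynomial $P_{\mathfrak q}$ of $a_{\mathfrak q}(g)$ to the finitely many possible values of $a_{\mathfrak q}(E_2)$, as in Section~\ref{seteIII}. All resulting integers are nonzero, and their prime factors are adjoined to $M_3$.

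For the first case I would use $E_1$ first. Its conductor is $2^s\cdot 7^2\cdot\operatorname{rad}(c)$ with $s\in\{2,3,4\}$ (Proposition~\ref{conductor}), and level lowering gives $\bar\rho_{E_1,p}\sim\bar\rho_{h,\mathfrak P}$ with $h\in S_2(2^s\cdot 7^2)$. Reproducing the analysis in the proof of Theorem~\ref{grau7}: comparing $a_3(h),a_{23}(h)$ with the finite sets of possible values of $a_3(E_1),a_{23}(E_1)$ --- using $a_3(E_1)=-1$, which holds because $3\mid a+b$ --- kills every $h$ except those attached to $E_{(0,1)},E_{(1,-1)},E_{(1,1)}$; then $a_3(E_{(1,1)})=3\neq-1$ kills $E_{(1,1)}$, and Kraus' inertia-at-$7$ argument kills $E_{(1,-1)}$ because $7\nmid a+b$ forces $|\Phi_7(E_1)|=6$ while $|\Phi_7(E_{(1,-1)})|=3$. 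This leaves the single surviving possibility $\bar\rho_{E_1,p}\sim\bar\rho_{E_{(0,1)},p}$, which no condition derived from $C=3$ alone can exclude. Here the second Frey curve enters: this isomorphism forces $a_q(E_1)=a_q(E_{(0,1)})$ for every prime $q$ of good reduction once $p>4\sqrt{q}$, which pins $(a,b)$ modulo $q$ to a small, explicitly computable set of residues; feeding those residues into the level-lowered isomorphism $\bar\rho_{E_2,p}\sim\bar\rho_{g,\mathfrak Q}$ with $g\in S_2(2^{s'}\cdot 3\cdot\pi_7^2)$, $s'\in\{0,1,3,4\}$ (now $t'=2$), and comparing traces of Frobenius at primes above several such split $q$, yields for each admissible $g$ a nonzero integer divisible by $p$. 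The non-rational newforms in these spaces are again eliminated blindly by the $P_{\mathfrak q}$-trick; only the few rational newforms at the heaviest level $2^4\cdot 3\cdot\pi_7^2$ must be known explicitly, and that computation --- at the edge of feasibility --- is the main new input. Setting $M_3$ to be the product of all primes produced in the two cases then completes the proof.

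The main obstacle is entirely computational: the Hilbert modular form spaces over the cubic field at the heavy levels ($2^4\cdot 3\cdot\pi_7$ and $2^4\cdot 3\cdot\pi_7^2$) have dimension in the thousands, far beyond a direct full decomposition. What makes the argument go through is the orchestration of the multi-Frey reduction --- using $E_1$ to collapse the first case to the single residual newform $E_{(0,1)}$, so that from the cubic side one needs only trace comparisons at a handful of splitting primes rather than a full newform census --- together with Voight's large-scale computation of the rational newforms at the critical levels and the systematic use of the minimal-polynomial device of Section~\ref{seteIII} to discard all newforms with non-rational eigenvalues without computing them.
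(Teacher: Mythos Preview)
Your overall architecture is right --- the paper also splits by Sophie Germain and uses $E_1:=E_{(a,b)}^{(1,2,3)}$ together with $E':=E_{(a,b)}^{(2,1)}$ --- but you have not pushed the $2$-adic bookkeeping far enough in the first case, and this matters decisively.

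In the first case ($7\nmid a+b$), your claim that only the newform attached to $E_{(0,1)}$ survives on the $E_1$ side is correct. But $E_{(0,1)}$ has level $2^2\cdot 7^2$, so the residual isomorphism $\bar\rho_{E_1,p}\sim\bar\rho_{E_{(0,1)},p}$ can occur \emph{only} when the Frey curve $E_1$ itself has level $2^2\cdot 7^2$; by Proposition~\ref{conductor} this forces $2\nmid a+b$ and $4\mid a$. Feeding this into Proposition~\ref{cond4} gives $s'=3$ for $E'$, not $s'\in\{0,1,3,4\}$. Hence the only Hilbert level you must handle with $E'$ in the first case is $2^3\cdot 3\cdot\pi_7^2$, of dimension $10753$ --- and Voight's computation of the rational newforms there is the main new input. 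Your heaviest claimed level $2^4\cdot 3\cdot\pi_7^2$ has dimension $86017$ and the paper explicitly records it as out of reach; your plan as written would stall there. The fix is simply to observe the $2$-adic constraint above, after which that level never appears.

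Two smaller remarks. First, once you are at level $2^3\cdot 3\cdot\pi_7^2$, the paper does not use your residue-restriction from $\bar\rho_{E_1,p}\sim\bar\rho_{E_{(0,1)},p}$ at all: it runs $E'$ alone and the quantities $B_{13}(g)$ and $B_{41}(g)$ already kill all $152$ rational newforms for $p>17$. Your genuine multi-Frey coupling is a legitimate refinement and could help in other situations, but here it is unnecessary. Second, the paper also handles the sub-case $2\mid a+b$ by invoking the $C=6$ argument (since then $6\mid a+b$), exactly as you do in your second case; it then further subdivides $2\nmid a+b$ by whether $4\mid a$ to route each sub-case to the cheapest Frey curve. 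That finer $2$-adic split is precisely what your proposal is missing.
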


For the proof we will again use the Frey curves in section~\ref{sete} and in section~\ref{seteII}. Suppose $(a,b,c)$ is a primitive solution to $x^7 + y^7 = 3z^p$. Write $E:=E_{(a,b)}^{(1,2,3)}$ and $E^\prime := E_{(a,b)}^{(2,1)}$. 

\begin{tm} Suppose that $(a,b,c)$ is a primitive solution to $x^7 + y^7 =3z^p$. Write $I_3 = (1 + 3^{18})^2$. Then, the representations $\bar{\rho}_{E,p}$ and $\bar{\rho}_{E',p}$ are irreducible for all $p > I_3$.
\label{irr3}
\end{tm}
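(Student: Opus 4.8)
The plan is to obtain Theorem~\ref{irr3} by combining irreducibility results already established, the essential observation being that the bound $I_3=(1+3^{18})^2$ depends only on the field $K^+$ and not on the coefficient of the equation; in particular $I_3=I_4=I_6$. I would handle the two Frey curves separately.

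For $\bar{\rho}_{E',p}$, note that $E'=E_{(a,b)}^{(2,1)}$ is exactly the Frey curve treated in Theorems~\ref{irr4} and~\ref{irr6}, and that the proofs of those theorems (instances of Theorem~\ref{irr}) use nothing about the value of $C$: the only inputs are that $K^+$ is cubic with class number $1$, that its generating units are $z$ and $-z+1$, and that the constant $B_7$ attached to $K=K^+$ in Theorem~1 of~\cite{FS} has $7$ and $13$ as its only prime divisors. Since $p>I_3=(1+3^{18})^2$ forces $p>13$, hence $p\nmid B_7$, Theorem~2 of~\cite{FS} gives that $\bar{\rho}_{E',p}$ is (absolutely) irreducible. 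For $\bar{\rho}_{E,p}$, the curve $E=E_{(a,b)}^{(1,2,3)}$ is defined over $\Q$, so I would argue as in Section~\ref{sete}: for a non-trivial primitive solution $E$ has a prime of multiplicative reduction of residue characteristic $>6$, and Theorem~22 of~\cite{Dahm} then gives that $\bar{\rho}_{E,p}$ is absolutely irreducible for all $p\ge 17$; since $I_3>17$ this covers every $p>I_3$. A trivial solution may either be discarded at the outset or treated uniformly by applying Theorem~\ref{irr} over $K_0=\Q$.

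There is no real obstacle in the argument. The only point genuinely worth checking is that nothing in the proofs of Theorems~\ref{irr4} and~\ref{irr6} depends on $C$ beyond the shape of the Frey curve $E_{(a,b)}^{(2,1)}$, and that $I_3$ exceeds the much smaller irreducibility threshold supplied by the rational curve $E$. The theorem is stated only so that the single hypothesis $p>I_3$ simultaneously controls both Frey curves, which is precisely what the multi-Frey argument of Section~\ref{seteIV} needs.
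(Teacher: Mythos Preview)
Your argument is correct and matches the paper's own proof: the paper simply says that the claim for $E$ follows from section~\ref{sete} (the $\Q$-curve argument via Theorem~22 of \cite{Dahm}) and that the claim for $E'$ follows from the proof of Theorem~\ref{irr4}, exactly as you have spelled out. Your additional remarks that $I_3=I_4=I_6$ and that nothing in the proof of Theorem~\ref{irr4} depends on $C$ are precisely the observations underlying the paper's one-line reference.
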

\begin{proof} We know from section~\ref{sete} that $\bar{\rho}_{E,p}$ is irreducible for all $p > 17$. The statement for $E'$ follows from the proof of Theorem~\ref{irr4}.
\end{proof}

From Propositions~\ref{artinc2I} and \ref{cond4}, modularity, Theorem~\ref{irr3} and level lowering we obtain, for large $p$, the isomorphisms: 
\begin{equation}
 \bar{\rho}_{E,p} \sim \bar{\rho}_{f,\fp}, 
 \label{eq:iso1}
\end{equation}
where $f$ is a classical newform $2^\ell 7^2$ with $\ell \in \{2,3,4\}$, and
\begin{equation}
\bar{\rho}_{E^\prime,p} \sim \bar{\rho}_{g,\fp^\prime}, 
\label{eq:iso2}
\end{equation}
where $g$ is a Hilbert newforms over $K^+$ of level $2^s 3 \pi_7^t$ with $s \in \{0,1,3,4\}$ and $t=1$ or $2$.

\bigskip

In what follows we will subdivide the solutions $(a,b,c)$ according to certain divisibility conditions at 2 and 7. Then, for each condition type, we will contradict \eqref{eq:iso1} for all $f$ in the corresponding level or \eqref{eq:iso2} for all $g$ in the corresponding level. Except for two condition types that will require new computations, all the other contradictions are obtained by applying the proofs of the theorems in previous sections. 

\bigskip

We first observe that since $C=3$ we cannot apply Theorem~\ref{grau7} as we did in section~\ref{seteIII}. Thus we also have to consider the case $7 \nmid a+b$. We now split $(a,b,c)$ into cases.
\begin{itemize}
 \item Suppose $2 \mid a+b$. Then $6 \mid a+b$ and the result follows from the proof of Theorem~\ref{grau7III}. 
 \item From now on assume $2 \nmid a+b$.
 \item From Propositions~\ref{conductor} and \ref{cond4} it follows that if $4 \nmid a$ then $\ell=3$ and $s=3,4$.
 \item Suppose $4 \nmid a$ and $7 \nmid a+b$. Then, using $E$, the result follows from the proof of Theorem~\ref{grau7}.
 \item Suppose $4 \nmid a$ and $7 \mid a+b$. Thus $t=1$. Using $E^\prime$, the result follows from the proof of Theorem~\ref{grau7III} when $s=4$. For $s=3$, using $E^\prime$, the result will follow if we can eliminate all the newforms in $S_2(2^3 3 \pi_7)$.
 \item From Proposition~\ref{cond4} we know that $s=3$ if $4 \mid a$. 
 \item Suppose $4 \mid a$ and $7 \mid a+b$. Thus $t=1$. Using $E^\prime$, the result will follow if we can eliminate all the newforms in $S_2(2^3 3 \pi_7)$.
 \item Suppose $4 \mid a$ and $7 \nmid a+b$. Thus $t=2$ and, using $E^\prime$, the result follows if we eliminate all the newforms in $S_2(2^3 3 \pi_7^2)$.
\end{itemize}

\begin{rem} Note that in the last two cases we cannot use $E$ instead of $E^\prime$. Indeed, if $2 \nmid a+b$ and $4 \mid a$ then isomorphism \eqref{eq:iso1} holds with $f \in S_2(2^2 7^2)$. The newform $f_{(1,0)}$ associated with $E_{(1,0)}$ belongs to this space. From the proof of Theorem~\ref{grau7} we know that $a_3(E) = a_3(E_{(1,0)}) = -1$ when $3 \mid a+b$. Thus we do not have enough information to eliminate $f_{(1,0)}$. Recall that in the mentioned proof we do not need to eliminate $f_{(1,0)}$ because we also have $2 \mid a+b$.
\end{rem}

To complete the proof we have to eliminate all the newforms in  $S_2(2^3 3 \pi_7^t)$ for $t=1,2$. For $t=1$ we used Magma to compute all the newforms and obtained 121 newforms where 56 are rational. We wrote a small Magma code to eliminate all these 56 newforms, analogously to what we did in section~\ref{subsec:rational}. We again used the primes above 13 and 41 and obtained a contradiction for all $p > 5$. Since we also know the non-rational newforms explicitly we eliminate them as in the last paragraph of section~\ref{seteIII}.

We now take care of $t=2$. The dimension of $S_2(2^3 3 \pi_7^2)$ is 10753 which makes a full computation impossible. Nevertheless, John Voight was also able to compute the rational newforms in it. There are 152 of them. More precisely, he computed their Fourier coefficients for all primes $\q$ with norm up to 200. The output can be found at \url{http://www.ub.edu/tn/visitant/amat.php}. As before, we used the primes above 13 and 41 and eliminated all the newforms for $p > 17$. Since we do not have the non-rational newforms explicitly we apply the same argument as in section~\ref{subsec:nonrational} which gives us the constant $M_3$. This ends the proof of Theorem~\ref{grau7IV}.

\bibliography{geralnovo}

\begin{thebibliography}{10}

\bibitem{GGL2}
Thomas Barnet-Lamb, Toby Gee, and David Geraghty.
\newblock Congruences betwen {H}ilbert modular forms: constructing ordinary
  lifts {I}{I}.
\newblock {\em Math. Res. Letters (to appear)}.

\bibitem{chenBen}
Michael~A. Bennett and Imin Chen.
\newblock Multi-{F}rey {$\mathbb{Q}$}-curves and the {D}iophantine equation
  {$a^2+b^6=c^n$}.
\newblock {\em Algebra Number Theory}, 6(4):707--730, 2012.

\bibitem{BCDY}
Michael~A. Bennett, Imin Chen, Sander~R. Dahmen, and Soroosh Yazdani.
\newblock {G}eneralized {F}ermat {E}quations: {A} {M}iscelany (preprint). \\
  \url{http://www.staff.science.uu.nl/~dahme104/BeChDaYa-misc.pdf}.

\bibitem{BENg}
Michael~A. Bennett, Jordan~S. Ellenberg, and Nathan~C. Ng.
\newblock The {D}iophantine equation {$A^4+2^\delta B^2=C^n$}.
\newblock {\em Int. J. Number Theory}, 6(2):311--338, 2010.

\bibitem{bil1}
Nicolas Billerey.
\newblock \'{E}quations de {F}ermat de type {$(5,5,p)$}.
\newblock {\em Bull. Austral. Math. Soc.}, 76(2):161--194, 2007.

\bibitem{bil2}
Nicolas Billerey.
\newblock Crit\`eres d'irr\'eductibilit\'e pour les repr\'esentations des
  courbes elliptiques.
\newblock {\em Int. J. Number Theory}, 7(4):1001--1032, 2011.

\bibitem{BD}
Nicolas Billerey and Luis~V. Dieulefait.
\newblock Solving {F}ermat-type equations {$x^5+y^5=dz^p$}.
\newblock {\em Math. Comp.}, 79(269):535--544, 2010.

\bibitem{Magma}
Wieb Bosma, John Cannon, and Catherine Playoust.
\newblock The {M}agma algebra system. {I}. {T}he user language.
\newblock {\em J. Symbolic Comput.}, 24(3-4):235--265, 1997.
\newblock Computational algebra and number theory (London, 1993).

\bibitem{BreuilDiamond}
Christophe Breuil and Fred Diamond.
\newblock Formes modulaires de {H}ilbert modulo $p$ et valeurs d'extensions
  galoisiennes. to appear in {A}nn. {S}cient. de l'{E}.{N}.{S}. \\
  \url{http://arxiv.org/abs/1208.5367}.

\bibitem{bru}
Nils Bruin.
\newblock On powers as sums of two cubes.
\newblock In {\em Algorithmic number theory ({L}eiden, 2000)}, volume 1838 of
  {\em Lecture Notes in Comput. Sci.}, pages 169--184. Springer, Berlin, 2000.

\bibitem{Car1}
Henri Carayol.
\newblock Sur les repr\'esentations galoisiennes modulo {$l$} attach\'ees aux
  formes modulaires.
\newblock {\em Duke Math. J.}, 59(3):785--801, 1989.

\bibitem{CS}
Imin Chen and Samir Siksek.
\newblock Perfect powers expressible as sums of two cubes.
\newblock {\em J. Algebra}, 322(3):638--656, 2009.

\bibitem{Dahm}
Sander~R. Dahmen.
\newblock Classical and modular methods applied to {D}iophantine equations.
\newblock {\em PhD thesis, University of Utrecht, 2008. \\
  \url{igitur-archive.library.uu.nl/dissertations/2008-0820-200949/UUindex.html}}.

\bibitem{Darmon}
Henri Darmon.
\newblock Rigid local systems, {H}ilbert modular forms, and {F}ermat's last
  theorem.
\newblock {\em Duke Math. J.}, 102(3):413--449, 2000.

\bibitem{DG}
Henri Darmon and Andrew Granville.
\newblock On the equations {$z^m=F(x,y)$} and {$Ax^p+By^q=Cz^r$}.
\newblock {\em Bull. London Math. Soc.}, 27(6):513--543, 1995.

\bibitem{DM}
Henri Darmon and Lo{\"{\i}}c Merel.
\newblock Winding quotients and some variants of {F}ermat's last theorem.
\newblock {\em J. Reine Angew. Math.}, 490:81--100, 1997.

\bibitem{Dembele}
Lassina Demb{\'e}l{\'e}.
\newblock Quaternionic {M}anin symbols, {B}randt matrices, and {H}ilbert
  modular forms.
\newblock {\em Math. Comp.}, 76(258):1039--1057, 2007.

\bibitem{DF}
Luis Dieulefait and Nuno Freitas.
\newblock The {F}ermat-type equations $x^5 + y^5 = 2z^p$ or $3z^p$ solved
  through $\mathbb{Q}$-curves. to appear in math. comp. \\
  \url{http://arxiv.org/abs/1103.5388}.

\bibitem{DF2}
Luis Dieulefait and Nuno Freitas.
\newblock Fermat-type equations of signature {$(13,13,p)$} via {H}ilbert
  cuspforms.
\newblock {\em Math. Ann.}, 357(3):987--1004, 2013.

\bibitem{ell2}
Jordan~S. Ellenberg.
\newblock Galois representations attached to {$\mathbb Q$}-curves and the
  generalized {F}ermat equation {$A^4+B^2=C^p$}.
\newblock {\em Amer. J. Math.}, 126(4):763--787, 2004.

\bibitem{ell3}
Jordan~S. Ellenberg.
\newblock Serre's conjecture over {$\mathbb{F}_9$}.
\newblock {\em Ann. of Math. (2)}, 161(3):1111--1142, 2005.

\bibitem{FS}
Nuno Freitas and Samir Siksek.
\newblock Criteria for irreducibility of mod $p$ representations of {F}rey
  curves (preprint). \\ \url{http://arxiv.org/abs/1309.4748}.

\bibitem{JL}
H.~Jacquet and R.~P. Langlands.
\newblock {\em Automorphic forms on {${\rm GL}(2)$}}.
\newblock Lecture Notes in Mathematics, Vol. 114. Springer-Verlag, Berlin,
  1970.

\bibitem{Jarv}
Frazer Jarvis.
\newblock Correspondences on {S}himura curves and {M}azur's principle at {$p$}.
\newblock {\em Pacific J. Math.}, 213(2):267--280, 2004.

\bibitem{JMano}
Frazer Jarvis and Jayanta Manoharmayum.
\newblock On the modularity of supersingular elliptic curves over certain
  totally real number fields.
\newblock {\em J. Number Theory}, 128(3):589--618, 2008.

\bibitem{JMee}
Frazer Jarvis and Paul Meekin.
\newblock The {F}ermat equation over {${\mathbb Q}(\sqrt{2})$}.
\newblock {\em J. Number Theory}, 109(1):182--196, 2004.

\bibitem{KVoight}
Markus Kirschmer and John Voight.
\newblock Algorithmic enumeration of ideal classes for quaternion orders.
\newblock {\em SIAM J. Comput.}, 39(5):1714--1747, 2010.

\bibitem{kraus}
Alain Kraus.
\newblock Sur le d\'efaut de semi-stabilit\'e des courbes elliptiques \`a
  r\'eduction additive.
\newblock {\em Manuscripta Math.}, 69(4):353--385, 1990.

\bibitem{kraus0}
Alain Kraus.
\newblock Sur l'\'equation {$a^3+b^3=c^p$}.
\newblock {\em Experiment. Math.}, 7(1):1--13, 1998.

\bibitem{kraus1}
Alain Kraus.
\newblock On the equation {$x^p+y^q=z^r$}: a survey.
\newblock {\em Ramanujan J.}, 3(3):315--333, 1999.

\bibitem{kraus3}
Alain Kraus.
\newblock Une question sur les \'equations {$x^m-y^m=Rz^n$}.
\newblock {\em Compositio Math.}, 132(1):1--26, 2002.

\bibitem{pap}
Ioannis Papadopoulos.
\newblock Sur la classification de {N}\'eron des courbes elliptiques en
  caract\'eristique r\'esiduelle {$2$} et {$3$}.
\newblock {\em J. Number Theory}, 44(2):119--152, 1993.

\bibitem{Poonen}
Bjorn Poonen.
\newblock Some {D}iophantine equations of the form {$x^n+y^n=z^m$}.
\newblock {\em Acta Arith.}, 86(3):193--205, 1998.

\bibitem{Raj}
Ali Rajaei.
\newblock On the levels of mod {$l$} {H}ilbert modular forms.
\newblock {\em J. Reine Angew. Math.}, 537:33--65, 2001.

\bibitem{SW1}
C.~M. Skinner and A.~J. Wiles.
\newblock Residually reducible representations and modular forms.
\newblock {\em Inst. Hautes \'Etudes Sci. Publ. Math.}, (89):5--126 (2000),
  1999.

\bibitem{SW2}
C.~M. Skinner and Andrew~J. Wiles.
\newblock Nearly ordinary deformations of irreducible residual representations.
\newblock {\em Ann. Fac. Sci. Toulouse Math. (6)}, 10(1):185--215, 2001.

\bibitem{wiles2}
A.~Wiles.
\newblock On ordinary {$\lambda$}-adic representations associated to modular
  forms.
\newblock {\em Invent. Math.}, 94(3):529--573, 1988.

\bibitem{wiles}
Andrew Wiles.
\newblock Modular elliptic curves and {F}ermat's last theorem.
\newblock {\em Ann. of Math. (2)}, 141(3):443--551, 1995.

\end{thebibliography}
\bibliographystyle{plain}

\end{document}